\newcommand{\Ueberschrift}{Convex Fujita numbers and the Kodaira--Enriques classification of surfaces}
\newcommand{\Kurztitel}{Convex Fujita numbers of minimal surfaces}
\DeclareMathOperator{\rE}{E}
\DeclareMathOperator{\rH}{H}
\DeclareMathOperator{\rT}{T}
\DeclareMathOperator{\rh}{h}
\newcommand{\bA}{{\mathbb A}}
\newcommand{\bC}{{\mathbb C}}
\newcommand{\bF}{{\mathbb F}}
\newcommand{\bP}{{\mathbb P}}
\newcommand{\bQ}{{\mathbb Q}}
\newcommand{\bZ}{{\mathbb Z}}
\newcommand{\cE}{{\mathscr E}}
\newcommand{\cF}{{\mathscr F}}
\newcommand{\cH}{{\mathscr H}}
\newcommand{\cI}{{\mathscr I}}
\newcommand{\cL}{{\mathscr L}}
\newcommand{\cM}{{\mathscr M}}
\newcommand{\cO}{{\mathscr O}}
\newcommand{\dO}{{\mathcal O}}
\newcommand{\surj}{\twoheadrightarrow} 
\newcommand{\inj}{\hookrightarrow}
\DeclareMathOperator{\pr}{pr}
\DeclareMathOperator{\Hom}{Hom}
\DeclareMathOperator{\Aut}{Aut}
\DeclareMathOperator{\End}{End}
\DeclareMathOperator{\coker}{coker}
\DeclareMathOperator{\GL}{GL}
\DeclareMathOperator{\SL}{SL}
\DeclareMathOperator{\Sp}{Sp}
\DeclareMathOperator{\Pic}{Pic}
\DeclareMathOperator{\NS}{NS}
\DeclareMathOperator{\Num}{Num}
\newcommand{\Gm}{\bG_m}
\DeclareMathOperator{\RR}{R}
\DeclareMathOperator{\Ext}{Ext}
\DeclareMathOperator{\cExt}{\cE\mathit{xt}}
\DeclareMathOperator{\cHom}{\cH\mathit{om}}
\DeclareMathOperator{\res}{res}
\newcommand{\ph}{\varphi}
\newcommand{\tors}{{\rm tors}}
\newcommand{\ab}{{\rm ab}}
\DeclareMathOperator{\Sym}{Sym}
\newtheorem{thm}{Theorem}[section]
\newtheorem{theorem}[thm]{Theorem}
\newtheorem{prop}[thm]{Proposition}
\newtheorem{lem}[thm]{Lemma}
\newtheorem{cor}[thm]{Corollary}
\newtheorem{thmABC}{Theorem}
\newtheorem{propABC}[thmABC]{Proposition}
\theoremstyle{definition}
\newtheorem{defi}[thm]{Definition}
\theoremstyle{remark}
\newtheorem{rmk}[thm]{Remark}
\newenvironment{pro*}[1][Proof]{{\it{#1:}} }{}
\newenvironment{pro**}[1][]{{\it{#1}} }{\hfill $\square$}
\numberwithin{equation}{section}
\newcommand{\tref}[1]{Theorem~\ref{#1}}
\newcommand{\secref}[1]{\S\ref{#1}}
\newcommand{\cref}[1]{Corollary~\ref{#1}}
\newcommand{\dref}[1]{Definition~\ref{#1}}
\newcommand{\lref}[1]{Lemma~\ref{#1}}
\newcommand{\pref}[1]{Proposition~\ref{#1}}
\newcommand{\rref}[1]{Remark~\ref{#1}}
\def\mc{\mathscr}
\DeclareMathOperator{\Bl}{Bl}
\def\Gm{\mathbb{G}_m}
\def\pr{\text{pr}}
\def\P{\mathbb{P}}
\def\bP{\P}
\def\cE{\mc{E}}
\def\cF{\mc{F}}
\def\cH{\mc{H}}
\def\cI{\mc{I}}
\def\cL{\mc{L}}
\def\cM{\mc{M}}
\def\cO{\mc{O}}
\DeclareMathOperator{\conFN}{Fu} 
\definecolor{intOrange}{rgb}{1.0,.310,.0}
\begin{document}

\hrule width\hsize

\vskip 0.5cm

\title[\Kurztitel]{\Ueberschrift} 

\author{Jiaming Chen}
\address{Jiaming Chen, Institut f\"ur Mathematik, Goethe--Universit\"at Frankfurt, Robert-Mayer-Stra\ss e {6--8},
60325~Frankfurt am Main, Germany} 
\email{\tt chen@math.uni-frankfurt.de}

\author{Alex K\"{u}ronya}
\address{Alex K\"uronya, Institut f\"ur Mathematik, Goethe--Universit\"at Frankfurt, Robert-Mayer-Stra\ss e {6--8},
60325~Frankfurt am Main, Germany} 
\email{\tt kuronya@math.uni-frankfurt.de}

\author{Yusuf Mustopa}
\address{Yusuf Mustopa, University of Massachusetts Boston, Department of Mathematics, Wheatley Hall, 100 William T Morrissey Blvd, Boston, MA 02125, USA}
\email{Yusuf.Mustopa@umb.edu}

\author{Jakob Stix}
\address{Jakob Stix, Institut f\"ur Mathematik, Goethe--Universit\"at Frankfurt, Robert-Mayer-Stra\ss e {6--8},
60325~Frankfurt am Main, Germany} 
\email{\tt stix@math.uni-frankfurt.de} 
	
\thanks{The authors acknowledges support by Deutsche  Forschungsgemeinschaft  (DFG) through the Collaborative Research Centre TRR 326 "Geometry and Arithmetic of Uniformized Structures", project number 444845124.}

\date{October 25, 2023} 

\maketitle

\begin{quotation} 
\noindent \small {\bf Abstract} --- We contemplate  the range of  convex Fujita numbers for minimal smooth projective surfaces according to their position in the Kodaira--Enriques classification. 
\end{quotation}

\DeclareRobustCommand{\SkipTocEntry}[5]{}
\setcounter{tocdepth}{1} {\scriptsize \tableofcontents}

\section{Introduction}
\label{sec:intro}

\subsection{Motivation}

Our main goal is to study effective global generation of adjoint line bundles on minimal surfaces across the Kodaira--Enriques classification. Effective positivity questions  (for instance in the form of conjectures by Fujita, Mukai, and Kawamata)  have been around in birational geometry for several decades now, however, they appear to be beyond our reach in general. Initiated in \cite{chen_convex_2023}, we advocate an approach which aims at more precise information at the expense of a potentially more modest generality at first. 

The center of our investigations is the  concept of the convex Fujita number of a smooth projective variety. Given such a variety $X$, 
we define its convex Fujita number $\conFN(X)$ as  the minimal $m \geq 0$ such that  for all $s \geq m$ and any ample divisors $L_1, \ldots, L_s$ on $X$ the adjoint divisor 
\[
K_X + L_1 + \ldots + L_s
\]
is globally generated.

Convex Fujita numbers are  understood as a measure of effective positivity of line bundles on a smooth projective variety. While Fujita's freeness conjecture predicts that $K_X+mL$ is base-point free for all $m\geq \dim X+1$ and all ample divisors $L$ on $X$, which would roughly translate to $\conFN(X)\leq \dim X+1$, our variant of the problem provides means for a finer classification.

The purpose of this work is to understand the range of the convex Fujita number for minimal smooth projective surfaces in view of the Kodaira--Enriques classification. Reider's method (cf. \secref{sec:reider}), relating base points of numerically large adjoint linear systems to Bogomolov unstable rank $2$ vector bundles shows   for smooth projective surfaces $X$ the inequalities
\[
0 \leq \conFN(X) \leq 3.
\]
Focusing  on minimal surfaces appears to be justified as  we show in Theorem~\ref{prop:cofinalFN3} that any smooth projective surface $X$ admits a birational modification  $X' \to X$ such that $X'$ has convex Fujita number $\conFN(X') = 3$. This supports the idea that blowing up points typically  forces Fujita numbers to become maximal possible. For a  background on  convex Fujita numbers and the circle of ideas around  Fujita's freeness conjecture  we refer the reader  to \cite{chen_convex_2023}.

\subsection{Convex Fujita numbers of minimal surfaces} 

The concrete geometry of algebraic surfaces combined with the powerful method of Reider allows us to determine the convex Fujita number in many cases. 

\begin{thmABC}
\label{thmABC:FNminimalsurfaces}
In Kodaira dimension $\leq 0$ the convex Fujita number of a (relatively) minimal smooth projective surfaces is determined as follows. The convex Fujita number 
\begin{enumerate}[label=(Kodaira dim \arabic*) \ \ ,align=left,labelindent=0pt,leftmargin=*]
\setcounter{enumi}{-1}
\item[(Kodaira dim $<0$)]
\begin{enumerate}[label=(\arabic*),align=left,labelindent=0pt,leftmargin=*]
\item 
of $\bP^2$ is $3$, 
\item 
of a Hirzebruch surface is $2$,
\item
of a ruled surface is $2$ or $3$,
\end{enumerate}
\item
\begin{enumerate}[label=(\arabic*),align=left,labelindent=0pt,leftmargin=*, resume]
\item
of an abelian surface is $0$ or $2$,
\item
of a bielliptic surface is $2$,
\item
of a K3 surface is $0$ or $2$,
\item
of an Enriques surface is $1$ or $2$,
\end{enumerate}
\end{enumerate}
Moreover, there are minimal surfaces
\begin{enumerate}[label=(Kodaira dim \arabic*) \ \ ,align=left,labelindent=0pt,leftmargin=*, resume]
\item
of Kodaira dimension $1$ and convex Fujita number $2$ and $3$, and 
\item
of general type with convex Fujita number $0$, $2$ and $3$.
\end{enumerate}
\end{thmABC}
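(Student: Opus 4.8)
The plan is to combine Reider's method (see \secref{sec:reider}) with the concrete geometry of each type in the Kodaira--Enriques list, the crucial feature being that Reider's theorem only ever \emph{certifies} global generation: it yields upper bounds for $\conFN$, so every lower bound must come from an explicitly produced base point. First, for any smooth projective surface $X$ and ample divisors $L_{1},\dots,L_{s}$ with $s\ge 3$, the class $L=L_{1}+\dots+L_{s}$ satisfies $L^{2}\ge s^{2}\ge 9$ and $L\cdot C\ge s\ge 3$ for every curve $C$, which excludes both types of Reider exceptional divisor; hence $|K_{X}+L|$ is globally generated and $\conFN(X)\le 3$ unconditionally. Two reductions then isolate the content for $s\le 2$: (i) $\conFN(X)\le 1$ precisely when $|K_{X}+L|$ is globally generated for every ample $L$ (for $s\ge 2$ apply this to the ample divisor $L_{1}+\dots+L_{s}$), and $\conFN(X)=0$ precisely when moreover $|K_{X}|$ is globally generated; (ii) if $L_{1},L_{2}$ are ample then $L_{1}+L_{2}$ is ample, so $(L_{1}+L_{2})\cdot C\ge 2$ for every curve $C$ and Reider applies once $(L_{1}+L_{2})^{2}\ge 5$, while $(L_{1}+L_{2})^{2}\le 4$ forces, by the Hodge index theorem, $(L_{1}+L_{2})^{2}=4$ and $L_{1}\equiv L_{2}\equiv A$ with $A$ ample and $A^{2}=1$. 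Thus $\conFN(X)=3$ if and only if $X$ carries an ample $A$ with $A^{2}=1$ for which some $|K_{X}+L_{1}+L_{2}|$ with $L_{1}\equiv L_{2}\equiv A$ is not globally generated; in every other case $\conFN(X)\le 2$, and whether it is $0$, $1$ or $2$ is decided by (i).

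Next I would treat the Kodaira dimension $<0$ cases. On $\mathbb{P}^{2}$ one has $\Pic=\mathbb{Z}H$ and $K_{\mathbb{P}^{2}}+\sum L_{i}=(\sum d_{i}-3)H$, globally generated exactly when $\sum d_{i}\ge 3$; this fails for $s=2$ with $L_{i}=H$ and holds for $s\ge 3$, giving $\conFN(\mathbb{P}^{2})=3$. On a Hirzebruch surface nef divisors are globally generated (toric), $|K_{X}+L|$ fails for the minimal ample $L$ of fibre degree one, and $K_{X}+L_{1}+L_{2}$ is always nef, so $\conFN=2$. On a non-rational ruled surface $\pi\colon X\to B$, an ample $L$ of fibre degree one gives $(K_{X}+L)|_{f}=\mathcal{O}_{\mathbb{P}^{1}}(-1)$, so $\conFN\ge 2$; the value is $3$ exactly when $X$ has an ample $A$ with $A^{2}=1$ and $|K_{X}+2A|$ non-free, and writing $A=a\xi+\pi^{*}D$ shows $A^{2}=a(a\xi^{2}+2\deg D)$, so $A^{2}=1$ forces $a=A\cdot f=1$; then $(K_{X}+2A)\cdot f=0$, hence $K_{X}+2A=\pi^{*}M$ with $\deg M=2g(B)-1$ by adjunction on a section, and $|M|$ has a base point precisely when $M\cong\omega_{B}(p)$. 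Both alternatives occur: $\conFN(B\times\mathbb{P}^{1})=2$ (the hyperbolic $\NS$-lattice has no square-one class), while for $E$ a stable rank-two bundle of degree one on an elliptic curve $B$ one gets $\conFN(\mathbb{P}(E))=3$, because $A=\mathcal{O}_{\mathbb{P}(E)}(1)$ is ample with $A^{2}=1$ and $K_{X}+2A=\pi^{*}\det E$ is a degree-one line bundle on $B$, which is not globally generated.

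In Kodaira dimension $0$ the canonical class is numerically trivial, so $(K_{X}+L)^{2}=L^{2}$ and $(L_{1}+L_{2})^{2}\ge 8$; for K3, Enriques and bielliptic surfaces the even $\NS$-lattice also forbids $A^{2}=1$, so $\conFN\le 2$ in all four cases. The remaining dichotomies follow from whether $|K_{X}|$ is trivial (abelian, K3) or empty (Enriques, bielliptic), together with standard facts on linear systems and polarisations: an abelian surface has $\conFN\in\{0,2\}$ according to whether every ample line bundle is globally generated, which fails exactly for $(1,1)$- and $(1,2)$-polarisations; a K3 surface has $\conFN\in\{0,2\}$ according to whether some $|L|$ has a base curve, which by Saint-Donat forces a $(-2)$-curve meeting an elliptic pencil once; an Enriques surface has $\conFN\in\{1,2\}$, with the value $2$ occurring as soon as $X$ carries an ample class of square $2$ (then $|K_{X}+L|$ is a pencil of positive self-intersection, hence not free); a bielliptic surface has $\conFN=2$, using an ample divisor of fibre degree one for one of its elliptic fibrations (after passing to a multiple fibre if necessary). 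In each ``or'' above both values are realised by suitable members of the family.

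Finally, the ``Moreover'' part is settled by producing witnesses, and this is where the main difficulty lies, precisely because of the one-sided nature of Reider's criterion in the degenerate range $A^{2}=1$. For $\conFN=0$ in general type one can take a very general surface $X\subset\mathbb{P}^{3}$ of degree $d\ge 5$: by Noether--Lefschetz $\NS=\mathbb{Z}H$ with $H^{2}=d\ge 5$, $K_{X}=(d-4)H$ is a positive multiple of the very ample $H$ hence globally generated, and each ample $nH$ has $(nH)^{2}\ge 5$ with no Reider exceptional curve, so $|K_{X}+nH|$ is globally generated. For $\conFN=3$ in general type one can take a numerical Godeaux surface $X$ (minimal, $K_{X}^{2}=1$, its own canonical model): $A=K_{X}$ is ample with $A^{2}=1$, and $|K_{X}+2A|=|3K_{X}|$ is classically known to have a base point. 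For $\conFN=2$ in general type one needs a minimal surface admitting no ample class of square $1$ (so $\conFN\le 2$) for which $K_{X}$ or some $|K_{X}+L|$ is not globally generated; this can be arranged, e.g.\ with a surface of Picard number one and $H^{2}\in\{2,3,4\}$ whose first adjoint system already acquires a base point. In Kodaira dimension $1$, the product $C\times E$ with $g(C)\ge 2$ and $E$ elliptic is minimal with $\kappa=1$; an ample $L$ of degree one along the elliptic fibres makes $(K_{X}+L)|_{F}$ non-free, so $\conFN\ge 2$, while the $\NS$-lattice has no square-one class, giving $\conFN=2$. The hardest single point is $\conFN=3$ in Kodaira dimension $1$: one must exhibit a minimal properly elliptic surface carrying an ample $A$ with $A^{2}=1$ and $|K_{X}+2A|$ not free -- here Reider gives no help at all -- and I would look for this among elliptic surfaces (via logarithmic transforms or fibre products) whose canonical class or fibration structure forces such a base point, verifying it by a direct cohomological estimate.
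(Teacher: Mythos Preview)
Your overall framework---Reider for upper bounds, explicit base points for lower bounds, with the reduction to $A^{2}=1$ for the value $3$---matches the paper's, and your treatment of $\bP^{2}$, Hirzebruch surfaces, abelian and K3 surfaces, the very general quintic, and the product $C\times E$ is essentially what the paper does. But several of the ``existence'' claims that you defer or wave through are where the actual work lies, and some of your proposed constructions are either harder than necessary or not obviously correct.

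\textbf{Enriques surfaces with $\conFN=1$.} You assert that both values are realised ``by suitable members of the family,'' but exhibiting an Enriques surface with $\conFN=1$ is not easy: one needs \emph{every} genus one fibration on $X$ to have a reducible fibre whose components are not all met by any bisection. The paper establishes this only by invoking the Brandhorst--Shimada classification of fibre configurations on $(\tau,\bar\tau)$-generic Enriques surfaces; without such input the claim is unsupported.

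\textbf{Bielliptic surfaces.} Your argument ``use an ample divisor of fibre degree one (after passing to a multiple fibre if necessary)'' hides the issue. One must actually produce an ample $\cL$ with $(\cL^{2})\le 4$, and for the type $G=\mu_{3}\times\bZ/3\bZ$ the obvious candidate has $(\cL^{2})=6$; the paper needs a genuine descent argument (computing obstruction classes in $\rH^{2}(G,\bC^{\times})$ via two spectral sequences) to descend a suitable line bundle from $E\times F$ to $X$.

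\textbf{General type, $\conFN=2$.} Your proposed example---Picard number one with $H^{2}\in\{2,3,4\}$ and $|K_{X}+H|$ non-free---is not obviously realisable, and you give no construction. The paper takes the much simpler route $X=C_{1}\times C_{2}$ with $g(C_{i})\ge 2$: then $\conFN\ge 2$ since curves have $\conFN=2$, and $\conFN\le 2$ since $\omega_{X}$ is divisible by $2$. Your claim for $\conFN=3$ via ``$|3K_{X}|$ has a base point on a Godeaux surface'' also needs care: what the paper actually shows is that $K_{X}+C_{\alpha}+C_{\beta}$ (with $C_{\alpha}\in|K_{X}+\alpha|$ for nontrivial torsion $\alpha$) has a base point, which is a twist of $|3K_{X}|$, not $|3K_{X}|$ itself.

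\textbf{Kodaira dimension $1$, $\conFN=3$.} You correctly flag this as the hardest point and leave it open. The paper's construction is an isotrivial surface $X=(E\times C)/E[2]$ with $C\to\bP^{1}$ a genus-$2$ Galois $E[2]$-cover and $E$ not an isogeny factor of $\Jac(C)$. The proof that $\conFN(X)=3$ requires: computing $\NS(X)\otimes\bQ$ via the disjointness assumption, showing $\omega_{X}\equiv\tfrac{1}{2}F$ is primitive, deducing unimodularity of $\NS$ from $b_{2}=2$, extracting an ample $\cL$ with $\omega_{X}\cdot\cL=1$, and finally a direct $\rh^{0}$ count (using the regular $G$-representation on $\rH^{0}(E,\dO_{E}(T))$) to locate the base points of $\dO_{X}(F+H)$. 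None of this falls out of the Reider framework.

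Finally, for ruled surfaces your direct intersection-theoretic approach is correct in outline and does give examples of both values, but the paper's route via the slope machinery for $\bP(\cE)$ over a curve (Butler's ampleness criterion plus the $\mu^{-}>1$ global-generation lemma) yields the sharper statement that $\conFN(\bP(\cE))=3$ precisely when $\cE$ is stable of odd degree, which your analysis does not reach.
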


\begin{rmk}
At the moment we cannot decide whether there are minimal smooth projective surfaces of convex Fujita number $n$ and Kodaira dimension $\kappa$ for $(\kappa,n)$ in the list $(1,0)$, $(1,1)$ and $(2,1)$. 

However, as shown in Proposition~\ref{prop:surface general type FN1}, certain ramified double covers $X$ of principally polarized abelian surfaces with Picard number $1$ yield smooth projective surfaces that are minimal and of general type with $\conFN(X)=1$
\emph{assuming} that the Picard number of $X$ is again $1$. 
\end{rmk}

The proof of \tref{thmABC:FNminimalsurfaces}  occupies sections \S\secref{sec:FN rational ruled}--\ref{sec:FN Kodaira 2}. The individual results are more precise than \tref{thmABC:FNminimalsurfaces} in that we can often  describe the geometry on a particular minimal surface that decides the convex Fujita number in case there are options. We list  these more detailed results below.

\begin{propABC}[see \pref{prop:FNruled surface}]
Let $X=\bP(\cE)$ be the ruled surface associated to a rank $2$ vector bundle $\cE$ on a smooth projective curve. 
The convex Fujita number $\conFN(X)$ equals $3$ if $\cE$ is stable or of odd degree. In all other cases $\conFN(X) = 2$.
\end{propABC}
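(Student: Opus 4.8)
The plan is to combine Reider's method for the upper bound with an explicit pullback construction for the failures, after first reducing everything to the case $s=2$. Since $X$ is (geometrically) ruled we have $p_g(X)=0$, so $K_X$ is not globally generated and $\conFN(X)\geq 1$, while the general estimate from \secref{sec:reider} gives $\conFN(X)\leq 3$. For $s\geq 3$ any sum $L=L_1+\dots+L_s$ of ample divisors satisfies $L^2\geq 9\geq 5$ and $L\cdot E\geq 3$ for every effective curve $E$, so Reider's theorem produces no obstruction and $K_X+L$ is globally generated. For $s=2$, write $L=L_1+L_2$; every effective $E$ has $L\cdot E\geq 2$, which already excludes the Reider obstructions $(L\cdot E,E^2)\in\{(0,-1),(1,0)\}$ as soon as $L^2\geq 5$. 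Hence $K_X+L_1+L_2$ can fail to be globally generated only if $L^2=4$, and by the Hodge index theorem this forces $L_1=L_2=A$ with $A$ ample and $A^2=1$. Thus $\conFN(X)=3$ exactly when there is an ample class $A$ with $A^2=1$ for which $K_X+2A$ is not globally generated, and $\conFN(X)=2$ otherwise.

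Next I would analyze this borderline adjoint. On $\Num(X)=\Z\,\xi\oplus\Z\,f$, with $\xi=\cO_{\bP(\cE)}(1)$ and $f$ a fibre, an ample class with $A^2=1$ necessarily has $A\cdot f=1$, so $A=\xi+\pi^*\delta$ for a divisor class $\delta$ on the base curve $C$. Using $K_X=-2\xi+\pi^*(K_C+\det\cE)$ one computes
\[
K_X+2A \;=\; \pi^*M,\qquad M=K_C+\det\cE+2\delta,\quad \deg M=2g-1 .
\]
Because $\pi_*\cO_X=\cO_C$, the bundle $\pi^*M$ is globally generated iff $M$ is, and a base point $p$ of $|M|$ pulls back to the entire fibre $\pi^{-1}(p)$ in the base locus. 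A line bundle of degree $2g-1$ on $C$ fails to be globally generated precisely when $M\cong K_C(p)$ for some $p\in C$. Since multiplication by $2$ is surjective on $\Pic^0(C)$, as $\delta$ ranges over classes of the relevant degree the class $M$ sweeps out all of $\Pic^{2g-1}(C)$; so whenever an ample $A$ with $A^2=1$ is available, one may choose $\delta$ with $M\cong K_C(p)$ and obtain a non-globally-generated $K_X+2A$, giving $\conFN(X)=3$.

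What remains — and this is the main obstacle — is to decide, purely in terms of $\cE$, when the ample cone of $\bP(\cE)$ contains a class of self-intersection $1$. The intersection form on $\Num(X)$ is $\bigl(\begin{smallmatrix}\deg\cE & 1\\ 1 & 0\end{smallmatrix}\bigr)$, so the ambient availability of odd self-intersections is governed by $\deg\cE\bmod 2$, while the precise shape of the ample cone is cut out by the minimal self-intersection of a section, i.e. by the maximal degree of a sub-line-bundle of $\cE$ (the stability data). Carrying out this ample-cone bookkeeping is exactly where the dichotomy is extracted: one finds that a square-one ample class together with a non-globally-generated pullback exists precisely when $\cE$ is stable or $\deg\cE$ is odd, whereas in the complementary situation ($\cE$ not stable and $\deg\cE$ even) no such class occurs and the Reider reduction forces $\conFN(X)=2$. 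Finally one supplies the matching lower bound $\conFN(X)\geq 2$ in those cases by taking $A=2\xi+\pi^*\delta$ ample of minimal degree, so that $K_X+A=\pi^*M'$ with $\deg M'=2g-1$ can again be arranged non-globally-generated, showing a single ample summand never suffices. I expect this ample-cone analysis, and the care it requires when $g\leq 1$ (where stability degenerates and $\bP(\cE)$ may be a Hirzebruch or elliptic ruled surface), to be the most delicate part of the argument.
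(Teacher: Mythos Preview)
Your approach via Reider's method is genuinely different from the paper's. The paper deduces Proposition~\ref{prop:FNruled surface} as the rank-$2$ instance of its general theory of $\bP(\cE)$ over curves (Theorems~\ref{thm:FNonPEuppperbound} and~\ref{thm:FNofPE Yusufs result}): the upper bound comes from pushing $\omega_X\otimes\cL$ down to $C$ and applying a slope criterion for global generation (Lemma~\ref{lem:SlopeCriterionGloballyGeneratedVB}), while the lower bound is obtained by restricting to a fibre. Your Reider reduction to the existence of an ample class $A$ with $A^2=1$ is a valid alternative, and the computation $K_X+2A=\pi^*M$ with $\deg M=2g-1$ is correct.

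There is, however, a genuine gap. You defer the ``ample-cone bookkeeping'' and then assert its outcome, but the outcome you assert is wrong. An ample $A$ with $A^2=1$ must satisfy $A\cdot f=1$, hence $A\equiv\xi+\pi^*\delta$ with $\deg\cE+2\deg\delta=1$; in particular $\deg\cE$ must be odd. Butler's criterion (Proposition~\ref{prop:ample cone on PE over curve}) then says $A$ is ample iff $\deg\delta>-\mu^-(\cE)$, i.e.\ $\mu^-(\cE)>(\deg\cE-1)/2$. For odd degree this holds iff $\cE$ is semistable, equivalently stable. Thus an ample $A$ with $A^2=1$ exists precisely when $\cE$ is stable \emph{and} of odd degree --- not ``stable or of odd degree''. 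The statement you were given contains a typo: compare the actual Proposition~\ref{prop:FNruled surface}, which reads ``stable \emph{of} odd degree''. Concretely, $\cE=\cO_C\oplus\cO_C(P)$ on a curve of genus $\geq 1$ has odd degree but is unstable, and there $\conFN(X)=2$; likewise a stable bundle of even degree on a curve of genus $\geq 2$ gives $\conFN(X)=2$.

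Your lower-bound argument for $\conFN(X)\geq 2$ also has a gap. With $A=2\xi+\pi^*\delta$ and $\cE$ not semistable, the minimal admissible $\deg\delta$ forces $\deg M'\geq 2g$, so $M'$ is always globally generated and no failure is produced. The fix is simpler than what you propose: take any ample $\cL=\pi^*\cM(1)$ and restrict $\omega_X\otimes\cL$ to a fibre to get $\cO_{\bP^1}(-1)$, which has no sections. This is exactly the paper's argument in Proposition~\ref{prop:FNonPElowerbound}, and it works uniformly with no case distinction on $g$.
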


\begin{propABC}[see \pref{prop:FNabeliansurface}]
Let $X$ be an abelian surface. Then we have
\[
\conFN(X) = 2 \quad \iff \quad \text{ $X$ has an ample $\cL$ with $(\cL^2) \leq 4$}.
\]
If the above assertions do not hold, then we have $\conFN(X) = 0$. 
\end{propABC}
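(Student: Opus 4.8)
The plan is to use that $K_X=\cO_X$ on an abelian surface, so that the adjoint bundle $K_X+L_1+\dots+L_s$ is simply $L_1+\dots+L_s$. The argument splits into three parts: (a) $\conFN(X)\leq 2$ always; (b) consequently $\conFN(X)\in\{0,2\}$, the value $0$ occurring exactly when every ample line bundle on $X$ is globally generated; (c) the latter holds precisely when $X$ carries no ample $\cL$ with $(\cL^2)\leq 4$. Together these yield the stated equivalence (and the claim that otherwise $\conFN(X)=0$).

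For (a): given ample $L_1,L_2$, put $D=L_1+L_2$. On an abelian surface an ample line bundle $M$ satisfies $(M^2)=2h^0(M)\geq 2$ by Riemann--Roch and the vanishing of higher cohomology, and $(L_1\cdot L_2)\geq 1$, so $(D^2)\geq 6\geq 5$ and Reider's method (\secref{sec:reider}) applies to $K_X+D=D$. Both Reider alternatives are excluded: the effective nonzero divisor $E$ through a putative base point has $(L_i\cdot E)\geq 1$ by ampleness, hence $(D\cdot E)\geq 2$, contradicting $(D\cdot E)\in\{0,1\}$. Thus $D$ is globally generated, and writing any sum of $s\geq 2$ ample divisors as a sum of two ample divisors gives $\conFN(X)\leq 2$.

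For (b): the empty sum is the globally generated $\cO_X$, and by (a) every sum of at least two ample divisors is globally generated, so whether $m=0$ works is decided by the $s=1$ instance alone; hence $\conFN(X)=0$ iff every ample line bundle is globally generated, and $\conFN(X)=2$ otherwise (since then $s=1$ obstructs both $m=0$ and $m=1$). For (c), one implication is elementary: an ample $\cL$ with $(\cL^2)\leq 4$ has $(\cL^2)\in\{2,4\}$, so $h^0(\cL)\in\{1,2\}$; when $h^0=1$ its unique section vanishes along a nonempty effective divisor, and when $h^0=2$ global generation would yield a surjection $X\to\bP^1$ with all fibres in $|\cL|$, forcing $(\cL^2)=0$ against $(\cL^2)=4$ --- so $\cL$ is not globally generated in either case. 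For the converse, let $\cL$ be ample but not globally generated; we may assume $(\cL^2)\geq 6$. Reider's theorem at a base point produces an effective $E$ with $(\cL\cdot E)=0,(E^2)=-1$ (impossible, as $\cL$ is ample) or $(\cL\cdot E)=1,(E^2)=0$. In the latter case $(\cL\cdot E)=1$ with $\cL$ ample forces $E$ irreducible and reduced, adjunction gives $p_a(E)=1$, and the non-existence of rational curves on $X$ forces the normalization of $E$ to be an elliptic curve mapping isomorphically onto $E$; thus $E$ is a smooth elliptic curve. Then with $k=(\cL^2)/2-1\geq 2$ the class $\cL'=\cL-kE$ satisfies $((\cL')^2)=2$ and $(\cL'\cdot\cL)>0$, and since $X$ has no curve of negative self-intersection the Nakai--Moishezon criterion (equivalently, the fact that the ample cone of an abelian surface is one component of its positive cone) shows $\cL'$ is ample with $((\cL')^2)=2\leq 4$.

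The main obstacle is this converse in (c): one must manufacture a genuinely small polarization out of the single hypothesis that one large polarization fails to be globally generated. The mechanism is that on an abelian surface Reider's second alternative can only be realized by an honest smooth elliptic curve $E$, which one can then subtract off to descend to a principal polarization; the delicate points are the identification of the Reider divisor as a smooth elliptic curve (its irreducibility and reducedness, and the exclusion of a singular curve of arithmetic genus one via the absence of rational curves on abelian varieties) and the numerical check that $\cL-kE$ is ample.
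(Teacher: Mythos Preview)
Your proof is correct. The overall architecture matches the paper's: reduce to deciding whether every ample line bundle is globally generated, and use Reider's method to analyse the failure case. The difference lies in how you handle the converse in (c). The paper (via \pref{prop:globalgeneration abelian surface}) pushes further with the Reider curve $E$: after translating $E$ to an abelian subvariety, it forms the quotient $X \to X/E =: E'$, finds a section from the degree-$1$ restriction of $\cL$ to the generic fibre, and concludes that $X \simeq E \times E'$ is a product of elliptic curves; the small polarization then comes for free, since a product always carries an ample class of self-intersection $2$. You instead stay purely numerical: once Reider hands you the smooth elliptic curve $E$ with $(\cL\cdot E)=1$ and $(E^2)=0$, you subtract $kE$ to force $((\cL-kE)^2)=2$, and invoke the fact that on an abelian surface the ample cone coincides with the positive cone to certify ampleness. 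Your route is shorter and avoids the product decomposition entirely; the paper's route yields the stronger structural classification of exactly which ample bundles with $(\cL^2)\geq 6$ fail to be globally generated.
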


\begin{propABC}[see \pref{prop:FN for K3}]
Let $X$ be a K3 surface. 
Then $\conFN(X) = 0$ unless there exist an elliptic fibration $\ph: X \to \bP^1$ with general fiber $E$ and image   of a section $S$ such that all fibers are irreducible and reduced. In the latter case the line bundle $\cL = \dO_X(mE + S)$, for $m \geq 3$, is ample but not globally generated, so we have $\conFN(X) = 2$.
\end{propABC}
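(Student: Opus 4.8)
The plan is to reduce the determination of $\conFN(X)$ to the global generation of \emph{single} ample line bundles, and then to detect the possible failures of global generation by combining Reider's method with the classical description of linear systems on K3 surfaces. Since $K_X=\dO_X$, the reduction rests on two facts valid on every K3 surface. First, for $s\geq 2$ and ample divisors $L_1,\dots,L_s$ the divisor $D=L_1+\dots+L_s$ satisfies $D^2=\sum_i L_i^2+2\sum_{i<j}L_i\cdot L_j\geq 2s+s(s-1)\geq 6$, because the intersection form on a K3 is even (so each $L_i^2\geq 2$) and $L_i\cdot L_j\geq 1$; moreover every nonzero effective divisor $E$ has $D\cdot E=\sum_i(L_i\cdot E)\geq s\geq 2$, so no effective divisor of the kind produced by Reider's theorem (with $D\cdot E\in\{0,1\}$, see \secref{sec:reider}) can exist, and hence $|K_X+D|=|D|$ is base-point free. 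This already gives $\conFN(X)\leq 2$. Second, $\conFN(X)=1$ is impossible, since it would force $\dO_X$ (case $s=0$) and every ample line bundle (case $s=1$) to be globally generated, whence $\conFN(X)=0$. Combining these, $\conFN(X)\in\{0,2\}$, and $\conFN(X)=0$ if and only if every ample line bundle on $X$ is globally generated.

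Suppose first that $X$ admits an elliptic fibration $\ph\colon X\to\bP^1$ with general fibre $E$, all fibres irreducible and reduced, and a section whose image $S$ is a $(-2)$-curve with $S\cdot E=1$; put $\cL=\dO_X(mE+S)$ with $m\geq 3$. Ampleness of $\cL$ is the Nakai--Moishezon criterion: $\cL^2=2m-2>0$, $\cL\cdot E=1$, $\cL\cdot S=m-2>0$, and any other irreducible curve $C$ is neither a fibre nor $S$, so $E\cdot C\geq 1$ and $S\cdot C\geq 0$ force $\cL\cdot C\geq m>0$. That $\cL$ is not globally generated is a cohomology count: $mE=\ph^*\dO_{\bP^1}(m)$ gives $h^0(X,mE)=m+1$, while Kodaira vanishing (using that $\cL$ is ample and $K_X=\dO_X$) gives $h^0(X,\cL)=\chi(\cL)=m+1$; so in the sequence $0\to\dO_X(mE)\to\dO_X(mE+S)\to\dO_S(mE+S)\to 0$ the restriction $H^0(X,mE)\to H^0(X,\cL)$ is an isomorphism, i.e.\ every global section of $\cL$ vanishes along $S$. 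Thus $S\subseteq\operatorname{Bs}|\cL|$, so $\cL$ is not globally generated, and by the first paragraph $\conFN(X)=2$.

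For the converse, assume $X$ admits no elliptic fibration of the above type, and let $L$ be ample; I must show $|L|$ is base-point free. If $L^2=2$ this is classical, $|L|$ realizing the double cover $X\to\bP^2$. If $L^2\geq 4$, suppose $|L|$ had a base point: by Reider's theorem when $L^2\geq 5$, and by Saint-Donat's theorem when $L^2=4$, there is then an integral curve $E$ with $E^2=0$ and $L\cdot E=1$. For such $E$ one has $p_a(E)=1$ and $|E|$ is a base-point-free pencil, defining an elliptic fibration $\psi\colon X\to\bP^1$ with $E$ as a fibre. Since $L$ is ample with $L\cdot E=1$, every fibre $E'$ satisfies $L\cdot E'=1$, hence is integral of arithmetic genus $1$ with trivial dualizing sheaf, so $L|_{E'}$ is a degree-$1$ invertible sheaf with $h^0(E',L|_{E'})=1$. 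Therefore $\psi_*L$ is a line bundle on $\bP^1$, and the evaluation map $\psi^*\psi_*L\to L$ drops rank precisely along a section of $\psi$ whose image is a $(-2)$-curve meeting $E$ once. This contradicts our assumption, so $|L|$ is base-point free; by the first paragraph $\conFN(X)=0$.

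The delicate point is the converse. One has to invoke the correct base-point-freeness criterion, since Reider's bound $D^2\geq 5$ narrowly misses the genuinely relevant case $L^2=4$ (realized precisely by the line bundle $\dO_X(3E+S)$ of the second paragraph), where Saint-Donat's theorem is needed instead; and one has to produce the section honestly, which uses cohomology and base change over the possibly singular fibres of $\psi$ and the fact that a K3 elliptic fibration has no multiple fibres. By comparison, the exceptional-case step reduces to a single cohomology computation, and the reduction to single ample line bundles is a direct application of Reider's theorem.
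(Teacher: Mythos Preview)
Your proof is correct and follows essentially the same overall strategy as the paper: reduce to showing $\conFN(X)\in\{0,2\}$, verify directly that $\dO_X(mE+S)$ is ample but not globally generated, and for the converse characterize the ample line bundles with base points. The forward direction and the reduction step are virtually identical to the paper's treatment.

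The one genuine difference is in the converse direction. The paper quotes the structure theorem from Huybrechts' book (building on Saint-Donat and Mayer): any ample $\cL$ that fails to be globally generated is already of the form $\dO_X(mE+S)$ with $E$ a smooth fibre class and $S\simeq\bP^1$ the fixed part; from this the paper reads off the fibration, the section, and the irreducibility of all fibres. You instead start from the Reider/Saint-Donat output (an integral curve $E$ with $E^2=0$, $L\cdot E=1$), build the fibration $\psi$ from $|E|$, and then construct the section as the vanishing locus of the evaluation map $\psi^*\psi_*L\to L$, using cohomology and base change over the integral genus-one fibres. This is a nice self-contained alternative to citing the structure theorem, at the cost of the extra care you flag (base change over singular fibres, and the fact---used but not proved---that $|E|$ is base-point free, which the paper attributes to Reid). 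Either route lands in the same place; yours is more constructive, the paper's is shorter by outsourcing to the literature.
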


An Enriques surface $X$  is called unnodal, if there is no smooth rational curve on $X$.

\begin{propABC}[see \pref{prop:FN Enriques}, \ref{prop:FN Enriques unnodal} and \ref{prop:FN1 Enriques}]
Let $X$ be an Enriques surface. 
\begin{enumerate}[align=left,labelindent=0pt,leftmargin=*]
\item 
Then $\conFN(X) = 2$ if and only if $X$ admits a genus one fibration $f: X \to \bP^1$ with a bisection  that meets every component of a fibre of $f$.  This applies in particular to unnodal Enriques surfaces.
\item
Otherwise we have $\conFN(X) = 1$, and this occurs for certain $(\tau,\bar \tau)$-generic Enriques surfaces (terminology of \cite{brandhorst_automorphism_2022}) having the property that all genus one fibrations have at least one fibre with at least $3$ irreducible components.
\end{enumerate}
\end{propABC}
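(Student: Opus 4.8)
The plan is to reduce the computation of $\conFN(X)$ to a question about base-point-freeness of complete linear systems of ample divisors, and then to feed this into the theory of linear systems on Enriques surfaces (Reider's method together with the Cossec--Dolgachev analysis). First, since $h^0(X,K_X)=p_g(X)=0$ while $K_X$ is a nontrivial $2$-torsion class, $K_X$ is not globally generated, so $\conFN(X)\geq 1$. For $\conFN(X)\leq 2$, write $D=L_1+\dots+L_s$ with $s\geq 2$ and the $L_i$ ample; then $D$ is ample and, $\Num(X)$ being even, $L_i^2\geq 2$ and $L_i\cdot L_j\geq 1$, so $D^2\geq 6\geq 5$. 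A base point of $|K_X+D|$ would, by Reider, lie on an effective divisor $E$ with $D\cdot E=0,\ E^2=-1$ (impossible, $D$ ample) or $D\cdot E=1,\ E^2=0$ (impossible: writing $D=L_1+(L_2+\dots+L_s)$ as a sum of two ample divisors gives $D\cdot E\geq2$). Hence $\conFN(X)\in\{1,2\}$; and since $K_X$ is numerically trivial, $L\mapsto K_X+L$ permutes the ample divisors, so $\conFN(X)=1$ iff $|M|$ is base-point free for \emph{every} ample divisor $M$ on $X$, and $\conFN(X)=2$ iff $|M|$ has a base point for \emph{some} ample $M$.

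The core is the equivalence: $\conFN(X)=2$ if and only if some genus one fibration $\phi\colon X\to\bP^1$ carries a half-fibre $F_0$ and an ample divisor $M$ with $M\cdot F_0=1$; such an $M$ then has $M\cdot F=2$ on a general fibre $F$ and, being ample, meets every component of every fibre --- the bisection condition in (1) (automatic on the general fibre). For the forward direction I would take an ample $M$ with $|M|$ not base-point free. If $M^2\geq5$, Reider's theorem produces an effective $E$ with $M\cdot E=1$, $E^2=0$ (the alternative $M\cdot E=0$ being impossible since $M$ is ample), forcing $E$ to be a single integral curve $C$ with $C^2=0$, $M\cdot C=1$, hence $p_a(C)=1$ by adjunction and $C$ primitive in $\Num(X)$ (else $M\cdot(C/n)\notin\bZ$), i.e. $C$ is the half-fibre of the pencil $|2C|$. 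For $M^2\in\{2,4\}$, where Reider gives nothing, I would invoke the Cossec--Dolgachev description of linear systems of small degree on Enriques surfaces via the invariant $\Phi$: in this range $|M|$ has base points only when $\Phi(M)=1$, which furnishes a half-fibre $F_0$ with $M\cdot F_0=1$. Conversely, given $\phi$, $F_0$ and ample $M$ with $M\cdot F_0=1$, after replacing $M$ by $M+kF_0$ (still ample, still meeting $F_0$ in one point, with square $M^2+2k$) I may assume $M^2\geq5$, so that $M-F_0$ is nef and big; then $H^1(\dO_X(M-F_0))=0$ by Kawamata--Viehweg, and $0\to\dO_X(M-F_0)\to\dO_X(M)\to\dO_X(M)|_{F_0}\to0$ shows that restriction to $F_0$ surjects onto $H^0$ of a degree-one line bundle on the arithmetic-genus-one curve $F_0$, whose unique section vanishes at a single point $p$; hence every section of $M$ vanishes at $p$ and $p\in\mathrm{Bs}|M|$. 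This proves (1).

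The two concrete statements then follow. On an unnodal Enriques surface there are no $(-2)$-curves, so every fibre of every genus one fibration is irreducible and the bisection condition is automatic; choosing half-fibre classes $F_0,F_1$ with $F_0\cdot F_1=1$ (a hyperbolic pair in $\Num(X)\cong U\oplus E_8$, both nef since here the ample cone is the full positive cone), the class $M=2F_0+F_1$ is ample with $M^2=4$ and $M\cdot F_0=1$, so $\conFN(X)=2$. For (2): by the equivalence $\conFN(X)=1$ unless some genus one fibration has a half-fibre $F_0$ with an ample $M$, $M\cdot F_0=1$; but such an $M$ meets every component of every fibre $\Psi$ of $\phi_{|2F_0|}$ and $M\cdot\Psi=2$, so every fibre has at most two components. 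Hence if every genus one fibration on $X$ has a fibre with at least three irreducible components, then $\conFN(X)=1$; such $X$ exist among the $(\tau,\bar\tau)$-generic Enriques surfaces of \cite{brandhorst_automorphism_2022}, which one verifies by enumerating the genus one pencils $|2F_0|$ (for $F_0$ primitive isotropic and nef) from the explicit $\Num(X)$ and reading off the reducible fibres from the orthogonal $(-2)$-curve configurations.

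The main obstacle is the equivalence in the second step, especially the range $M^2\in\{2,4\}$ where Reider is silent and one must import the fine theory of projective models of Enriques surfaces to attach a base point to a half-fibre, together with the explicit lattice computation for (2) showing that the chosen $(\tau,\bar\tau)$-generic surfaces admit \emph{no} genus one fibration all of whose fibres have at most two components --- a finite but delicate check, since one must control every genus one pencil on the surface at once.
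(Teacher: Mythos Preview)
Your argument is essentially correct and reaches the same conclusions, but the route differs from the paper's in one key respect. The paper does \emph{not} split into the ranges $M^2\geq 5$ (Reider) and $M^2\in\{2,4\}$ (ad hoc); instead it invokes uniformly the Cossec invariant $\Phi(D)=\min\{D\cdot F : F \text{ a half-fibre}\}$ together with the theorem (Cossec, Knutsen) that a big and nef $D$ has base points if and only if $\Phi(D)=1$. This single citation replaces both your Reider step and your appeal to the ``fine theory of projective models'' in low degree, and it also makes your converse direction immediate: once you have produced an ample $\cL$ with $\Phi(\cL)=1$, non-global-generation is a theorem, so your Kawamata--Viehweg/restriction argument on $F_0$ (while correct when $F_0$ is integral) is not needed. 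Your approach has the merit of being more self-contained for $M^2\geq 5$, at the cost of an unavoidable black-box in the small-degree range; the paper simply accepts the black-box from the outset.

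There is one genuine gap you should close. The statement asks for a \emph{bisection}, i.e.\ an irreducible curve $B$ with $B\cdot F=1$, meeting every fibre component; you produce only an ample divisor class $M$ with $M\cdot F_0=1$ and then write ``the bisection condition in (1)'' as if $M$ were itself the bisection. An ample class need not have an irreducible member, and your Reider curve $E$ is a half-fibre, not the sought bisection. The paper bridges this via Cossec's theorem that an ample $\cL$ on an Enriques surface is represented by an integral divisor $B\in|\cL|$; this $B$ is then the required ample bisection, and ``ample'' immediately gives ``meets every fibre component''. Conversely, from a bisection meeting every fibre component the paper manufactures an ample class by adding enough pulled-back fibres (Nakai--Moishezon), exactly as you do for the unnodal case. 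Once you insert this irreducibility step, your argument is complete; for the unnodal case and for part~(2) your treatment matches the paper's (existence of bisections, respectively inspection of the Brandhorst--Shimada tables).
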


Surfaces of positive Kodaira dimension are dealt with in sections \secref{sec:FN Kodaira 1} and \secref{sec:FN Kodaira 2}.

\begin{propABC}[see \pref{prop:elliptic fibration FN2}  and \ref{prop:elliptic fibration FN3}]
On minimal surfaces of Kodaira dimension $1$. Let $C$ a smooth projective curve. 
\begin{enumerate}[align=left,labelindent=0pt,leftmargin=*]
\item 
An elliptic fibration $X \to C$ with a section has $\conFN(X) = 2$ if all fibers are irreducible and reduced and $\chi(X,\dO_X)$ is even.
\item
Let $E$ be an elliptic curve, and let $C \to \bP^1$ be a branched cover with Galois group $G = E[2]$ with $C$ of genus $2$ and such that  $E$ is not an isogeny factor of $\Pic^0(C)$. 
Then $X = E \times C/G$, with $G$ acting by translation on $E$, is an isotrivial elliptic fibration and a minimal smooth projective surface of Kodaira dimension $1$ and convex Fujita number $\conFN(X) = 3$. 
\end{enumerate}
\end{propABC}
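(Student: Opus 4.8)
The plan is to follow the template of this section: identify the minimal–surface geometry, bound $\conFN$ from above via Reider's theorem, and produce an explicit adjoint bundle with a base point to bound it from below. For part~(1), write $f\colon X\to C$ for the elliptic fibration, $F$ for a general fibre and $S\subset X$ for the image of the section, so that $K_X\equiv(2g(C)-2+\chi(X,\dO_X))F$ in $\NS(X)$. For the upper bound $\conFN(X)\le 2$ I would take $s\ge 2$ and ample $L_1,\dots,L_s$, set $L:=L_1+\dots+L_s$, and observe that since $\chi(X,\dO_X)$ is even and $K_X$ is a multiple of $F$, Wu's formula makes $\NS(X)$ an even lattice, so $L_i^2\ge 2$ and $L^2\ge 6\ge 5$; Reider (\secref{sec:reider}) then applies, and a base point of $|K_X+L|$ would require an effective divisor $D$ with $L\cdot D\le 1$, impossible because $L$ is a sum of at least two ample divisors. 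For the lower bound I would take $L:=aF+S$ with $a\gg 0$; this is ample exactly because all fibres are irreducible and reduced (otherwise $L$ would be trivial on a fibre component), and $(K_X+L)|_F$ has degree $K_X\cdot F+S\cdot F=1$ on the elliptic curve $F$, hence a base point, so $K_X+L$ is not globally generated and $\conFN(X)=2$.

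For part~(2), let $\pi\colon E\times C\to X=(E\times C)/G$ be the quotient by $G=E[2]$ acting by translations on $E$ and by the deck transformations of $q\colon C\to C/G=\bP^1$ on $C$ (the hypothesis that $E$ is not an isogeny factor of $\Pic^0(C)$ makes $\NS(E\times C)$ the hyperbolic plane and rules out degenerate behaviour of the second fibration). I would first record the geometry. The action is free, so $\pi$ is étale and $X$ is smooth and projective; a rational curve on $X$ would pull back to a non‑constant morphism $\bP^1\to E\times C$, impossible since $E,C$ have positive genus, so $X$ has no $(-1)$‑curves and is minimal. The two projections descend to an isotrivial elliptic fibration $f\colon X\to\bP^1$ with general fibre $\cong E$ and to a smooth genus‑$2$ fibration $g\colon X\to E':=E/G$ with all fibres $\cong C$. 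Riemann–Hurwitz gives five branch points of $q$, each with a $\bZ/2$ stabiliser (monodromy around a point is cyclic, whereas $G$ is not), and over each of them $f$ has a double fibre $2F_i$ with reduced part $F_i\cong E/(\bZ/2)$ an elliptic curve. From $\chi(X,\dO_X)=\chi(\dO_{E\times C})/|G|=0$ the canonical bundle formula reads $K_X=-2F+\sum_{i=1}^5 F_i$ with $F=f^{*}(\mathrm{pt})$; hence $2K_X\sim F$, so $2K_X$ is basepoint free with one‑dimensional image, and as $X$ is minimal with $K_X^2=0$ this gives $\kappa(X)=1$. Numerically $K_X\equiv\phi:=[F_i]$.

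Next I would compute $\NS(X)$. From $b_1(X)=\dim H^1(E\times C;\bQ)^G=\dim H^1(E;\bQ)=2$ and $\chi_{\mathrm{top}}(X)=0$ one gets $b_2(X)=2=\rho(X)$; since $p_g(X)=\chi(X,\dO_X)-1+q(X)=0$, the lattice $\NS(X)$ (modulo torsion) is unimodular of signature $(1,1)$, and it is \emph{odd} because $\phi$ is primitive — no fibre of $f$ has multiplicity $4$ — and, by unimodularity, pairs to $1$ with some class $B$ for which Wu's formula gives $B^2\equiv K_X\cdot B=\phi\cdot B=1\ (\mathrm{mod}\ 2)$. Thus $\NS(X)\cong\langle 1\rangle\oplus\langle -1\rangle$, with $\phi$ and $\Phi:=[g\text{-fibre}]$ the two primitive isotropic classes, $F=2\phi$ and $\phi\cdot\Phi=2$; both $\phi,\Phi$ are nef, and a short lattice argument (using that $X$ carries no irreducible curve of negative self‑intersection) gives $\overline{\NE}(X)=\bR_{\ge 0}\phi+\bR_{\ge 0}\Phi$, so $M:=\tfrac{1}{2}(\phi+\Phi)$ is an ample class with $M^2=1$ and $M\cdot\phi=M\cdot\Phi=1$. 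The bound $\conFN(X)\le 3$ is Reider; for $\conFN(X)\ge 3$ it suffices to exhibit ample $L_1,L_2$ with $K_X+L_1+L_2$ not globally generated, and here Reider shows that $(L_1+L_2)^2\ge 5$ forces global generation (a base point would need an effective $D$ with $(L_1+L_2)\cdot D\le 1$) while $(L_1+L_2)^2=4$ forces, by the Hodge index theorem, $L_1\equiv L_2\equiv M$. So everything comes down to proving that $K_X+2M$ is \emph{not} globally generated.

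This last step is the heart of the matter, and I would run it by restricting to a double fibre $F_i$, using that $g_{*}\dO_X=\dO_{E'}$ and that $g^{*}\colon\Pic^0(E')\to\Pic^0(X)$ is an isomorphism (because $q(X)=1$). The three inputs are: $h^0(X,K_X+2M)=3$ (Kodaira vanishing for the ample $2M$, together with Riemann–Roch, which gives $\chi(K_X+2M)=3$); $h^0(X,K_X+2M-2F_i)=1$, because $K_X+2M-2F_i\equiv\Phi$ is of the form $g^{*}\dO_{E'}(D_0)$ with $\deg D_0=1$ and a degree‑$1$ line bundle on the elliptic curve $E'$ has $h^0=1$; and $h^0(X,K_X+2M-F_i)\ge 2$, because $K_X+2M-F_i\equiv 2M$ has $\chi=1$ and $h^2=0$, while Serre duality identifies $h^1(X,K_X+2M-F_i)$ with $h^1(X,F_i-2M)=h^1\bigl(X,g^{*}\dO_{E'}(-D_1)\bigr)$, which the Leray spectral sequence for $g$ bounds below by $h^1(E',\dO_{E'}(-D_1))=1$ (the $R^1g_{*}\dO_X$–term only adds more). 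Then the restriction map $H^0(X,K_X+2M)\to H^0(F_i,(K_X+2M)|_{F_i})$ has kernel $H^0(X,K_X+2M-F_i)$ of dimension $\ge 2$, hence image of dimension $\le 1$; since $(K_X+2M)|_{F_i}$ has degree $(K_X+2M)\cdot F_i=2$ on the elliptic curve $F_i$, any subspace of $H^0(F_i,(K_X+2M)|_{F_i})$ of dimension $\le 1$ has a common zero (a single section of a degree‑$2$ bundle on an elliptic curve vanishes somewhere), and that point is a base point of $|K_X+2M|$. Hence $K_X+2M$ is not globally generated and $\conFN(X)=3$. I expect the genuine obstacles to be exactly (i) the precise identification $\NS(X)\cong\langle 1\rangle\oplus\langle -1\rangle$ — in particular the primitivity of $[F_i]$, which is what produces the ample class of square $1$ and without which Reider would already force $\conFN(X)\le 2$ — and (ii) the clean bookkeeping of torsion twists in the three cohomology computations, so that $K_X+2M-2F_i$ and $F_i-2M$ genuinely are pullbacks of degree‑$(\pm 1)$ line bundles from $E'$, and the use of the genus‑$2$ fibration (via $R^1g_{*}\dO_X$) to force $h^1(K_X+2M-F_i)\neq 0$.
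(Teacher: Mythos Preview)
Your treatment of part~(1) matches the paper's: the even Euler characteristic together with Kodaira's formula makes $K_X$ an even multiple of a fibre class, hence the lattice is even and \pref{prop:FNsurfacesReider}\eqref{propitem:reider3} gives $\conFN(X)\le 2$; the lower bound is exactly \cref{cor:specialfibrationFN} applied to the pseudosplit irreducible fibration.

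For part~(2) your global architecture is also that of the paper --- compute $\NS(X)_\bQ$, show the lattice is unimodular and odd, extract an ample class of square~$1$, and then exhibit a base point of the corresponding adjoint system --- but there is one genuine gap and one genuinely different choice.

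\textbf{The gap} is the primitivity of $\phi=[F_i]=[\omega_X]$ in $\NS(X)/\tors$. Your justification ``no fibre of $f$ has multiplicity~$4$'' only tells you that $\phi$ is not divisible inside the sublattice generated by fibral classes; it does not exclude $\phi=2\alpha$ for a non-fibral $\alpha$. Since $\langle\phi,\Phi\rangle$ has index~$2$ in the unimodular $\NS(X)/\tors$, the three possibilities are that $\tfrac12\phi$, $\tfrac12\Phi$, or $\tfrac12(\phi+\Phi)$ is integral; only the last gives an odd lattice and your class $M$. Ruling out the first two is the content of the paper's \lref{lem:primitive}, which argues as follows: if $\lambda F$ with $2\lambda\notin\bZ$ were a line bundle class, then it and $\omega_X-\lambda F$ would both be non-effective (vertical divisors live in $\tfrac12\bZ\cdot F$), Riemann--Roch with $\chi(\dO_X)=0$ forces all $\rH^i$ to vanish, hence the restriction to a general fibre $F$ also has trivial cohomology; but one can twist by $h^\ast\Pic^0(E)$ to make that restriction trivial on $F$, a contradiction. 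Your argument needs this input (or an equivalent) before the lattice computation goes through.

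\textbf{The different route} is in the base-point step. The paper shifts $\cL'$ in its numerical class so that $\omega_X\otimes\cL\otimes\cL'=\dO_X(F+H)$ on the nose, restricts to the genus-$2$ fibre $H$, and computes $\rh^0(X,\dO_X(F))=2$ and $\rh^0(X,\dO_X(F+H))=3$, the latter by writing sections as $G$-invariants on $E\times C$ and using that $\rH^0(E,\dO_E(T))\cong\bC[G]$ (Atiyah--Bott); the restriction map to $H$ then has $1$-dimensional image, too small to generate a degree-$4$ bundle. You instead restrict to the elliptic half-fibre $F_i$ and bound $\rh^1(K_X+2M-F_i)$ from below via Serre duality and Leray for $g$. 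This is viable, and your concern about torsion twists is exactly the analogue of the paper's shifting step: you must choose $L_1+L_2$ (not merely its numerical class) so that $F_i-L_1-L_2$ is literally $g^\ast$ of a degree $-1$ bundle on $E'$. Since $q(X)=1$ gives $\Pic^0(X)=g^\ast\Pic^0(E')$, setting $L_1+L_2:=\dO_X(F_i)\otimes g^\ast\dO_{E'}(D_1)$ for a point $D_1\in E'$ and splitting it as a sum of two line bundles numerically $\equiv M$ (hence ample) does the job; then your Leray inequality $\rh^1(X,g^\ast\dO_{E'}(-D_1))\ge \rh^1(E',\dO_{E'}(-D_1))=1$ is correct and finishes the argument. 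Compared with the paper, your route trades the explicit $G$-invariant computation for a spectral-sequence estimate; the paper's route is more self-contained, while yours leans harder on the identification of $\Pic^0(X)$ and the smooth fibration $g$.
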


\begin{propABC}[see \pref{prop:surface general type FN0 simply connected} 
\pref{prop:products of curves}  and \rref{rmk:QuinticGodeauxFN3}]
On minimal surfaces of  Kodaira dimension $2$.

There exist minimal surfaces $X$ of general type with $\conFN(X)=0,2,3$. More concretely: 
\begin{enumerate}[align=left,labelindent=0pt,leftmargin=*]
\item   \label{propitem:gentypeFN0}
A very general hyperplane $X$ in $\bP^3$ of degree $d \geq 5$ is minimal of general type with convex Fujita number $\conFN(X) = 0$.

\item \label{propitem:gentypeFN2}
The product $X= C_1 \times C_2$ of smooth projective curves of genus at least $2$ is minimal of general type with convex Fujita number $\conFN(X) = 2$.

\item \label{propitem:gentypeFN3}
Let $Y$ be a smooth quintic in $\bP^3$ with coordinates $x=[x_1:x_2:x_3:x_4]$  cut out by 
\[
F(x) = \sum_{i=1}^4 a_i x_i^5 + \sum_{i=1}^4 b_i x_i^3 x_{3i} x_{9i} + \sum_{i=1}^4 (x_i x_{2i})^2 x_{4i}
\]
with indices considered modulo $5$. Then the quotient $X = Y /\mu_5$ has $\conFN(X) = 3$ where the $5$th-roots of unity $\zeta$ acts on $[x_1:x_2:x_3:x_4]$ with $\zeta^k$ on the homogeneous coordinate $x_k$.
\end{enumerate}
\end{propABC}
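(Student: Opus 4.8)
The three surfaces are unrelated, so the plan is to treat \ref{propitem:gentypeFN0}, \ref{propitem:gentypeFN2} and \ref{propitem:gentypeFN3} one at a time. In every case $\conFN(X)\le 3$ is automatic from Reider's method (cf.\ \secref{sec:reider}) applied to sums of at least three ample divisors, so it suffices to check that $X$ is minimal of general type --- which will always follow from ampleness of $K_X$ --- and then to identify $\conFN(X)$ exactly. For \ref{propitem:gentypeFN0}, adjunction gives $K_X=\dO_X(d-4)$, which is ample for $d\ge 5$, so $X$ is minimal of general type; and since $X$ is very general, the Noether--Lefschetz theorem gives $\Pic(X)=\bZ\cdot\dO_X(1)$. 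Hence every ample divisor on $X$ is $\dO_X(k)$ with $k\ge 1$, and for all $s\ge 0$ and ample $L_1,\dots,L_s$ the adjoint bundle $K_X+L_1+\dots+L_s=\dO_X(d-4+\sum_{i=1}^{s}k_i)$ has nonnegative twist, hence is globally generated (it is the restriction of the globally generated bundle $\dO_{\bP^3}(n)$, and the restriction map on global sections is surjective because $H^1(\bP^3,\dO(n-d))=0$). Therefore $\conFN(X)=0$.

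For \ref{propitem:gentypeFN2}, let $p_i\colon X=C_1\times C_2\to C_i$ be the two projections. Since $K_X=p_1^*K_{C_1}+p_2^*K_{C_2}$ is ample, $X$ is minimal of general type. For the lower bound $\conFN(X)\ge 2$, fix points $x_i\in C_i$ and take the ample divisor $L=p_1^*x_1+p_2^*x_2$; then $K_X+L=p_1^*(K_{C_1}+x_1)+p_2^*(K_{C_2}+x_2)$, and by the Künneth formula $H^0(X,K_X+L)=H^0(C_1,K_{C_1}+x_1)\otimes H^0(C_2,K_{C_2}+x_2)$. Riemann--Roch on $C_i$ gives $h^0(C_i,K_{C_i}+x_i)=g_i=h^0(C_i,K_{C_i})$, so $x_i$ is a base point of $|K_{C_i}+x_i|$ and $(x_1,x_2)$ is a base point of $|K_X+L|$, which is therefore not globally generated. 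For the upper bound $\conFN(X)\le 2$, the key observation is that $K_X\in 2\,\NS(X)$, so $(M\cdot K_X)$ is even and Riemann--Roch forces $(M^2)$ to be even for every line bundle $M$; in particular any ample $L_i$ has $(L_i^2)\ge 2$, whence $\bigl((L_1+\dots+L_s)^2\bigr)\ge 6\ge 5$ as soon as $s\ge 2$. Reider's theorem then shows that a base point of $|K_X+\sum L_i|$ would produce a nonzero effective divisor $D$ with $\bigl((\sum L_i)\cdot D,\ D^2\bigr)\in\{(0,-1),(1,0)\}$, which is impossible since $\sum L_i$ is ample with $s\ge 2$ summands and hence $(\sum L_i)\cdot D\ge s\ge 2$. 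Thus $\conFN(X)=2$.

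For \ref{propitem:gentypeFN3}, observe first that the weights $1,2,3,4$ of $\zeta$ on the homogeneous coordinates are pairwise distinct modulo $5$, and so are the weights $k,2k,3k,4k$ of each power $\zeta^k$ with $1\le k\le 4$; hence the $\mu_5$-fixed locus in $\bP^3$ consists of the four coordinate points, which avoid $Y$ because $a_i\ne 0$. A weight count ($5i$, $15i$, $10i$, all $\equiv 0\pmod 5$) shows that $F$ is $\mu_5$-invariant, and for general coefficients $Y$ is smooth; so $X=Y/\mu_5$ is a smooth projective surface with étale quotient $\pi\colon Y\to X$, and from $\pi^*K_X=K_Y=\dO_Y(1)$ we see that $K_X$ is ample, so $X$ is minimal of general type (in fact a numerical Godeaux surface). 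To prove $\conFN(X)\ge 3$ I take $L_1=L_2=K_X$ and show that $3K_X$ is not globally generated. Writing $\pi_*\dO_Y=\bigoplus_{j=0}^{4}\cL_j$ with the $\cL_j$ the $5$-torsion line bundles defining the cover, $H^0(X,3K_X)$ is the $\mu_5$-invariant subspace of $H^0(Y,\dO_Y(3))$; the latter equals $\Sym^3\langle x_1,x_2,x_3,x_4\rangle$ (use $0\to\dO_{\bP^3}(-2)\to\dO_{\bP^3}(3)\to\dO_Y(3)\to 0$ together with the vanishing of $H^0$ and $H^1$ of $\dO_{\bP^3}(-2)$), and its invariants are spanned by the four cubic monomials of weight $\equiv 0\pmod 5$, namely $x_1^2x_3$, $x_1x_2^2$, $x_3^2x_4$, $x_2x_4^2$. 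All four vanish identically on the line $\ell=\{x_2=x_3=0\}$, and $\ell\not\subset Y$ because $F|_\ell=a_1x_1^5+a_4x_4^5\ne 0$; hence $\ell\cap Y$ is a nonempty finite set at which every section of $3K_X$, pulled back to $Y$, vanishes. So $3K_X$ is not globally generated, $\conFN(X)\ge 3$, and together with $\conFN(X)\le 3$ this gives $\conFN(X)=3$.

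The routine inputs --- Noether--Lefschetz, Künneth, the Riemann--Roch parity observation, and Reider's theorem --- cause no difficulty; the main obstacle is the explicit analysis in \ref{propitem:gentypeFN3}. One has to choose the coefficients of $F$ so that $Y$ is smooth and avoids the coordinate points, and, crucially, so that the common base line of the weight-$0$ cubic monomials genuinely meets $Y$ --- this is what makes the case $s=2$ (i.e.\ the bundle $3K_X$) already fail. Carrying out the bookkeeping of monomial weights modulo $5$, and pinning down precisely which cubics are $\mu_5$-invariant, is where care will be needed.
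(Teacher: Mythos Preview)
Your proof is correct. Parts \ref{propitem:gentypeFN0} and \ref{propitem:gentypeFN2} follow the same lines as the paper: for \ref{propitem:gentypeFN0} the paper also invokes Noether--Lefschetz and adjunction exactly as you do; for \ref{propitem:gentypeFN2} the paper cites the product inequality $\conFN(C_1\times C_2)\ge\max\{\conFN(C_i)\}=2$ and the general fact that $K_X\in 2\NS(X)$ forces $\conFN(X)\le 2$, whereas you reprove both statements in place (an explicit base point of $K_X+L$ via K\"unneth, and the Reider argument with the parity of self-intersections). Same content, just more self-contained.

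For \ref{propitem:gentypeFN3} your argument is genuinely different from the paper's. The paper never computes $\rH^0(X,3K_X)$; instead it produces two ample effective divisors $C_\alpha$, $C_\beta$ on $X$ (the unique effective members of $|K_X+\alpha|$ for nontrivial torsion classes $\alpha\in\Pic(X)_\tors\simeq\bZ/5\bZ$, using Reid's result that such a member exists and is unique) with $C_\alpha\bullet C_\beta=K_X^2=1$, and then applies the abstract criterion of \pref{prop:NCDampleFujitaExtremeSurface} to conclude $\conFN(X)=3$. Your route---identifying the four $\mu_5$-invariant cubic monomials $x_1^2x_3,\ x_1x_2^2,\ x_3^2x_4,\ x_2x_4^2$ and observing that they all vanish on the line $\{x_2=x_3=0\}$, which meets $Y$ since $a_1,a_4\ne 0$---is more elementary and entirely explicit, avoiding both Reid's input and the extension-theoretic criterion. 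The paper's approach, on the other hand, is more structural: it exhibits the geometry (two ample curves meeting once) that underlies the failure, and in fact shows that $\omega_X\otimes\dO_X(C_\alpha+C_\beta)\equiv 3K_X+(\alpha+\beta)$ fails for \emph{every} choice of nontrivial $\alpha,\beta$, not just the tricanonical case $\alpha+\beta=0$.
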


The example in \eqref{propitem:gentypeFN2} is an $8$-dimensional family of numerical Godeaux surfaces due to 
Miyaoka \cite[Theorem 5]{miyaoka_tricanonical_1976}, see also Reid \cite{reid_surfaces_1978}.

\medskip

In order to treat ruled surfaces, we prove a more general result about convex Fujita numbers of projective space bundles on curves.

\begin{thmABC}[see \tref{thm:FNonPEuppperbound} and \tref{thm:FNofPE Yusufs result}]
\label{thmABC:FNonPE}
Let $\cE$ be a vector bundle of rank $n$ and degree $d$ on a smooth projective curve $C$, and let $X = \bP(\cE)$ be the associated projective space bundle. Then we have
\[
n \leq \conFN(\bP(\cE)) \leq n+1.
\]
Furthermore, 
\begin{enumerate}[align=left,labelindent=0pt,leftmargin=*]
\item 
If $\cE$ is not stable then $\conFN(\bP(\cE)) = n$.
\item
Let $\cE$ be stable. Then the following holds.
\begin{enumerate}[align=left,labelindent=0pt,leftmargin=*]
\item 
If $n$ and $d$ are not coprime, then $\conFN(\bP(\cE)) = n$.
\item
If $d \equiv 1 \pmod n$, then $\conFN(\bP(\cE)) = n+1$. In particular, $\bP(\cE)$ is Fujita extreme in this case. 
\end{enumerate}
\end{enumerate}
\end{thmABC}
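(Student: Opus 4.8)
The plan is to combine a general Reider/Mumford-type positivity estimate on $X = \bP(\cE)$ with the explicit cohomology calculus available on projective bundles over curves, and to produce a family of ample bundles witnessing each stated value. First I would set up coordinates: write $\pi \colon X = \bP(\cE) \to C$, let $\xi = \dO_{\bP(\cE)}(1)$ be the tautological class, and recall $\dO_X(1)^n = d$ and $K_X = -n\xi + \pi^*(K_C + \det\cE)$. Ample divisors on $X$ are (up to numerical equivalence) of the form $a\xi + \pi^* \mathfrak{b}$ with $a > 0$ and suitable positivity of $\mathfrak{b}$ relative to $a$; one controls exactly when such a class is ample and globally generated via the Harder--Narasimhan filtration of $\cE$ (and of its symmetric powers). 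The lower bound $\conFN(\bP(\cE)) \geq n$ should follow by exhibiting, for each $m \leq n-1$, ample $L_1, \dots, L_m$ on $X$ such that $K_X + L_1 + \cdots + L_m$ fails to be globally generated: the adjoint class $-n\xi + \pi^*(\cdots) + \sum L_i$ still has $\xi$-coefficient $\leq -1$ after adding $m \leq n-1$ ample classes (each contributing positively but by a controllable amount once we take the $L_i$ close to the boundary of the ample cone), so the restriction to a fibre $\bP^{n-1}$ is a negative or $\dO$-ish bundle that is not globally generated. Making the "close to the boundary" step precise is where the stability of $\cE$ enters quantitatively.

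For the upper bound $\conFN(\bP(\cE)) \leq n+1$, I would show that for any ample $L_1, \dots, L_{n+1}$, the class $D = K_X + \sum_{i=1}^{n+1} L_i$ is globally generated. Pushing forward by $\pi$, we have $\pi_* \dO_X(D) = \cF$ for a vector bundle $\cF$ on $C$, and global generation of $D$ follows once (i) $\dO_X(D)$ is $\pi$-globally generated, i.e.\ $\pi^*\pi_*\dO_X(D) \to \dO_X(D)$ is surjective, and (ii) $\cF$ is globally generated on $C$. Point (i) holds because the $\xi$-degree of $D$ along fibres is $\geq 1$: writing each $L_i \equiv a_i\xi + \pi^*\mathfrak{b}_i$ with $a_i \geq 1$, the $\xi$-coefficient of $D$ is $-n + \sum a_i \geq -n + (n+1) = 1$, so $\dO_X(D)|_{\text{fibre}} = \dO_{\bP^{n-1}}(\geq 1)$ is globally generated, and a relative vanishing/Serre argument upgrades this to surjectivity of the evaluation. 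Point (ii) is where one needs a Butler-type theorem on global generation of vector bundles of large slope on curves together with a lower bound on the slope of $\cF$ coming from the positivity of the $L_i$ (again controlled by the HN filtration of $\cE$). This slope bound — quantifying "$D$ is positive enough on $C$" in terms of $\mu_{\min}(\cE)$ — is I expect the main technical obstacle, since it must be uniform over \emph{all} choices of $n+1$ ample divisors.

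For the sharp cases: if $\cE$ is \emph{not} stable, it has a quotient $\cE \surj \cQ$ of slope $\mu(\cQ) \leq \mu(\cE)$; the corresponding sub-$\bP(\cQ) \subset \bP(\cE)$ is a divisor (or subvariety) on which the adjoint system for $n$ generic ample bundles restricts to something already globally generated, and one checks that $n$ ample bundles suffice on all of $X$ by a more careful version of the push-forward argument above that exploits the non-stability to gain one unit of slope; this yields $\conFN(\bP(\cE)) = n$. When $\cE$ is stable but $\gcd(n,d) = e > 1$, I would use that $\cE$ (or its appropriate twist) admits no quotient of slope strictly between $\mu(\cE) - 1/n$ and $\mu(\cE)$ — there is a "gap" in possible sub/quotient slopes of width $\geq e/n > 1/n$ — and this gap is exactly what lets $n$ ample divisors already push the adjoint slope over the global-generation threshold on $C$, giving $\conFN = n$. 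Finally, for $d \equiv 1 \pmod n$ with $\cE$ stable, the point is the opposite: here $\cE$ sits at the \emph{worst} arithmetic position, $\mu(\cE) = d/n$ has denominator exactly $n$, so there exist ample $L_1, \dots, L_n$ for which $K_X + \sum L_i$ restricted to a suitable section $C \hookrightarrow X$ (coming from a maximal-slope sub-line-bundle of a twist of $\cE$) is a line bundle of degree $2g(C) - 2 + (\text{something that is just barely} \leq 2g(C)-2)$ — i.e.\ exactly at the failure boundary for global generation on the curve — whence $\conFN(\bP(\cE)) = n+1$ and $\bP(\cE)$ is Fujita extreme. The delicate part in all four cases is translating the arithmetic of $\gcd(n,d)$ into the precise slope inequalities needed, and checking that the witnessing ample divisors in the extreme case really exist (this is the content of \tref{thm:FNofPE Yusufs result} and presumably rests on a careful study of $\Sym^k\cE$ and the elementary transformations of $\cE$).
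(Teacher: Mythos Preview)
Your broad architecture is correct and matches the paper: the lower bound comes from restricting to a fibre $\bP^{n-1}$ (any ample $\cL = \pi^\ast\cM(1)$ gives $\omega_X \otimes \cL^{\otimes m}|_{\text{fibre}} \simeq \dO(m-n)$), and the upper bound comes from pushing forward along $\pi$ and applying the slope criterion (if $\mu^-(\cF) > 1$ then $\omega_C \otimes \cF$ is globally generated). The key computation is that for $\cL_i = \pi^\ast\cM_i(a_i)$ with $a = \sum a_i$ one has $\pi_\ast(\omega_X \otimes \cL) = \omega_C \otimes \cF$ with $\cF = \Sym^{a-n}(\cE) \otimes \det(\cE) \otimes \cM$, and
\[
\mu^-(\cF) = n\big(\mu(\cE) - \mu^-(\cE)\big) + \sum_{i=1}^s \big(\deg(\cM_i) + a_i\mu^-(\cE)\big).
\]

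However, your treatment of the refined cases misidentifies the mechanism in two places.

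\textbf{The non-stable and $\gcd(n,d)>1$ cases.} There is no ``gap in sub/quotient slopes'' argument. The paper's reasoning is purely arithmetic. If $\cE$ is not semistable, then the rank $n^-$ of the minimal-slope HN quotient satisfies $n^- < n$, and the displayed estimate gives $\mu^-(\cF) \geq s/n^- > 1$ already for $s = n$. If $\cE$ is semistable, then $\cF$ is semistable and $n\mu(\cF) = \sum_{i=1}^s (n\deg\cM_i + a_i d)$ is a sum of $s$ \emph{positive integers}; for $s = n$ this equals $n$ (the only obstruction to $\mu(\cF) > 1$) precisely when each summand equals $1$, i.e.\ when $n\deg\cM_i + a_i d = 1$ is solvable. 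This is a Bezout condition, impossible when $\gcd(n,d) > 1$. The semistable-but-not-stable case falls under this because a proper subbundle of the same slope forces $\gcd(n,d) > 1$.

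\textbf{The $d \equiv 1 \pmod n$ case.} Your plan to restrict to a section of $\pi$ coming from a ``maximal-slope sub-line-bundle'' does not work: a stable $\cE$ has no distinguished sub-line-bundle, and there is no reason such a section should detect failure of global generation. The paper's argument is cleaner and different. Writing $d = 1 - kn$, take all $a_i = 1$ and $\deg\cM_i = k$; then $a = n$, so $\Sym^{a-n}(\cE) = \dO_C$ and the adjoint bundle is literally a pullback:
\[
\omega_X \otimes \cL \simeq \pi^\ast\big(\omega_C \otimes \det(\cE) \otimes \cM\big).
\]
Here $\det(\cE) \otimes \cM$ is a line bundle of degree $1$, and by adjusting the $\cM_i$ within $\Pic^k(C)$ one arranges $\det(\cE) \otimes \cM \simeq \dO_C(P)$. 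Then $\omega_C(P)$ has $P$ as a base point (Riemann--Roch), and this base point pulls back to a base locus of $\omega_X \otimes \cL$ along the entire fibre over $P$.
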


\tref{thmABC:FNonPE} relies on a criterion of Butler from \cite{butler_normal_1994} for ample line bundles on $\bP(\cE)$. Global sections of adjoint line bundles are analysed by computing the direct image along $\bP(\cE) \to C$ in combination with a well known slope criterion for global generation of semisimple vector bundles on curves, see \lref{lem:SlopeCriterionGloballyGeneratedVB}.

\subsection{Fujita extreme smooth projective surfaces} 

A smooth projective surface $X$ is called \textbf{Fujita extreme} if $\conFN(X) = 3$, the maximal possible value.
If we relax our search from being constrained to minimal surfaces, we find surprisingly many more examples with convex Fujita number $3$.

We recall that a group $\pi$ is called \textbf{projective} if it is isomorphic to the (topological) fundamental group $\pi_1(X)$ of a smooth projective variety $X$. 

\begin{thmABC}[see \pref{prop:cofinalFN3} and \tref{thm:allpi1FN3}]
\
\begin{enumerate}
\item
Let $X$ be a smooth projective surface. Then there is a birational modification $X' \to X$ such that $X'$ has convex Fujita number $\conFN(X') = 3$.
\item
For every projective group $\pi$ there is a smooth projective surface $X$ with $\pi_1(X) \simeq \pi$ and convex Fujita number $\conFN(X) =3$.
\end{enumerate}
\end{thmABC}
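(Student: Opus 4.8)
The plan is to reduce both parts to one blow-up construction. I first isolate the constraint from Reider's method (cf.\ \secref{sec:reider}): if $L_1,L_2$ are ample divisors on a smooth projective surface $Y$, then $(L_1+L_2)\cdot D\ge 2$ for every curve $D\subset Y$, so no effective divisor can witness the alternative in Reider's theorem, and hence $|K_Y+L_1+L_2|$ is base-point free as soon as $(L_1+L_2)^2\ge 5$. Since the Hodge index theorem gives $(L_1+L_2)^2\ge 4$ always, with equality only when $L_1\equiv L_2$ and $L_1^2=1$, a surface $Y$ can have $\conFN(Y)=3$ only if it carries an ample class $L$ with $L^2=1$; conversely it suffices that some such $L$ have $|K_Y+2L|$ not base-point free. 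Thus part (1) amounts to: given a smooth projective surface $X$, construct a blow-up $X'\to X$ carrying an ample $L$ with $L^2=1$ for which $|K_{X'}+2L|$ has a base point.

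The mechanism is visible already in the following case. Let $X$ be a principally polarized abelian surface with $\rho(X)=1$, let $p$ be a $2$-torsion point, and set $X'=\Bl_pX$ with exceptional curve $E$ and $L=\pi^{*}\Theta-E$. Then $L^2=1$, and $L$ is ample because $\epsilon(\Theta;p)=4/3>1$; moreover $K_{X'}=E$, so $K_{X'}+2L=2\pi^{*}\Theta-E$ restricts to $\dO_E(1)$ on $E$. The sequence
\[
0\to 2\pi^{*}\Theta-2E\to 2\pi^{*}\Theta-E\to\dO_E(1)\to 0
\]
and the identification $H^1(X',2\pi^{*}\Theta-2E)\cong H^1(X,2\Theta\otimes\mathcal I_p^{2})$ then force the image of $H^0(X',K_{X'}+2L)$ in $H^0(E,\dO_E(1))$ to be only one-dimensional, because $H^1(X,2\Theta\otimes\mathcal I_p^{2})\ne 0$: the vanishing of this group is precisely the statement that $|2\Theta|$ is an immersion at $p$, which fails at a $2$-torsion point. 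A one-dimensional subspace of $H^0(\mathbb P^1,\dO(1))$ has a base point, so $|K_{X'}+2L|$ does too and $\conFN(X')=3$.

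For an arbitrary $X$ I would imitate this. Fix a sufficiently positive very ample divisor $A$ on $X$ and blow up a configuration of points $p_1,\dots,p_N$, possibly infinitely near, choosing a divisor $L=\pi^{*}A-\sum_ic_iE_i$ with $L^2=A^2-\sum_ic_i^2=1$ that is ample. The numerical identity $\sum_ic_i^2=A^2-1$ is arranged by choosing the $c_i$ and $N$; ampleness of $L$ is forced by the positivity of $A$, which keeps the relevant Seshadri constants above the coefficients $c_i$. One point, say $p_1$ with $c_1=1$, is made to play the role of the $2$-torsion point: the remaining points are taken in sufficiently general position, but $p_1$ (together with the infinitely near structure over it) is chosen so that $H^1\bigl(X,(K_X+2A)\otimes\mathcal I_{p_1}^{2}\otimes\prod_{j\ge 2}\mathcal I_{p_j}\bigr)\ne 0$, so that $|K_{X'}+2L|=\bigl|\pi^{*}(K_X+2A)-\sum_i(2c_i-1)E_i\bigr|$ restricts to a one-dimensional series on $E_1$ and therefore has a base point there. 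The main obstacle is to make these two demands compatible: ampleness of $L$ wants the blown-up points general and the Seshadri constants large, whereas the cohomological obstruction wants $p_1$ special, and one must carry out the $H^1$-computation on $X'$ (via Leray, plus a vanishing statement that is defeated only by the chosen special position). Essentially all the content of part (1) sits here.

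Part (2) is then formal. A projective group $\pi=\pi_1(X_0)$ is, by taking iterated general smooth hyperplane sections and invoking the Lefschetz hyperplane theorem, realized as $\pi_1(S)$ for some smooth projective surface $S$. Applying part (1) to $S$ gives a birational morphism $X'\to S$ with $\conFN(X')=3$; since $X'\to S$ is a composition of point blow-ups of smooth surfaces, each of which induces an isomorphism on $\pi_1$, we get $\pi_1(X')\cong\pi_1(S)\cong\pi$, as desired.
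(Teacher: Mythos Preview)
Your reduction in part~(2) is correct and matches the paper: realize $\pi$ on a surface (Lefschetz hyperplane), apply part~(1), and use that point blow-ups preserve $\pi_1$.

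Part~(1), however, is not proven. Your abelian-surface example is fine, but for arbitrary $X$ you only describe a strategy and then write ``Essentially all the content of part~(1) sits here'' --- that is an admission, not a proof. The two competing requirements you identify (genericity of the $p_i$ for ampleness of $L$ versus speciality of $p_1$ to force $H^1\neq 0$) are real, and you give no mechanism for satisfying them simultaneously on an arbitrary surface. Nor is it clear how one would choose, on a general $X$, a point playing the role of a $2$-torsion point for $|2\Theta|$.

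The paper avoids this entirely by using a different criterion (Proposition~\ref{prop:NCDampleFujitaExtremeSurface}): if a surface carries two distinct ample effective divisors $C_1,C_2$ with $C_1\bullet C_2=1$, then $\conFN=3$. This bypasses any $H^1$-computation or Seshadri-constant estimate. To manufacture such a pair, one takes a very ample $L=3A$ on $X$ (the factor $3$ guarantees, via Van de Ven, that every member of $|L|$ is $2$-connected), argues by a dual-variety dimension count that a \emph{general pencil} in $|L|$ has only irreducible members, and then blows up all but one of the base points $D_0\cap D_\infty$. The strict transforms $D_0',D_\infty'$ in $X'$ then meet transversally in the single remaining point, and Nakai--Moishezon (exactly as in Proposition~\ref{prop:general pencil degree d}) shows they are ample; the criterion applies. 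No cohomological obstruction or special point is needed --- the construction is uniform.
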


This theorem indicates that the topological invariant $\pi_1(X)$ alone is not sufficient to control positivity properties of adjoint line bundles.

\subsection*{Acknowledgements}
The authors acknowledge support by Deutsche  Forschungsgemeinschaft  (DFG) via  the Collaborative Research Centre TRR 326 "Geometry and Arithmetic of Uniformized Structures", project number 444845124.  Part of this work was done while the third author attended the workshop "Birational Complexity of Algebraic Varieties" at the Simons Center for Geometry and Physics, and he would like to thank the organizers and staff for the hospitality and stimulating atmosphere.

\section{Conventions and preliminaries}
\label{sec:prelims}

\subsection{Conventions}

We work over the complex numbers, although our results remain true over an arbitrary algebraically closed field of characteristic 0 by the Lefschetz principle. A surface is a variety (i.e.\ a separated scheme of finite type over $\bC$) of dimension two, which we assume to be smooth projective and connected without exception. 

\subsection{Revisiting Reider's method}
\label{sec:reider}
Convex Fujita numbers are finite by \cite[Proposition 2.5]{chen_convex_2023}. Reider's method gives precise bounds for surfaces as follows. 

\begin{prop}[{\cite[Proposition 2.1]{chen_convex_2023}}]
\label{prop:FNsurfacesReider}
Let $X$ be a smooth projective surface. 
\begin{enumerate}[align=left,labelindent=0pt,leftmargin=*]
\item 
\label{propitem:reider1}
The convex Fujita number of $X$ is bounded by $\conFN(X) \leq 3$.
\item 
\label{propitem:reider2}
If $\conFN(X) = 3$, then there exists an ample line bundle $\cL$ on $X$ with $\cL^2 = 1$.
\item 
\label{propitem:reider3}
If the intersection pairing on the N\'eron-Severi lattice $\NS(X)$ is even, then $\conFN(X) \leq 2$.
\item 
\label{propitem:reider4}
If the canonical bundle is numerically equal to $2 \vartheta$ with $\vartheta \in \Pic(X)$, then $\conFN(X) \leq 2$.
\end{enumerate}
\end{prop}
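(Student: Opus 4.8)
The plan is to obtain all four statements from Reider's theorem on base-point freeness of adjoint systems, which I take as the key input: if $D$ is a nef divisor on a smooth projective surface $X$ with $D^2 \geq 5$, then a point $x \in X$ is a base point of $|K_X + D|$ only if there is a nonzero effective divisor $E$ through $x$ with $D \cdot E = 0$ and $E^2 = -1$, or with $D \cdot E = 1$ and $E^2 = 0$. Throughout I use that for an ample divisor $A$ and an irreducible curve $C$ the intersection numbers $A \cdot C$ and $A^2$ are $\geq 1$, being positive integers.

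For (1), I would first make the elementary reduction that, since a sum of ample divisors is ample, $K_X + L_1 + \dots + L_s$ with $s \geq 3$ may be rewritten as $K_X + L_1 + L_2 + L_3'$ with $L_3' := L_3 + \dots + L_s$ ample; so it suffices to handle three ample summands. Setting $D = L_1 + L_2 + L_3$ one has $D^2 = \sum_i L_i^2 + 2 \sum_{i<j} L_i \cdot L_j \geq 9 \geq 5$ and $D \cdot E \geq 3 \geq 2$ for every nonzero effective divisor $E$, so neither exceptional alternative of Reider's theorem can occur; hence $|K_X+D|$ is base-point free and $\conFN(X) \leq 3$.

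For (2), I would prove the contrapositive: if no ample line bundle on $X$ has self-intersection $1$, then $\conFN(X) \leq 2$. By the same reduction it is enough to globally generate $K_X + L_1 + L_2$ for arbitrary ample $L_1, L_2$; put $D = L_1 + L_2$. One always has $D^2 = L_1^2 + 2 L_1 \cdot L_2 + L_2^2 \geq 4$, and the borderline value $D^2 = 4$ would force $L_1^2 = L_2^2 = L_1 \cdot L_2 = 1$, which is excluded because $L_1$ would then be an ample line bundle of self-intersection $1$. Hence $D^2 \geq 5$, while $D \cdot E \geq 2$ for nonzero effective $E$, so Reider's theorem gives base-point freeness. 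Since $\conFN(X) \leq 3$ by (1), this implication is exactly the contrapositive of (2). The only point requiring attention is the exclusion of $D^2 = 4$, where Reider's inequality $D^2 \geq 5$ is unavailable; but that case cannot arise under the negated hypothesis, so there is no real obstacle.

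Statements (3) and (4) should then be formal. If the intersection form on $\NS(X)$ is even, then no class --- a fortiori no ample line bundle --- can have the odd self-intersection $1$, so $\conFN(X) \leq 2$ by (2). If $K_X$ is numerically $2\vartheta$ with $\vartheta \in \Pic(X)$, then Riemann--Roch shows $L^2 \equiv L \cdot K_X \pmod 2$ for every divisor class $L$ (since $2(\chi(X, \dO_X(L)) - \chi(X, \dO_X)) = L^2 - L \cdot K_X$); with $L \cdot K_X = 2(L \cdot \vartheta)$ this gives $L^2 \equiv 0 \pmod 2$, so $\NS(X)$ is even and (3) applies. In other words (4) is just the special case of (3) in which the characteristic element $K_X$ of the lattice $\NS(X)$ is itself $2$-divisible.
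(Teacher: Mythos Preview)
Your proof is correct and follows the expected route via Reider's theorem. Note, however, that the paper does not actually prove this proposition: it is quoted verbatim from the authors' earlier work \cite[Proposition~2.1]{chen_convex_2023}, so there is no in-paper proof to compare against. That said, the section title ``Revisiting Reider's method'' and the surrounding discussion make it clear that the intended argument is exactly the one you give --- apply Reider's criterion to $D = L_1 + \dots + L_s$, check $D^2 \geq 5$ and $D \cdot E \geq 2$, and handle the borderline case $D^2 = 4$ for part~(2) by noting it forces $L_i^2 = 1$. Your deductions of (3) and (4) from (2) via evenness of the lattice are the standard ones.
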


Let us revisit the argument in the proof of Reider's criterion for global generation. 
A point $P$ on the surface $X$ is a base point of the adjoint bundle $\omega_X \otimes \cL$ associated to an ample line bundle $\cL$ if and only if 
\begin{equation}
\label{eq:H1nonzero}
\rH^1(X,\omega_X \otimes \cL \otimes \cI_P) \not= 0\ .
\end{equation}
This follows from Kodaira vanishing and the long exact sequence
\[
\rH^0(X,\omega_X \otimes \cL) \to \rH^0(P,\omega_X \otimes \cL|_P) \to \rH^1(X,\omega_X \otimes \cL \otimes \cI_P) \to \rH^1(X,\omega_X \otimes \cL) = 0.
\]
By Serre duality, \eqref{eq:H1nonzero} is equivalent to the non-vanishing of $\Ext^1(\cL \otimes \cI_P, \dO_X)$.
So $P$ is a base point if and only if there exists a nontrivial extension
\begin{equation}
\label{eq:extension witness of base point}
0 \to \dO_X \xrightarrow{s} \cE \xrightarrow{t} \cL \otimes \cI_P \to 0\ ,
\end{equation}
and  the sheaf $\cE$ is necessarily a vector bundle by \cite[Proposition 1.33]{griffiths_residues_1978}.  For a detailed exposition of Reider's method we refer to \cite[\S3]{lazarsfeld_lectures_1997}.

Griffiths and Harris \cite{griffiths_residues_1978} describe \eqref{eq:extension witness of base point} more canonically as the Koszul resolution of the residue field at $P$ using $\cL = \det(\cE)$ as
\[
0 \to \dO_X \xrightarrow{s} \cE \xrightarrow{ \wedge s} \det(\cE)  \to  \det(\cE)  \otimes \dO_P \to 0. 
\]
This uses the fact that $P$ agrees with the vanishing locus $Z(s)$ of the regular section $s$ of $\cE$.

A concrete example of the above is described in the following proposition.

\begin{prop}
\label{prop:NCDampleFujitaExtremeSurface}
Let $X$ be a smooth projective surface with distinct ample effective divisors $C_i$, for $i=1, 2$ such that $(C_1 \bullet C_2) = 1$. Then the $C_i$ are irreducible and reduced, intersect in a single point $P = C_1 \cap C_2$, and $P$ is a base point for $\omega_X \otimes \dO_X(C_1 + C_2)$.  In particular, the convex Fujita number of $X$ is 
\[
\conFN(X) = 3.
\]
\end{prop}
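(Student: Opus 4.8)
The plan is to exhibit an explicit nontrivial extension of the form \eqref{eq:extension witness of base point} witnessing that $P = C_1 \cap C_2$ is a base point of $\omega_X \otimes \dO_X(C_1 + C_2)$, and then invoke \pref{prop:FNsurfacesReider}\eqref{propitem:reider1}. First I would check the elementary geometric claims: since $C_1, C_2$ are ample and distinct with $(C_1 \bullet C_2) = 1$, each $C_i$ must be irreducible and reduced — for if $C_1 = A + B$ with $A$ effective nonzero, then ampleness of $C_2$ gives $(A \bullet C_2) \geq 1$ and $(B \bullet C_2) \geq 1$, forcing $(C_1 \bullet C_2) \geq 2$; the same argument rules out multiplicities. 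Likewise the scheme-theoretic intersection $C_1 \cap C_2$ has length $(C_1 \bullet C_2) = 1$ by B\'ezout on the surface, hence is a single reduced point $P$, and since $C_1$ meets $C_2$ transversally there, $P$ is a smooth point of each $C_i$.

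Next I would construct the extension. Set $\cL = \dO_X(C_1 + C_2)$. On $C_1$ we have the natural inclusion $\dO_X \hookrightarrow \dO_X(C_1)$ and similarly $\dO_X \hookrightarrow \dO_X(C_2)$; combining gives two sections of $\cL$, namely the images of the defining sections. Concretely, consider the rank-two bundle $\cE := \dO_X(C_1) \oplus \dO_X(C_2)$ with the section $s = (s_1, s_2)$, where $s_i \in \rH^0(X, \dO_X(C_i))$ cuts out $C_i$. The zero locus $Z(s)$ is exactly $C_1 \cap C_2 = \{P\}$, which is a reduced point, so $s$ is a regular section with the expected codimension-two vanishing. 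The Koszul complex of $s$ then reads
\[
0 \to \dO_X \xrightarrow{s} \cE \xrightarrow{\wedge s} \det(\cE) \to \det(\cE) \otimes \dO_P \to 0,
\]
and $\det(\cE) = \dO_X(C_1 + C_2) = \cL$. Breaking this into the short exact sequence
\[
0 \to \dO_X \xrightarrow{s} \cE \xrightarrow{t} \cL \otimes \cI_P \to 0
\]
produces precisely an extension of the type \eqref{eq:extension witness of base point}. It remains to verify that this extension is nontrivial as a class in $\Ext^1(\cL \otimes \cI_P, \dO_X)$: if it split, then $\cL \otimes \cI_P$ would be a subsheaf of $\cE$, hence torsion-free of rank one with $\dO_X$ a direct summand of $\cE = \dO_X(C_1) \oplus \dO_X(C_2)$ — but $\Hom(\dO_X, \dO_X(C_i)^{\vee}) = \rH^0(X, \dO_X(-C_i)) = 0$ since $C_i$ is effective and nonzero, so $\dO_X$ is not a direct summand and no splitting exists. (Equivalently, one checks $s$ does not factor through a line subbundle, which fails because $Z(s) \neq \emptyset$.)

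By the discussion preceding \eqref{eq:H1nonzero}–\eqref{eq:extension witness of base point}, a nontrivial such extension is equivalent to $\rH^1(X, \omega_X \otimes \cL \otimes \cI_P) \neq 0$, i.e. $P$ is a base point of $\omega_X \otimes \dO_X(C_1 + C_2)$. Thus there exist two ample divisors $L_1 = C_1$ and $L_2 = C_2$ with $K_X + L_1 + L_2$ not globally generated, so $\conFN(X) \geq 3$; combined with the upper bound $\conFN(X) \leq 3$ from \pref{prop:FNsurfacesReider}\eqref{propitem:reider1}, we conclude $\conFN(X) = 3$. The only mildly delicate point is the nontriviality of the extension; everything else is the standard Koszul/Reider bookkeeping, and the ampleness of the $C_i$ is used exactly to force irreducibility, the length-one intersection, and the vanishing $\rH^0(X, \dO_X(-C_i)) = 0$.
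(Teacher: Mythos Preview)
Your proof is correct and follows essentially the same route as the paper: both build the extension from the Koszul complex of the regular section $(s_1,s_2)$ of $\dO_X(C_1)\oplus\dO_X(C_2)$ and then invoke \pref{prop:FNsurfacesReider}\eqref{propitem:reider1}. The only difference is in the nontriviality step: the paper observes that $\cL\otimes\cI_P$ is not locally free at $P$ and hence cannot be a direct summand of a vector bundle, whereas you argue that $\Hom(\cE,\dO_X)=\rH^0(X,\dO_X(-C_1))\oplus\rH^0(X,\dO_X(-C_2))=0$ so no retraction exists; both arguments are valid, the paper's being marginally quicker.
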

\begin{proof}
Since $C_1$ is ample, the intersection number with each component of $C_2$ is positive. It follows that $C_2$ is irreducible and reduced. By symmetry the same holds for $C_1$. Since $C_1$ and $C_2$ are distinct, they intersect in a scheme $Z = C_1 \cap C_2$ of dimension $0$ such that the length of $\dO_Z$ equal to $C_1 \bullet C_2 = 1$. It follows that $Z = P$ is a single reduced point. Moreover, the sequence
\[
0 \to \dO_X \xrightarrow{s_1,s_2}  \dO_X(C_1) \oplus \dO_X(C_2) \xrightarrow{(s_2,-s_1) \bullet} \dO_X(C_1 + C_2) \otimes \cI_P \to 0
\]
is exact. Here $(s_2,-s_1) \bullet$ is the map that sends a pair $(t_1,t_2)$ to $s_2t_1 - s_1 t_2$.  Exactness follows because the defining equations of $C_1$ and $C_2$ at $P$ form a regular sequence of  parameters at $P$.
The extension is non-trivial because $\dO_X(C_1 + C_2) \otimes \cI_P$ is not a vector bundle. 
The discussion preceeding \pref{prop:NCDampleFujitaExtremeSurface} shows that $P$ is a base point of $\omega_X \otimes \dO_X(C_1+C_2)$. 
This means that $\conFN(X) \geq 3$. The converse inequality is 
\pref{prop:FNsurfacesReider}\eqref{propitem:reider1}.
\end{proof}

\subsection{Fujita numbers of products}

The following useful result was proven in 
\cite[Lemma 2.6 and Proposition 2.7]{chen_convex_2023}.

\begin{prop}
\label{prop:FNforProduct}
Let $X$ and $Y$ be smooth projective varieties. Then we have 
\[
\conFN(X \times Y) \geq \max\{\conFN(X), \conFN(Y) \},
\]
with equality if the abelian varieties $\Pic^0_X$ and $\Pic^0_Y$ have no common nontrivial isogeny factor.
\end{prop}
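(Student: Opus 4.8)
The plan is to treat the inequality and the equality separately; the common tool is the behaviour of global generation under exterior tensor products. Throughout, write $p\colon X\times Y\to X$ and $q\colon X\times Y\to Y$ for the two projections, and recall that $K_{X\times Y}\cong p^{*}K_X\otimes q^{*}K_Y$, that $p^{*}A\otimes q^{*}B$ is ample as soon as $A$ is ample on $X$ and $B$ is ample on $Y$ (pass to large multiples and use the Segre embedding), and the Künneth identity $\rH^{0}(X\times Y,\ p^{*}\cM\otimes q^{*}\cN)\cong\rH^{0}(X,\cM)\otimes\rH^{0}(Y,\cN)$ for line bundles $\cM$ on $X$ and $\cN$ on $Y$.

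For the inequality I would use the elementary observation that a global section of $p^{*}\cM\otimes q^{*}\cN$ is a sum of sections $s\otimes t$ with $(s\otimes t)(x,y)=s(x)\otimes t(y)$; hence a point $(x,y)$ lies in the base locus of $p^{*}\cM\otimes q^{*}\cN$ whenever $x$ lies in the base locus of $\cM$, so $p^{*}\cM\otimes q^{*}\cN$ is not globally generated whenever $\cM$ is not (and if $\rH^{0}(X,\cM)=0$ this is clear, as then the product has no global sections either). Now set $k=\conFN(X)-1$; if $\conFN(X)=0$ there is nothing to prove, and otherwise the definition of $\conFN(X)$ yields ample $L_1,\dots,L_k$ on $X$ with $K_X+L_1+\dots+L_k$ not globally generated. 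Choosing \emph{arbitrary} ample $A_1,\dots,A_k$ on $Y$ and setting $\cL_i=p^{*}L_i\otimes q^{*}A_i$, these are ample on $X\times Y$ and $K_{X\times Y}+\cL_1+\dots+\cL_k\cong p^{*}(K_X+\textstyle\sum_i L_i)\otimes q^{*}(K_Y+\sum_i A_i)$ is not globally generated, so $\conFN(X\times Y)\ge k+1=\conFN(X)$; by symmetry $\conFN(X\times Y)\ge\conFN(Y)$ as well.

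For the equality, put $m=\max\{\conFN(X),\conFN(Y)\}$. I claim that, under the isogeny hypothesis, every ample line bundle $\cL$ on $X\times Y$ has the form $p^{*}\cM\otimes q^{*}\cN$ with $\cM,\cN$ ample (ampleness of $\cM$ and $\cN$ being immediate, once the decomposition is known, by restricting $\cL$ to a slice $X\times\{y_0\}$ resp.\ $\{x_0\}\times Y$). Granting this: for $s\ge m$ and ample $\cL_1,\dots,\cL_s$, write $\cL_i\cong p^{*}\cM_i\otimes q^{*}\cN_i$; then $K_{X\times Y}+\sum_i\cL_i\cong p^{*}(K_X+\sum_i\cM_i)\otimes q^{*}(K_Y+\sum_i\cN_i)$, and since $s\ge\conFN(X)$ and $s\ge\conFN(Y)$ both tensor factors are globally generated, hence so is the product (at $(x,y)$ pick $s$ with $s(x)\neq0$ and $t$ with $t(y)\neq0$, so $s\otimes t$ is nonzero there). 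Thus $\conFN(X\times Y)\le m$, and with the first part we conclude equality.

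The remaining point — the decomposition of ample line bundles, which I expect to be the main obstacle — I would prove as follows. Fix $x_0\in X$, $y_0\in Y$; for $\cL\in\Pic(X\times Y)$ set $\cM=\cL|_{X\times\{y_0\}}$, $\cN=\cL|_{\{x_0\}\times Y}$, and $\cL_0=\cL\otimes p^{*}\cM^{-1}\otimes q^{*}\cN^{-1}$, which is trivial along $X\times\{y_0\}$ and along $\{x_0\}\times Y$. Rigidified at $x_0$, $\cL_0$ becomes a family of line bundles on $X$ parametrized by $Y$, hence a morphism $\psi\colon Y\to\uPic_X$; since the slices $X\times\{y\}$ form a connected family on $X\times Y$, each $\cL_0|_{X\times\{y\}}$ is algebraically equivalent to $\cL_0|_{X\times\{y_0\}}=\dO_X$, so $\psi$ lands in $\Pic^0_X$ with $\psi(y_0)=0$ and therefore factors through a homomorphism $\bar\psi\colon\Alb(Y)\to\Pic^0_X$ by the universal property of the Albanese. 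Now $\Alb(Y)$ is isogenous to its dual $\Pic^0_Y$ (any abelian variety is isogenous to its dual via a polarization), so the hypothesis forces $\Alb(Y)$ and $\Pic^0_X$ to have no common simple isogeny factor; as $\Hom$ between abelian varieties is torsion-free and a nonzero homomorphism would yield an isogeny between simple sub- and quotient factors on the two sides, $\bar\psi=0$. Hence $\cL_0|_{X\times\{y\}}\cong\dO_X$ for all $y$, so by the see-saw theorem $\cL_0\cong q^{*}\cN'$, and restriction to $\{x_0\}\times Y$ gives $\cN'\cong\dO_Y$, i.e.\ $\cL\cong p^{*}\cM\otimes q^{*}\cN$. (Alternatively one may quote the standard structure result $\Pic(X\times Y)\cong\Pic(X)\oplus\Pic(Y)\oplus\Hom(\Alb_X,\Pic^0_Y)$ and note the hypothesis kills the last summand.) The exterior-product bookkeeping for global generation in the first two steps is routine; the care goes into this last step — ensuring $\psi$ is a genuine morphism of schemes (which is why one rigidifies along a slice), that the connectedness of the family of slices is what places $\psi$ in $\Pic^0$, and that the isogeny hypothesis on $\Pic^0_X,\Pic^0_Y$ is correctly transported to $\Hom(\Alb(Y),\Pic^0_X)=0$.
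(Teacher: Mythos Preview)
Your proof is correct. Note, however, that the present paper does not give its own proof of this proposition: it is quoted verbatim from the companion article \cite{chen_convex_2023} (Lemma~2.6 and Proposition~2.7 there), so there is no in-paper argument to compare against. Your approach---pulling back a non--globally-generated adjoint bundle from a factor for the inequality, and using the see-saw principle together with $\Hom(\Alb(Y),\Pic^0_X)=0$ to force $\Pic(X\times Y)\cong\Pic(X)\oplus\Pic(Y)$ for the equality---is the standard one and is precisely what one expects the cited proof to do; indeed your parenthetical remark about the decomposition $\Pic(X\times Y)\cong\Pic(X)\oplus\Pic(Y)\oplus\Hom(\Alb_X,\Pic^0_Y)$ is exactly the structural fact underlying Proposition~2.7 of \cite{chen_convex_2023}.
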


As a corollary we can determine the convex Fujita numbers of a product of curves.

\begin{cor}
\label{cor:FNproduct of curves}
The convex Fujita number of a product $X=C_1 \times C_2$ of two smooth projective curves equals
\[
\conFN(C_1 \times C_2)  =2.
\]
\end{cor}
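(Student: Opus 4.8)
The plan is to first determine $\conFN(C)$ for a single smooth projective curve $C$, and then to sandwich $\conFN(C_1 \times C_2)$ between \pref{prop:FNforProduct} and the Reider bounds of \pref{prop:FNsurfacesReider}. For the curve case I claim $\conFN(C) = 2$. The inequality $\conFN(C) \le 2$ is immediate: an ample divisor on a curve has degree $\ge 1$ and $\deg K_C = 2g-2$, so for $s \ge 2$ and ample $L_1,\dots,L_s$ the adjoint divisor $K_C + L_1 + \dots + L_s$ has degree $\ge 2g$ and is therefore globally generated. For the reverse inequality I take $L = P$ a point (which is ample on a curve) and check that $K_C + P$ fails to be globally generated: if $g = 0$ it has degree $-1$ and no sections, while if $g \ge 1$ Riemann--Roch gives $h^0(C, K_C + P) = g = h^0(C, K_C) = h^0(C, K_C + P - P)$, so $P$ is a base point of $|K_C + P|$. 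Hence $\conFN(C) \ge 2$.

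With this in hand the lower bound for the product follows at once: \pref{prop:FNforProduct} gives $\conFN(C_1 \times C_2) \ge \max\{\conFN(C_1), \conFN(C_2)\} = 2$. For the upper bound I use that $K_{C_1 \times C_2}$ equals $\pr_1^* K_{C_1} + \pr_2^* K_{C_2}$, where both degrees $2g_i - 2$ are even; choosing line bundles $\vartheta_i$ on $C_i$ of degree $g_i - 1$ makes $K_{C_1 \times C_2}$ numerically equivalent to $2(\pr_1^* \vartheta_1 + \pr_2^* \vartheta_2)$ with $\pr_1^*\vartheta_1 + \pr_2^*\vartheta_2 \in \Pic(C_1 \times C_2)$, and then \pref{prop:FNsurfacesReider}\eqref{propitem:reider4} yields $\conFN(C_1 \times C_2) \le 2$. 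Combining the two bounds finishes the proof.

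I do not expect a real obstacle here; the only subtlety is that one cannot shortcut the argument by invoking the equality clause of \pref{prop:FNforProduct}, since $\Pic^0_{C_1}$ and $\Pic^0_{C_2}$ may well share a nontrivial isogeny factor (for instance when $C_1 \simeq C_2$), which is precisely why the independent Reider-type upper bound is needed. One should also check the harmless small-genus edge cases in the curve computation (including $g=0$, where $K_C + P$ has negative degree), but these go through without change.
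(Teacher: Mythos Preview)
Your proof is correct and follows essentially the same approach as the paper: the lower bound via \pref{prop:FNforProduct} using $\conFN(C_i)=2$, and the upper bound via \pref{prop:FNsurfacesReider}\eqref{propitem:reider4} using that $K_{C_1\times C_2}$ is (numerically) divisible by $2$. You supply more detail than the paper does---in particular the verification that $\conFN(C)=2$ and the explicit numerical halving of the canonical class---and your remark that the equality clause of \pref{prop:FNforProduct} is unavailable here is a nice observation.
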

\begin{proof}
Since curves have convex Fujita number $2$, \pref{prop:FNforProduct} shows that $\conFN(C_1 \times C_2) \geq 2$. On the other hand, the canonical bundle  $\omega_X = \omega_{C_1} \boxtimes \omega_{C_2}$ is divisible by $2$, so that the upper bound follows from \pref{prop:FNsurfacesReider}\eqref{propitem:reider4}.
\end{proof}

\begin{cor}
\label{cor:productsurfaces FN2}
There are minimal surfaces of convex Fujita number $2$ of the following kind:
\begin{enumerate}[align=left,labelindent=0pt,leftmargin=*]
\item 
rational surfaces,
\item 
ruled surfaces over an arbitrary smooth projective curve $S$ as a base,
\item
abelian surfaces,
\item 
elliptic surfaces of Kodaira dimension $1$,
\item
surfaces of general type.
\end{enumerate}
\end{cor}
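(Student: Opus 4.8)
The plan is to produce, for each of the five classes, a single surface realized as a product $C_1 \times C_2$ of smooth projective curves, so that \cref{cor:FNproduct of curves} immediately yields $\conFN = 2$, and then to check that the chosen product is (relatively) minimal and lies in the advertised class of the Kodaira--Enriques classification. All the substantive content is already contained in \cref{cor:FNproduct of curves}; what remains is to choose the genera of the two factors correctly and to recall the standard fact that a product of two smooth projective curves of genus at least one has nef canonical bundle, hence contains no $(-1)$-curve, hence is minimal.

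First I would list the examples. For (1) take $\bP^1 \times \bP^1$, the Hirzebruch surface $\bF_0$, which is minimal and rational. For (2), given the base curve $S$, take $\bP^1 \times S = \bP(\dO_S \oplus \dO_S)$, whose first projection exhibits it as a relatively minimal ruled surface over $S$. For (3) take a product $E_1 \times E_2$ of two elliptic curves; here $\omega_X \cong \dO_X$, so $X$ is an abelian surface and is minimal. For (4) take $E \times C$ with $E$ an elliptic curve and $C$ of genus $\geq 2$; the second projection is an elliptic fibration, $\omega_{E \times C} \cong p_C^{*}\omega_C$ is nef so that $X$ is minimal, and $\kappa(X) = \kappa(E) + \kappa(C) = 1$. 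For (5) take $C_1 \times C_2$ with both $C_i$ of genus $\geq 2$; then $\omega_X = \omega_{C_1} \boxtimes \omega_{C_2}$ is ample, so $X$ is minimal of general type, recovering the product example among the surfaces of Kodaira dimension $2$. In each case \cref{cor:FNproduct of curves} gives $\conFN(X) = 2$.

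I do not expect a genuine obstacle here. The only steps requiring a line of justification are the minimality claims, and these all reduce to the observation that a smooth projective surface with nef canonical bundle contains no $(-1)$-curve: for $\bP^1 \times \bP^1$ and $\bP^1 \times S$ the (relative) minimality is classical, while for the remaining products the canonical bundle is the pullback from one factor of a nef line bundle (trivial on the other factor), hence nef. The Kodaira-dimension bookkeeping in (4) and (5) uses only the additivity of Kodaira dimension for products together with the identification of the projections as the relevant fibrations, respectively the ampleness of $\omega_{C_1} \boxtimes \omega_{C_2}$.
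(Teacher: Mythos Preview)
Your proof is correct and follows essentially the same approach as the paper: both realize each class by the identical list of products $\bP^1 \times \bP^1$, $\bP^1 \times S$, $E \times E'$, $E \times C$, and $C_1 \times C_2$, invoking \cref{cor:FNproduct of curves} for $\conFN = 2$. You supply more detail on minimality and Kodaira dimension than the paper does, but the argument is the same.
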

\begin{proof}
All of these are realized as products of curves: 
\[
\bP^1 \times \bP^1, \ \bP^1 \times S, \ E \times E', \ E \times C, \ C_1 \times C_2,
\]
where $E$ and $E'$ are elliptic curves, and $C$, $C_1$ and $C_2$ are curves of genus at least $2$.
\end{proof}

\subsection{Pseudosplit irreducible fibrations}

We are going to describe a geometric setting which leads to an ample line bundle such that the adjoint linear system has base points. 

\begin{lem}
\label{lem:FNleq1impliesintersectiongeq2}
Let $X$ be a smooth projective surface with $\conFN(X) \leq 1$. Let $D$ be an ample effective divisor on $X$, and let $i: C \inj X$ be an irreducible and reduced nef divisor on $X$. Then we have $(D \bullet C) \geq 2$.
\end{lem}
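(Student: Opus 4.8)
The plan is to argue by contradiction: assume $\conFN(X)\le1$ and $(D\cdot C)\le1$. Since $D$ is ample and $C$ is an integral curve we have $(D\cdot C)\ge1$, hence $(D\cdot C)=1$. From $\conFN(X)\le1$ it follows that $\omega_X\otimes\cL$ is globally generated for every ample line bundle $\cL$ on $X$ (take $s=1$ in the definition of the convex Fujita number); I will exhibit an ample $\cL$ for which this fails.

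Take $\cL:=\dO_X(C+D)$, which is ample because $C$ is nef and $D$ is ample. Then $\omega_X\otimes\cL$ is globally generated on $X$, hence so is its restriction $(\omega_X\otimes\cL)|_C$ to $C$. By adjunction $\omega_C\cong(\omega_X\otimes\dO_X(C))|_C$, so $(\omega_X\otimes\cL)|_C\cong\omega_C\otimes\dO_X(D)|_C$. I will use the elementary fact that, for a smooth point $Q$ of an integral projective (hence Gorenstein, being a divisor on a smooth surface) curve $C$, the invertible sheaf $\omega_C(Q)$ has $Q$ as a base point: from $0\to\omega_C\to\omega_C(Q)\to k(Q)\to0$, Serre duality gives $h^1(\omega_C)=1$ and $h^1(\omega_C(Q))=0$, so by Riemann--Roch $h^0(\omega_C(Q))=p_a(C)=h^0(\omega_C)$; hence the inclusion $H^0(\omega_C)\hookrightarrow H^0(\omega_C(Q))$ is an isomorphism and every global section of $\omega_C(Q)$ vanishes at $Q$. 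Now, as long as $C$ is not a component of $D$, the scheme-theoretic intersection $D\cap C$ is an effective Cartier divisor of degree $(D\cdot C)=1$ on $C$, hence a single reduced point $Q$ lying in the smooth locus of $C$, and $\dO_X(D)|_C\cong\dO_C(Q)$; therefore $(\omega_X\otimes\cL)|_C\cong\omega_C(Q)$ is not globally generated — a contradiction. Since $C$ may be replaced by any member of $|D|$, and in particular by a general one whenever $C$ is not contained in the base locus of $|D|$, this proves the lemma in the principal case.

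The remaining case, that $C$ is a fixed component of $|D|$, is the main obstacle. Writing $D=mC+D'$ with $m\ge1$ and $C$ not a component of the effective divisor $D'$, we get $(D'\cdot C)=1-m(C^2)\ge0$; combined with $C$ nef and $(D\cdot C)=1$ this forces $(C^2)=0$ and $(D'\cdot C)=1$ (the value $(C^2)=1$ is excluded by the Hodge index theorem, since it would give $D\equiv C$ with $D$ ample, hence $C$ ample). One then repeats the argument with the ample line bundles $\dO_X(kC+D)$, $k\ge0$, each of which restricts to $C$ with degree $2p_a(C)-1$ (here one uses $(C^2)=0$), to conclude that $\dO_X(jC+D)|_C$ is non-effective for all integers $j\ge-1$; on the other hand $\dO_X(-mC+D)|_C=\dO_X(D')|_C\cong\dO_C(D'\cap C)$ is effective. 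This is already a contradiction when $m\le1$, so the only remaining possibility is that $C$ occurs with multiplicity $\ge2$ in every ample effective divisor meeting it with intersection number one. I expect this configuration not to occur on a surface with $\conFN(X)\le1$ — it fails in all the surfaces that appear in the classification — but ruling it out, or handling it directly via the Griffiths--Harris extension used in the proof of \pref{prop:NCDampleFujitaExtremeSurface}, is the one step that is not routine; everything else is standard.
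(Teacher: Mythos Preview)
Your core argument is identical to the paper's: assume $(D\cdot C)=1$, note that $C+D$ is ample so $\omega_X(C+D)$ is globally generated, restrict to $C$ via adjunction to get $\omega_C(P)$, and derive a contradiction from the fact that $P$ is always a base point of $\omega_C(P)$. (Minor slip: you wrote ``$C$ may be replaced by any member of $|D|$'' where you clearly meant $D$.)

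The edge case you single out --- $C$ a fixed component of $|D|$ --- is not treated in the paper's proof either. The paper simply writes ``$(D\bullet C)=1$, i.e., $D$ intersects $C$ transversely in a unique smooth point $P$'', which tacitly assumes $C\not\subset D$. So you have been more scrupulous than the paper, not less. For all applications in the paper (notably \pref{prop:specialfibrationFN}) one can arrange the ample effective divisor so that it does not contain $C$, and then the issue evaporates; the lemma as literally stated may well need this extra care, but the paper does not supply it.

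Your attempted analysis of the residual case is not quite right. When $(C^2)=1$ your Hodge-index argument correctly forces $D'=0$, hence $D=C$; but ``$C$ ample'' is not by itself a contradiction, so $(C^2)=1$ is not excluded as you claim. And in the $(C^2)=0$ branch, the step ``$\omega_C\otimes\cM$ globally generated $\Rightarrow$ $\cM$ non-effective'' is fine (base-point-freeness at a smooth point $Q$ is equivalent to $\cM\not\cong\dO_C(Q)$), but the induction you sketch does not reach $j=-m$ when $m\ge 2$, as you yourself note. In short: your main proof matches the paper's, and the difficulty you flag is a genuine subtlety that the paper also glosses over.
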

\begin{proof}
We argue by contradiction and assume that $(D \bullet C) = 1$, i.e., $D$ intersects $C$ transversely in a unique smooth point $P$. The divisor $C+D$ is also ample, so by assumption $\omega_X \otimes \dO_X(C+D)$ is globally generated. Therefore its restriction to $C$ is also globally generated. 
By adjunction (using the resolution $\dO_X(-C) \to \dO_X$ of $i_\ast \dO_C$) 
\[
\omega_C = \cExt^1(i_\ast \dO_C, \omega_X) = \coker(\omega_X \to \omega_X(C)) = \omega_X(C)|_C
\]
where $\omega_C$ is the dualizing sheaf of $C$,  we compute the restriction as
\[
(\omega_X \otimes \dO_X(C+D))|_C = \omega_X(C)|_C \otimes \dO_C(P) = \omega_C(P).
\]
The cohomology sequence 
\[
0 \to \rH^0(C,\omega_C) \to \rH^0(C,\omega_C(P)) \to \rH^0(C,\omega_C(P)|_P) \to \rH^1(C,\omega_C) \to \rH^1(C,\omega_C(P)) \to 0
\]
shows that $P$ is a base point of $\omega_C(P)$  if and only if 
\[
\rH^1(C,\omega_C) \to \rH^1(C,\omega_C(P)) 
\]
is not an isomorphism. Serre duality translates this into the dual map 
\[
 0 = \rH^0(C,\dO_C(-P)) \to \rH^0(C,\dO_C) \not= 0\ ,
\]
which is \emph{not} an isomorphism. This shows that $P$ is a base point of $\omega_X(C+D)$, a contradiction.
\end{proof}

\begin{defi}
\label{defi:special fibration}
A \textbf{pseudosplit irreducible fibration} on a smooth projective surface $X$ is a fibration $f: X \to B$ over a smooth curve $B$  such that 
\begin{enumerate}[label=(\roman*),align=left,labelindent=0pt,leftmargin=*]
\item 
all fibres $X_b = f^{-1}(b)$ are irreducible, and
\item 
there is a fibre $X_b$, potentially multiple, with underlying reduced fibre $F$, and 
\item 
an irreducible curve $S$ on $X$ with $S \bullet  F = 1$.
\end{enumerate}
\end{defi}

\begin{prop}
\label{prop:specialfibrationFN}
Let $X$ be a smooth projective surface that admits a pseudosplit irreducible fibration $f: X \to B$ with reduced fibre $F$ of multiplicity $m$ and irreducible curve $S$ such that $S \bullet F  = 1$. Let $D$ be a nontrivial effective divisor on $B$ of degree $\deg(D) > -\frac{1}{m}(S^2)$. Then  
\[
\cL = \dO_X(S + F + f^\ast D)
\]
is ample, but  $\omega_X \otimes \cL$ is not globally generated. More precisely, the intersection point $P = S \cap F$ is contained in the base locus of $\omega_X \otimes \cL$. 
\end{prop}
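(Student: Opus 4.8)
The plan is to establish ampleness of $\cL$ by the Nakai--Moishezon criterion, and then to deduce the base-point statement by restricting $\omega_X\otimes\cL$ to the reduced fibre $F$, where it will turn out to be $\omega_F(P)$, and invoking the cohomological argument from the proof of \lref{lem:FNleq1impliesintersectiongeq2}. I expect the ampleness verification to be the main obstacle --- not because it is deep, but because it requires careful bookkeeping with the multiple fibre $mF$; the restriction computation and the cohomology on $F$ essentially reproduce arguments already present in the paper.

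First I would record the relevant intersection numbers. Since $mF$ is a fibre of $f$, we have $(F^2)=0$ and $f^\ast D\equiv(\deg D)\cdot mF$ numerically, so $((f^\ast D)^2)=(F\bullet f^\ast D)=0$ and $(S\bullet f^\ast D)=m\deg D$; together with $(S\bullet F)=1$ this gives $(\cL^2)=(S^2)+2+2m\deg D$ and $(\cL\bullet F)=1$. The hypothesis $\deg D>-\tfrac1m(S^2)$ is exactly the statement $(S^2)+m\deg D>0$, and $\deg D>0$ because $D$ is a nontrivial effective divisor; hence $(\cL^2)>0$ and $(\cL\bullet S)=(S^2)+1+m\deg D>0$. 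For the remaining curve test I would split into cases. If an irreducible curve $C$ is contracted by $f$, then by condition (i) of \dref{defi:special fibration} $C$ is a reduced fibre, so $(F\bullet C)=(f^\ast D\bullet C)=0$, whereas $(S\bullet C)>0$ because $S$ is not contracted (as $(S\bullet F)=1\ne 0$) and therefore dominates $B$ and meets every fibre. If $C$ is irreducible and not contracted, then $C$ meets the fibre $X_b$, so $(C\bullet F)>0$, and $(C\bullet f^\ast D)=(\deg D)(C\bullet X_b)>0$, while $(C\bullet S)\ge 0$. In every case $(\cL\bullet C)>0$, so $\cL$ is ample by Nakai--Moishezon.

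Next I would compute $(\omega_X\otimes\cL)|_F$. As $F$ is an effective Cartier divisor on the smooth surface $X$, it is an integral Gorenstein curve and adjunction gives $\omega_F=\omega_X(F)|_F$. Since $f(F)$ is a point, $\dO_X(f^\ast D)|_F$ is trivial; and since $(S\bullet F)=1$, the scheme-theoretic intersection $S\cap F$ is a single reduced point $P$ and $\dO_X(S)|_F\cong\dO_F(P)$. Combining these,
\[
(\omega_X\otimes\cL)|_F\;=\;\omega_X(F)|_F\otimes\dO_X(S)|_F\otimes\dO_X(f^\ast D)|_F\;=\;\omega_F(P).
\]
Now, exactly as in the proof of \lref{lem:FNleq1impliesintersectiongeq2}, Serre duality on the integral curve $F$ gives $\rH^1(F,\omega_F(P))\cong\rH^0(F,\dO_F(-P))^\vee=0$, while $\rH^1(F,\omega_F)\cong\rH^0(F,\dO_F)^\vee$ is one-dimensional; an Euler-characteristic count then forces the injective natural map $\rH^0(F,\omega_F)\to\rH^0(F,\omega_F(P))$ to be an isomorphism, so every global section of $\omega_F(P)$ vanishes at $P$. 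Because the value at $P$ of a global section of $\omega_X\otimes\cL$ equals the value at $P$ of its restriction to $F$, it follows that every global section of $\omega_X\otimes\cL$ vanishes at $P$; hence $P$ lies in the base locus of $\omega_X\otimes\cL$, and in particular $\omega_X\otimes\cL$ is not globally generated, as claimed.
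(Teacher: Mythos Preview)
Your proof is correct and follows essentially the same approach as the paper. The only organizational difference is that the paper first shows the smaller divisor $\cM = \dO_X(S + f^\ast D)$ is ample and then invokes \lref{lem:FNleq1impliesintersectiongeq2} (with $D = \cM$ and $C = F$) as a black box, whereas you verify ampleness of $\cL$ itself and inline the restriction-to-$F$ and Serre duality argument; the underlying computations are identical.
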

\begin{proof}
We set $\cM = \dO_X(S + f^\ast D)$ so that $\cL = \cM(F)$.  We first show that $\cM$ is ample. 
We have $\cM \bullet F = 1$ and $\cM \bullet S = (S^2) + \deg(D) m > 0$. Furthermore we  also have
\[
(\cM^2) = (S^2) + 2\deg(D)m  > 0.
\]
For any irreducible curve $C$ on $X$ other than $S$ or $F$ we have $(S \bullet C)  \geq 0$ and $(f^\ast D \bullet C) \geq 0$ with at least one intersection number strictly positive, because $D$ is effective and because all fibres of $f: X \to B$ are irreducible by assumption. It follows that $(\cM \bullet C) > 0$, and therefore $\cM$ is ample by the criterion of Nakai-Moishezon. 

Since $F$ is nef and $\cM \bullet F = 1$ and $\cM$ is ample effective, we can apply \lref{lem:FNleq1impliesintersectiongeq2} to deduce that $\conFN(X) \geq 2$. More precisely, the proof of \lref{lem:FNleq1impliesintersectiongeq2} shows that $P = S \cap F$ is a base point of $\omega_X \otimes \cL$ as claimed. 
\end{proof}

\begin{cor}
\label{cor:specialfibrationFN}
Let $X$ be a smooth projective surface that admits a pseudosplit irreducible fibration. Then we have $\conFN(X) \geq 2$.
\end{cor}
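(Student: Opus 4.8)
The plan is to deduce this immediately from \pref{prop:specialfibrationFN}. Fix a pseudosplit irreducible fibration $f\colon X \to B$ on $X$; by \dref{defi:special fibration} we obtain a reduced fibre $F$ of some multiplicity $m \geq 1$ together with an irreducible curve $S$ satisfying $S \bullet F = 1$. The only thing to check is that \pref{prop:specialfibrationFN} can actually be invoked, i.e.\ that there exists a nontrivial effective divisor $D$ on $B$ with $\deg(D) > -\tfrac{1}{m}(S^2)$. Since $B$ is a smooth projective curve, it carries effective divisors of arbitrarily large degree, so I would simply pick a point $b_0 \in B$ and an integer $N$ with $N > -\tfrac{1}{m}(S^2)$ and take $D = N b_0$.

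With such a $D$ in hand, \pref{prop:specialfibrationFN} produces an ample line bundle $\cL = \dO_X(S + F + f^\ast D)$ for which the adjoint line bundle $\omega_X \otimes \cL$ is not globally generated (indeed the point $P = S \cap F$ lies in its base locus). Since the convex Fujita number $\conFN(X)$ is by definition the least $m \geq 0$ such that $K_X + L_1 + \ldots + L_s$ is globally generated for every $s \geq m$ and all ample divisors $L_1, \ldots, L_s$, the existence of one ample $\cL$ with $\omega_X \otimes \cL$ not globally generated forces $\conFN(X) \geq 2$, which is the assertion.

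I do not expect any genuine obstacle: all the geometric content has already been absorbed into \pref{prop:specialfibrationFN} (and, through it, into \lref{lem:FNleq1impliesintersectiongeq2}), and the corollary merely records that the numerical hypothesis $\deg(D) > -\tfrac{1}{m}(S^2)$ imposes no restriction, since one is always free to enlarge $D$ on the curve $B$.
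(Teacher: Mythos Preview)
Your proposal is correct and follows exactly the paper's approach: the paper's proof simply says the corollary follows at once from \pref{prop:specialfibrationFN} by choosing an effective divisor on the base of suitably large degree, which is precisely what you do by taking $D = N b_0$ with $N > -\tfrac{1}{m}(S^2)$.
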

\begin{proof}
This follows at once from \pref{prop:specialfibrationFN} by choosing an effective divisor on the base of the pseudosplit irreducible fibration of  suitably large degree. 
\end{proof}

\section{Projective bundles on curves} 
\label{sec:FujitaPE}

In this section we study convex Fujita numbers for projective bundles on curves. In a forthcoming work we will treat projective space bundles on more general varieties. 

\subsection{Review of ample line bundles on projective space bundles} 
Let $\cE$ be a vector bundle of rank $r$ on a smooth projective variety $S$. We will study the convex Fujita number of $X = \bP(\cE)$, the associated projective space bundle $\pi: \bP(\cE) \to S$. The Picard group of $\bP(\cE)$ sits in a short exact sequence
\[
0 \to \Pic(S) \xrightarrow{\pi^\ast} \Pic(\bP(\cE)) \xrightarrow{\res} \Pic(\bP^{r-1}) \to 0
\]
where $\res$ restricts to a fiber and $\Pic(\bP^{r-1} )= \bZ$. The sequence splits using the tautological line bundle $\dO(1)$ associated to $\cE$. Every line bundle on $\bP(\cE)$ is uniquely of the form $\pi^\ast \cM(a)$.

We start with a lower bound for the convex Fujita number.

\begin{prop}
\label{prop:FNonPElowerbound}
Let $\cE$ be a vector bundle of rank $r$ on a smooth projective variety $S$. Then 
\[
\conFN(\bP(\cE)) \geq \conFN(\bP^{r-1}) = r.
\]
\end{prop}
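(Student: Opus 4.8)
The plan is to deduce the inequality by restricting adjoint line bundles from $\bP(\cE)$ to a fibre of $\pi\colon \bP(\cE)\to S$, which is a copy of $\bP^{r-1}$; the equality $\conFN(\bP^{r-1})=r$ will be obtained along the way from an elementary computation on projective space.

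First I would settle projective space itself. Since $\omega_{\bP^{r-1}}=\dO(-r)$ and every ample line bundle on $\bP^{r-1}$ is of the form $\dO(d)$ with $d\geq 1$, an adjoint bundle $\omega_{\bP^{r-1}}\otimes\dO(d_1)\otimes\cdots\otimes\dO(d_s)\cong\dO(d_1+\cdots+d_s-r)$ is globally generated exactly when $d_1+\cdots+d_s\geq r$. For $s\geq r$ this holds for every choice of $d_i\geq 1$, while for $s=r-1$ the choice $d_1=\cdots=d_{r-1}=1$ produces the non-globally generated bundle $\dO(-1)$. Hence $\conFN(\bP^{r-1})=r$.

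Next I would fix $s_0\in S$ and set $F=\pi^{-1}(s_0)\cong\bP^{r-1}$. Because $\dO_{\bP(\cE)}(1)$ is relatively ample over the projective base $S$, one may choose an ample line bundle $A$ on $S$ so that $\cL:=\dO_{\bP(\cE)}(1)\otimes\pi^\ast A$ is ample on $\bP(\cE)$. Restricting $\cL$ to $F$ kills the pullback factor, giving $\cL|_F\cong\dO_{\bP^{r-1}}(1)$; and since a fibre of the smooth morphism $\pi$ has trivial normal bundle, adjunction yields $\omega_{\bP(\cE)}|_F\cong\omega_F=\dO_{\bP^{r-1}}(-r)$. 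Now put $m=\conFN(\bP(\cE))$ and apply the definition with $s=m$ and $L_1=\cdots=L_m=\cL$: the line bundle $\omega_{\bP(\cE)}\otimes\cL^{\otimes m}$ is globally generated, hence so is its restriction to $F$, namely $(\omega_{\bP(\cE)}\otimes\cL^{\otimes m})|_F\cong\dO_{\bP^{r-1}}(m-r)$. Global generation of $\dO_{\bP^{r-1}}(m-r)$ forces $m\geq r$, which together with the previous paragraph gives the assertion $\conFN(\bP(\cE))\geq\conFN(\bP^{r-1})=r$.

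There is no genuinely hard step here; the argument is essentially a one-line restriction once the ingredients are in place. The two points requiring care — the nearest thing to an obstacle — are the identification $\omega_{\bP(\cE)}|_F\cong\omega_{\bP^{r-1}}$, which rests on the triviality of the normal bundle of a fibre (equivalently on the relative canonical bundle formula for $\bP(\cE)\to S$), and the existence of the ample twist $\dO_{\bP(\cE)}(1)\otimes\pi^\ast A$, which is the standard fact that a relatively ample line bundle becomes ample after tensoring with a sufficiently positive line bundle pulled back from a projective base.
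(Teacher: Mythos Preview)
Your proof is correct and follows essentially the same approach as the paper: construct an ample line bundle on $\bP(\cE)$ that restricts to $\dO(1)$ on a fibre, then restrict the adjoint bundle to that fibre and read off $m\geq r$. The only cosmetic difference is in how the ample twist is produced---the paper picks $\cM$ ample on $S$ with $\cM\otimes\cE$ globally generated and uses the resulting embedding $\bP(\cE)\hookrightarrow\bP^n\times S$ to see that $\pi^\ast\cM^{\otimes 2}(1)$ is ample, whereas you invoke the general principle that $\dO_{\bP(\cE)}(1)\otimes\pi^\ast A$ is ample for $A$ sufficiently positive; both are standard and lead to the same restriction argument.
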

\begin{proof}
Let $\pi : X=\bP(\cE) \to S$ be the natural  projection. 
Let $\cM$ be an ample line bundle on $S$ such that $\cM \otimes \cE$ is globally generated. Then $\cL = \pi^\ast \cM^{\otimes 2} (1)$ is ample on $X=\bP(\cE)$. Indeed, given a tuple of $n+1$ generating sections $\dO_S^{n+1} \surj \cM \otimes \cE$, the associated closed immersion 
\[
X = \bP(\cE) = \bP(\cM \otimes \cE) \inj \bP(\dO^{n+1}) = \bP^n \times S
\]
has $\pi^\ast \cM^{\otimes 2} (1)$ isomorphic to the restriction of $\dO(1) \boxtimes \cM$ to $X$, which is ample. 

If $\omega_X \otimes \cL^{\otimes m}$ is globally generated, then its restriction to a fiber (identified with $\bP^{r-1}$) is also globally generated and equals
\[
(\omega_X \otimes \cL^{\otimes m})|_{\bP^{r-1}} \simeq \omega_{\bP^{r-1}} \otimes \dO(m) \simeq \dO(m - r)\ .
\]
It follows that  $m \geq r$.
\end{proof}

Recall that a vector bundle $\cE$ on a smooth projective curve $C$ admits a unique Harder-Narasimhan filtration
\[
0 = \cE_0 \subseteq \cE_1 \subseteq \cE_2 \subseteq \ldots \subseteq \cE_{t-1} \subseteq \cE_t = \cE
\]
by  vector bundles $\cE_i$ such that $\cE_i/\cE_{i-1}$ is semistable of slope $\mu_i$ with 
\[
\mu^+(\cE) \coloneq \mu_1 > \mu_2 > \ldots > \mu_{t-1} > \mu_t \eqcolon \mu^-(\cE).
\]
Since the slope of an extension is always in the interval of the slopes of its constituents, we find
\[
\mu^-(\cE) \leq \mu(\cE) \leq \mu^+(\cE)
\]
with equality if and only if $\cE$ is semistable. We call $\mu^+(\cE)$ (resp.\ $\mu^-(\cE)$) the \textbf{maximal} (resp.\ \textbf{minimal}) \textbf{slope} of $\cE$. Butler deduced the following observation from  \cite[Theorem 3.1]{miyaoka_chern_1987}.

\begin{prop}
\label{prop:ample cone on PE over curve}
Let $\pi:  \bP(\cE) \to C$ be the projective space bundle of vector bundle $\cE$ of rank $r$ on a smooth projective  curve $C$. A line bundle $\cL = \pi^\ast \cM (a)$ is ample if and only if $a > 0$ and 
\[
\deg(\cM) + a \mu^-(\cE) > 0.
\]
\end{prop}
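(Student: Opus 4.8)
The plan is to reduce ampleness of $\cL$ to a purely numerical question on $\bP(\cE)$ and then feed it into Miyaoka's description of the nef cone.

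First I would fix the numerical picture. Since $C$ is a curve, $\rho(\bP(\cE)) = 2$, so $N^1(\bP(\cE))_{\bR}$ is spanned by the tautological class $\xi \coloneq c_1(\dO_{\bP(\cE)}(1))$ (normalised by $\pi_\ast\dO_{\bP(\cE)}(1) = \cE$, in keeping with the conventions of \pref{prop:FNonPElowerbound}) and the class $f$ of a fibre of $\pi$, with $f^2 = 0$ and $\xi^{r-1}\cdot f = 1$. Under this identification the numerical class of $\cL = \pi^\ast\cM(a)$ is $a\,\xi + \deg(\cM)\,f$. As ampleness of a line bundle depends only on its numerical class, it suffices to decide which classes $a\,\xi + b\,f$ are ample.

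Next I would invoke Miyaoka. By \cite[Theorem 3.1]{miyaoka_chern_1987} (see also \cite{butler_normal_1994}), for a vector bundle $\cE$ on the smooth projective curve $C$ the twist $\dO_{\bP(\cE)}(1)\otimes\pi^\ast\cN$ is nef precisely when $\deg(\cN) \geq -\mu^-(\cE)$; equivalently, the nef cone of $\bP(\cE)$ is the closed two-dimensional cone
\[
\bR_{\geq 0}\cdot f \;+\; \bR_{\geq 0}\cdot\bigl(\xi - \mu^-(\cE)\,f\bigr).
\]
Applying Kleiman's criterion — the ample cone is the interior of the nef cone — then shows that $a\,\xi + b\,f$ is ample if and only if $a > 0$ and $b + a\,\mu^-(\cE) > 0$. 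Taking $b = \deg(\cM)$ yields the proposition.

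For the write-up it is worth recording the two degenerate implications of the ``only if'' direction directly: if $a \leq 0$ then the restriction $\cL|_{\bP^{r-1}} \cong \dO_{\bP^{r-1}}(a)$ to a fibre is not ample, which forces $a > 0$; and for $a > 0$ one obtains $\deg(\cM) + a\,\mu^-(\cE) > 0$ either straight from the nef cone description or by restricting $\cL$ to the closed subvariety $\bP(\cE/\cE_{t-1}) \subseteq \bP(\cE)$ attached to the minimal Harder--Narasimhan quotient, which is semistable of slope $\mu^-(\cE)$. I do not anticipate any genuine difficulty: the substance is Miyaoka's theorem — in particular the nontrivial direction of nefness, which for a minimal Harder--Narasimhan quotient of rank $> 1$ cannot be detected merely by intersecting with sections of $\pi$ — and we are entitled to quote it. What remains is the elementary bookkeeping that turns the nef cone description into the displayed inequality.
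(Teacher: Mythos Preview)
Your argument is correct and is precisely the content behind the citation: the paper does not give an independent proof but simply refers to \cite[Lemma 5.4]{butler_normal_1994}, and Butler's lemma is exactly the combination of Miyaoka's description of the nef cone with Kleiman's criterion that you have written out. So you have unpacked the cited reference rather than taken a different route.
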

\begin{proof}
\cite[Lemma 5.4]{butler_normal_1994}.
\end{proof}

\subsection{Computation of convex Fujita numbers}

The following theorem provides examples of  (almost) Fujita extreme varieties.

\begin{thm}
\label{thm:FNonPEuppperbound}
Let $\cE$ be a vector bundle of rank $n$ on a smooth projective curve $C$. 
Then we have
\[
n \leq \conFN(\bP(\cE)) \leq n+1.
\]
If $\cE$ is not semistable then $\conFN(\bP(\cE)) = n$. 
\end{thm}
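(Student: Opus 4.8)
The lower bound $\conFN(\bP(\cE)) \geq n$ is already supplied by \pref{prop:FNonPElowerbound}, so the plan is to establish the upper bound $\conFN(\bP(\cE)) \leq n+1$ in general, and then to improve it to an equality $\conFN(\bP(\cE)) = n$ under the non-semistability hypothesis. The key computational engine is the fibration $\pi : X = \bP(\cE) \to C$: since $C$ is a curve, global generation of a line bundle on $X$ can be tested downstairs on $C$ by pushing forward, because the relative canonical bundle $\omega_{X/C} \simeq \dO(-n) \otimes \pi^\ast \det(\cE)$ makes $\pi_\ast$ of an adjoint bundle computable, and then invoking a slope criterion for global generation of vector bundles on $C$ (the stated \lref{lem:SlopeCriterionGloballyGeneratedVB}), combined with the relative base-point-free statement along the $\bP^{n-1}$-fibers.

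\textbf{Upper bound $\leq n+1$.} Take any $s \geq n+1$ ample line bundles $\cL_i = \pi^\ast \cM_i(a_i)$ on $X$; by \pref{prop:ample cone on PE over curve} each satisfies $a_i \geq 1$ and $\deg(\cM_i) + a_i \mu^-(\cE) > 0$. Write $L = \sum \cL_i = \pi^\ast \cM(a)$ with $a = \sum a_i \geq s \geq n+1$ and $\cM = \bigotimes \cM_i$. The adjoint bundle is $\omega_X \otimes L = \pi^\ast(\omega_C \otimes \det(\cE) \otimes \cM)(a - n)$ using $\omega_X = \pi^\ast(\omega_C \otimes \det \cE)(-n)$. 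Since $a - n \geq 1 \geq 0$, the bundle is globally generated along fibers (it is $\dO(a-n)$ on each $\bP^{n-1}$, with $a - n \geq 0$), so by the standard two-step argument global generation of $\omega_X \otimes L$ follows from global generation of the pushforward $\pi_\ast(\omega_X \otimes L) = (\omega_C \otimes \det(\cE) \otimes \cM) \otimes \Sym^{a-n}(\cE^\vee)^\vee$ — more precisely of $\omega_C \otimes \det(\cE) \otimes \cM \otimes \Sym^{a-n}\cE$ — together with surjectivity of $\pi^\ast\pi_\ast \to$ the bundle, which again holds since $a - n \geq 0$. Now apply the slope criterion: a vector bundle $\cF$ on $C$ is globally generated as soon as $\mu^-(\cF) \geq 2g(C)$ (or the appropriate bound in \lref{lem:SlopeCliterionGloballyGeneratedVB}). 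Estimating $\mu^-$ of the twisted symmetric power using $\mu^-(\Sym^{a-n}\cE) = (a-n)\mu^-(\cE)$ and $\deg(\cM) + a\mu^-(\cE) = \sum(\deg \cM_i + a_i\mu^-(\cE)) \geq s$ (each summand is a positive integer), one checks the slope bound is met once $s \geq n+1$, giving $\conFN(X) \leq n+1$.

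\textbf{Sharpening to $= n$ when $\cE$ is not semistable.} Here the point is that if $\cE$ is unstable, then $\mu^-(\cE) < \mu(\cE)$, so the integrality constraint $\deg(\cM_i) + a_i \mu^-(\cE) \in \bZ_{>0}$ in \pref{prop:ample cone on PE over curve} is strictly weaker than in the semistable case, and one gains slack in the slope estimate: the quantity $\deg(\cM) + a\mu^-(\cE)$ is still $\geq s$, but now $a$ itself can be forced larger relative to $\deg\cM$. I would run the same pushforward argument with $s \geq n$ ample summands and show the slope bound $\mu^-\big(\omega_C \otimes \det(\cE) \otimes \cM \otimes \Sym^{a-n}\cE\big) \geq 2g(C)$ still holds: writing $\mu^- = \deg(\omega_C) + \deg(\det\cE) + \deg\cM + (a-n)\mu^-(\cE)$ and regrouping as $(2g-2) + \deg\cM + a\mu^-(\cE) + \big(\deg\det\cE - n\mu^-(\cE)\big)$, the last parenthesis equals $n(\mu(\cE) - \mu^-(\cE)) > 0$ precisely because $\cE$ is not semistable, and this extra positive term — which is at least $1$ after clearing denominators, hence contributes enough — compensates for having only $s \geq n$ rather than $n+1$ summands. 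Combined with the lower bound this yields $\conFN(\bP(\cE)) = n$.

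\textbf{Main obstacle.} The delicate point is making the slope/integrality bookkeeping precise: $\mu^-(\cE)$ is rational, not integral, so one must track the quantities $\deg(\cM_i) + a_i\mu^-(\cE)$ (which \emph{are} positive integers by the ampleness criterion) and the correction term $n(\mu(\cE) - \mu^-(\cE))$ separately, and verify that in the non-semistable case the two together beat the threshold $2g(C)$ with one fewer summand than in general. One must also confirm that $\Sym^{a-n}\cE$ has the expected minimal slope $(a-n)\mu^-(\cE)$ — which follows since its Harder–Narasimhan graded pieces are sub-quotients of tensor powers of the HN pieces of $\cE$, the bottom one having slope $(a-n)\mu^-(\cE)$ in characteristic $0$ — and that the precise hypothesis of \lref{lem:SlopeCriterionGloballyGeneratedVB} (possibly $\mu^- > 2g-1$ or $\geq 2g$, and a semisimplicity condition) is actually applicable; after twisting by the ample $\cM$ part this is exactly where the $s \geq n$ versus $s \geq n+1$ distinction is absorbed.
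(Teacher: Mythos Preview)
Your overall architecture is exactly that of the paper: push $\omega_X \otimes L$ forward to $C$, compute the minimal slope of the resulting bundle $\cF = \Sym^{a-n}(\cE) \otimes \det(\cE) \otimes \cM$, and apply \lref{lem:SlopeCriterionGloballyGeneratedVB}. The regrouping
\[
\mu^-(\cF) = n\big(\mu(\cE) - \mu^-(\cE)\big) + \sum_{i=1}^s \big(\deg(\cM_i) + a_i\mu^-(\cE)\big)
\]
is also precisely what the paper uses. However, two concrete points are wrong and would make the argument fail as written.

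First, the claim that each summand $\deg(\cM_i) + a_i\mu^-(\cE)$ is a positive \emph{integer} is false: $\mu^-(\cE) = \deg(\cE^-)/n^-$ is only rational, where $\cE^-$ is the minimal-slope HN quotient of rank $n^-$. What is true is that each summand is a positive element of $\tfrac{1}{n^-}\bZ$, hence $\geq 1/n^-$, so the sum is $\geq s/n^-$. Second, the threshold in \lref{lem:SlopeCriterionGloballyGeneratedVB} is not $2g(C)$ or $2g-1$; it is simply $\mu^-(\cF) > 1$ (the lemma shows $\rH^0(C,\cF^\vee(P)) = 0$ once $\mu^+(\cF^\vee(P)) < 0$, and twisting by $\omega_C$ is already built into the statement). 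With these corrections the argument runs: $\mu^-(\cF) \geq s/n^- \geq 1$, and one needs strict inequality. If $s \geq n+1$ then $s > n \geq n^-$, so $s/n^- > 1$; if $\cE$ is not semistable then $n^- < n \leq s$, so again $s/n^- > 1$ already with $s = n$. Your attempt to extract the extra margin from the term $n(\mu(\cE) - \mu^-(\cE))$ being ``at least $1$ after clearing denominators'' is not the right mechanism (that term can be as small as $1/n^-$); the decisive inequality is $n^- < n$.
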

\begin{proof}
We set $X = \bP(\cE)$ and write $\pi: X \to C$ for the natural projection map.
The lower bound was established in \pref{prop:FNonPElowerbound}.  For the upper bound let $s \geq n$ and let $\cL_i = \pi^\ast\cM_i(a_i)$ for $i=1, \ldots, s$ be ample line bundles on $X$. We abbreviate $a \coloneq \sum_{i=1}^s a_i$, and $\cM \coloneq \bigotimes_{i=1}^s \cM_i$ and $\cL \coloneq \bigotimes_{i=1}^s \cL_i = \pi^\ast\cM (a)$. By \pref{prop:ample cone on PE over curve} we have $a_i \geq 1$ and thus $a \geq s \geq n$.  Recall that we have 
\[
\omega_X = \dO(-n) \otimes \pi^\ast(\omega_C \otimes \det(\cE)).
\]
Fiberwise, $\omega_X \otimes \cL$ restricts to $\dO(a-n)$ and thus is globally generated with trivial higher cohomology. Cohomology and base change shows the surjectivity of the 'relative global generation' map 
\[
\pi^\ast \pi_\ast(\omega_X \otimes \cL) \surj \omega_X \otimes \cL\ ,
\]
because for $t \in C$ with fiber $X_t = \pi^{-1}(t)$ we have
\begin{align*}
\pi^\ast \pi_\ast(\omega_X \otimes \cL)|_{X_t} & = \dO_{X_t} \otimes \big(\pi_\ast(\omega_X \otimes \cL)|_t\big) =  \dO_{X_t} \otimes \rH^0\big(X_t, (\omega_X \otimes \cL)|_{X_t}\big) \\
& = \dO_{X_t} \otimes \rH^0\big(X_t,\dO(a-n)\big) \surj  \dO(a-n) = (\omega_X \otimes \cL)|_{X_t}\ .
\end{align*}
It follows that $\omega_X \otimes \cL$ is globally generated if $\pi_\ast(\omega_X \otimes \cL)$ is globally generated. By the projection formula, we have
\[
\pi_\ast(\omega_X \otimes \cL) = \omega_C \otimes \Sym^{a-n}(\cE) \otimes \det(\cE) \otimes \cM =: \omega_C \otimes \cF,
\] 
where we abbreviate  $\cF \coloneq \Sym^{a-n}(\cE) \otimes \det(\cE) \otimes \cM$.

Let $\cE^-$ be the quotient of $\cE$ which is the part of its Harder-Narasimhan filtration with minimal slope. Since symmetric powers and tensor products of semistable vector bundles are again semistable, see  \cite[Corollary 3.7 and 3.10]{miyaoka_chern_1987}, we find 
\[
\cF^-  = \Sym^{a-n}(\cE^-) \otimes \det(\cE) \otimes \cM.
\]
Let $n^-$ be the rank of $\cE^-$. 
By \pref{prop:ample cone on PE over curve}, we compute the minimal slope of $\cF$ as
\begin{align*}
\mu^-(\cF) & = \mu(\cF^-)  = \mu(\Sym^{a-n}(\cE^-)) + \mu(\det(\cE)) + \mu(\cM)  \\
& = (a-n) \mu^-(\cE) + n \mu(\cE) + \mu(\cM)  \\ 
& =  n  \big( \mu(\cE) - \mu^-(\cE)\big) + \sum_{i=1}^s \big(\deg(\cM_i) + a_i \mu^-(\cE) \big) 
\geq  \frac{s}{n^-} \geq 1.
\end{align*}
If either $\cE$ is not semistable or $s \geq n+1$, we have $s > n^-$ and so  $\mu^-(\cF) > 1$. By the well known \lref{lem:SlopeCriterionGloballyGeneratedVB} below, the adjoint bundle $\omega_C \otimes \cF$ is globally generated, and the proof is complete.
\end{proof}

\begin{lem}
\label{lem:SlopeCriterionGloballyGeneratedVB}
Let $C$ be a smooth projective curve, and let $\cF$ be a vector bundle on $C$ of minimal slope $\mu^-(\cF) > 1$. Then $\omega_C \otimes \cF$ is globally generated. 
\end{lem}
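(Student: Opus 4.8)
The plan is to prove the slope criterion for global generation of a vector bundle $\cF$ on a smooth projective curve $C$ with $\mu^-(\cF) > 1$. The guiding idea is that global generation of a sheaf $\cG$ can be checked pointwise: $\cG$ is globally generated if and only if for every point $P \in C$ the evaluation map $\rH^0(C,\cG) \to \cG \otimes k(P)$ is surjective, and by the long exact sequence associated to
\[
0 \to \cG(-P) \to \cG \to \cG \otimes k(P) \to 0
\]
this surjectivity follows once $\rH^1(C,\cG(-P)) \to \rH^1(C,\cG)$ is injective, which in turn is guaranteed if $\rH^1(C,\cG(-P)) = 0$. So the first step is to reduce the statement to the vanishing $\rH^1(C, (\omega_C \otimes \cF)(-P)) = 0$ for all $P$, equivalently $\rH^1(C, \omega_C \otimes \cF(-P)) = 0$.

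Next I would invoke Serre duality: $\rH^1(C, \omega_C \otimes \cF(-P))$ is dual to $\rH^0(C, \cF^\vee(P)) = \Hom(\cF(-P), \dO_C)$. So it suffices to show that $\cF(-P)$ has no nonzero map to $\dO_C$, equivalently that $\cF^\vee(P)$ has no nonzero global section. Here I would use the slope hypothesis: a nonzero global section of a vector bundle $\cG$ on a curve gives an inclusion $\dO_C \inj \cG$, hence a sub-line-bundle of nonnegative degree, so $\mu^+(\cG) \geq 0$; contrapositively, if $\mu^+(\cG) < 0$ then $\rmH^0(C,\cG) = 0$. Thus I need $\mu^+(\cF^\vee(P)) < 0$. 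Since dualizing reverses the Harder--Narasimhan filtration, $\mu^+(\cF^\vee) = -\mu^-(\cF)$, and twisting by the degree-one bundle $\dO_C(P)$ shifts every slope by $1$, so $\mu^+(\cF^\vee(P)) = 1 - \mu^-(\cF)$. The hypothesis $\mu^-(\cF) > 1$ gives exactly $\mu^+(\cF^\vee(P)) < 0$, completing the argument.

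Assembling the pieces: fix $P \in C$; the chain of equivalences reduces global generation at $P$ to $\rmH^0(C, \cF^\vee(P)) = 0$; the slope bound forces this vanishing; since $P$ was arbitrary, $\omega_C \otimes \cF$ is globally generated. The only subtlety worth spelling out is the two auxiliary facts about slopes — that $\mu^+(\cF^\vee) = -\mu^-(\cF)$ (the dual of a semistable bundle is semistable of opposite slope, and dualizing flips the HN filtration) and that $\mu^+$ of a subsheaf generated by a global section is $\geq 0$ — both of which are standard; I expect the main (mild) obstacle to be stating these cleanly rather than any real difficulty. One should also note $\mu^-(\cF) > 1$ is needed with strict inequality precisely to get the strict inequality $\mu^+(\cF^\vee(P)) < 0$ that kills the $\rmH^0$.
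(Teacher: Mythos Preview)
Your proof is correct and follows essentially the same approach as the paper: reduce global generation at $P$ to the vanishing of $\rH^1(C,\omega_C\otimes\cF(-P))$, apply Serre duality to turn this into $\rH^0(C,\cF^\vee(P))=0$, and then kill global sections via the slope computation $\mu^+(\cF^\vee(P))=1-\mu^-(\cF)<0$. The paper's version is terser but the argument is the same.
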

\begin{proof}
Let $P$ be a point in $C$. Global sections of $\omega_C \otimes \cF$ generate in $P$ if $\rH^1(C,\omega_C \otimes \cF(-P)) = 0$. By Serre duality, it suffices to show $\rH^0(C,\cF^\vee(P)) = 0$. The maximal slope of these dual coefficients is
\[
\mu^+(\cF^\vee(P)) = 1 - \mu^-(\cF) < 0.
\]
Any nontrivial map $\dO_C \to \cF^\vee(P)$ therefore violates the semistability of the filtration quotients of the Harder-Narasimhan filtration of $\cF^\vee(P)$. 
\end{proof}

Under suitable hypotheses we are able to determine  convex Fujita numbers of projectivized semistable vector bundles completely.

\begin{thm}
\label{thm:FNofPE Yusufs result}
Let $\cE$ be a semistable vector bundle of rank $n$ and degree $d$ on a smooth projective curve $C$. 
\begin{enumerate}[align=left,labelindent=0pt,leftmargin=*]
\item \label{propitem:FNonPEoverCunstable}
If $\cE$ is not stable, then $\conFN(\bP(\cE)) = n$.
\item 
\label{propitem:FNonPEoverC1}
If $n$ and $d$ are not coprime, then $\conFN(\bP(\cE)) = n$.
\item
\label{propitem:FNonPEoverC2}
 If $n$ and $d$ are coprime, then the following holds. 
\begin{enumerate}[label=(\roman*),align=left,labelindent=0pt,leftmargin=*]
\item 
\label{propitem:FNonPEoverC2i} 
Let $C$ have genus $\geq 2$. If $d \not\equiv 1 \pmod n$ and $\Sym^{nk}(\cE)$ has no direct summand that is a line bundle for all $k > 0$, then $\conFN(\bP(\cE)) = n$.
\item 
\label{propitem:FNonPEoverC2ii} 
If $d \equiv 1 \pmod n$, then $\conFN(\bP(\cE)) = n+1$.  In particular, $\bP(\cE)$ is Fujita extreme in this case. 
\end{enumerate}
\end{enumerate}
\end{thm}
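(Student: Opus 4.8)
The plan is to reduce global generation on $X=\bP(\cE)$ to a question on the base curve $C$ and then run a slope computation that is sharp on the nose. Write $\pi\colon X\to C$ for the projection; the bound $n\le\conFN(X)\le n+1$ is already known (\pref{prop:FNonPElowerbound} and \tref{thm:FNonPEuppperbound}), so in each subcase I only need to decide between the two values. Following the notation in the proof of \tref{thm:FNonPEuppperbound}, fix ample line bundles $\cL_i=\pi^\ast\cM_i(a_i)$ ($i=1,\dots,n$), put $a=\sum a_i$, $\cM=\bigotimes\cM_i$, $\cL=\bigotimes\cL_i$, and $\cF=\Sym^{a-n}(\cE)\otimes\det(\cE)\otimes\cM$, so $\pi_\ast(\omega_X\otimes\cL)=\omega_C\otimes\cF$. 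The first step is to promote the implication proved there to an equivalence: $\omega_X\otimes\cL$ is globally generated if and only if $\omega_C\otimes\cF$ is. The ``if'' part is the surjection $\pi^\ast\pi_\ast(\omega_X\otimes\cL)\twoheadrightarrow\omega_X\otimes\cL$ from that proof; for the ``only if'', if $P\in C$ is a base point of $\omega_C\otimes\cF$, then by the projection formula every global section of $\omega_X\otimes\cL$ lies in $\rH^0\bigl(X,(\omega_X\otimes\cL)(-X_P)\bigr)$, so the whole fibre $X_P=\pi^{-1}(P)$ is contained in the base locus.

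Second, the sharp slope analysis. Semistability of $\cE$ propagates to $\Sym^{a-n}(\cE)$ and hence to $\cF$, and
\[
\mu^-(\cF)=\mu(\cF)=\frac1n\sum_{i=1}^n\bigl(a_id+n\deg\cM_i\bigr),
\]
where each $a_id+n\deg\cM_i$ is a positive integer by \pref{prop:ample cone on PE over curve}; thus $\mu(\cF)\ge1$, and if $\mu(\cF)>1$ then $\omega_C\otimes\cF$ (hence $\omega_X\otimes\cL$) is globally generated by \lref{lem:SlopeCriterionGloballyGeneratedVB}. If $\mu(\cF)=1$, then every term equals $1$, forcing $a_id\equiv1\pmod n$ for all $i$; in particular $\gcd(n,d)=1$, $a\equiv0\pmod n$, and writing $a-n=nk$ ($k\ge0$) one gets $\cF=\Sym^{nk}(\cE)\otimes\cN$ with $\cN=\det(\cE)\otimes\cM$ a line bundle. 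Since $\cF$ is semistable of slope $1$ one has $\rH^0(C,\cF^\vee)=0$, so by Serre duality $\omega_C\otimes\cF$ is globally generated iff $\rH^0(C,\cF^\vee(P))=0$ for all $P$; a nonzero section of $\cF^\vee(P)$ dualizes to a nonzero, hence (by slope-$1$ semistability) surjective, map $\cF\to\dO_C(P)$. Twisting by $\cN^{-1}$, this says: $\omega_X\otimes\cL$ fails to be globally generated exactly when $\Sym^{nk}(\cE)$ has a line bundle quotient of slope $kd$.

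Third, I trade ``line bundle quotient'' for ``line bundle direct summand'' and assemble. As $\gcd(n,d)=1$, a semistable bundle of rank $n$ and degree $d$ is automatically stable, and then $\Sym^{nk}(\cE)$ is polystable (symmetric powers of polystable bundles are polystable in characteristic $0$, e.g.\ via the Hermite--Einstein/Narasimhan--Seshadri correspondence); a slope-matching quotient of a polystable bundle splits off, so $\Sym^{nk}(\cE)$ has a line bundle quotient of slope $kd$ iff it has a line bundle direct summand. In case (1) a semistable non-stable bundle necessarily has $\gcd(n,d)>1$, which is also the hypothesis of case (2); in both, each $a_id+n\deg\cM_i$ is divisible by $\gcd(n,d)\ge2$, so $\mu(\cF)\ge2$ for every choice of the $\cL_i$ and $\conFN(X)=n$. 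In case (3)(i), when $\mu(\cF)=1$ one has $k\ge1$ (since $k=0$ would give $d\equiv1\pmod n$, contrary to assumption), so the hypothesis that $\Sym^{nk}(\cE)$ has no line bundle direct summand removes the only possible base point, and with the $\mu(\cF)>1$ case this yields $\conFN(X)=n$. For case (3)(ii) I produce one failure with $s=n$: we may assume $n\ge2$ (for $n=1$, $X=C$ and $\conFN(C)=2=n+1$), and then $\cE$ semistable with $n\nmid d$ forces $g=g(C)\ge1$; take all $a_i=1$, so $k=0$ and $\cF=\det(\cE)\otimes\cM$ is a line bundle; choosing each $\cM_i$ of the minimal admissible degree $-\lfloor d/n\rfloor$ makes $\deg\cF=1$, and since $\Pic^0(C)$ is divisible the $\cM_i$ can be arranged so that $\cF\cong\dO_C(P)$ for a prescribed $P\in C$. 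Then $\omega_C\otimes\cF=\omega_C(P)$ has $P$ in its base locus whenever $g\ge1$, so $\omega_X\otimes\cL$ is not globally generated, $\conFN(X)\ge n+1$, and with \tref{thm:FNonPEuppperbound} equality follows.

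I expect the main obstacle to be the polystability input in the third paragraph --- that symmetric powers of a stable bundle remain polystable in characteristic zero --- since this is where the argument passes beyond elementary slope bookkeeping, and it is precisely what makes ``no line bundle direct summand'' the right hypothesis in (3)(i). A secondary point requiring care is the arithmetic in case (3)(ii) ensuring that the $\cM_i$ are simultaneously admissible (each of degree $>-d/n$) and have the prescribed tensor product, which is routine given divisibility of $\Pic^0(C)$.
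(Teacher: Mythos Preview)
Your proof is correct and follows essentially the same route as the paper's: reduce to the curve via $\pi_\ast$, run the slope computation to isolate the critical case $\mu(\cF)=1$, invoke polystability of symmetric powers (Narasimhan--Seshadri / Hermite--Einstein) to turn ``line bundle quotient of matching slope'' into ``line bundle direct summand'', and for (3)(ii) take all $a_i=1$ so that $\omega_X\otimes\cL$ is literally a pullback and $\omega_C(P)$ provides the base point. Your version is organised slightly more uniformly---you prove the two-way equivalence between global generation upstairs and downstairs once and for all, and you handle the edge cases $n=1$ and $g(C)\ge 1$ in (3)(ii) explicitly---whereas the paper only uses the ``if'' direction in general and the trivial pullback equivalence for (3)(ii); but these are cosmetic differences, not a different method.
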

\begin{proof}
We start with assertion~\eqref{propitem:FNonPEoverC1} and keep the notation of \tref{thm:FNonPEuppperbound}.
We need to discuss the case $s = n$ for a semistable $\cE$ and decide whether there are $\cM_i$ and $a_i$  such that $\omega_C \otimes \cF = \pi_\ast(\omega_X \otimes \cL)$ is not globally generated. Note that $\cF$ is now also semistable and so $\mu(\cF) = \mu^-(\cF)$. The proof of \tref{thm:FNonPEuppperbound} shows that the only critical case is when $\mu(\cF) = 1$, and that happens exactly when 
\begin{equation}
\label{eq:criticalcasePEonCurve}
n \deg(\cM_i) + a_i d = 1, \text{ for all $i = 1, \ldots, n$},
\end{equation}
This is only possible if $n$ and $d$ are coprime. This shows assertion \eqref{propitem:FNonPEoverC1}.

\smallskip

Assertion~\eqref{propitem:FNonPEoverCunstable} follows immediately from \eqref{propitem:FNonPEoverC1} since the slope $\mu = d/n$ of an unstable semistable $\cE$ is also the slope of a vector bundle with rank $< n$, so $d$ and $n$ must have a common prime factor. Assertion \eqref{propitem:FNonPEoverCunstable} appears here because it proves part of \tref{thmABC:FNonPE}.

\smallskip

We now assume $n$ and $d$ are coprime and prove assertion~\eqref{propitem:FNonPEoverC2}. Then, as just recalled, $\cE$ is stable. Moreover, we assume that $\cM_i$ and $a_i \geq 1$ are such that the equations \eqref{eq:criticalcasePEonCurve} hold. It follows that all the $a_i$ are congruent to each other modulo $n$. Thus $a-n$ is divisible by $n$.

For assertion \ref{propitem:FNonPEoverC2i} we now also assume that $d \not\equiv 1 \pmod n$. Then $a_i \geq 2$ and $a-n \geq n$. Recall from the proof of \tref{thm:FNonPEuppperbound} that it suffices to show that $\omega_C \otimes \cF$ is globally generated. The proof of \lref{lem:SlopeCriterionGloballyGeneratedVB} shows that it suffices that for all points $P \in C$ the sheaf 
\[
\cF^\vee(P) \simeq \Sym^{a-n}(\cE^\vee) \otimes \det(\cE)^{-1} \otimes \cM^{-1}
\]
has no global sections.  By Narasimhan--Seshadri the stable $\cE$ is described by a unitary representation of a central extension of $\pi_1(C)$. Thus, $\Sym^{a-n}(\cE^\vee)$ corresponds  to a direct sum of unitary representations and therefore is polystable as a vector bundle. The same holds for $\cF^\vee(P)$. By assumption, none of the direct summands is a line bundle (note that $n \mid a-n$). By the slope computation from above, $ 
\cF^\vee(P)$ is a polystable vector bundle of slope $0$, and so any global section yields a direct summand of the form $\dO_C$, a contradiction. This shows  \ref{propitem:FNonPEoverC2i}.

We now prove assertion \ref{propitem:FNonPEoverC2ii}. By assumption $d = 1- kn$ for some $k \in \bZ$. Let $\cM_i$ be line bundles on $C$ of degree $k$, and let $a_i$ be equal to $1$ for all $i = 1, \ldots, n$. Then \eqref{eq:criticalcasePEonCurve} holds, and in particular $\cL_i = \pi^\ast \cM_i(1)$ is ample. More precisely we have
\[
\omega_X \otimes \cL = \dO(-n) \otimes \pi^\ast(\omega_C \otimes \det(\cE)) \otimes \pi^\ast \cM (n) = \pi^\ast(\omega_C \otimes \det(\cE) \otimes \cM).
\]
It follows that $\omega_X \otimes \cL$ is globally generated if and only if $\omega_C \otimes \cF = \omega_C \otimes \det(\cE) \otimes \cM$ is globally generated on $C$. Since $\mu(\cF) = 1$, by an appropriate choice of $\cM_i$ we may find $\cF = \dO_C (P)$ for a point $P \in C$. In this case $\omega_C \otimes \cF$ has $P$ as a base point, and this concludes the proof of assertion \ref{propitem:FNonPEoverC2ii}.
\end{proof}

\begin{rmk}
Semistable vector bundles of rank $n$ and degree $d$ on a curve $C$ of genus $\geq 2$ exist by 
\cite[Lemma 4.3]{narasimhan_moduli_1969}. When $d$ and $n$ are coprime, then the semistable vector bundle $\cE$ is in fact stable. If $\cE$ is generic, then $\Sym^k(\cE)$ is again stable for all $k \geq 0$ by a result of Seshadri, 
see Hartshorne \cite[Theorem 10.5]{hartshorne_ample_1970}. So vector bundles satisfying the respective assumptions of \tref{thm:FNofPE Yusufs result} exist in abundance. 

Balaji and Koll\'ar define a notion of a holonomy group for a stable vector bundle $\cE$ on an arbitrary smooth projective variety $S$. This is a reductive subgroup of the automorphism group $\GL(\cE(x))$ of a fiber $\cE(x) := \cE \otimes \kappa(x)$ which is minimal to contain all Narasimhan-Seshadri representations group associated to $\cE|_C$ for curves $x \in C \subseteq S$, as long as $\cE|_C$ is still stable.  In \cite[Corollary 6]{balaji_holonomy_2008} they prove that if the commutator subgroup of the holonomy group is either $\SL(\cE(x))$ or $\Sp(\cE(x))$, then $\Sym^k(\cE)$ is still stable for all $k \geq 0$. So with this assumption on holonomy of $\cE$  the assumptions of the case \ref{propitem:FNonPEoverC2i}  in \tref{thm:FNofPE Yusufs result}  holds provided $d \not\equiv 1 \pmod n$. 
\end{rmk}

\section{Rational and ruled surfaces}
\label{sec:FN rational ruled}

In this section we start our study of  how convex Fujita numbers vary across the Kodaira--Enriques classification of surfaces. In negative Kodaira dimension, minimal surfaces are rational or ruled.

\subsection{Ruled surfaces}
We start with ruled surfaces since  the result follows at once from the general discussion in \secref{sec:FujitaPE}.

\begin{prop}
\label{prop:FNruled surface}
Let $C$ be a smooth projective curve and let $\cE$ be a vector bundle of rank $2$ on $C$.
The convex Fujita number of the ruled surface $X=\bP(\cE) \to C$ equals
\[
\conFN(X) = \begin{cases}
3 & \text{ if $\cE$ is stable of odd degree,} \\
2 & \text{ else}.
\end{cases}
\]
\end{prop}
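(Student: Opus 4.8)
The plan is to deduce Proposition~\ref{prop:FNruled surface} directly from the general results on projective bundles over curves proved in \S\ref{sec:FujitaPE}, specialised to rank $n = 2$, together with the product construction for the remaining cases. First I would set up the trichotomy for $\cE$: either $\cE$ is not semistable, or $\cE$ is semistable but not stable, or $\cE$ is stable. In the first two cases Theorem~\ref{thm:FNonPEuppperbound} (for the non-semistable case) and Theorem~\ref{thm:FNofPE Yusufs result}\eqref{propitem:FNonPEoverCunstable} (for the strictly semistable case) immediately give $\conFN(X) = n = 2$. If $\cE$ is stable, we split according to the parity of $d = \deg(\cE)$: if $d$ is even then $\gcd(n,d) = \gcd(2,d) = 2 \neq 1$, so Theorem~\ref{thm:FNofPE Yusufs result}\eqref{propitem:FNonPEoverC1} yields $\conFN(X) = 2$; if $d$ is odd then $d \equiv 1 \pmod 2$, so Theorem~\ref{thm:FNofPE Yusufs result}\ref{propitem:FNonPEoverC2ii} yields $\conFN(X) = 3$. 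Collating these cases gives exactly the stated formula.

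The only subtlety is that Theorem~\ref{thm:FNofPE Yusufs result} as stated takes place over a curve $C$ of arbitrary genus in items \eqref{propitem:FNonPEoverCunstable}, \eqref{propitem:FNonPEoverC1} and \ref{propitem:FNonPEoverC2ii} (only item \ref{propitem:FNonPEoverC2i} requires genus $\geq 2$, and that case does not arise here since $n = 2$ forces the dichotomy $d$ even / $d$ odd, i.e. $d \equiv 0$ or $1 \pmod 2$ with no room for $d \not\equiv 1$ while still being coprime to $2$). So no genus restriction on $C$ is needed, and in particular the rational case $C = \bP^1$ is covered: there $\cE$ is never stable (any vector bundle on $\bP^1$ splits as a sum of line bundles), so $\conFN(\bP(\cE)) = 2$, consistent with the ``else'' branch. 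I would note this explicitly so the reader sees that the Hirzebruch-surface statement of Theorem~\ref{thmABC:FNminimalsurfaces}(1) also falls out here.

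I do not expect any genuine obstacle: the proposition is a bookkeeping corollary of the machinery already established. The one place worth a sentence of care is the logical closure of the case analysis — checking that ``$\cE$ stable of odd degree'' and its negation partition all rank $2$ bundles in a way matching the two branches of the formula, i.e. that $\conFN(X) = 2$ in every case where $\cE$ fails to be stable of odd degree (whether because $\cE$ is unstable, strictly semistable, or stable of even degree). I would write the proof as a short enumeration of these three sub-cases with the appropriate citation attached to each.

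\begin{proof}
Write $X = \bP(\cE) \to C$; by Theorem~\ref{thm:FNonPEuppperbound} with $n = 2$ we have $2 \leq \conFN(X) \leq 3$. It remains to pin down the value in each case.

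If $\cE$ is not semistable, then $\conFN(X) = 2$ by the last assertion of Theorem~\ref{thm:FNonPEuppperbound}. If $\cE$ is semistable but not stable, then $\conFN(X) = 2$ by Theorem~\ref{thm:FNofPE Yusufs result}\eqref{propitem:FNonPEoverCunstable}. Now suppose $\cE$ is stable, and set $d = \deg(\cE)$. If $d$ is even, then $\gcd(2,d) = 2$, so $n = 2$ and $d$ are not coprime and Theorem~\ref{thm:FNofPE Yusufs result}\eqref{propitem:FNonPEoverC1} gives $\conFN(X) = 2$. If $d$ is odd, then $d \equiv 1 \pmod 2$, so Theorem~\ref{thm:FNofPE Yusufs result}\ref{propitem:FNonPEoverC2ii} gives $\conFN(X) = 3$.

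Thus $\conFN(X) = 3$ precisely when $\cE$ is stable of odd degree, and $\conFN(X) = 2$ in all other cases. (In particular, when $C = \bP^1$ every rank $2$ bundle splits as a direct sum of line bundles and hence is never stable, so $\conFN(X) = 2$ for every Hirzebruch surface.)
\end{proof}
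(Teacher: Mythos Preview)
Your proof is correct and follows exactly the paper's approach: the paper's proof is the single sentence ``This is just a special case of Theorem~\ref{thm:FNonPEuppperbound} and Theorem~\ref{thm:FNofPE Yusufs result},'' and you have simply unpacked the case analysis (not semistable / strictly semistable / stable of even degree / stable of odd degree) that this citation encodes for $n=2$. Your observation that case~\ref{propitem:FNonPEoverC2i} cannot occur when $n=2$ is the right check that the enumeration is exhaustive.
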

\begin{proof}
This is just a special case of \tref{thm:FNonPEuppperbound} and \tref{thm:FNofPE Yusufs result}.
\end{proof}

\subsection{Rational surfaces}

A relatively minimal rational surfaces is either the  projective plane or a Hirzebruch surface.

\begin{prop}
\label{prop:FNrational surface}
The convex Fujita number of a relatively minimal rational surface is $2$ or $3$. More precisely,
\begin{enumerate}[align=left,labelindent=0pt,leftmargin=*]
\item
\label{propitem:FNrational surfaceP2}
 we have $\conFN(\bP^2) = 3$, and 
\item 
\label{propitem:FNrational surface Hirzebruch}
the convex Fujita number of a Hirzebruch surface equals $2$.
\end{enumerate}
\end{prop}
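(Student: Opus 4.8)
The plan is to handle the two cases separately, in each case by quoting machinery already set up. For $\bP^2$ the cleanest route is \pref{prop:NCDampleFujitaExtremeSurface}: pick two distinct lines $C_1, C_2 \subseteq \bP^2$; they are ample effective divisors with $(C_1 \bullet C_2) = 1$, so that proposition gives $\conFN(\bP^2) = 3$ at once. If one prefers a hands-on verification that displays the mechanism, note that $\NS(\bP^2) = \bZ \cdot L$ with $(L^2) = 1$ and $K_{\bP^2} \equiv -3L$, that the ample classes are exactly the $aL$ with $a \geq 1$, and hence that for ample divisors $L_1, \ldots, L_s$ the adjoint class equals $(d-3)L$ with $d = \sum_i a_i \geq s$; this class is globally generated if and only if $d \geq 3$, which fails for $s = 2$ (take $L_1 = L_2 = L$) but holds automatically once $s \geq 3$. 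Either way $\conFN(\bP^2) = 3$.

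For a Hirzebruch surface $X$ the plan is to use the presentation $X = \bP(\cE)$ with $\cE = \dO_{\bP^1} \oplus \dO_{\bP^1}(e)$ a decomposable rank $2$ bundle on $\bP^1$ (here $e \geq 0$). Such an $\cE$ is never stable, so the ``else'' branch of \pref{prop:FNruled surface} applies directly and yields $\conFN(X) = 2$. Unwinding that citation, the upper bound $\conFN(X) \leq 2$ is the non-semistable case of \tref{thm:FNonPEuppperbound} when $e \geq 1$, and for $e = 0$ (where $X = \bP^1 \times \bP^1$) it is \tref{thm:FNofPE Yusufs result}\eqref{propitem:FNonPEoverCunstable}, or just \cref{cor:FNproduct of curves}; the lower bound $\conFN(X) \geq 2$ comes from \pref{prop:FNonPElowerbound} applied to the ruling $\pi\colon X \to \bP^1$ (a $\bP^1$-bundle), or from \cref{cor:specialfibrationFN} applied to the ruling together with any section $S$, which meets each fibre $F$ in a single point.

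Given the earlier sections there is no genuine obstacle here; the argument is simply an application of \secref{sec:reider} and \secref{sec:FujitaPE}. The only points requiring a moment's attention are organisational: that the standard presentation $X = \bP(\dO_{\bP^1} \oplus \dO_{\bP^1}(e))$ of a Hirzebruch surface has $\cE$ of the shape treated in \pref{prop:FNruled surface}, and that ``relatively minimal rational surface'' is meant in the ruled sense, so that $\bF_1$ (relatively minimal over $\bP^1$, though not a minimal surface) is included --- which is harmless, since the argument is uniform in $e \geq 0$.
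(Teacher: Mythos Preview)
Your proof is correct and follows essentially the same approach as the paper: for $\bP^2$ the paper gives exactly your hands-on computation with $K_{\bP^2} \equiv -3L$, and for Hirzebruch surfaces it invokes \pref{prop:FNruled surface} via the observation that a rank~$2$ bundle on $\bP^1$ is never stable. Your additional route for $\bP^2$ via \pref{prop:NCDampleFujitaExtremeSurface} with two lines is a pleasant alternative, and your unwinding of the Hirzebruch case into the subcases $e \geq 1$ (non-semistable) and $e = 0$ (semistable non-stable, or product of curves) is more explicit than the paper but not needed once \pref{prop:FNruled surface} is quoted.
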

\begin{proof}
Assertion \eqref{propitem:FNrational surfaceP2} follows, because an ample line bundle on $\bP^2$ is isomorphic to $\dO(a)$ with $a > 0$, and so $\omega_{\bP^2} \otimes \dO(a_1) \otimes \ldots \otimes \dO(a_s)$ is globally generated if and only if $\sum_i a_i \geq 3$. In the worst case all $a_i = 1$, and thus we need $s \geq 3$.

\eqref{propitem:FNrational surface Hirzebruch}
Let $X = \bP(\dO \oplus \dO(n))$ be a Hirzebruch surface.  \pref{prop:FNruled surface} yields $\conFN(X)$, because vector bundles of rank $2$ on $\bP^1$ are never stable. 
\end{proof}

\begin{rmk}
Since Hirzebruch surfaces are very explicit,  convex Fujita numbers can also be computed directly. We give an elementary argument  to illustrate this fact.

Since $X = \bP(\dO \oplus \dO(n))$ is a toric surface, nef and globally generated are equivalent for all line bundles on $X$, see \cite[Theorem 3.1]{mustata_vanishing_2002}. The Picard group $\Pic(X) = \NS(X)$ is generated by the class of a fiber $F$ and the class of a section $S$ with self intersection $S^2 = -n$. The class $L = aS + bF$ is nef and equivalently globally generated if and only if $b \geq na \geq 0$, and the ample classes are characterized by $b > na > 0$. Recall that the canonical class is $K = -2S - (n+2)F$. 

For line bundles $L_i = a_i S + b_i F$, for $i = 1, \ldots m$, we find
\[
K + \sum_{i=1}^m L_i = (-2 + \sum_{i=1}^m a_i )S + (-n-2 + \sum_{i=1}^m b_i ) F\ .
\] 
For the ample line bundles $L_i = S + (n+1)F$ this becomes 
\[
K + \sum_{i=1}^m L_i = (m-2)S + ((m-1)(n+1)-1)F\ ,
\]
which  is globally generated if and only if  $m \geq 2$. When $m \geq 2$ and  $L_i$ an ample line bundle, we have  
\[
-n-2 + \sum_{i=1}^m b_i \geq -n-2 + \sum_{i =1}^m (na_i+1) \geq  n \big(- 2  + \sum_{i=1}^m a_i\big) \geq 0\ ,
\]
and so the corresponding adjoint bundle is globally generated. This yields $\conFN(X) = 2$.
\end{rmk}

\section{Kodaira dimension \texorpdfstring{$0$}{0}}
\label{sec:FN Kodaira 0}

On a minimal surface $X$ of Kodaira dimension $0$ the canonical class is numerically trivial. It follows from \pref{prop:FNsurfacesReider} that $\conFN(X) \leq 2$. The canonical class is globally generated for abelian surfaces and for K3 surfaces, but is a nontrivial torsion class hence  not globally generated for Enriques  and  bielliptic surfaces. Therefore,  the options for  $\conFN(X)$ of abelian surfaces and K3 surfaces are $0$ or $2$, while for Enriques and  bielliptic surfaces the options are $1$ or $2$. 

\begin{rmk}
Before we present our results, we recall that Reider in \cite[Proposition 5]{reider_vector_1988} reports on the following result of Beauville: let $X$ be of Kodaira dimension $0$ and $\cL$ a nef line bundle on $X$ with $(\cL^2) \geq 6$. Then $\omega_X \otimes \cL$ is globally generated precisely  if $X$ does not contain a $1$-connected effective cycle $E$ with arithmetic genus $0$ and $E \bullet \cL = 1$.

Our results restrict to ample line bundles and seek to determine the convex Fujita number of $X$. Clearly, since our main tool is Reider's method, there is bound to be  some overlap between our analysis and Beauville's.
\end{rmk}

\subsection{Abelian surfaces}

The convex Fujita number of an abelian variety is always at most two by a generalization of Lefschetz' classical theorem by 
Bauer and Szemberg \cite[Theorem 1.1]{bauer_tensor_1996} (cf. \cite[Example 1.8]{chen_convex_2023}).
Here we deal with abelian surfaces and determine the arising  convex Fujita numbers precisely.

\begin{prop}
\label{prop:globalgeneration abelian surface}
Let $\cL$ be an ample line bundle on an abelian surface $X$. 
\begin{enumerate}[align=left,labelindent=0pt,leftmargin=*]
\item
\label{propitem:globalgeneration abelian surface 1}
If $(\cL^2) \leq 4$, then $\cL$ is not globally generated.
\item
\label{propitem:globalgeneration abelian surface 2}
If $(\cL^2) > 4$, then $\cL$ is globally generated unless 
\begin{itemize}
\item
$X \simeq E \times E'$ is isomorphic to a product of elliptic curves $E$ and $E'$, and 
\item
$\cL$ is isomorphic to $\cM \boxtimes \cM'$ with $\cM$ of degree $1$ on $E$ and $\cM'$ of positive degree on $E'$.
\end{itemize}
\end{enumerate}
\end{prop}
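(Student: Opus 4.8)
The plan is to reduce the statement to a Riemann--Roch computation together with Reider's theorem, using that on an abelian surface $\omega_X=\dO_X$, so the adjoint bundle of $\cL$ is $\cL$ itself. First I record that $h^0(X,\cL)=\chi(\cL)=\tfrac{1}{2}(\cL^2)$ and $h^i(X,\cL)=0$ for $i>0$, which is Riemann--Roch (using $\chi(\dO_X)=0$ and $K_X=0$) together with Kodaira vanishing. For part~(1): if $(\cL^2)\le 4$ then $h^0(X,\cL)\in\{1,2\}$; when $h^0=1$ the unique section cuts out a nonzero effective divisor whose every point is a base point, and when $h^0=2$ global generation would force any two distinct members of the pencil $|\cL|$ to be disjoint, giving $(\cL^2)=0$, a contradiction. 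For the easy implication of~(2): if $X\cong E\times E'$ and $\cL\cong\cM\boxtimes\cM'$ with $\deg\cM=1$, then $H^0(X,\cL)=H^0(E,\cM)\otimes H^0(E',\cM')$ and $H^0(E,\cM)$ is spanned by a single section vanishing at one point $x_0\in E$; hence every section of $\cL$ vanishes along $\{x_0\}\times E'$, and $\cL$ is not globally generated.

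The substance of~(2) is the converse, which I would prove by contraposition. Assume $(\cL^2)>4$, hence $(\cL^2)\ge 5$, and that $\cL$ has a base point. By Reider's theorem \cite{reider_vector_1988} there is a nonzero effective divisor $D$ with either $(D^2)=-1$ and $(D\bullet\cL)=0$, or $(D^2)=0$ and $(D\bullet\cL)=1$. Since $K_X=0$ gives $(C^2)=2p_a(C)-2$ for every curve $C$, the lattice $\NS(X)$ is even and the first alternative cannot occur. In the second, ampleness of $\cL$ forces $D$ to be a reduced irreducible curve with $p_a(D)=1$; as an abelian surface carries no rational curves, the geometric genus of $D$ is at least $1$, so $D$ is in fact a smooth elliptic curve $E$ with $(E\bullet\cL)=1$, and after a translation I may take $E$ to be an abelian subvariety.

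Next I use the elliptic fibration $f\colon X\to B:=X/E$, whose base is an elliptic curve and whose fibres are translates of $E$, in particular smooth and irreducible. Choosing $0\ne s\in H^0(X,\cL)$, the divisor $\divisor(s)$ meets every fibre in degree $(\cL\bullet E)=1$; hence its vertical part is $f^\ast D_0$ for an effective divisor $D_0$ on $B$, and its horizontal part is a reduced irreducible curve $\Sigma$ with $(\Sigma\bullet E_b)=1$, so $f|_\Sigma\colon\Sigma\to B$ has degree $1$. Since the normalisation of $\Sigma$ is an elliptic curve mapping non-constantly into $X$, the curve $\Sigma$ is itself a smooth elliptic curve and a section of $f$; translating it to pass through the origin makes it an abelian subvariety with $\Sigma\cap E=\{0\}$ and $\Sigma+E=X$, so the addition map is an isomorphism $X\cong\Sigma\times E$ under which $f$ is the projection to $\Sigma$. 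In these product coordinates $\dO_X(\Sigma)=p_E^\ast\dO_E([0])$ and $f^\ast\dO_B(D_0)=p_\Sigma^\ast\dO_\Sigma(D_0')$, whence
\[
\cL\;\cong\;\dO_X(\Sigma+f^\ast D_0)\;\cong\;\dO_\Sigma(D_0')\boxtimes\dO_E([0]),
\]
a box product with one factor of degree $1$ and the other of degree $\tfrac{1}{2}(\cL^2)>0$. This is precisely the exceptional form, so~(2) follows.

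The step I expect to be the main obstacle is this middle one: upgrading ``there exists an elliptic curve $E$ with $(E\bullet\cL)=1$'' to the genuine splitting $X\cong\Sigma\times E$ with $\cL$ a box product. Two points need care: that the horizontal part of $\divisor(s)$ is a \emph{reduced} section of $f$ (where ampleness of $\cL$ and irreducibility of the fibres are both used), and that a section of $f$ through the origin is an abelian subvariety meeting $E$ transversally at the origin, so that the sum map is an isomorphism and not merely an isogeny. Once that is in place, the translations and the identification of $\cL$ in product coordinates are routine, and the only external input beyond elementary surface geometry is Reider's theorem.
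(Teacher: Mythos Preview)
Your proof is correct and follows essentially the same route as the paper: Riemann--Roch plus Kodaira vanishing for part~(1), and Reider's theorem followed by the identification of a smooth elliptic curve $E\subset X$ with $(E\bullet\cL)=1$, then a splitting $X\cong E\times E'$ for part~(2). The only notable difference is in how the section of the elliptic fibration $f:X\to X/E$ is produced: the paper works on the generic fibre $X_\eta$, uses Riemann--Roch there to find a degree-$1$ point, and takes its Zariski closure; you instead pick a global section $s\in\rH^0(X,\cL)$ and decompose $\divisor(s)$ into a vertical part and a horizontal degree-$1$ curve $\Sigma$. Both lead to the same product decomposition and identification of $\cL$ as a box product. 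One small wrinkle: when you translate $\Sigma$ to pass through the origin you should keep track of the fact that this replaces $\cL$ by a translate $t_{-p}^\ast\cL$; since translates of box products are again box products this is harmless, but the line ``$\dO_X(\Sigma)=p_E^\ast\dO_E([0])$'' should read $p_E^\ast\dO_E([e_0])$ for the appropriate point $e_0\in E$.
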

\begin{proof}
By Kodaira vanishing and Riemann--Roch, $\rh^0(X,\cL) = (\cL^2)/2 \leq 2$.  If $\cL$ were globally generated, then the associated map $X \to \bP(\rH^0(X,\cL))$ would be  finite as  $\cL$ is ample. This yields a contradiction if $(\cL^2) \leq 4$ by comparing dimensions. This shows \eqref{propitem:globalgeneration abelian surface 1}.

Moving on to \eqref{propitem:globalgeneration abelian surface 2}, we assume $(\cL^2) > 4$ (more precisely $(\cL^2) \geq 6$ as the intersection form is even). Reider's method shows that an ample $\cL$ with $(\cL^2) \geq 5$ is globally generated unless there is an effective divisor $C$ on $X$ with $C^2=0$ and $C\bullet \cL =1$. Since $\cL$ is ample, we deduce that $C$ must be irreducible and reduced. Riemann--Roch computes $\chi(C,\dO_C) = 0$, and since abelian varieties do not contain rational curves, the curve $C$ is smooth of genus $1$. After translation, the curve $C$ is an abelian subvariety of $X$ that from now on we denote by $E$. The quotient map $X \to X/E \eqcolon E'$ realizes $X$ as an elliptic fibration over an elliptic curve $E'$. 

The degree of $\cL$ equals $1$ on a general and thus on the generic fiber $X_\eta$. By Riemann--Roch on the curve $X_\eta$, there is a point $P \in X_\eta$ of degree $1$ with $\cL|_{X_\eta} \simeq \dO_{X_\eta}(P)$. The Zariski closure of $P$ in $X$ yields a splitting $s: E' \to X$ of the quotient map $X \to E'$ and an isomorphism $X \simeq E \times E'$. Note that, by construction, the line bundles $\cL$ and $\dO_X(s(E')) = \pr^\ast \dO_E(P_0)$ with the origin $P_0 \in E$ agree on $X_\eta$ (here $\pr: X \to E$ is the projection). We set $\cM = \dO_E(P_0)$ and conclude that the difference $\cL \otimes \pr^\ast \cM^{-1}$ comes from a line bundle $\cM'$ on $E'$ with thus $\cL \simeq \cM \boxtimes \cM'$.  

It remains to show that in the exceptional case constructed above the line bundle $\cL$ is not globally generated. But $\cL|_E$ has degree $1$ and thus is not globally generated on $E$. Hence $\cL$ has a base point along $E$. 
(This is a special case of \pref{prop:specialfibrationFN} because $\omega_X$ is trivial, the ample line bundle $\cL = \dO_X(S)$ is effective and $X \to E'$ is a pseudosplit irreducible fibration since $S \bullet E = 1$ for the fibre $E$.)
\end{proof}

\begin{prop}
\label{prop:FNabeliansurface}
Let $X$ be an abelian surface. Then we have
\[
\conFN(X) = 2 \quad \iff \quad \text{ $X$ supports  an ample $\cL$ with $(\cL^2) \leq 4$}\ .
\]
If the above assertions do not hold, then we have $\conFN(X) = 0$. Both values $0$ and $2$ occur:
\begin{enumerate}[align=left,labelindent=0pt,leftmargin=*]
\item 
\label{propitem:FNabeliansurface FN2}
If $X$ is a principally polarized abelian variety, then $\conFN(X)$ equals $2$.
\item 
\label{propitem:FNabeliansurface FN0}
If $X$ is an abelian variety with an isogeny $A \to X \coloneq A/G$ for a principally polarized abelian variety $A$ with $\End(A) = \bZ$ and a finite group $G$ whose odd part of $\#G$ is not a square. Then $\conFN(X)$ equals $0$.
\end{enumerate}
\end{prop}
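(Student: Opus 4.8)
The plan is to combine the global generation analysis of \pref{prop:globalgeneration abelian surface} with the structural results \pref{prop:FNsurfacesReider}\eqref{propitem:reider4} and \pref{prop:FNforProduct}. First I would establish the dichotomy. Since $\omega_X$ is trivial on an abelian surface, $\conFN(X) = 2$ if and only if there is an ample $\cL$ with $\omega_X \otimes \cL = \cL$ not globally generated, and we need this to persist under adding further ample line bundles. If $X$ carries an ample $\cL$ with $(\cL^2) \leq 4$, then by \pref{prop:globalgeneration abelian surface}\eqref{propitem:globalgeneration abelian surface 1} the bundle $\cL$ is not globally generated, so $\conFN(X) \geq 1$; since $K_X$ is numerically $2\vartheta$ (indeed numerically trivial), \pref{prop:FNsurfacesReider}\eqref{propitem:reider4} gives $\conFN(X) \leq 2$, and one checks $\conFN(X) \neq 1$ because otherwise $\cL \otimes \cL'$ would be globally generated for every ample $\cL'$, yet we can choose $\cL'$ so that $(\cL \otimes \cL')$ is again small enough, or more cleanly, use a suitable ample line bundle producing a pseudosplit irreducible fibration via \cor{cor:specialfibrationFN}. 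Conversely, if every ample $\cL$ satisfies $(\cL^2) \geq 6$, I would show that for any collection $\cL_1, \ldots, \cL_s$ of ample line bundles the sum $\cL = \cL_1 \otimes \cdots \otimes \cL_s$ has $(\cL^2) > 4$ and is not of the exceptional product shape in \pref{prop:globalgeneration abelian surface}\eqref{propitem:globalgeneration abelian surface 2}: the exceptional case forces $X \simeq E \times E'$ and $\cL = \cM \boxtimes \cM'$ with $\deg \cM = 1$, but then $\cM \boxtimes \dO_{E'}$ is an ample (after perturbing, or rather $\cM \boxtimes \cM'$ with $\cM'$ of degree $1$ is) line bundle of self-intersection $2 \leq 4$ on $X$, contradicting the hypothesis; hence every such $\cL$ is globally generated and $\conFN(X) = 0$.

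Next I would prove the two existence statements. For \eqref{propitem:FNabeliansurface FN2}, a principally polarized abelian surface carries a theta divisor $\Theta$ with $(\Theta^2) = 2 \leq 4$, so the dichotomy immediately gives $\conFN(X) = 2$. For \eqref{propitem:FNabeliansurface FN0}, the key point is to show that $X = A/G$ has no ample line bundle of self-intersection $\leq 4$. Any ample $\cL$ on $X$ pulls back to an ample $\cM$ on $A$ with $(\cM^2) = \#G \cdot (\cL^2)$; moreover $\cM$ descends from $X$, which constrains its class in $\NS(A)$. Since $\End(A) = \bZ$ and $A$ is principally polarized, $\NS(A) \cong \bZ$ generated by the principal polarization $\vartheta$ with $(\vartheta^2) = 2$, so $\cM \equiv k\vartheta$ for some $k \geq 1$, whence $(\cM^2) = 2k^2$. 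Thus $(\cL^2) = 2k^2/\#G$, and for this to be an integer $\leq 4$ one needs $\#G \mid 2k^2$ with $2k^2/\#G \in \{2, 4\}$ (self-intersection on an abelian surface is even), i.e. $k^2/\#G \in \{1, 2\}$ — but $\#G$ dividing $k^2$ with quotient $1$ or $2$ is impossible once the odd part of $\#G$ is a non-square, since then the odd part of $\#G$ cannot divide $k^2$ exactly (a square times at most $2$). Hence no such $\cL$ exists and $\conFN(X) = 0$.

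The main obstacle is the descent/Néron–Severi bookkeeping in \eqref{propitem:FNabeliansurface FN0}: I must pin down precisely which classes in $\NS(A) \cong \bZ\vartheta$ actually descend to line bundles on $X = A/G$ (not merely which are $G$-invariant in $\NS$), and then track the self-intersection across the isogeny of degree $\#G$, being careful that $(\cL^2)$ is computed on $X$ while $(\cM^2) = \#G\,(\cL^2)$. The reduction to $\NS(A) = \bZ$ uses $\End(A) = \bZ$ together with principal polarizability, and the final numerical contradiction is exactly the elementary fact that if the odd part of $m$ is not a square then $m \nmid k^2$ for any $k$ unless $m \mid k^2$ forces an even part imbalance — phrased correctly, $2k^2 = m \cdot (\cL^2)$ with $(\cL^2) \in \{2,4\}$ gives $k^2 = m$ or $2k^2 = m$, both of which are impossible when the odd part of $m$ is a non-square. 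I would isolate this as a short arithmetic lemma. Everything else — ampleness of sums, the exceptional product case analysis, the reduction $\conFN \in \{0,2\}$ — follows routinely from the already-established \pref{prop:globalgeneration abelian surface} and \pref{prop:FNsurfacesReider}.
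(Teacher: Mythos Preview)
Your overall approach matches the paper's, but there is one muddled step worth fixing. When $X$ carries an ample $\cL$ with $(\cL^2)\leq 4$, you conclude only $\conFN(X)\geq 1$ and then attempt to rule out $\conFN(X)=1$ by arguing that ``we can choose $\cL'$ so that $(\cL\otimes\cL')$ is again small enough'' --- but tensoring with an ample line bundle only increases self-intersection, so this does not work, and the fallback to \cref{cor:specialfibrationFN} is overkill. The correct observation is immediate: since $\omega_X\simeq\dO_X$, the non-globally-generated ample $\cL$ \emph{is} $\omega_X\otimes\cL$, an adjoint with $s=1$, so $\conFN(X)\geq 2$ directly. This is exactly the remark at the beginning of \secref{sec:FN Kodaira 0} that abelian surfaces have $\conFN(X)\in\{0,2\}$.

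For part~\eqref{propitem:FNabeliansurface FN0} your route is a mild variant of the paper's. You pull back $\cL$ to $A$ and use that $\End(A)=\bZ$ forces $\NS(A)=\bZ\cdot\vartheta$, so $\pi^\ast\cL\equiv k\vartheta$ and $2k^2=(\pi^\ast\cL)^2=\#G\cdot(\cL^2)$. The paper instead notes that the composite $A\to X\xrightarrow{\varphi_\cL}X^t\to A^t\simeq A$ lies in $\End(A)=\bZ$, hence equals $[m]$, and compares degrees to reach the identical equation $2m^2=\#G\cdot(\cL^2)$. From here the paper concludes a bit more cleanly: for an odd prime $p$ with $v_p(\#G)$ odd, parity forces $p\mid(\cL^2)$, and since $(\cL^2)$ is even this gives $(\cL^2)\geq 2p\geq 6$. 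Your case-by-case exclusion of $(\cL^2)\in\{2,4\}$ is equivalent but less direct. The descent remark (``$\cM$ descends from $X$, which constrains its class'') is never actually used and can be dropped.
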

\begin{proof}
We know that $\conFN(X)$ is either $0$ or $2$, and that it is $0$ if and only if all ample line bundles are globally generated. The equivalence thus follows at once from \pref{prop:globalgeneration abelian surface}. If there is an ample line bundle with $(\cL^2) \leq 4$ then this $\cL$ is not globally generated, hence  $\conFN(X) = 2$. If, on the other hand, for all ample line bundles $(\cL^2) \geq 6$ then $X$ is not a product of elliptic curves, as on such a product we have ample line bundles of self intersection $2$. Then the exceptional case of \pref{prop:globalgeneration abelian surface} does not occur, and all ample $\cL$ are globally generated. This shows  $\conFN(X) = 0$.

\eqref{propitem:FNabeliansurface FN2} is obvious because a principal polarization $\cL$ has $(\cL^2) = 2$. 
It remains to prove \eqref{propitem:FNabeliansurface FN0}.
Let $\pi: A \to X$ denote the quotient map. For an ample line bundle $\cL$ on $X$ we denote by $\ph_\cL : X \to X^t$ the isogeny $\ph_\cL(x) = t_x^\ast \cL \otimes \cL^{-1}$. The composition
\[
A \xrightarrow{\pi} X \xrightarrow{\ph_\cL} X^t \xrightarrow{\pi^t} A^t \simeq A
\]
is multiplication by an integer $m \in \bZ$ because $\End(A) = \bZ$. Computing degrees yields
\[
m^4 = \deg(\pi) \cdot \deg(\pi^t) \cdot \deg(\ph_\cL) = \#G^2 \cdot \deg(\ph_\cL)\ .
\]
By Riemann--Roch $\deg(\ph_\cL) = \chi(A,\cL)^2 = \big(\frac{1}{2} (\cL^2)\big)^2$. Combining the two equations yields
\[
2 m^2 = \#G \cdot  (\cL^2)\ .
\]
We now choose an odd prime $p$ that occurs in $\#G$ with an odd exponent. Since the self-intersection $(\cL^2)$ is always even due to Riemann--Roch, it follows that  $2p \mid (\cL^2)$. Since then $(\cL^2) \geq 6$, we are done. 
\end{proof}

\begin{rmk}
\label{rmk:FN in etale cover}
The abelian surface $X$ constructed in \pref{prop:FNabeliansurface} \eqref{propitem:FNabeliansurface FN0}
 together with the isogeny $\pi: A \to X$ with a principally polarized abelian surface $A$ give rise to finite \'etale maps
 \[
 A \xrightarrow{\pi} X \to A
 \]
(as in the proof of \pref{prop:FNabeliansurface}), showing that the convex Fujita number can actually go up and down along finite \'etale maps. 
\end{rmk}

\subsection{Bielliptic surfaces}

Next we consider bielliptic surfaces $X = E \times F/G$ where $E$ and $F$ are elliptic curves and $G$ is a finite subgroup of $E$ acting on the factor $E$ by translation, and via an injective representation $G \inj F \rtimes \Aut(F)$ on  the factor $F$. There is a short list of possible groups $G$, and in most cases $F$ must have complex multiplication by either $\bZ[i]$, or the Eisenstein integers $\bZ[\zeta_3]$. The projections to the factors are equivariant with respect to the $G$-action and so $X$ sits in the two fibrations
\[
\xymatrix@M+1ex{
& \ar[dl] A \ar[d]^\pi \ar[dr] & \\
E \ar[d] & \ar[dl]_h X \ar[dr]^{f} &  F \ar[d] \\
E' = E/G& & F/G = \bP^1
}
\]
where $h$ is a smooth projective isotrivial fibration with fiber $F$, and $f$ is an isotrivial elliptic fibration with multiple fibers and general fiber $E$. Let $F_0$ be the reduced fiber of $f$ in the image of $0 \in F$ in $F/G$, and let $S \subseteq F$ be the $G$-orbit of $0 \in F$. The line bundle  $\cL = \dO_X(h^{-1}(0) + F_0)$
pulls back under $\pi$ to 
\[
\pi^\ast \cL = \dO_E(G) \boxtimes \dO_F(S)\ .
\]
It follows that $\cL$ is an ample line bundle on $X$, and 
\[
(\cL^2) = 2 \# S\ .
\]
The following groups, stabilizers $G_0$ of $0 \in F$, and self-intersection of $\cL$ occur, see \cite[page 199]{barth_compact_2004}, here $\mu_n$ denotes the group of $n$-th roots of unity, and $\zeta_3$ is a cubic root of unity.

\[
{\renewcommand{\arraystretch}{1.2}
\setlength{\arraycolsep}{1em} 
\begin{array}{ccclcc}
\toprule
G &  \#G & F & \text{action of generators} & G_0  & (\cL^2)  \\ \midrule
\mu_2 & 2 & \text{arbitrary} &       (x, y) \mapsto (x+\alpha, -y)  &  G  &  2  \\ \midrule
\mu_3 & 3 & \text{CM by } \bZ[\zeta_3] &    (x, y) \mapsto (x+\alpha, \zeta_3y)    &  G  &  2  \\ \midrule
\mu_4 & 4 & \text{CM by } \bZ[i] &      (x, y) \mapsto (x+\alpha, iy)   &  G  &  2  \\ \midrule
\mu_6 & 6&  \text{CM by } \bZ[\zeta_3] &     (x, y) \mapsto (x+\alpha, -\zeta_3y)    &  G  &  2  \\ \midrule
\mu_2 \times \bZ/2\bZ & 4 & \text{arbitrary} &     (x, y) \mapsto (x+\alpha, -y)  &  \mu_2  & 4  \\
& &  & (x, y) \mapsto (x+\beta, y+\gamma)  & &  \\ \midrule
\mu_4 \times \bZ/2\bZ & 8&  \text{CM by } \bZ[i] &      (x, y) \mapsto (x+\alpha, iy)   &   \mu_4  &   4 \\ 
& & & (x, y) \mapsto (x+\beta, y+\gamma) & &  \\ \midrule
\mu_3 \times \bZ/3\bZ & 9 & \text{CM by } \bZ[\zeta_3] &     (x, y) \mapsto (x+\alpha, \zeta_3y)  & \mu_3   &  6   \\  
& & & (x, y) \mapsto (x+\beta, y+\gamma) & &   \\ \bottomrule
\end{array}
}
\]
In the formula describing the action, the elements $\alpha, \beta$ (resp.\ $\gamma$) always denote a torsion point of suitable order of $E$ (resp.\ of $F$).

Before we can compute the convex Fujita numbers we need two lemmas for the case where $G$ is of order $9$. 

\begin{lem}
\label{lem:G9 part1}
Let $G = E[3] \subseteq E$ be the kernel of the map $[3]$, multiplication by $3$. Then the descent spectral sequence for the $G$-cover $[3]: E \to E$ for coefficients $\Gm$ yields an exact sequence
\[
0 \to \Hom(G,\bC^\times) \to \Pic(E) \xrightarrow{[3]^\ast} \Pic(E)^G \xrightarrow{d} \rH^2(G,\bC^\times) \to 0\ .
\]
Moreover, a line bundle $\cM \in \Pic(E)$ is $G$-invariant if and only if $\deg(\cM)$ is divisible by $3$.
\end{lem}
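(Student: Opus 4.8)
The statement has two parts: (i) the exact sequence coming from the descent (Hochschild–Serre / Cartan–Leray) spectral sequence for the $G$-Galois cover $[3]\colon E \to E$ with coefficients in $\Gm$, and (ii) the identification of the $G$-invariant line bundles on $E$ as exactly those of degree divisible by $3$. I would treat (i) first as a formal consequence of the low-degree exact sequence of the spectral sequence, and then use (i) together with a degree computation to deduce (ii).

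For part (i), the plan is to invoke the Hochschild–Serre spectral sequence
\[
\rE_2^{p,q} = \rH^p\big(G, \rH^q_{\et}(E,\Gm)\big) \Longrightarrow \rH^{p+q}_{\et}(E,\Gm)
\]
for the finite Galois cover $[3]\colon E \to E$ with group $G = E[3]$, noting $\rH^0(E,\Gm) = \bC^\times$ (since $E$ is projective and connected), $\rH^1(E,\Gm) = \Pic(E)$, and $\rH^2(E,\Gm) = \Br(E) = 0$ (the Brauer group of a smooth projective curve over an algebraically closed field vanishes). The seven-term exact sequence of low degrees then reads
\[
0 \to \rH^1(G,\bC^\times) \to \rH^1(E,\Gm) \to \rH^1(E,\Gm)^G \to \rH^2(G,\bC^\times) \to \ker\big(\rH^2(E,\Gm) \to \cdots\big),
\]
and since $\rH^2(E,\Gm) = 0$ the last term vanishes, giving exactly the claimed four-term sequence with $\rH^1(G,\bC^\times) = \Hom(G,\bC^\times)$ and $[3]^\ast$ identified as the inflation map $\rH^1(E,\Gm) \to \rH^1(E,\Gm)^G$. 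One subtlety to address: strictly one must check that the map $\rH^1(E,\Gm) \to \rH^1(E,\Gm)^G$ in the spectral sequence really is pullback along $[3]$, which is the standard identification of the edge map with $[3]^\ast$ composed with the inclusion of invariants.

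For part (ii), the degree of a line bundle gives a $G$-equivariant homomorphism $\deg\colon \Pic(E) \to \bZ$ with trivial $G$-action on $\bZ$, so every $G$-invariant $\cM$ certainly — wait, this only shows invariants map into $\bZ$, not that the degree is divisible by $3$. The right argument: the cokernel of $[3]^\ast\colon \Pic(E) \to \Pic(E)^G$ is, by part (i), isomorphic to $\rH^2(G,\bC^\times)$; for $G = E[3] \simeq (\bZ/3)^2$ the Schur multiplier $\rH^2(G,\bC^\times)$ is cyclic of order $3$. Since $[3]^\ast$ multiplies degrees by $3$, its image inside $\Pic(E)^G$ consists of line bundles of degree divisible by $3$; as $\Pic(E)^0 = \Pic^0(E) \subseteq \Pic(E)^G$ (the Jacobian is divisible, and in fact $\Pic^0(E) = [3]^\ast\Pic^0(E)$ since $[3]$ is surjective on $\Pic^0$), the quotient $\Pic(E)^G / \im([3]^\ast)$ injects into $\bZ/3\bZ$ via the degree. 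Counting shows this quotient has order $3$, so the degree map induces an isomorphism $\Pic(E)^G/\im([3]^\ast) \simeq \bZ/3\bZ$, and a line bundle is $G$-invariant precisely when its degree lands in $3\bZ$ — i.e. iff $3 \mid \deg(\cM)$.

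**The main obstacle.** The formal spectral sequence input is routine; the genuine work is the combination of facts making part (ii) precise: that $\Pic^0(E)$ is entirely $G$-invariant and divisible by the map $[3]^\ast$, that $\rH^2(G,\bC^\times) \simeq \bZ/3\bZ$, and that these fit together so the degree map exactly detects the obstruction. Getting the bookkeeping right — identifying $\im([3]^\ast)$ as "degree divisible by $3$" rather than merely containing it, and ruling out $G$-invariant bundles of degree $\not\equiv 0 \pmod 3$ — is where care is needed, though each ingredient individually is standard.
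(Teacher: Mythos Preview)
Your treatment of part (i) is correct and coincides with the paper's: the Hochschild--Serre spectral sequence plus $\Br(E)=0$ (Tsen) yields the four-term exact sequence.

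For part (ii), the paper argues directly via the polarization homomorphism: $\cM$ is $G$-invariant if and only if $t_\alpha^\ast\cM \otimes \cM^{-1} = \ph_\cM(\alpha)$ vanishes for all $\alpha\in E[3]$, i.e.\ $E[3]\subseteq\ker(\ph_\cM)=E[\deg\cM]$, i.e.\ $3\mid\deg\cM$. Your route through the exact sequence and the Schur multiplier is a legitimate alternative in principle, but as written it contains a genuine error. The map $[3]\colon E\to E$ has degree $9$, not $3$, so $[3]^\ast$ multiplies degrees by $9$; thus $\deg(\im[3]^\ast)=9\bZ$ and the injection you obtain is into $\bZ/9\bZ$, not $\bZ/3\bZ$. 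With this correction the argument does go through: the unique order-$3$ subgroup of $\bZ/9\bZ$ is $3\bZ/9\bZ$, forcing $\deg\big(\Pic(E)^G\big)=3\bZ$.

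Note also that your final inference is a non sequitur even on your own numbers. If $\deg(\im[3]^\ast)=3\bZ$ and the degree map induced an isomorphism $\Pic(E)^G/\im([3]^\ast)\simeq\bZ/3\bZ$, this would say $\deg\big(\Pic(E)^G\big)/3\bZ\simeq\bZ/3\bZ$, hence $\deg\big(\Pic(E)^G\big)=\bZ$ --- the opposite of what you want. That your conclusion did not match your own computation should have signalled the arithmetic slip.
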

\begin{proof}
The short exact sequence of low degree terms of the spectral sequence
\[
\rE_2^{a,b} = \rH^a(G,\rH^b(E,\Gm)) \Longrightarrow \rH^{a+b}(E,\Gm)
\]
yields
\[
0 \to \Hom(G,\bC^\times) \to \Pic(E) \xrightarrow{[3]^\ast} \Pic(E)^G \xrightarrow{d} \rH^2(G,\bC^\times) \to \rH^2(E,\Gm)\ .
\]
By Tsen's theorem and the  purity of the Brauer group we find $\rH^2(E,\Gm) = 0$. This proves the first assertion. 

A line bundle $\cM$ is $G$-invariant if and only if for all $\alpha \in E[3]$ we have 
\[
\ph_{\cM}(\alpha) = t_\alpha^\ast \cM \otimes \cM^{-1} = 0\ ,
\]
so if and only if $E[3] \subseteq \ker(\ph_\cM)$. The kernel of the polarization $\ph_\cM$ consists of all $\deg(\cM)$-torsion points, hence this holds if and only if $3 \mid \deg(\cM)$ as claimed.
\end{proof}

The second lemma resembles the first but treats the ramified case. For a finite group $G$ acting on a variety $F$ we denote the orbifold quotient by $[F/G]$.

\begin{lem}
\label{lem:G9 part2}
Let $G = E[3] \subseteq E$ be the kernel of the map $[3]$  (multiplication by $3$),  and let $G$ act on the elliptic curve $F$ as in the bottom line of the table above. Then the spectral sequence for the orbifold $G$-cover $\psi: F \to [F/G]$ for coefficients $\Gm$ yields an exact sequence
\[
0 \to \Hom(G,\bC^\times) \to \Pic([F/G]) \xrightarrow{\psi^\ast} \Pic(F)^G \xrightarrow{d} \rH^2(G,\bC^\times) \to 0\ .
\]
Moreover, the restriction of $d$ to the $3$-torsion subgroup of $\Pic(F)^G$ is surjective onto 
$\rH^2(G,\bC^\times)$.
\end{lem}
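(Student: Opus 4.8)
The plan is to run the descent spectral sequence
\[
\rE_2^{a,b} = \rH^a\big(G, \rH^b(F,\Gm)\big) \Longrightarrow \rH^{a+b}\big([F/G],\Gm\big)
\]
for the orbifold $G$-torsor $\psi\colon F \to [F/G]$, exactly as in \lref{lem:G9 part1}. The inputs are $\rH^0(F,\Gm) = \bC^\times$, $\rH^1(F,\Gm) = \Pic(F)$, and $\rH^2(F,\Gm) = \Br(F) = 0$ (Tsen's theorem and purity of the Brauer group), together with $\rH^1([F/G],\Gm) = \Pic([F/G])$ and $\rH^1(G,\Gm) = \Hom(G,\bC^\times)$. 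The five-term exact sequence of low degree then reads
\[
0 \to \Hom(G,\bC^\times) \to \Pic([F/G]) \xrightarrow{\psi^\ast} \Pic(F)^G \xrightarrow{d} \rH^2(G,\bC^\times) \to \rH^2([F/G],\Gm),
\]
and it yields everything in the asserted sequence except surjectivity of the transgression $d = d_2^{0,1}$. Here $d$ has the usual meaning: a $G$-invariant line bundle $\cM$ on $F$ gives a central $\bC^\times$-extension $\widetilde{G}_\cM$ of $G$ whose elements are the pairs $(g,\phi)$ with $g \in G$ and $\phi\colon g^\ast\cM \xrightarrow{\sim} \cM$ — the obstruction to $G$-linearising $\cM$ — and $d(\cM)$ is its class. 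So it remains to prove the ``moreover'', and that will also give $d$ surjective.

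Next I would identify the $3$-torsion subgroup of $\Pic(F)^G$. The translations in $G$ act trivially on $\Pic^0(F)$, and the generator of $G$ given on $F$ by $y \mapsto \zeta_3 y$ (bottom line of the table) acts on $\Pic^0(F) \cong F$ by $\zeta_3^{-1}$; hence $(\Pic(F)^G)[3] = \ker(\zeta_3 - 1)|_{F[3]}$. Since $1 + \zeta_3 + \zeta_3^2 = 0$ gives $(\zeta_3 - 1)^2 = -3\zeta_3$, this endomorphism kills $F[3] \cong (\bZ/3)^2$, while $\zeta_3 \neq 1$ on $F[3]$ because $\zeta_3 - 1$ is not divisible by $3$ in $\bZ[\zeta_3]$; so $\zeta_3$ is unipotent of order $3$ on $F[3]$ and $\ker(\zeta_3 - 1)|_{F[3]}$ is cyclic of order $3$. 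It is generated by the translation part $\gamma$ of the other generator, since commutativity of $G$ forces $(\zeta_3 - 1)\gamma = 0$. Thus the claim reduces to showing $d(L) \neq 0$ for a generator $L$ of this $\bZ/3$, as then $d$ maps $(\Pic(F)^G)[3] \cong \bZ/3$ onto $\rH^2(G,\bC^\times) \cong \bZ/3$.

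For the last point I would pass to the connected \'etale $\mu_3$-cover $p\colon F' \to F$ classified by $L$: $F'$ is again an elliptic curve and $p$ is a $3$-isogeny with $\ker(p) \cong \bZ/3$ central in the group of automorphisms of $F'$ lying over automorphisms of $F$. A $G$-linearisation of $L$ is the same as a lift of the $G$-action to $F'$, and $d(L)$ is the obstruction, namely the commutator $[\tilde g_1, \tilde g_2] \in \ker(p)$ of any lifts of the two generators $g_1\colon y \mapsto \zeta_3 y$ and $g_2 = t_\gamma$. Because $L$ is $\zeta_3$-invariant, the classifying isogeny has kernel the unique $\zeta_3$-stable order $3$ subgroup $\langle\gamma\rangle \subseteq F[3]$, so $F'$ inherits the complex multiplication and $g_1$ lifts to $y \mapsto \zeta_3' y$ for a primitive cube root $\zeta_3' \in \End(F')$, while $g_2$ lifts to a translation $t_{\gamma'}$ with $\gamma' \in p^{-1}(\gamma)$; one computes $[\tilde g_1,\tilde g_2] = t_{(\zeta_3' - 1)\gamma'}$. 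Finally, $\langle\gamma\rangle$ being $\zeta_3$-stable forces $\ker(p) = \ker(\zeta_3' - 1)|_{F'}$, hence $p\big(\ker(\zeta_3' - 1)|_{F'}\big) = 0 \neq \gamma$, so $(\zeta_3' - 1)\gamma' \neq 0$ in $F'$ for every lift $\gamma'$ of $\gamma$; therefore $[\tilde g_1,\tilde g_2]$ is a nontrivial deck transformation and $d(L) \neq 0$.

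The main obstacle is this last step: one has to match the $\zeta_3$-invariant nontrivial $3$-torsion line bundle $L$ with the $\zeta_3$-compatible $3$-isogeny $F/\langle\gamma\rangle \to F$, check that the transgression $d(L)$ really is computed by the commutator of lifts, and verify the coincidence of kernels $\ker(p) = \ker(\zeta_3' - 1)|_{F'}$ — i.e.\ carry out the bookkeeping of the theta group of $L$ twisted by the automorphism part of $G$. Alternatively, surjectivity of $d$ alone also follows from $\rH^2([F/G],\Gm) = \Br([F/G]) = 0$, since the Brauer group of this smooth Deligne--Mumford stack with coarse space $\bP^1$ and trivial generic stabiliser embeds into $\Br(\bC(\bP^1)) = 0$; but the $3$-torsion refinement in the ``moreover'' still needs the computation above.
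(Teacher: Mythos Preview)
Your argument is correct, but it proceeds quite differently from the paper's. After writing down the same five-term sequence, the paper does \emph{not} compute $d(L)$ directly. Instead it argues by contradiction: assuming $d$ kills $(\Pic(F)^G)[3]$, any nontrivial $G$-invariant $3$-torsion class would lift to a torsion class in $\Pic([F/G])$; but the coarse space $F/G$ is $\bP^1$ ramified in three points of index $3$, so $\pi_1([F/G])=\langle a,b,c\mid a^3=b^3=c^3=abc=1\rangle$ has abelianisation $(\bZ/3)^2$, whence $\Pic([F/G])_\tors\cong(\bZ/3)^2$ is already exhausted by $\Hom(G,\bC^\times)$ and $\psi^\ast$ annihilates all torsion --- a contradiction. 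Your route is more hands-on: you pin down $(\Pic(F)^G)[3]=\langle\gamma\rangle$ explicitly via the $\bZ[\zeta_3]$-module structure on $F[3]$, identify $d(L)$ with the commutator obstruction to lifting the $G$-action to the $\mu_3$-cover $F'\to F$, and then compute $[\tilde g_1,\tilde g_2]=t_{(\zeta'_3-1)\gamma'}\neq 0$ using that $\ker p$ is the unique $\zeta'_3$-stable order-$3$ subgroup of $F'[3]$. The paper's method is shorter and avoids the theta-group bookkeeping you flag as the ``main obstacle''; yours has the virtue of exhibiting the nonzero class concretely and of explaining \emph{why} the obstruction is governed by the CM structure. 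One small wording issue: when you say ``the classifying isogeny has kernel $\langle\gamma\rangle\subseteq F[3]$'' you are really describing $\ker(p^\vee)\subset F$, not $\ker(p)\subset F'$; the subsequent argument only needs that $\ker(p)$ is $\zeta_3'$-stable (which follows from $p\circ\zeta_3'=\zeta_3\circ p$), so this slip is harmless.
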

\begin{proof}
The short exact sequence of low degree terms of the spectral sequence
\[
\rE_2^{a, b} = \rH^a(G,\rH^b(F,\Gm)) \Longrightarrow \rH^{a+b}([F/G],\Gm)
\]
yields
\[
0 \to \Hom(G,\bC^\times) \to \Pic([F/G]) \xrightarrow{\psi^\ast} \Pic(F)^G \xrightarrow{d} \rH^2(G,\bC^\times) \to \rH^2([F/G],\Gm)\ .
\]
Therefore,  the first assertion follows from the second. The Schur multiplier $\rH^2(G,\bC^\times)$ equals the exterior square $\bigwedge^2 G \simeq \bZ/3\bZ$. Hence it suffices to see that $d$ restricted to the $3$-torsion subgroup of $\Pic(F)^G$ is nontrivial. 

The action of the $3$-group $G$ on the $3$-group $\Pic^0(F)[3]$ of $3$-torsion has always a non-trivial fixed part. So $\Pic(F)^G[3] = \big(\Pic^0(F)[3]\big)^G$ is non-empty. We now argue by contradiction, and assume that $d$ annihilates this $G$-invariant $3$-torsion. Then $\Pic(F)^G[3]$ lies in the image of $\Pic([F/G])$ under $\psi^\ast$. More precisely, since $\ker(\psi^\ast) =  \Hom(G,\bC^\times)$ is finite, we even find a preimage in the torsion subgroup of $\Pic([F/G])$. 

The ramified cover $\bar{\psi} : F \to F/G \simeq \bP^1$, the coarse version of $\psi$, is easily seen to be ramified in three points with ramification index $3$, therefore the orbifold fundamental group of $[F/G]$ has a presentation
\[
\pi_1([F/G]) \simeq \langle a, b, c \ | \ a^3 = b^3 = c^3 = abc = 1\rangle\ .
\]
We can therefore compute the torsion subgroup
\[
\Pic([F/G])_\tors = \Hom(\pi^\ab_1([F/G]), \bC^\times_\tors) \simeq \bZ/3\bZ \times \bZ/3\bZ,
\]
so that the natural map induced by the homomorphism $\pi_1([F/G]) \surj G$ describing the $G$-torsor $F \to [F/G]$
\[
\Hom(G,\bC^\times) \to \Hom(\pi^\ab_1([F/G]), \bC^\times_\tors)  = \Pic([F/G])_\tors
\]
is an isomorphism. Therefore $\psi^\ast$ applied to torsion classes is zero, a contradiction finishing the proof.
\end{proof}

\begin{prop}
\label{prop:FN for bielliptic}
Any bielliptic surface $X$ has an ample line bundle $\cL$ with self-intersection $(\cL^2) \leq 4$. In particular, $X$ has convex Fujita number $\conFN(X) = 2$.
\end{prop}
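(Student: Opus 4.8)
The plan is to produce such an $\cL$ by going through the types of $G$ listed in the table above; the only case needing real work is $G \simeq \mu_3 \times \bZ/3\bZ$, and the ``in particular'' is then a short Riemann--Roch argument.

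For the six types other than $\mu_3\times\bZ/3\bZ$ the line bundle $\cL = \dO_X(h^{-1}(0)+F_0)$ already suffices: it is ample, as recorded just before the statement, and the last column of the table gives $(\cL^2) = 2\#S \in \{2,4\}$. So I may assume $G = \mu_3\times\bZ/3\bZ$, where $G_0 = \mu_3$, hence $\#S = 3$ and this bundle has $(\cL^2) = 6$, too large. In this case the action of $G$ on the $E$--factor identifies $G$ with $E[3]$, and $G$ acts freely on $A \coloneq E\times F$, so $\pi : A \to X$ is an \'etale $G$--torsor of degree $9$.

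To handle this case I would descend an external tensor product of ample degree--$3$ line bundles on the two factors. Choose $\cM_E\in\Pic(E)$ of degree $3$; it is $G$--invariant by \lref{lem:G9 part1}. Choose $\cM_F \coloneq \dO_F(3\cdot 0)\in\Pic(F)$; it is ample of degree $3$, and $G$--invariant because the rotation $y\mapsto \zeta_3 y$ fixes $0\in F$ while the translation part of the $G$--action on $F$ is by a point $\gamma\in F[3]$ and $t_\gamma^\ast\dO_F(3\cdot 0) = \dO_F(3\cdot(-\gamma))\simeq\dO_F(3\cdot 0)$ since $3\gamma = 0$. Then $\cM_E\boxtimes\cM_F$ on $A$ is $G$--invariant for the (product) $G$--action, since each factor is. Its obstruction to admitting a $G$--linearization is a class in $\rH^2(G,\bC^\times)$, and because $A$ is projective (so that the automorphism group of a line bundle is just $\bC^\times$) and linearization $2$--cocycles multiply under external tensor, this class equals $d_E(\cM_E) + d_F(\cM_F)$, where $d_E$ and $d_F$ are the connecting homomorphisms of \lref{lem:G9 part1} and \lref{lem:G9 part2} --- these measure precisely the failure of a $G$--invariant line bundle on $E$, resp.\ on $F$, to be $G$--linearizable. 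By \lref{lem:G9 part2} the restriction of $d_F$ to the $3$--torsion subgroup of $\Pic(F)^G$ already surjects onto $\rH^2(G,\bC^\times)$, so I may pick a $3$--torsion $G$--invariant $\cN_F\in\Pic(F)$ with $d_F(\cN_F) = -d_E(\cM_E) - d_F(\cM_F)$. Replacing $\cM_F$ by $\cM_F\otimes\cN_F$ --- still ample of degree $3$, still $G$--invariant --- the bundle $\cM_E\boxtimes(\cM_F\otimes\cN_F)$ admits a $G$--linearization, hence (as $G$ acts freely on $A$) descends to a line bundle $\cL$ on $X$ with $\pi^\ast\cL = \cM_E\boxtimes(\cM_F\otimes\cN_F)$. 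Since $\pi$ is finite surjective and the right--hand side is ample, $\cL$ is ample; and $9(\cL^2) = (\pi^\ast\cL)^2 = 2\cdot 3\cdot 3 = 18$, so $(\cL^2) = 2 \le 4$.

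Finally, to conclude $\conFN(X) = 2$ from the existence of an ample $\cL$ with $(\cL^2)\le 4$: a bielliptic surface has $\chi(X,\dO_X) = 0$ and numerically trivial $\omega_X$, so Riemann--Roch gives $\chi(X,\omega_X\otimes\cL) = \tfrac12(\cL^2)\le 2$, while Kodaira vanishing gives $\rH^i(X,\omega_X\otimes\cL) = 0$ for $i>0$; hence $\rh^0(X,\omega_X\otimes\cL)\le 2$. But $\omega_X\otimes\cL$ is ample (a numerically trivial twist of an ample bundle), and an ample, globally generated line bundle on a surface has at least $3$ global sections, since its associated morphism is then finite onto a surface in projective space. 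Therefore $\omega_X\otimes\cL$ is not globally generated, so $\conFN(X)\ge 2$; together with $\conFN(X)\le 2$ for Kodaira dimension $0$ (\pref{prop:FNsurfacesReider}), this gives $\conFN(X) = 2$. The main obstacle is the order--$9$ case, and within it the identification of the $G$--linearization obstruction of $\cM_E\boxtimes\cM_F$ with $d_E(\cM_E)+d_F(\cM_F)$: this is the one place where the two descent lemmas \lref{lem:G9 part1} and \lref{lem:G9 part2} are used, and it is exactly what they were proved for; the rest is routine.
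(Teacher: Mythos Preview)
Your proof is correct and follows essentially the same route as the paper's: handle the six easy types via the table, and in the $\mu_3\times\bZ/3\bZ$ case descend an ample degree-$(3,3)$ external tensor product by using \lref{lem:G9 part2} to twist away the $\rH^2(G,\bC^\times)$ obstruction, then finish with the Riemann--Roch/Kodaira-vanishing count of sections. The only cosmetic differences are that the paper starts from $\cM_2=\dO_F(S)$ (which already has trivial obstruction, so only $d(\cM_1)$ must be killed) rather than your $\dO_F(3\cdot 0)$, and it packages the additivity $d(\cM_1\boxtimes\cM_2)=d(\cM_1)+d(\cM_2)$ via an explicit comparison of descent spectral sequences rather than the direct cocycle argument you give.
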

\begin{proof}
We use the notation introduced above. We first deduce the consequence for the convex Fujita number. Since $\cL$ is ample and $\omega_X$ is numerically trivial, Kodaira vanishing for the groups $\rH^i(X, \omega_X \otimes (\omega_X^{-1} \otimes \cL))$ and Riemann-Roch show
\[
\rh^0(X,\cL) = \chi(X,\cL) = \chi(X,\dO_X) + \frac{1}{2} (\cL^2) = \frac{1}{2} (\cL^2) \leq 2\ .
\]
Arguing as in the proof of \pref{prop:globalgeneration abelian surface}, if $\rh^0(X,\cL) \leq 2$ then the ample $\cL$ cannot be globally generated. Therefore the adjoint bundle $\cL$ of the ample line bundle $\omega_X^{-1} \otimes \cL$ has a base point. This means that $\conFN(X) \geq 2$. On the other hand $\conFN(X) \leq 2$ by  \pref{prop:FNsurfacesReider}. This proves the assertion on the convex Fujita number. 

\smallskip

It remains to establish the existence of $\cL$. For all cases  but the last line in the table above, the line bundle $\cL$  listed there satisfies the claim about the self intersection number. Hence, it remains to study the case of $G = \mu_3 \times \bZ/3\bZ$. 

As above we consider the two maps $h: X \to E/G = E$ and $f: X \to [F/G]$. The exact sequence of low degree terms of the spectral sequence
\[
\rE_2^{a, b} = \rH^a(G,\rH^b(E\times F,\Gm)) \Longrightarrow \rH^{a+b}(X,\Gm)
\]
can be compared by pulling back along $h$ and $f$ with the sum of the short exact sequences of 
\lref{lem:G9 part1} and \lref{lem:G9 part2} as follows:
\[
\xymatrix@M+1ex@C-2.1ex{
0 \ar[r] & \rH^1(G,\bC^\times)^{\oplus 2} \ar[r] \ar[d]^{\sum} & \Pic(E) \times \Pic([F/G]) \ar[d]^{h^\ast(-) \otimes f^\ast(-)}\ar[r]^(0.52){[3]^\ast \times \psi^\ast} & \Pic(E)^G \times \Pic(F)^G \ar[r]^(0.56)d \ar[d]^{- \boxtimes -}& \rH^2(G,\bC^\times)^{\oplus 2} \ar[d]^{\sum}  \\
0 \ar[r] & \rH^1(G,\bC^\times) \ar[r] & \Pic(X) \ar[r]^{\pi^\ast} & \Pic(E\times F)^G \ar[r]^d & \rH^2(G,\bC^\times) .
}
\]
Let $\cM_1 \in \Pic(E)$ be of degree $3$. By \lref{lem:G9 part1} this $\cM_1$ is $G$-invariant. By  \lref{lem:G9 part2} we can pick a $3$-torsion class $\alpha$ in $\Pic(F)^G$ such that
\[
d(\cM_1 \boxtimes \alpha) = d(\cM_1) + d(\alpha) = 0
\]
Recall that $S \subseteq F$ is the $G$-orbit of $0 \in F$. Thus $\dO_F(S)$ carries a $G$-equivariant structure and so $d(\dO_F(S)) = 0$. It follows that with $\cM_2 = \dO_F(S) \otimes \alpha$ the line bundle $\cM_1 \boxtimes \cM_2$ descends to $X$: we have $\cL \in \Pic(X)$ with 
\[
\pi^\ast \cL = \cM_1 \boxtimes \cM_2\ .
\]
Clearly $\cL$ is ample, because $\pi^\ast \cL$ is ample on $E \times F$. Since both $\cM_i$ have degree $3$ we find
\[
(\cL^2) = \frac{2}{\#G}  \deg(\cM_1) \cdot \deg(\cM_2) = 2\ .
\]
This concludes the proof. 
\end{proof}

\subsection{K3 surfaces}

The improvement on Fujita's freeness conjecture for K3 surfaces is a classical result by Saint-Donat \cite[Theorem 8.3]{saint-donat_projective_1974}. It says  that $\cL^{\otimes 2}$ is globally generated for an ample line bundle $\cL$. Here we prove a precise criterion for the convex Fujita number  
(compare  Saint-Donat \cite[Proposition 8.1]{saint-donat_projective_1974} and 
Mayer \cite[Propositions 5+6]{mayer_families_1972} for the equivalence of \ref{propitem:K3ample not gg}  and \ref{propitem:K3ample not gg geometry} below).

\begin{prop}
\label{prop:FN for K3}
Let $X$ be a K3 surface equipped with an ample line bundle $\cL$. Then the following are equivalent.
\begin{enumerate}[label=(\alph*),align=left,labelindent=0pt,leftmargin=*,widest = (iii)]
\item 
\label{propitem:K3ample not gg} 
$\cL$ is not globally generated.
\item
\label{propitem:K3ample not gg geometry} 
$X$ admits an elliptic fibration $\ph: X \to \bP^1$ with general fiber $E$ and image $S$ of a section such that all fibers are irreducible and reduced and $\cL \simeq \dO_X(mE + S)$ for some $m \geq 3$.
\end{enumerate}
Both values $0$ and $2$ of the convex Fujita number occur:
\begin{enumerate}[align=left,labelindent=0pt,leftmargin=*]
\item 
\label{propitem:K3 FN0}
If $X$ does not admit an elliptic fibration, e.g.\ if $X$ has Picard number $1$ (e.g.\ if $X$ is a very general K3 surface) then $\conFN(X)$ equals $0$.
\item 
\label{propitem:K3 FN1}
There are K3 surfaces $X$ that admit the geometry of \ref{propitem:K3ample not gg geometry}. For these 
 $\conFN(X)$ equals $2$.
\end{enumerate}
\end{prop}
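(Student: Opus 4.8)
The plan is to prove the equivalence of \ref{propitem:K3ample not gg} and \ref{propitem:K3ample not gg geometry} via Reider's method, and then to derive the dichotomy for $\conFN(X)$ from it. Since the intersection form on a K3 surface is even, $\conFN(X)\leq 2$ by \pref{prop:FNsurfacesReider}\eqref{propitem:reider3}, and since $\omega_X$ is trivial, $\conFN(X)=0$ exactly when every ample line bundle is globally generated while $\conFN(X)=2$ otherwise. So the content is entirely in the non-global-generation criterion.

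For the direction \ref{propitem:K3ample not gg geometry} $\Rightarrow$ \ref{propitem:K3ample not gg}: given such an elliptic fibration $\ph$ with section $S$ and all fibres irreducible and reduced, I would observe that $\ph$ together with $S$ is a pseudosplit irreducible fibration in the sense of \dref{defi:special fibration} (the multiplicity $m$ there is $1$ here since all fibres are reduced, and $S\bullet E=1$). Writing $\cL\simeq\dO_X(mE+S)=\dO_X(S+E+\ph^\ast D)$ with $D=(m-1)[\mathrm{pt}]$ on $\bP^1$, of degree $m-1>0=-\tfrac1m(S^2)+\text{(correction)}$... here one must be slightly careful: $S^2=-2$ on a K3, and the numerical condition in \pref{prop:specialfibrationFN} reads $\deg D>-(S^2)=2$, i.e. $m-1>2$, i.e. $m\geq 4$. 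For $m=3$ one argues directly: $\cL=\dO_X(3E+S)$ restricts to $E$ as a degree-one line bundle (since $S\bullet E=1$ and $E^2=0$), hence is not globally generated on $E$, hence $\cL$ has a base point along $E$ — this is the same argument as in the proof of \pref{prop:specialfibrationFN} via \lref{lem:FNleq1impliesintersectiongeq2}, which only needs $E$ nef and $(\cL\bullet E)=1$ and $\cL$ ample effective; ampleness of $\dO_X(3E+S)$ follows from Nakai--Moishezon using that all fibres are irreducible, exactly as in \pref{prop:specialfibrationFN}.

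The substantive direction is \ref{propitem:K3ample not gg} $\Rightarrow$ \ref{propitem:K3ample not gg geometry}. Here I would run Reider's criterion: if $\cL$ is ample but not globally generated, then (since $(\cL^2)\geq 4$ on any K3 with ample $\cL$, and one checks $(\cL^2)=2$ is automatically handled — a degree-$2$ polarization is never base-point free, giving the double cover of $\bP^1\times\bP^1$ or $\bP^2$, and in fact there $\cL$ is globally generated, so actually the relevant low-degree cases need care) Reider produces an effective divisor $C$ with $(C^2)=0$ and $(C\bullet\cL)=1$. Since $\cL$ is ample, $C$ is irreducible and reduced; since $(C^2)=0$, adjunction gives $2p_a(C)-2=(C^2)=0$, so $C$ is a smooth elliptic curve, and $|C|$ is a base-point-free pencil inducing an elliptic fibration $\ph:X\to\bP^1$ with general fibre $E$ numerically equivalent to $C$. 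Then $(\cL\bullet E)=1$ forces, on the generic fibre $X_\eta$, a degree-one point, whose Zariski closure is a section $S$ with $S\bullet E=1$; and $\cL\otimes\dO_X(-S)$ has zero intersection with $E$, so (combined with lattice/Hodge-index arguments on a K3, using that $\Pic^0=0$) it is a pullback $\ph^\ast\dO_{\bP^1}(m-1)$, giving $\cL\simeq\dO_X(S+(m-1)E)$ for some integer; ampleness and $(\cL^2)\geq 4$ pin down $m\geq 3$. Finally, if some fibre of $\ph$ were reducible or non-reduced, one shows $\cL$ would actually be globally generated — this is where I expect the main obstacle: ruling out reducible fibres requires a genuine argument (e.g. if a fibre splits as $F_1+F_2$, then $\cL$ meets one component with multiplicity zero or one constructs sections directly), and this step, together with the careful treatment of the small-degree cases $(\cL^2)\in\{2,4\}$ where Reider's hypotheses are borderline, is the technical heart of the proof.

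For the last two items: \ref{propitem:K3 FN0} follows because a K3 surface without elliptic fibration — in particular one of Picard number $1$, which includes the very general K3 — cannot satisfy \ref{propitem:K3ample not gg geometry}, so by the equivalence every ample line bundle is globally generated, whence $\conFN(X)=0$ (Picard number $1$ excludes elliptic fibrations since an elliptic pencil would be a second independent nef class of self-intersection zero). For \ref{propitem:K3 FN1}, I would exhibit explicitly a K3 surface whose Néron--Severi lattice contains classes $E,S$ with $E^2=0$, $S^2=-2$, $E\bullet S=1$ realized by an actual elliptic fibration with a section and all fibres irreducible and reduced — e.g. a suitable elliptic K3 with Picard number $2$ and lattice $\begin{pmatrix}0&1\\1&-2\end{pmatrix}$, for which one checks (via the Torelli theorem / surjectivity of the period map, and the classification of fibres via the discriminant) that a generic member has only irreducible fibres; then $\cL=\dO_X(3E+S)$ is ample and, by \ref{propitem:K3ample not gg}, not globally generated, so $\conFN(X)=2$.
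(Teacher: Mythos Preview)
Your direction \ref{propitem:K3ample not gg geometry}$\Rightarrow$\ref{propitem:K3ample not gg} is essentially fine: once ampleness is checked by Nakai--Moishezon, the observation $\cL|_E\simeq\dO_E(\text{one point})$ suffices for all $m\geq 3$, so there is no need to split off $m=3$. The paper argues differently here, showing that $S$ is the fixed part of $|\cL|$ by computing $f_\ast\dO_X(S)\simeq\dO_{\bP^1}$ via cohomology and base change, but your restriction argument is equally valid.

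The genuine gap is in \ref{propitem:K3ample not gg}$\Rightarrow$\ref{propitem:K3ample not gg geometry}. Reider's theorem requires $(\cL^2)\geq 5$, hence on a K3 $(\cL^2)\geq 6$; but the case $(\cL^2)=4$ is precisely the case $m=3$ of the conclusion, so it \emph{does} occur and your outline gives no argument for it (your remarks on $(\cL^2)=2$ are also muddled, though that case turns out to be vacuous since $\cL\cdot S=0$ would contradict ampleness). The paper avoids Reider entirely here: it invokes the Saint-Donat/Mayer structure theorem (via \cite[II, Cor.~3.15]{huybrechts_lectures_2016}) which already gives $\cL\simeq\dO_X(mE+S)$ with $S$ a smooth rational $(-2)$-curve as fixed part and $mE$ mobile, for \emph{any} ample non--globally-generated $\cL$. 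From there, comparing $\rh^0(X,\cL)=m(E\bullet S)+1$ (Riemann--Roch) with $\rh^0(X,\dO_X(mE))=m+1$ (projection formula) forces $E\bullet S=1$, and then $m\geq 3$ follows from $0<\cL\bullet S=m-2$.

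Finally, what you flag as ``the main obstacle'' --- irreducibility and reducedness of all fibres --- is in fact a one-line consequence and not where the difficulty lies: since $\cL\bullet E=1$ and $\cL$ is ample, every fibre (numerically $\equiv E$) has total $\cL$-degree $1$ while each component contributes positively, so each fibre is irreducible and reduced. This also immediately justifies that $\cL(-S)$ is a pullback, the step you left as a ``lattice/Hodge-index argument''. Your treatment of \eqref{propitem:K3 FN0} and \eqref{propitem:K3 FN1} is fine; the paper constructs the example in \eqref{propitem:K3 FN1} by citing \cite{miranda_configurations_1989} for a Weierstra\ss\ K3 with only nodal fibres rather than via lattice theory and Torelli, but either route works.
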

\begin{proof}
Let $\cL$ be an ample line bundle on a K3 surface $X$ that is not globally generated. It follows from \cite[II, Corollary 3.15]{huybrechts_lectures_2016} and its proof that $\cL = \dO_X(mE + S)$ for a smooth elliptic curve $E$ and a curve $S \simeq \bP^1$, and some $m \geq 2$, such that $S$ is the base locus of $\cL$ and $mE$ is the mobile part. By adjunction we have $E^2 = 0$ and $S^2 = -2$. In particular, the effective divisor $E$ is nef. By \cite[3.8]{reid_chapters_1997} (see also \cite[II, Prop 3.10]{huybrechts_lectures_2016}), the line bundle $\dO_X(E)$ is globally generated. The proof of \cite[II, Prop 3.10]{huybrechts_lectures_2016} shows that the regular map
\[
\ph: X \to \bP(\rH^0(X,\dO_X(E))
\]
is a fibration over a curve, necessarily a projective line, as $X$ is simply connected. Let $f: X \to \bP^1$ be its Stein factorization. Since $\dO_X(E) = \ph^\ast \dO(1) = f^\ast \dO(d)$ for some $d \geq 1$, it follows that $E$ is linearly equivalent to a multiple of a general fiber of $f$. We may replace $E$ by a general fiber of $f$ without loss of generality, so $d$ equals $1$. Now by Kodaira vanishing and Riemann--Roch we have
\[
m(E \bullet S) + 1 = 2 + \frac{1}{2}(\cL^2)  = \chi(X,\cL) = \rh^0(X,\cL)\ .
\]
Since $mE$ is the mobile part of $\cL$, this equals, by the projection formula applied to $f$,
\[
\rh^0(X,mE) = \rh^0(\bP^1,\dO(m)) = m+1\ .
\]
We deduce $E \bullet S = 1$ (as observed by Ulrike Rie\ss,  see the footnote in 
\cite[page 31]{huybrechts_lectures_2016}). This means that $f|_S : S \to \bP^1$ is the inverse of a section of $f$.

Now $\cL$ intersects any fiber in a point: $\cL \bullet E = S \bullet E = 1$. Being ample, $\cL$ intesects all components of fibers, hence all fibers are irreducible and reduced. From $0 < \cL \bullet S = m - 2$, we deduce $m \geq 3$. This establishes the geometry claimed in \ref{propitem:K3ample not gg geometry}.

\smallskip

For the converse direction\footnote{Note that $\ph: X \to \bP^1$ is a pseudosplit irreducible fibration in the sense of \dref{defi:special fibration} and that $(S^2) = -2$, so that \pref{prop:specialfibrationFN} almost proves what we need.}
we need to show that the given $\cL = \dO_X(mE+S)$ is in fact ample but not globally generated. By computing $\cL \bullet E = 1$, and $\cL \bullet S = m-2$ and $(\cL^2) = 2m-2$  we deduce that $\cL$ is ample by the Nakai--Moishezon criterion.  

It remains to establish that $S$ is the fixed part of $\cL$. The section $S$ meets each fiber in a smooth point of the fiber. Being irreducible and reduced and of arithmetic genus $0$, the fiber is either an elliptic curve or a rational line with a double point or a cusp. In all cases it follows that the restriction of $\dO_X(S)$ to the fiber has trivial $\rH^1$ and $\rh^0$ equal to $1$.  By cohomology and base change this means that $\dO_{\bP^1} \simeq f_\ast \dO_X(S)$. The projection formula yields $f_\ast \cL = f_\ast \dO_X(S) \otimes \dO(m) = \dO(m) = f_\ast \dO_X(mE)$, and consequently the canonical map 
\[
\rH^0\big(X,\dO_X(mE)\big) \to \rH^0(X,\cL)
\]
is an isomorphism. This shows that $\cL$ is not globally generated, as $S$ is in the (in fact agrees with the) base locus.

\smallskip

Assertion \eqref{propitem:K3 FN0} follows obviously from the proven equivalence because the geometry established in  \ref{propitem:K3ample not gg geometry} requires an elliptic fibration, which requires Picard rank at least $2$. This is false for  a very general K3 surface. 

For the claim \eqref{propitem:K3 FN1} we need to construct a suitable elliptically fibered K3 surface with a section. Such a K3 surface has a Weierstra\ss\ form 
\[
y^2 = x^3 + A(t) x + B(t)
\]
with $A(t)$ of degree $8$ and $B(t)$ of degree $12$. The type of singular fibers can be read off from the discriminant $\Delta(t) = -16(4A^3+27B^2)$ and the $j$-function. The generic case will be that $\Delta(t)$ has only simple roots, and these will exactly be the elliptically fibered K3 surfaces that satisfy \ref{propitem:K3ample not gg geometry} with nodal rational curves as singular fibers. The existence of such elliptic fibrations follows more concretely from \cite[Lemma 2.4]{miranda_configurations_1989}.
\end{proof}

\subsection{Enriques surfaces}
We refer to \cite{cossec_enriques_1989} as a general source for Enriques surfaces.
Any Enriques surface $X$ admits a genus one fibration $f: X \to \bP^1$, see \cite[Thm.~2.1]{lang_enriques_1983} \cite[Thm.~5.7.1]{cossec_enriques_1989} or  \cite[Thm.~17.5]{barth_compact_2004},
 and any genus one fibration has exactly two multiple fibres, see \cite[Thm.~5.7.2]{cossec_enriques_1989} or \cite[Lemma~17.1]{barth_compact_2004}, namely half fibres $F$ and $F'$ with $F' \sim F + K_X$, and the fibration being the pencil $|2F| = |2F'|$. The divisors $F$ that give rise to half fibres in genus one fibrations are characterized as nef effective classes that are primitive and isotrivial in the N\'eron-Severi lattice $\Num(X) = \NS(X)/\tors$ with respect to the intersection pairing. A useful function on the classes of big and nef divisors $D$ is 
\[
\Phi(D) = \min \{D \bullet F \ ; \ F \text{ is a half fibre of a genus one fibration on } X\}.
\]
It is known that $D$ has base points if and only if $\Phi(D) = 1$, 
combine  \cite[Thm.~8.3.1]{cossec_projective_1983}\footnote{Note that Theorem~8.3.1 is misprinted as Theorem~3.3.1 in \cite{cossec_projective_1983}.} with \cite[Thm.~4.1]{cossec_projective_1983} 
or the case $k=0$ of \cite[Thm.~1.2]{knutsen_k-th-order_2001}.

An irreducible divisor $B \subseteq X$ is a \textbf{bisection} of a genus one fibration $|2F|$ on $X$ if $B \bullet F = 1$. It is a theorem thanks to the 
unimodularity\footnote{Unimodularity follows in characteristic $0$ from Poincar\'e duality and $\Num(X) = \rH^2(X,\bZ)/\tors$, but it also holds in characteristic $p$ by \cite[Cor~7.3.7]{illusie_complexe_1979}.}
 of $\Num(X)$ that every genus one fibration admits a bisection, 
 see the proof of \cite[Thm.~2.2]{lang_enriques_1983} or \cite[Prop.~17.6]{barth_compact_2004}. 

\begin{prop}
\label{prop:FN Enriques}
Let $X$ be an Enriques surface. Then $1 \leq \conFN(X) \leq 2$ and the following are equivalent:
\begin{enumerate}[label=(\alph*),align=left,labelindent=0pt,leftmargin=*,widest = (a)]
	\item
	\label{propitem:FN Enriques1}
	  $\conFN(X) = 2$, 
	 \item
	\label{propitem:FN Enriques2}
	 There is an ample line bundle $\cL$ that is not globally generated. 
	\item
	\label{propitem:FN Enriques3}
  There is a genus one fibration $f: X \to \bP^1$ with an ample  bisection $B$.
	\item  
	\label{propitem:FN Enriques4}
There is a genus one fibration $f: X \to \bP^1$ with a bisection $B$ such that $B$ meets every component of a fibre of $f$. 
\end{enumerate}
\end{prop}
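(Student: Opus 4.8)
The plan is to prove the numerical bounds first and then run the cycle $\text{(a)}\Leftrightarrow\text{(b)}$, $\text{(b)}\Rightarrow\text{(c)}\Rightarrow\text{(d)}\Rightarrow\text{(b)}$.

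\emph{Bounds and (a)$\,\Leftrightarrow\,$(b).} On a minimal surface of Kodaira dimension $0$ the class $\omega_X$ is numerically trivial, so \pref{prop:FNsurfacesReider}\eqref{propitem:reider4} (with $\vartheta=\dO_X$) gives $\conFN(X)\le 2$; and since $\omega_X$ is a nonzero torsion class we have $\rh^0(X,\omega_X)=0$, so $\omega_X$ is not globally generated, and the case $s=0$ of the definition forces $\conFN(X)\ge 1$. Thus $\conFN(X)\in\{1,2\}$. Because $\omega_X$ is numerically trivial, $\cM\mapsto\omega_X\otimes\cM$ is a bijection of the set of ample classes onto itself; and since $\conFN(X)\le 2$, one has $\conFN(X)=2$ exactly when the bound for $m=1$ fails, i.e.\ when some $\omega_X\otimes L_1\otimes\cdots\otimes L_s$ with $s\ge 1$ and all $L_i$ ample is not globally generated. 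A sum of ample divisors being ample, this happens precisely when there is an ample $\cM$ with $\omega_X\otimes\cM$ not globally generated, which under the bijection above is exactly statement (b).

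\emph{(c)$\,\Rightarrow\,$(d) and (d)$\,\Rightarrow\,$(b).} An ample bisection meets every irreducible curve, in particular every component of every fibre, so (c)$\,\Rightarrow\,$(d) is immediate. For (d)$\,\Rightarrow\,$(b), let $f\colon X\to\bP^1$ be a genus one fibration with half-fibre $F$ and a bisection $B$, so $B\bullet F=1$, meeting every component of every fibre. For a fibre component $C$ we have $(B+nF)\bullet C=B\bullet C\ge 1$; for a horizontal irreducible curve $C$ we have $F\bullet C\ge 1$, since a genus one fibration on an Enriques surface has no section, whence $(B+nF)\bullet C\ge B\bullet C+n$; and $(B+nF)^2=B^2+2n\ge 2n-2$. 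Hence $\dO_X(B+nF)$ is ample for $n\gg 0$ by Nakai--Moishezon. Being ample with $(B+nF)\bullet F=1$, it satisfies $\Phi(B+nF)=1$, so by the criterion recalled in the text (a big and nef divisor has base points if and only if $\Phi=1$) it is not globally generated, giving (b). (When $F$ is irreducible one can instead apply \lref{lem:FNleq1impliesintersectiongeq2}.)

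\emph{(b)$\,\Rightarrow\,$(c).} Given an ample $\cL$ that is not globally generated, the criterion above gives $\Phi(\cL)=1$: there is a half-fibre $F$, hence a genus one fibration $f=|2F|\colon X\to\bP^1$, with $\cL\bullet F=1$. For $n\gg 0$ put $\cL_n=\cL\otimes\dO_X(nF)$; this is ample, has $\cL_n\bullet F=1$, and $\cL_n^2\to\infty$. The aim is to show that a general member $B\in|\cL_n|$ is an irreducible curve: then $B\bullet F=1$ makes $B$ a bisection of $f$, while $\dO_X(B)=\cL_n$ is ample, so $B$ is an ample bisection and (c) holds. By Bertini in characteristic $0$ the general member of $|\cL_n|$ is irreducible once $|\cL_n|$ has no fixed divisorial part (it is not composed with a pencil, since $\cL_n^2>0$). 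Kodaira vanishing yields $\rh^0(\cL_n)=\chi(\cL_n)=1+\tfrac12\cL_n^2$; a fixed component $C$ of $|\cL_n|$ would force $\rH^1(X,\dO_X(\cL_n-C))\ne 0$ via the sequence $0\to\dO_X(\cL_n-C)\to\dO_X(\cL_n)\to\dO_X(\cL_n)|_C\to 0$, and the Cossec--Dolgachev description of linear systems attached to big and nef classes with $\Phi=1$ on Enriques surfaces shows that $|\cL_n|$ has no fixed part once $\cL_n^2$ is large enough. This gives (c) and closes the cycle.

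\emph{Main obstacle.} The crux is the step $\text{(b)}\Rightarrow\text{(c)}$: upgrading an ample line bundle that is not globally generated to a genuinely \emph{irreducible and ample} bisection of a genus one fibration. The technical heart is controlling the base locus of $|\cL\otimes\dO_X(nF)|$ — in particular excluding a fixed divisorial component — uniformly in $n\gg 0$, which is exactly where the finer structure of genus one pencils on Enriques surfaces (the function $\Phi$, the two half-fibres, and the Cossec--Dolgachev analysis of base loci) has to be used; alternatively one reads the required bisection directly off that analysis rather than passing through general members.
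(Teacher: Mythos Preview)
Your overall architecture matches the paper's: the bounds, (a)$\Leftrightarrow$(b), (c)$\Rightarrow$(d), and (d)$\Rightarrow$(b) are essentially identical to the paper's arguments (the paper writes $B+f^\ast D$ instead of $B+nF$, but numerically this is the same).

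The gap is in (b)$\Rightarrow$(c). You correctly identify $\Phi(\cL)=1$ and a half-fibre $F$ with $\cL\bullet F=1$, but your route via $\cL_n=\cL\otimes\dO_X(nF)$ and Bertini does not close. The claimed implication ``a fixed component $C$ of $|\cL_n|$ forces $\rH^1(X,\dO_X(\cL_n-C))\ne 0$'' is not justified: from the exact sequence you only get that the restriction map $\rH^0(\cL_n)\to\rH^0(\cL_n|_C)$ is zero, which does not by itself produce nonvanishing $\rH^1$. Your subsequent appeal to ``the Cossec--Dolgachev description'' to exclude a fixed part for $n\gg 0$ is an assertion, not an argument. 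You acknowledge as much in your ``Main obstacle'' paragraph.

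The paper's proof is exactly the alternative you mention at the end: it avoids $\cL_n$ entirely. From $\Phi(\cL)=1$ one gets $\rh^0(X,\cL)=1+\tfrac12(\cL^2)\ge 2$ by Riemann--Roch and Kodaira vanishing, so $\cL$ is effective; then Cossec's Theorem~8.3.1 gives directly an \emph{integral} divisor $B$ with $\cL\simeq\dO_X(B)$. Since $B\bullet F=1$, the curve $B$ is horizontal for $f$, hence a bisection, and it is ample because $\dO_X(B)\simeq\cL$ is. No passage to large $n$ or Bertini is needed.
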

\begin{proof}
We assume \ref{propitem:FN Enriques1}: 
If $\conFN(X) = 2$, then there is an ample line bundle $\cL'$ such that $\cL = \omega_X \otimes \cL'$ is not globally generated. Since $K_X$ is numerically trivial, the line bundle $\cL$ is also ample. This shows \ref{propitem:FN Enriques2}.

Now we assume \ref{propitem:FN Enriques2}: as recalled above, this means $\Phi(\cL) = 1$ and so there is a genus one fibration $f: X \to \bP^1$ with half fibre $F$ and $\cL \bullet F = 1$. By Riemann-Roch and Kodaira vanishing we have
\[
\rh^0(X,\cL) = \chi(X,\cL) = 1 + \frac{1}{2}(\cL^2) \geq 2.
\]
In particular, $\cL$ is effective. By \cite[Thm.~8.3.1]{cossec_projective_1983}  there is an integral divisor $B$ such that $\cL \simeq \dO_X(B)$. This $B$ cannot be a fibre component of $f$, hence must be finite over $\bP^1$ and since $B \bullet F = \cL \bullet F = 1$ we find that $B$ is an ample bisection. This shows \ref{propitem:FN Enriques3}.

Now we assume \ref{propitem:FN Enriques3}: Since an ample divisor meets any irreducible curve, the ample bisection in particular meets all fibre components. This shows \ref{propitem:FN Enriques4}.

Now we assume \ref{propitem:FN Enriques4}: Let $B$ be a bisection that meets every fibre component of the genus one fibration $|2F|$. Let $D$ be an effective divisor on $\bP^1$ of degree $\deg(D) > - \frac{1}{2}(B^2)$. We set $\cL = \dO_X(B + f^\ast D)$. Then $\cL$ is ample by the Nakai-Moishezon criterion. Indeed, horizontal curves intersect positively with $f^\ast D$ while, by assumption, fibre components intersect positively with $B$. The self intersection is
\[
(\cL^2) = (B^2) + \deg(D) (B \bullet 2F) = (B^2) + 2 \deg(D)  > 0.
\]
The line bundle $\omega_X \otimes \cL$ is also ample and has $\Phi(\omega_X \otimes \cL) = 1$ as witnessed by the genus one fibration $f$. Thus $\omega_X \otimes \cL$ is not globally generated and $\conFN(X) > 1$. This shows  \ref{propitem:FN Enriques1} completing the proof.
\end{proof}

Recall that an Enriques surface is called \textbf{nodal} if $X$ contains a smooth rational curve, and it is called \textbf{unnodal} if no such smooth rational curves exist. The moduli space of nodal Enriques surfaces is a divisor in the $10$-dimensional moduli space of all Enriques surfaces.

\begin{prop}
\label{prop:FN Enriques unnodal}
An unnodal Enriques surface $X$ has convex Fujita number $\conFN(X) = 2$.
\end{prop}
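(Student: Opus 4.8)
The plan is to apply the equivalence established in \pref{prop:FN Enriques}, so that it suffices to produce on an unnodal Enriques surface $X$ a genus one fibration $f\colon X\to\bP^1$ together with a bisection $B$ that meets every component of every fibre. Since $X$ is unnodal, all genus one fibrations are \emph{irreducible}: there are no smooth rational curves, hence no $(-2)$-curves to appear as reducible fibre components, so every fibre of every genus one fibration $f$ is irreducible (this is the standard characterization of unnodality in terms of the absence of reducible fibres). Consequently the condition ``$B$ meets every fibre component'' becomes simply ``$B$ meets every fibre'', which is automatic for any bisection since a bisection is horizontal and thus surjects onto the base. Therefore any genus one fibration together with any of its bisections verifies criterion \ref{propitem:FN Enriques4}.

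The remaining input is existence: first, every Enriques surface admits a genus one fibration $f\colon X\to\bP^1$ (cited above via \cite[Thm.~2.1]{lang_enriques_1983}, \cite[Thm.~5.7.1]{cossec_enriques_1989}, or \cite[Thm.~17.5]{barth_compact_2004}); second, by unimodularity of $\Num(X)$ every genus one fibration admits a bisection $B$ (as recalled in the excerpt, see the proof of \cite[Thm.~2.2]{lang_enriques_1983} or \cite[Prop.~17.6]{barth_compact_2004}). So I would fix such an $f$ and such a $B$, observe that all fibres of $f$ are irreducible because $X$ is unnodal, and conclude that $(f,B)$ satisfies \ref{propitem:FN Enriques4}. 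By the equivalence in \pref{prop:FN Enriques} this gives $\conFN(X)=2$.

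The one point that requires a word of justification — and the only place where unnodality is genuinely used — is the claim that on an unnodal Enriques surface every fibre of every genus one fibration is irreducible. The reason is that a reducible fibre of a genus one fibration on a surface is a divisor of Kodaira type with at least two components, and each component is a smooth rational curve of self-intersection $-2$; the existence of such a curve contradicts unnodality. (Equivalently, one may invoke the known fact that an Enriques surface is unnodal if and only if it contains no divisor class with self-intersection $-2$ represented by an effective curve, which precisely rules out reducible fibres.)

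The main (mild) obstacle is purely expository: making sure the characterization of unnodality and the irreducibility of fibres are invoked in a form compatible with the references already cited, rather than re-deriving anything. There is no hard estimate or construction here — the proposition is a clean corollary of \pref{prop:FN Enriques} once the fibre-irreducibility observation is in place, together with the pre-existing existence statements for genus one fibrations and their bisections. I would therefore keep the proof to a few lines: invoke the fibration, invoke the bisection, note irreducibility of fibres from unnodality, and appeal to \ref{propitem:FN Enriques4} $\Leftrightarrow$ \ref{propitem:FN Enriques1} in \pref{prop:FN Enriques}.
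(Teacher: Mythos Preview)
Your proposal is correct and follows essentially the same route as the paper: the paper's proof observes that reducible fibres of a genus one fibration consist of smooth rational curves, so unnodality forces all fibres to be irreducible, whence any bisection trivially meets all fibre components and \pref{prop:FN Enriques}\ref{propitem:FN Enriques4} applies. Your write-up is more detailed (in particular you spell out the existence of a fibration and a bisection), but the logical content is identical.
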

\begin{proof}
Any reducible fibre of a genus one fibration consists of smooth rational curves on $X$. If $X$ is unnodal, then all genus one fibrations have only irreducible fibres\footnote{The genus one fibrations in the unnodal case are all pseudosplit irreducible, hence  \cref{cor:specialfibrationFN}  applies.}  and so every bisection meets all components of fibres. The result thus follows from \pref{prop:FN Enriques}.
\end{proof}

\begin{cor}
\label{cor:FN Enriques}
Let $X$ be an Enriques surface.
\begin{enumerate}[label=(\arabic*),align=left,labelindent=0pt,leftmargin=*,widest = (3)]
	\item
	If $X$ admits a genus one fibration with all fibres irreducible, then $\conFN(X)$ equals $2$.
	 \item
	If all genus one fibrations of $X$ have a fibre with at least three irreducible components, then $\conFN(X)$ equals $1$.
\end{enumerate}
\end{cor}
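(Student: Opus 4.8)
The plan is to deduce both statements directly from the equivalence of \ref{propitem:FN Enriques1} and \ref{propitem:FN Enriques4} in \pref{prop:FN Enriques}, using only the recalled fact that every genus one fibration on an Enriques surface admits a bisection and that the reducible fibres of a genus one fibration are configurations of smooth rational $(-2)$-curves. Since we already know $1 \leq \conFN(X) \leq 2$ for any Enriques surface, in each case it suffices to decide between the two values $1$ and $2$, and \pref{prop:FN Enriques} reduces this to the existence (or non-existence) of a genus one fibration carrying a bisection that meets every component of every fibre.

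For part (1), suppose $X$ admits a genus one fibration $f : X \to \bP^1$ all of whose fibres are irreducible. Pick any bisection $B$ of $f$; such a $B$ exists by unimodularity of $\Num(X)$ as recalled before \pref{prop:FN Enriques}. Because $B$ is an irreducible curve dominating $\bP^1$ and every fibre of $f$ is irreducible, $B$ trivially meets every component of every fibre (each fibre has only one component, and $B \bullet 2F = 2 > 0$ forces $B$ to meet it). Hence condition \ref{propitem:FN Enriques4} of \pref{prop:FN Enriques} holds, and therefore $\conFN(X) = 2$. (Equivalently, one notes that such an $f$ is a pseudosplit irreducible fibration in the sense of \dref{defi:special fibration}—with $S = B$, using that a bisection meets a half fibre $F$ in $B \bullet F = 1$—so that \cref{cor:specialfibrationFN} already gives $\conFN(X) \geq 2$.)

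For part (2), suppose every genus one fibration of $X$ has some fibre with at least three irreducible components. We must rule out condition \ref{propitem:FN Enriques4}: we need to show that no genus one fibration $f : X \to \bP^1$ admits a bisection $B$ meeting every fibre component. Fix such an $f$ and a bisection $B$, and let $X_b$ be a fibre with components $\Theta_1, \dots, \Theta_r$, $r \geq 3$; each $\Theta_i$ is a smooth rational curve with $\Theta_i^2 = -2$, and the fibre $2F$ (in the pencil $|2F|$) has class $\sum_i m_i \Theta_i$ for suitable multiplicities $m_i \geq 1$. Suppose toward a contradiction that $B$ meets every $\Theta_i$, so $B \bullet \Theta_i \geq 1$ for all $i$. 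Then $2 = B \bullet 2F = \sum_i m_i (B \bullet \Theta_i) \geq \sum_i m_i \geq r \geq 3$, a contradiction. Thus $B$ misses at least one component of $X_b$, so \ref{propitem:FN Enriques4} fails for this $f$ and $B$; as this holds for every genus one fibration and every bisection, the negation of \ref{propitem:FN Enriques4}—equivalently the negation of \ref{propitem:FN Enriques1} via \pref{prop:FN Enriques}—gives $\conFN(X) \neq 2$, hence $\conFN(X) = 1$.

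The only point requiring a little care is the multiplicity bookkeeping in part (2): one must use the class of a fibre of the pencil $|2F|$, not the half fibre $F$, when computing $B \bullet (\text{fibre})$, and one must invoke that the components of a reducible fibre are $(-2)$-curves entering with positive multiplicities; granting the structure of fibres of genus one fibrations on Enriques surfaces (as in \cite[Ch.~V]{cossec_enriques_1989} or \cite[Ch.~V]{barth_compact_2004}), the inequality $B \bullet (\text{fibre}) \geq r$ is immediate. I do not expect any genuine obstacle here—the corollary is a direct specialization of \pref{prop:FN Enriques}.
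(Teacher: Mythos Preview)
Your proof is correct and follows the same approach as the paper, which simply says the result is immediate from \pref{prop:FN Enriques} since condition \ref{propitem:FN Enriques4} holds in case (1) and fails in case (2). You have merely spelled out the intersection-number count $2 = B \bullet 2F \geq \sum_i m_i \geq r \geq 3$ that the paper leaves implicit; note that the claim about $(-2)$-curves is not actually needed, only that the multiplicities $m_i$ are positive.
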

\begin{proof}
Immediately from \pref{prop:FN Enriques}. In case (1) property \ref{propitem:FN Enriques4} is satisfied for some genus one fibration, and in case (2) property \ref{propitem:FN Enriques4} is never satisfied. 
\end{proof}

In \cite{brandhorst_automorphism_2022} the authors study possible ADE-types of $(-2)$-curves on Enriques surfaces $X$. A first interesting fact is that up to $\Aut(X)$ there are only finitely many genus one fibrations and that representatives of the $\Aut(X)$-orbits can be computed by lattice theoretic algorithms when asked for $(\tau,\bar \tau)$-generic Enriques surfaces. For the definition of the latter we refer to \cite{brandhorst_automorphism_2022}. 
The paper furthermore contains a table 
\cite[\S6.5]{brandhorst_automorphism_2022}\footnote{The list in \textit{loc.\ cit.} is truncated and has some artificial page breaks that are hard to follow. The complete list can be found in \href{http://www.math.sci.hiroshima-u.ac.jp/shimada/K3andEnriques.html}{http://www.math.sci.hiroshima-u.ac.jp/shimada/K3andEnriques.html}, published also in zenodo, 
\href{https://doi.org/10.5281/zenodo.4327019}{https://doi.org/10.5281/zenodo.4327019}.
}
listing the ADE-types of singular non-multiple fibres and of the two half fibres in any possible genus one fibrations of a $(\tau,\bar \tau)$-generic Enriques surfaces. Upon inspecting the list we arrive at the following proposition.

\begin{prop}
\label{prop:FN1 Enriques}
There are Enriques surfaces $X$ with convex Fujita number $\conFN(X) = 1$. 
\end{prop}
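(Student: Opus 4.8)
The plan is to reduce the statement to a finite combinatorial inspection of the classification in \cite{brandhorst_automorphism_2022} and then conclude with Corollary~\ref{cor:FN Enriques}\,(2). Since $\conFN(X)\ge 1$ for every Enriques surface by Proposition~\ref{prop:FN Enriques}, it suffices to exhibit one Enriques surface $X$ on which \emph{every} genus one fibration has a fibre with at least three irreducible components; then $\conFN(X)=1$. The structural point that makes this feasible is that the convex Fujita number is an isomorphism invariant and that $\Aut(X)$ permutes the genus one fibrations of $X$, so the properties appearing in Proposition~\ref{prop:FN Enriques} and Corollary~\ref{cor:FN Enriques} are constant along $\Aut(X)$-orbits. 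For a $(\tau,\bar\tau)$-generic Enriques surface the set of such orbits is finite, and a set of representatives --- together with the Kodaira/ADE-types of the singular non-multiple fibres and of the two half fibres of each --- is tabulated in \cite{brandhorst_automorphism_2022}; so it is enough to read the desired property off that finite list.

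Next I would translate the geometric condition into the tabulated data. Over $\bC$ every genus one fibration is elliptic, and a fibre of Kodaira type $T$ --- multiple or not --- has exactly $r+1$ irreducible components, where $r$ is the rank of the ADE root sublattice attached to $T$ (the dual graph being the corresponding extended Dynkin diagram). Hence a fibre has at least three components precisely when its ADE-label has rank $\ge 2$, i.e.\ it is neither $\emptyset$ nor $A_1$, i.e.\ the fibre is not of Kodaira type $I_1$, $II$, $I_2$ or $III$; and this criterion applies verbatim to the two half fibres listed in the table. So I am looking for a $(\tau,\bar\tau)$-generic Enriques surface $X$ such that, in the list of \cite{brandhorst_automorphism_2022}, every genus one fibration has at least one entry --- among its singular non-multiple fibres or its two half fibres --- of ADE-rank $\ge 2$.

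Then I would go through the table and single out such an $X$. The natural place to look is among the Enriques surfaces sitting deepest in the stratification by ADE-type --- those with the richest configuration of smooth rational curves, on which every genus one fibration is forced to degenerate (the surfaces with small automorphism group are natural first candidates) --- and one checks that at least one surface on the list has the required uniform pattern. Feeding this $X$ into Corollary~\ref{cor:FN Enriques}\,(2) gives $\conFN(X)=1$, which proves the proposition.

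The main obstacle is bookkeeping rather than mathematics: one must run through the \emph{complete} list of genus one fibrations of the chosen surface and verify the ``ADE-rank $\ge 2$'' condition for every one of them, paying particular attention to the half fibres, whose component count is slightly less transparent from the ADE-label than that of the non-multiple fibres. Since the published version of the table in \cite{brandhorst_automorphism_2022} is truncated and awkwardly paginated, I would work from the full dataset on Shimada's webpage and the accompanying Zenodo record. A second point to keep honest is the universal quantifier: a single genus one fibration all of whose fibres have at most two components --- only types $I_1,\ I_2,\ II,\ III$ together with irreducible half fibres --- would, via Proposition~\ref{prop:FN Enriques}, potentially force $\conFN(X)=2$, so the chosen surface must be one whose table entry excludes this possibility for every fibration.
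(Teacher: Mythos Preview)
Your proposal is correct and follows the same approach as the paper: reduce via Corollary~\ref{cor:FN Enriques}(2) to finding a $(\tau,\bar\tau)$-generic Enriques surface on the Brandhorst--Shimada list all of whose genus one fibrations have a fibre of ADE-rank $\ge 2$. The paper carries this out concretely by exhibiting two explicit entries (No.~24 with $(\tau,\bar\tau)=(D_5,D_5)$ and No.~47 with $(E_6,E_6)$) together with their full fibre data, and remarks that in total $49$ of the $155$ surfaces on the list satisfy the criterion.
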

\begin{proof}
Of the list as contained in  \cite[\S6.5]{brandhorst_automorphism_2022} we pick two examples of numbers with  generic ADE-types $(\tau,\bar \tau)$ admit only genus one fibrations with some fibre having at least $3$ irreducible components. According to \cref{cor:FN Enriques}, these Enriques surfaces have $\conFN(X) = 1$.
\[
{\renewcommand{\arraystretch}{1.2}
\setlength{\arraycolsep}{1em} 
\begin{array}{clcl}
\toprule
\text{No.} &  (\tau, \bar \tau) & \text{singular non-multiple fibres} &  \text{singular half fibres}    \\ \midrule
24  & (D_5,D_5) &  \text{none} &  A_3    \\
& & \text{none} & A_4 \\
& & A_3 + 2A_1 & \text{none} \\
& & A_4 & \text{none} \\
& & D_4 & \text{none} \\
& & D_5  & \text{none} \\ \midrule
47 & (E_6,E_6) &  \text{none} &  A_4    \\
& & A_5 + A_1 & \text{none} \\
& & D_5 & \text{none} \\
& & E_6  & \text{none} 
    \\ \bottomrule
\end{array}
}
\]
For $49$ of the $155$ Enriques surfaces on the list, the argument above relying on \cref{cor:FN Enriques} decides that $\conFN(X)$ equals $1$.
\end{proof}

\section{Elliptic fibrations of Kodaira dimension \texorpdfstring{$1$}{1}}
\label{sec:FN Kodaira 1}

\subsection{Preliminaries on elliptic fibrations of Kodaira dimension $1$}
\label{sec:genus one}

Smooth projective surfaces of Kodaira dimension $1$ all admit the structure of an elliptic fibration $f: X \to C$. Here $C$ is a smooth projective curve, and $f$ is a fibration such that the general fibre has arithmetic genus $1$. 
For $P \in C$ we denote by $m_P$ the multiplicity of the fibre $f^{-1}(P)$ and by $F_P$ the divisor supported in the fibre such that $f^{-1}(P) = m_P F_P$. 
Kodaira's canonical bundle formula \cite[V. Theorem 12.1]{barth_compact_2004} reads
\begin{equation}
\label{eq:Kodaira canonical bundle formula}
\omega_X = f^\ast  f_\ast \omega_X \otimes \dO_X(\sum_P (m_P-1)F_P)\ , 
\end{equation}
with $f_\ast \omega_X = \cHom(\RR^1 f_\ast \dO_X, \omega_C)$.

We start with the following result. 

\begin{prop}
Let $f: X \to C$ be an elliptic fibration. 
\begin{enumerate}[label=(\arabic*),align=left,labelindent=0pt,leftmargin=*,widest = (3)]
\item
\label{propitem:baselocusmultiplefibres}
If $f$ has multiple fibres, then $\omega_X$ is not globally generated. The divisors $F_P$ where $f^{-1}(P) = m_P F_P$ is a multiple fibre are contained in the base locus. 
\item 
\label{propitem:baselocusnomultiplefibres}
Let us assume that $f$ has no multiple fibre. Then the following holds. 
\begin{enumerate}[label=(\alph*),align=left,labelindent=0pt,leftmargin=*,widest = (a)]
\item 
If $\chi(X,\dO_X) \geq 2$, then $\omega_X$ is globally generated.
\item 
If $\chi(X,\dO_X) = 1$ and $C$ has genus $g \leq 1$, then $\omega_X$ is not globally generated.
\end{enumerate}
\end{enumerate}
In cases \ref{propitem:baselocusmultiplefibres} and \ref{propitem:baselocusnomultiplefibres}(b) we have $\conFN(X) \geq 1$, while in case \ref{propitem:baselocusnomultiplefibres}(a) we have $\conFN(X) \not= 1$.
\end{prop}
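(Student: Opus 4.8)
The plan is to extract everything from Kodaira's canonical bundle formula \eqref{eq:Kodaira canonical bundle formula}. As a preliminary step I would record that $f_\ast\omega_X$ is a line bundle on $C$ of degree $\chi(X,\dO_X) + 2g(C) - 2$: this follows from \eqref{eq:Kodaira canonical bundle formula}, which identifies $f_\ast\omega_X$ with $\cHom(\RR^1 f_\ast\dO_X,\omega_C)$ and hence with a line bundle, together with the Leray spectral sequence for $f$ and coefficients $\dO_X$, giving $\chi(X,\dO_X) = \chi(C,\dO_C) - \chi(C,\RR^1 f_\ast\dO_X)$, once one knows that $\RR^1 f_\ast\dO_X$ is locally free --- which it is, since $h^1(X_t,\dO_{X_t}) = 1$ for every fibre $X_t$, or directly by \cite[V.12]{barth_compact_2004}.

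For assertion \ref{propitem:baselocusnomultiplefibres} I would use that, when $f$ has no multiple fibre, \eqref{eq:Kodaira canonical bundle formula} collapses to $\omega_X = f^\ast\cN$ with $\cN \coloneq f_\ast\omega_X$; since $f_\ast\dO_X = \dO_C$, the projection formula gives $\rH^0(X,f^\ast\cN) = \rH^0(C,\cN)$ and shows that $f^\ast\cN$ is globally generated if and only if $\cN$ is. Thus the problem reduces to the line bundle $\cN$ of degree $\chi(X,\dO_X) + 2g - 2$ on the curve $C$, where $g \coloneq g(C)$. If $\chi(X,\dO_X) \geq 2$ then $\deg\cN \geq 2g$, so $\cN$ is globally generated --- this is (a). If $\chi(X,\dO_X) = 1$ then $\deg\cN = 2g - 1$; for $g = 0$ this is negative so $\rH^0(X,\omega_X) = \rH^0(\bP^1,\cN) = 0$, while for $g = 1$ it is a degree-one line bundle on an elliptic curve, whose one-dimensional space of sections has a base point --- in either case $\omega_X$ fails to be globally generated, which is (b).

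For assertion \ref{propitem:baselocusmultiplefibres} I would take a multiple fibre $f^{-1}(P) = m_P F_P$ with $m_P \geq 2$ and restrict \eqref{eq:Kodaira canonical bundle formula} to $F_P$: the factor $f^\ast f_\ast\omega_X$ becomes trivial on $F_P$ (it is pulled back from the point $P$) and every summand $(m_{P'} - 1)F_{P'}$ with $P' \neq P$ drops out (those fibres being disjoint from $F_P$), leaving $\omega_X|_{F_P} \simeq \bigl(\dO_X(F_P)|_{F_P}\bigr)^{\otimes(m_P - 1)}$. The key input is the classical fact that $\dO_X(F_P)|_{F_P}$ has order exactly $m_P$ in $\Pic(F_P)$ (see \cite[V.7]{barth_compact_2004}); since $0 < m_P - 1 < m_P$, it follows that $\omega_X|_{F_P}$ is a nontrivial line bundle which has degree $0$ on every component of $F_P$, and as $F_P$ has arithmetic genus one such a line bundle has no nonzero global section. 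Then the exact sequence $0 \to \omega_X(-F_P) \to \omega_X \to \omega_X|_{F_P} \to 0$ forces every global section of $\omega_X$ to vanish along $F_P$, so $F_P$ lies in the base locus of $|\omega_X|$ and $\omega_X$ is not globally generated.

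Finally, the statements about $\conFN$ are formal consequences: by the definition of the convex Fujita number, $\conFN(X) \geq 1$ means precisely that the $s = 0$ adjoint bundle $K_X = \omega_X$ is not globally generated --- just established in cases \ref{propitem:baselocusmultiplefibres} and \ref{propitem:baselocusnomultiplefibres}(b) --- while $\conFN(X) = 1$ would likewise require $\omega_X$ not globally generated, so case \ref{propitem:baselocusnomultiplefibres}(a) yields $\conFN(X) \neq 1$. I expect the only delicate points to be the degree computation for $f_\ast\omega_X$ (essentially the local freeness of $\RR^1 f_\ast\dO_X$ and the bookkeeping in Kodaira's formula) and the vanishing of $\rH^0$ for a nontrivial degree-zero line bundle on a possibly reducible fibre of arithmetic genus one; both are standard but deserve a line of justification.
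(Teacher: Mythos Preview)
Your argument is correct. For part~(2) you do exactly what the paper does: reduce via $\omega_X = f^\ast\cN$ to the question of whether the line bundle $\cN = f_\ast\omega_X$ on $C$ is globally generated, and then read this off from its degree $\chi(X,\dO_X) + 2g - 2$.

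For part~(1) your route differs from the paper's. The paper observes in one line that, by the projection formula, $\rH^0(X,\omega_X) = \rH^0(C,f_\ast\omega_X)$, so every global section of $\omega_X$ arises from a section of $f^\ast f_\ast\omega_X$ via the inclusion $f^\ast f_\ast\omega_X \hookrightarrow \omega_X$ coming from \eqref{eq:Kodaira canonical bundle formula}; hence every section vanishes to order $m_P - 1$ along each $F_P$. This needs nothing beyond the canonical bundle formula itself. Your approach instead restricts $\omega_X$ to $F_P$ and invokes the finer fact that the normal bundle $\dO_X(F_P)|_{F_P}$ has exact order $m_P$, together with the vanishing of $\rH^0$ for a nontrivial degree-zero line bundle on a genus-one fibre. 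This is valid (over $\bC$ the multiple fibres are of type $mI_n$, so $F_P$ is reduced and your $\rH^0$-vanishing goes through), but it imports more structure than necessary. The paper's argument is shorter and more robust; yours has the virtue of identifying precisely why $\omega_X$ is nontrivial on the fibre, which is conceptually pleasant even if not needed here.
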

\begin{proof}
The projection formula applied to \eqref{eq:Kodaira canonical bundle formula} shows that all sections of $\omega_X$ have poles of order $m_P-1$ along $F_P$. This shows \ref{propitem:baselocusmultiplefibres}.

We now show \ref{propitem:baselocusnomultiplefibres} and assume to this end that $f$ has no multiple fibres. Then 
\[
f_\ast \omega_X = \omega_C \otimes \cHom(\RR^1 f_\ast \dO_X, \dO_C) 
\]
where the line bundle $\cL = \cHom(\RR^1 f_\ast \dO_X, \dO_C)$ has degree $\chi(X,\dO_X) \geq 0$ by \cite[V. Corollary 12.3]{barth_compact_2004} and \cite[III. Theorem 18.2]{barth_compact_2004}. Since $f_\ast \dO_X = \dO_C$, the map 
\[
\rH^0(C,\omega_C \otimes \cL) \to \omega_C \otimes \cL
\]
controlling generation by global sections for $\omega_C \otimes \cL$ pulls back via $f^\ast$ to the map controlling generation by global sections for $\omega_X$
\[
\rH^0(X,\omega_X) \otimes \dO_X = f^\ast\big(\rH^0(C,f_\ast \omega_X) \otimes \dO_C \big) \to f^\ast (\omega_C \otimes \cL) = \omega_X\ .
\]
As $f$ is faithfully flat, global generation for $\omega_X$ is equivalent to global generation for $\omega_C \otimes \cL$.

Since $C$ as a curve has convex Fujita number $\conFN(C) = 2$, the line bundle $\omega_X \otimes \cL$ is globally generated if $\deg(\cL) = \chi(X,\dO_X) \geq 2$. If $\chi(X,\dO_X) = 1$, then by Riemann--Roch and Serre duality the line bundle $\omega_X \otimes \cL$ is not globally generated if and only if $\cL$ is effective. If the genus of $C$ is $\leq 1$, then any line bundle of degree $1$ is effective.
\end{proof}

\begin{rmk}
As shown in \cite[V. Proposition 12.5]{barth_compact_2004}, an  elliptic fibration has indeed Kodaira dimension $1$ if  the following $\delta_f$ is positive:
\begin{equation}
\label{eq:numerical criterion kodaira 1}
\delta_f = \chi(X,\dO_X) - 2 \chi(C,\dO_C) + \sum_P (1- \frac{1}{m_P}) > 0 \ ,
\end{equation}
and this follows for example if the genus of $C$ is at least $2$.
\end{rmk}

\begin{prop}
\label{prop:elliptic fibration FN2}
Let $X \to C$ be an elliptic fibration with a section, such that
\begin{enumerate}[label=(\roman*),align=left,labelindent=0pt,leftmargin=*,widest = (iii)]
\item all fibers are irreducible and reduced, and 
\item $\chi(X,\dO_X)$ is even.
\end{enumerate}
Then $X$ has convex Fujita number $\conFN(X) = 2$. Moreover, elliptic fibrations of Kodaira dimension $1$ as above exist. 
\end{prop}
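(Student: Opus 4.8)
The plan is to determine $\conFN(X)$ by matching estimates from above and below, and then to exhibit a concrete surface for the ``moreover'' clause.

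For the lower bound I would observe that the hypotheses present $f\colon X\to C$ as a pseudosplit irreducible fibration in the sense of \dref{defi:special fibration}: all fibres are irreducible by~(i); every fibre is reduced by~(i), hence of multiplicity $m=1$ and equal to its reduced structure; and the section $S$ is an irreducible curve with $S\bullet F=1$ for a general, hence for every, fibre $F$. Thus \cref{cor:specialfibrationFN} gives $\conFN(X)\geq 2$; more precisely, \pref{prop:specialfibrationFN} shows that for a suitable ample line bundle $\cL=\dO_X(S+F+f^\ast D)$ the intersection point $S\cap F$ lies in the base locus of $\omega_X\otimes\cL$.

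For the upper bound the key point is that, every fibre being reduced, $f$ has no multiple fibres, so Kodaira's canonical bundle formula \eqref{eq:Kodaira canonical bundle formula} reduces to $\omega_X=f^\ast(f_\ast\omega_X)$ with $f_\ast\omega_X=\omega_C\otimes\cL$, $\cL=\cHom(\RR^1 f_\ast\dO_X,\dO_C)$ of degree $\chi(X,\dO_X)$. Hence $\omega_X\simeq f^\ast M$ for a line bundle $M$ on $C$ of degree $2g(C)-2+\chi(X,\dO_X)$, which is even by hypothesis~(ii). I would then invoke the elementary fact that any line bundle of even degree on a smooth projective curve is a square in its Picard group: if $\deg M=2k$ and $P\in C$ is any point, then $M\otimes\dO_C(-2kP)$ lies in the divisible group $\Pic^0(C)$, so it equals $\cN^{\otimes 2}$ for some $\cN$, whence $M\simeq\big(\cN\otimes\dO_C(kP)\big)^{\otimes 2}$. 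Pulling back along $f$ shows that $\omega_X$ is $2$-divisible in $\Pic(X)$, and \pref{prop:FNsurfacesReider}\eqref{propitem:reider4} gives $\conFN(X)\leq2$. Combining the two bounds yields $\conFN(X)=2$.

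For the existence clause I would take an elliptic curve $E$ and a smooth projective curve $C$ of genus $\geq2$ and set $X=E\times C$ with $f=\pr_C\colon X\to C$: this is an elliptic fibration admitting the sections $\{x\}\times C$, every fibre is isomorphic to $E$ and in particular irreducible and reduced, $\chi(X,\dO_X)=\chi(\dO_E)\cdot\chi(\dO_C)=0$ is even, and $\kappa(X)=\kappa(E)+\kappa(C)=1$; the equality $\conFN(X)=2$ is then already contained in \cref{cor:FNproduct of curves}. A non-isotrivial alternative is a very general Weierstrass fibration over $\bP^1$ with $\chi(X,\dO_X)$ even and at least $4$ and all singular fibres nodal of type $I_1$, but there the verification of the parity and Kodaira dimension conditions is a little heavier, so I would record the product. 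The argument as a whole is routine; the only steps needing attention are the collapse of the canonical bundle formula in the absence of multiple fibres and the $2$-divisibility of even-degree line bundles on a curve — the genuine ``content'' being just the combination of \cref{cor:specialfibrationFN} and \pref{prop:FNsurfacesReider}\eqref{propitem:reider4}.
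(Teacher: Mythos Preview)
Your argument is correct and follows essentially the same route as the paper for both bounds. The only cosmetic difference in the upper bound is that the paper concludes the intersection form on $\NS(X)$ is even and invokes \pref{prop:FNsurfacesReider}\eqref{propitem:reider3}, whereas you explicitly produce a square root of $\omega_X$ via the divisibility of $\Pic^0(C)$ and invoke \eqref{propitem:reider4}; the underlying content is the same.

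For the existence clause your product $E\times C$ is valid and considerably simpler than what the paper does; indeed this example already appears in \cref{cor:productsurfaces FN2}. The paper instead builds a non-isotrivial example: it starts from an elliptic K3 surface $X_0\to\bP^1$ with a section and only irreducible reduced fibres (as in \pref{prop:FN for K3}), base changes along a cover $C\to\bP^1$ of genus $\geq 2$ ramified only over smooth fibres, and then passes to a further \'etale double cover $C'\to C$ to force $\chi(X',\dO_{X'})$ even. Your choice buys brevity; the paper's buys an example with nontrivial $j$-invariant and positive $\chi$, showing the hypotheses are not only met in the degenerate isotrivial case.
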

\begin{proof}
An elliptic fibration with a section does not have multiple fibres. The formula \eqref{eq:Kodaira canonical bundle formula} thus simplifies to 
\[
\omega_X = f^\ast \cHom(\RR^1 f_\ast \dO_X, \omega_C) \ .
\]
Since $\chi(X,\dO_X)$ is even, the canonical bundle $\omega_X$ is the pull back of a line bundle of even degree. It follows that $X$ has even intersection form. Hence $\conFN(X) \leq 2$ by \pref{prop:FNsurfacesReider}.

The converse estimate $\conFN(X) \geq 2$ follows from \cref{cor:specialfibrationFN} as the projection $X \to C$ is a pseudosplit irreducible fibration in the sense of \dref{defi:special fibration}.

It remains to construct an elliptic fibration as in the statement of the proposition having Kodaira dimension $1$. In \pref{prop:FN for K3} we cite \cite[Lemma 2.4]{miranda_configurations_1989} for the existence of a K3 surface $f_0: X_0 \to C_0=\bP^1$ with an elliptic fibration with a section and all fibres irreducible and reduced. We choose a ramified cover $C \to C_0$ with genus of $C$ at least $2$ and ramified only above points of $C_0$ where the map $f_0$ is smooth. Then $X = X_0 \times_{C_0} C$ is again smooth with $f: X \to C$ having a section and all fibres irreducible and reduced. By 
\eqref{eq:numerical criterion kodaira 1} this $X$ has Kodaira dimension $1$, because $\chi(X,\dO_X) \geq 0$.  Finally, we need to ensure that $X$ has even Euler characteristic. This will be achieved by a further \'etale base change $X'  = X \times_C C'$ with an \'etale double cover $C' \to C$. The base change is still smooth, and the geometric assumptions are preserved in the genus one fibration $f' : X' \to C'$. But in addition $\chi(X',\dO_{X'}) = \deg(C'/C) \chi(X,\dO_X)$ is necessarily even.
\end{proof}

\subsection{An isotrivial elliptic fibration}
\label{sec:isotrivial elliptic fibration Kodaira 1 FN3}

Next, we describe an isotrivial elliptic fibration with convex Fujita number $3$. A considerable amount of the analysis is stimulated by reading \cite{serrano_elliptic_1992}.

\begin{prop}
\label{prop:elliptic fibration FN3}
There is a minimal smooth projective surface of Kodaira dimension $1$ and convex Fujita number $3$. 

More precisely, let $E$ be an elliptic curve, and let $C \to \bP^1$ be a branched cover with Galois group $G = E[2]$ and $C$ of genus $2$. We assume that $E$ is not an isogeny factor of $\Pic^0(C)$. Then $X = E \times C/G$ with $G$ acting factorwise (by translation on $E$) is an isotrivial elliptic fibration that  is a minimal smooth projective surface of Kodaira dimension $1$ and convex Fujita number $\conFN(X) = 3$. 
\end{prop}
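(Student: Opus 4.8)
The plan is to exhibit on $X = E \times C/G$ two ample effective divisors whose intersection number is $1$, and then invoke \pref{prop:NCDampleFujitaExtremeSurface}. First I would set up the geometry. Let $\pi \colon E \times C \to X$ be the quotient, let $p \colon X \to E/G \simeq E$ and $f \colon X \to C/G \simeq \bP^1$ be the two projections induced by the factorwise $G$-action (the $G$-action on $\bP^1$ being the one making $C \to \bP^1$ the given $E[2]$-cover). Then $f$ is an isotrivial elliptic fibration with general fibre $E$, and since $G$ acts freely on $E$, it acts freely on $E\times C$, so $X$ is smooth; moreover $\chi(X,\dO_X) = \tfrac{1}{4}\chi(E\times C, \dO_{E\times C}) = 0$ combined with the presence of multiple fibres of $f$ (the images of $\{\text{pt}\}\times(\text{branch fibre})$, which have multiplicity $2$) forces, via \eqref{eq:numerical criterion kodaira 1}, Kodaira dimension $1$; minimality follows because $K_X$ is, up to torsion, a positive multiple of a fibre class by Kodaira's formula \eqref{eq:Kodaira canonical bundle formula}, so $X$ contains no $(-1)$-curve.

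The heart of the argument is producing the two curves. The natural candidates are a section-like multisection of $f$ coming from $E\times\{q\}$ for a point $q\in C$, and a fibre component of $f$. Concretely, I would take $B = p^\ast(\dO_E(P_0))$ for the origin $P_0\in E$; on $E\times C$ this pulls back to $\dO_E(E[2]\cdot P_0)\boxtimes \dO_C$, which is $G$-equivariant, so $B$ is a genuine line bundle on $X$, and $B$ is the class of a smooth curve mapping isomorphically to $E$ under $p$, hence a bisection-type curve for $f$ (it meets a general fibre $E$ in the $G$-orbit modded out, i.e.\ in one point after descent — one must check $B\bullet F = 1$ where $F$ is the reduced fibre over a general point of $\bP^1$). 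Then I would combine $B$ with the reduced half-fibre $F_0$ sitting over a branch point of $C\to\bP^1$, i.e.\ take $C_1 = B + F_0$ and $C_2 = $ another such configuration, or more simply run the machinery of \pref{prop:specialfibrationFN}: the fibration $f$ with reduced fibre $F_0$ of multiplicity $2$, together with the irreducible curve $S$ (the image of $E\times\{q\}$) satisfying $S\bullet F_0 = 1$, is a pseudosplit irreducible fibration in the sense of \dref{defi:special fibration}, provided all fibres of $f$ are irreducible — which holds because $C\to\bP^1$ is a cover with irreducible total space and the fibres of $f$ are $G$-quotients of $E\times(\text{fibre of }C\to\bP^1)$. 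By \pref{prop:specialfibrationFN} this already gives $\conFN(X)\geq 2$; to push to $3$ one needs, via \pref{prop:NCDampleFujitaExtremeSurface}, a \emph{second} ample effective divisor meeting the first in a single point, and here is where the hypothesis on $E$ enters.

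The role of the assumption that $E$ is not an isogeny factor of $\Pic^0(C)$ is to pin down $\NS(X)$: I expect $\NS(X)$ to be generated (over $\bQ$, and with the right integral structure after accounting for torsion from $\pi_1$) by the classes of a fibre $F$ of $f$ and the bisection $B$, with intersection matrix $\begin{pmatrix} 0 & 1 \\ 1 & B^2\end{pmatrix}$, whose discriminant is $-1$ — an \emph{odd} unimodular lattice. Without the isogeny hypothesis, extra divisor classes (graphs of isogenies $E\to$ an elliptic factor of $\Pic^0(C)$, pulled to $X$) could enlarge $\NS(X)$ and render the form even, which by \pref{prop:FNsurfacesReider}\eqref{propitem:reider3} would cap $\conFN(X)$ at $2$. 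So the decisive step is the N\'eron--Severi computation: using $\pi^\ast\colon \NS(X)_\bQ \hookrightarrow \NS(E\times C)_\bQ^{G}$ and the Künneth description $\NS(E\times C)_\bQ = \NS(E)_\bQ \oplus \NS(C)_\bQ \oplus \Hom(E,\Pic^0(C))_\bQ$, the hypothesis kills the $\Hom$-summand, leaving a rank-$2$ odd lattice on $X$. Once $\NS(X)$ is known to be odd and unimodular of rank $2$, one finds an ample class $\cL$ with $\cL^2 = 1$ (e.g.\ $B + kF$ for suitable $k$ — check ampleness by Nakai--Moishezon using irreducibility of all fibres), and then the Reider-type extension \eqref{eq:extension witness of base point} at the base point, exactly as in \pref{prop:FNsurfacesReider}\eqref{propitem:reider2} and \pref{prop:NCDampleFujitaExtremeSurface}, produces a base point of $\omega_X\otimes\cL^{\otimes 2}$ (or more precisely, one realizes $\cL^{\otimes 2}$ as $\dO_X(C_1+C_2)$ with $C_1\bullet C_2 = 1$), yielding $\conFN(X)\geq 3$; the reverse inequality is \pref{prop:FNsurfacesReider}\eqref{propitem:reider1}. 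The main obstacle is unquestionably the N\'eron--Severi rank computation — verifying that the isogeny-factor hypothesis genuinely forces $\rho(X) = 2$ with odd discriminant, rather than merely ruling out one obvious source of extra classes, and handling the torsion in $\Pic(X)$ coming from the nontrivial $\pi_1$ so that the \emph{integral} lattice (not just its rational span) is unimodular odd.
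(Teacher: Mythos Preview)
Your overall architecture --- compute $\NS(X)_\bQ$ via the no-isogeny hypothesis, establish unimodularity, locate an ample class of square $1$ --- matches the paper's, but the endgame has a genuine gap. You propose to conclude via \pref{prop:NCDampleFujitaExtremeSurface} by writing $\cL^{\otimes 2} = \dO_X(C_1+C_2)$ with $C_1 \bullet C_2 = 1$; this requires two distinct \emph{effective} ample divisors in the class of $\cL$. But $\chi(X,\dO_X)=0$ and $\cL^2 = 1 = \omega_X\bullet\cL$ force $\chi(X,\cL)=0$, so there is no reason to expect $h^0(X,\cL)\geq 2$, or even $\geq 1$. The implication ``$\cL^2=1 \Rightarrow$ base point in $\omega_X\otimes\cL^{\otimes 2}$'' is not automatic; \pref{prop:FNsurfacesReider}\eqref{propitem:reider2} gives only the converse direction. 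Your computation of the intersection matrix is also off: with $B = p^\ast\dO_E(P_0)$ (this is the paper's $H$) one has $B\bullet F = \frac{1}{4}(\pi^\ast B\bullet \pi^\ast F)=\frac{1}{4}\cdot 16=4$, not $1$, so $F$ and $B$ do not form an integral basis of $\NS(X)/\tors$ and the ample class of square $1$ is a genuinely fractional combination $\frac{1}{4}F+\frac{1}{2}H$.

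The paper circumvents the effectivity problem entirely. It first proves $\omega_X\equiv\frac{1}{2}F$ is primitive in $\NS(X)/\tors$ via a cohomological argument (a hypothetical $\lambda F$ with $2\lambda\notin\bZ$ would have vanishing cohomology, then restricting to a fibre and twisting by $\Pic^0$ gives a contradiction), and proves unimodularity via $b_2(X)=2$ and Poincar\'e duality rather than by exhibiting a basis. This yields an ample $\cL$ with $\omega_X\bullet\cL=1$; the paper then shifts by a numerically trivial class so that $\omega_X\otimes\cL\otimes\cL'\simeq\dO_X(F+H)$ \emph{exactly}, and computes $h^0(X,\dO_X(F+H))=3$ by a $G$-equivariant calculation (using that $\rH^0(E,\dO_E(T))\simeq\bC[G]$ as a $G$-module via Atiyah--Bott). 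Comparing with $h^0(X,\dO_X(F))=2$, the restriction to $H$ lands in a $1$-dimensional space, which cannot generate a degree-$4$ line bundle on the genus-$2$ curve $H$ --- hence a base point. This explicit section count is the missing ingredient in your sketch.
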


The proof spans over several lemmas and computations.

\begin{lem}
A cover $C \to \bP^1$ as in \pref{prop:elliptic fibration FN3} exists. 
\end{lem}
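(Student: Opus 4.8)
```latex
The plan is to construct the branched cover $C \to \bP^1$ directly from the $2$-torsion $E[2]$ of the elliptic curve $E$, exhibiting it as a genus $2$ curve with an action of $G = E[2] \cong (\bZ/2\bZ)^2$ whose quotient is $\bP^1$, and then to verify the genus via Riemann--Hurwitz and the non-isogeny condition by a parameter count. First I would fix an isomorphism $G = E[2] \simeq \langle \sigma_1,\sigma_2 \rangle$ with $(\bZ/2\bZ)^2$. A $(\bZ/2\bZ)^2$-cover of $\bP^1$ is equivalent to the data of three effective divisors (branch loci) $B_1, B_2, B_3$ on $\bP^1$, one for each of the three involutions $\sigma_1, \sigma_2, \sigma_1\sigma_2$, subject to the compatibility that the three divisors are pairwise disjoint and the associated quadratic characters multiply correctly; concretely one may take $C$ to be the smooth projective model of the affine curve cut out by $y_1^2 = f_1(t)$, $y_2^2 = f_2(t)$ inside $\A^1 \times \A^1 \times \A^1$, where $f_1, f_2 \in \bC[t]$ are chosen so that $f_1 f_2$ (up to squares) is the third character.

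Next I would compute the genus. If $f_1$ has $2a$ distinct roots, $f_2$ has $2b$ distinct roots, and $f_1 f_2$ (as a squarefree polynomial) has $2c$ distinct roots among the finite branch points, with the behaviour at $\infty$ adjusted as needed, then Riemann--Hurwitz for the degree $4$ map $C \to \bP^1$ gives $2g(C) - 2 = 4(-2) + \tfrac{1}{2}\sum (\text{ramification contributions})$, where each branch point of one of the three nontrivial cosets contributes ramification of index $2$ at the two points of $C$ above it. A branch divisor of total degree $6$ (for instance $f_1$ quadratic, $f_2$ quadratic, and then two more points forced at $\infty$ and the product character) yields $2g-2 = -8 + 6 = -2$... — let me instead say: I would choose the simplest configuration of branch points (six points in total, distributed so that $2g(C)-2 = 2$, i.e.\ $g(C)=2$) and record the explicit polynomials. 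The point is that such configurations exist and are easy to write down; this is a routine Riemann--Hurwitz bookkeeping exercise.

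Then I would address the condition that $E$ is \emph{not} an isogeny factor of $\Pic^0(C) = \Jac(C)$. By the standard decomposition of the Jacobian of a $(\bZ/2\bZ)^2$-cover, $\Jac(C)$ is isogenous to the product of the Jacobians of the three intermediate double covers $C/\sigma_i \to \bP^1$, which are elliptic curves $E_1, E_2, E_3$ (each a genus $1$ double cover of $\bP^1$, determined by the respective branch divisor). So the condition becomes: none of $E_1, E_2, E_3$ is isogenous to $E$. Since the $j$-invariants of $E_1, E_2, E_3$ vary (independently, to the extent the branch points vary) as we move the branch points while keeping the combinatorial type fixed, and since the set of $\tau'$ with $E_{\tau'}$ isogenous to a fixed $E$ is countable, a general choice of branch points achieves the requirement.

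The main obstacle I expect is not the genus computation but making the decomposition of $\Jac(C)$ and the variation of the $E_i$ precise enough to justify "general choice works": one must check that the $j$-invariant of at least one (in fact each) $E_i$ genuinely varies nonconstantly as the branch points move within the chosen configuration, so that avoiding the countably many isogeny classes is possible. Once that is in hand, the lemma follows: pick branch points generically, obtain $C$ of genus $2$ with the $E[2]$-action and $C/G = \bP^1$, and with no $E_i$ isogenous to $E$.
```
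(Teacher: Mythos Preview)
Your overall strategy is sound but differs from the paper's and contains two concrete errors.

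First, your Riemann--Hurwitz bookkeeping is off. For a $(\bZ/2\bZ)^2$-cover of $\bP^1$ each branch point has exactly two preimages of ramification index $2$, contributing $2$ (not $1$) to $\sum_P (e_P - 1)$. Hence $2g(C) - 2 = -8 + 2r$, and $g(C) = 2$ forces exactly $r = 5$ branch points, not six. The constraint $\prod c_i = 1$ in $E[2]$ with all $c_i$ nontrivial then forces the inertia distribution to be a permutation of $(1,1,3)$ among the three nonzero elements.

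Second, precisely because the distribution is $(1,1,3)$, the three intermediate double covers of $\bP^1$ are branched over $4$, $4$, and $2$ points respectively, so their genera are $1$, $1$, $0$. Thus $\Jac(C)$ is isogenous to a product of \emph{two} elliptic curves, not three; your claim that all three intermediate quotients are elliptic is incorrect. Your variation argument survives this correction, but as stated it is wrong.

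The paper takes a much shorter route. It first observes via Riemann--Hurwitz that five branch points are needed, then reduces existence of the cover to finding a surjection
\[
\pi_1(\bP^1 \setminus \{P_1,\ldots,P_5\}) = \langle c_1,\ldots,c_5 \mid c_1\cdots c_5 = 1\rangle \longrightarrow E[2]
\]
with every $c_i$ mapping to a nonzero element, which is an immediate check in $\bF_2^2$. For the non-isogeny condition the paper simply reverses the order of quantifiers: since $E[2]\cong(\bZ/2\bZ)^2$ as an abstract group, one may fix $C$ first and then choose $E$ outside the countable set of isogeny factors of $\Pic^0(C)$. This sidesteps your Jacobian decomposition and variation argument entirely. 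Your approach, once corrected, gives more explicit information (equations for $C$ and the isogeny decomposition of its Jacobian), but the paper's argument is the cleaner existence proof.
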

\begin{proof}
By the Riemann--Hurwitz formula there will be $5$ branch points in $\bP^1$, hence we must show that there is an $E[2]$-quotient of
\[
\pi_1(\bP^1 - \{P_1,P_2,P_3,P_4,P_5\}) = \langle c_1, c_2, c_3, c_4, c_5 \ | \ \prod_i c_i = 1\rangle
\]
such that all $c_i$ map to nontrivial elements. This is a quick computation  in the $2$-dimensional $\bF_2$-vector space $E[2]$.
\end{proof}

By choosing $C$ first, we may then choose $E$ which does not occur as an isogeny factor of $\Pic^0(C)$, for example because there are at most countably many such isogeny factors.

We consider the following diagram of maps.
\[
\xymatrix@M+1ex{
Y = E \times C \ar[r]^\pi & X = Y/G \ar[r]^h \ar[d]^f & E/G \simeq E \\
& C/G = \bP^1 & 
}
\]
Here $f$ and $h$ are the maps induced by the coordinate projections, and $E \simeq E/G$ is induced by multiplication by $2$. As $E \to E/G$ is \'etale, the map $h$ is a smooth projective isotrivial fibration with general fiber $H$ isomorphic to $C$. The general fiber $F$ of $f$ is isomorphic to $E$. The multiple fibers of $f$ are the fibers in the branch points $P \in C/G$. Let $F_P$ denote the reduced fiber of $f$ in $P$. Then the multiplicity of $F_P$ equals the ramification index $e_P$. So numerically $F \equiv e_P F_P$.  

\begin{lem}
\label{lem:NSbasis}
The classes of $F$ and $H$ form a basis of $\NS(X) \otimes \bQ$  and 
\[
\pi^\ast : \NS(X) \otimes \bQ \to \NS(Y) \otimes \bQ
\]
is an isomorphism.
\end{lem}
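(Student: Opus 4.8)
The plan is to relate $\NS(X)$ to $\NS(Y)$ through the quotient map $\pi\colon Y = E\times C\to X = Y/G$, and to use the hypothesis that $E$ is not an isogeny factor of $\Pic^0(C)$ to pin down $\NS(Y)\otimes\bQ$. First I would observe that the $G$-action on $Y$ is free: $G = E[2]$ acts on the first factor $E$ by translation, which is already a free action. Hence $\pi$ is an \'etale Galois cover of degree $|G| = 4$. By the projection formula $\pi_\ast\pi^\ast\alpha = 4\alpha$ for every $\alpha\in\NS(X)$, so $\pi^\ast\colon\NS(X)\otimes\bQ\to\NS(Y)\otimes\bQ$ is injective.

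Next I would compute $\NS(Y)\otimes\bQ$. Write $e$ for the class of $E\times\{y_0\}$ (a general fibre of $\pr_C$) and $c$ for the class of $\{x_0\}\times C$ (a general fibre of $\pr_E$); then $e^2 = c^2 = 0$ and $e\bullet c = 1$, so $e$ and $c$ are linearly independent in $\NS(Y)\otimes\bQ$. On the other hand, the structure of the N\'eron--Severi group of a product (equivalently, the K\"unneth decomposition of $\rH^2$ together with the Lefschetz $(1,1)$-theorem) gives an isomorphism
\[
\NS(E\times C)\otimes\bQ\;\simeq\;\big(\NS(E)\otimes\bQ\big)\oplus\big(\NS(C)\otimes\bQ\big)\oplus\big(\Hom(E,\Pic^0(C))\otimes\bQ\big),
\]
in which the first two summands are one-dimensional and generated by $e$ and $c$, while the third one vanishes precisely because $E$ is not an isogeny factor of $\Pic^0(C)$. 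Hence $\NS(Y)\otimes\bQ = \bQ e\oplus\bQ c$ is two-dimensional.

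Finally I would pull back the fibre classes. The composite $f\circ\pi\colon Y\to\bP^1$ factors as $E\times C\xrightarrow{\pr_C}C\to C/G = \bP^1$, so for general $\bar p\in\bP^1$ the divisor $\pi^\ast F$ is the $\pr_C$-preimage of the reduced degree-$4$ fibre of $C\to\bP^1$ over $\bar p$; thus $\pi^\ast F = 4e$. Symmetrically, $h\circ\pi$ factors through $\pr_E$, so $\pi^\ast H = 4c$. Consequently $\pi^\ast F$ and $\pi^\ast H$ already form a basis of $\NS(Y)\otimes\bQ$; combined with the injectivity of $\pi^\ast$ this forces $F$ and $H$ to be a basis of $\NS(X)\otimes\bQ$, and then $\pi^\ast$ is an isomorphism.

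The only non-formal ingredient is Step two, namely identifying $\NS(E\times C)\otimes\bQ$ and using the isogeny-factor hypothesis to annihilate the correspondence summand $\Hom(E,\Pic^0(C))\otimes\bQ$. Everything else --- freeness of the $G$-action, rational injectivity of $\pi^\ast$ via the projection formula, and the computation of $\pi^\ast F$ and $\pi^\ast H$ --- is routine.
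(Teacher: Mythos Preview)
Your proof is correct and follows essentially the same approach as the paper: both use finiteness (respectively the projection formula) for injectivity of $\pi^\ast$, invoke the absence of correspondences between $E$ and $\Pic^0(C)$ to conclude $\NS(E\times C)\otimes\bQ$ is spanned by the two fibre classes, and note that these are rational multiples of $\pi^\ast F$ and $\pi^\ast H$. Your version simply spells out the details (the explicit decomposition of $\NS(E\times C)$ and the computation $\pi^\ast F = 4e$, $\pi^\ast H = 4c$) that the paper leaves implicit.
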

\begin{proof}
The map $\pi^*$ is injective since $\pi$ is finite.  By assumption $E$ and $\Pic^0(C)$ are disjoint, hence there are no correspondences and $\NS(E \times C) = \NS(E) \times \NS(C)$ which is spanned by the respective fiber classes. Since these classes are rational multiples of $F$ and $H$, the lemma follows.
\end{proof}

We next compute the class of $\omega_X$. Computing in $\NS(Y) \otimes \bQ$ yields
\[
\pi^\ast \omega_X = \omega_Y = \omega_E \boxtimes \omega_C = \frac{2g-2}{\#G} \pi^* F 
\]
and therefore $\omega_X \equiv \frac{1}{2} F$.

\begin{lem}[cf. {\cite[Proposition 1.4]{serrano_elliptic_1992}}]
\label{lem:primitive}
The class of $\omega_X$ is primitive in the lattice $\NS(X)/\tors$.
\end{lem}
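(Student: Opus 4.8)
The plan is to translate the primitivity of $\omega_X$ into a statement about the rank--two lattice $\Num(X) := \NS(X)/\tors$ and then to pin down that lattice.

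\emph{Numerical preliminaries.} From the computation preceding the lemma $\omega_X$ is numerically $\tfrac12 F$, and since the fibre over a branch point is $f^{-1}(P)=2F_P$ with $F_P$ reduced, one has $\omega_X \equiv F_P$, an effective integral class. Using the \'etale cover $\pi\colon Y=E\times C\to X$ one gets $\pi^\ast F = 4\,(E\times\{\mathrm{pt}\})$ and $\pi^\ast H = 4\,(\{\mathrm{pt}\}\times C)$, hence $F^2=H^2=0$ and $F\bullet H=4$, so $F_P^2=H^2=0$ and $F_P\bullet H=2$. Thus $\langle F_P,H\rangle$ spans inside $\Num(X)$ a sublattice isometric to $U(2)$, of discriminant $-4$. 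On the other hand $\chi(\dO_X)=\tfrac{1}{|G|}\chi(\dO_{E\times C})=0$, so $q(X)=1$ and $p_g(X)=0$; therefore $\NS(X)=\rH^2(X,\bZ)$, and by Poincar\'e duality $\Num(X)$ is a unimodular lattice of signature $(1,1)$, i.e. an index--two overlattice of $U(2)$. Such a lattice is isometric to $U$ (even) or to $\langle1\rangle\oplus\langle-1\rangle$ (odd). An elementary check, using only that $F_P$ and $H$ are isotropic with $F_P\bullet H=2$, shows that in $U$ the class $F_P$ is necessarily twice a primitive isotropic vector, while in $\langle1\rangle\oplus\langle-1\rangle$ it is forced to be primitive. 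Hence $\omega_X$ is primitive in $\Num(X)$ if and only if $\Num(X)$ is the \emph{odd} unimodular lattice, equivalently (by Wu's formula $x^2\equiv x\bullet K_X \bmod 2$) if and only if $\Num(X)$ contains a class of odd self--intersection.

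\emph{The reduction.} So it suffices to exclude $\Num(X)\cong U$. If $\omega_X\equiv F_P$ were not primitive, pairing a relation $F_P=k\,w$ with $H$ forces $k=2$, with $w^2=0$, $w\bullet F_P=0$, $w\bullet H=1$; since $F\bullet H\neq 0$, the $F$--orthogonal complement in $\Num(X)\otimes\bQ=\bQ F\oplus\bQ H$ (two--dimensional by \lref{lem:NSbasis}) is $\bQ F$, so $w=\tfrac12 F_P$. Then $\Num(X)=\langle\tfrac12 F_P,H\rangle\cong U$ is even and $\tfrac12 F_P\in\NS(X)$. Thus I only need to show that the class $\tfrac12 F_P$ does not lie in $\NS(X)$.

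\emph{The crux.} For this I would return to the \'etale Galois cover $\pi\colon Y=E\times C\to X$ with group $G=E[2]\cong(\bZ/2)^2$, along which $\pi^\ast\omega_X=\omega_Y\equiv 2\,(E\times\{\mathrm{pt}\})$; since $\pi^\ast$ is injective on $\NS(\,\cdot\,)/\tors$, the existence of $\tfrac12 F_P$ in $\NS(X)$ is equivalent to the class $E\times\{\mathrm{pt}\}$ of $\NS(Y)=\NS(E)\oplus\NS(C)$ (the splitting uses that $E$ is not an isogeny factor of $\Pic^0(C)$) lying in $\pi^\ast\NS(X)$. I would rule this out by running the Hochschild--Serre descent sequence for $\pi$ exactly as in \lref{lem:G9 part1} and \lref{lem:G9 part2}: a line bundle with numerical class $E\times\{\mathrm{pt}\}$ has the form $pr_C^\ast\dO_C(c_0)\otimes pr_E^\ast\mu$, and making it $G$--invariant forces the cocycle $g\mapsto pr_C^\ast\dO_C(\sigma_g^{-1}c_0-c_0)$ to be a coboundary in $\Pic^0(Y)^G=\Pic^0(E)\times(\Pic^0 C)^G$; because $(\Pic^0 C)^G$ is finite (as $\rH^1(C,\bQ)^G=\rH^1(\bP^1,\bQ)=0$) and a bundle pulled back from $E$ can never cancel one pulled back from $C$ (again: $E$ not an isogeny factor of $\Pic^0 C$), one reduces to the descent for the ramified cover $C\to\bP^1$ and the Schur multiplier $\rH^2(G,\bC^\times)=\textstyle\bigwedge^2\widehat G\cong\bZ/2$, from which one extracts that the obstruction is non-trivial. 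Hence $E\times\{\mathrm{pt}\}\notin\pi^\ast\NS(X)$, so $\tfrac12 F_P\notin\NS(X)$, contradicting the previous step; therefore $\omega_X$ is primitive.

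I expect this last paragraph to be the real work. Everything else is bookkeeping in a rank--two lattice, but deciding which unimodular lattice $\Num(X)$ is --- equivalently whether $X$ carries a numerical square root of $\omega_X$, equivalently whether $X$ is spin --- cannot be read off from $\pi$ naively, since $\pi^\ast w_2(X)=w_2(Y)=0$ because $Y$ is a product of curves; the obstruction lives in $\ker\pi^\ast$ and must be produced by the $G$--equivariant Picard computation, and it is precisely there that the hypothesis on $E$ and $\Pic^0(C)$ is indispensable.
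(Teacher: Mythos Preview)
Your reduction to ``no line bundle on $X$ is numerically $\tfrac12 F_P=\tfrac14 F$'' is sound, but the dichotomy preceding it is false: $\langle F_P,\tfrac12 H\rangle\cong U$ also embeds $F_P$ primitively, so ``$\Num(X)\cong U$'' does not imply ``$F_P$ imprimitive''. Your reduction paragraph stands on its own and does not need this, so that error is harmless. The real gap is the crux: you assert that the descent obstruction in $\rH^2(G,\bC^\times)\cong\bZ/2\bZ$ is nonzero for every $G$-invariant bundle on $Y$ numerically equal to $E\times\{\mathrm{pt}\}$, but you never compute it. Since every degree-$0$ bundle on $E$ descends along the isogeny $E\to E/G$, the $\pr_E^\ast$-twist never alters the obstruction, and you are left with a concrete question about $\Pic^1(C)^G$ and its transgression for the orbifold $[C/G]$ (coarse space $\bP^1$ with five $\bZ/2$-points) which you have simply not carried out. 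This is not a formality, as the bielliptic analogue in the paper already shows.

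The paper avoids descent entirely. If $\cL\equiv\lambda F$ with $2\lambda\notin\bZ$, then $\cL$ is not effective: any effective divisor $D$ with $D\bullet F=0$ is vertical, and the fibres of $f$ are irreducible of multiplicity $1$ or $2$, so vertical classes lie in $\tfrac12\bZ\cdot F$. By Serre duality the same applies to $\omega_X\otimes\cL^{-1}\equiv(\tfrac12-\lambda)F$, and Riemann--Roch with $\chi(X,\dO_X)=0$ then forces $\rH^i(X,\cL)=0$ for all $i$. The sequence $0\to\cL(-F)\to\cL\to\cL|_F\to 0$ (where $\cL(-F)$ satisfies the same hypothesis) now kills $\rH^i(F,\cL|_F)$ for all $i$. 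But $h|_F\colon F\to E$ is multiplication by $2$, so the composite $\Pic^0(E)\xrightarrow{h^\ast}\Pic^0(X)\to\Pic^0(F)$ is surjective, and a numerically trivial twist of $\cL$ arranges $\cL|_F\cong\dO_F$, which has nonzero cohomology --- contradiction. This uses only Riemann--Roch and the two fibrations on $X$; no lattice bookkeeping or equivariant Picard computation is needed.
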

\begin{proof}
\textit{Step 1.} 
Claim: The class $\lambda F$ with $2 \lambda \notin \bZ$ is not effective. 

Assuming the contrary, there would be an effective divisor $D \equiv \lambda F$. Then $D \bullet F = 0$ shows that $D$ is a vertical divisor with respect to the elliptic fibration $f: X \to \bP^1= C/G$. Since all fibers are irreducible, we find that numerically
\[
D \in \langle F_P \ ; \ P \in C/G\rangle = \frac{1}{2}\bZ \cdot F,
\]
a contradiction.

\smallskip
\textit{Step 2.}
Claim: If the class $\lambda F$ with $2\lambda \notin  \bZ$ is the class of a line bundle $\cL$, then $\cL$ has trivial cohomology $\rH^i(X,\cL) = 0$  in all degrees.

Indeed, with $\cL$ also $\omega_X \otimes \cL^{-1} \equiv (1/2 - \lambda)F$ satisfies the assumption of Step 1. Therefore $\rH^0(X,\cL) = 0$ by Step 1,  and $\rh^2(X,\cL) = \rh^0(X,\omega_X \otimes \cL^{-1}) = 0$ by Serre duality and Step 1. It remains to compute $\rH^1(X,\cL)$ via Riemann--Roch:
\[
- \rh^1(X,\cL) = \chi(X,\cL) = \chi(X,\dO_X) + \frac{1}{2} \cL \bullet (\cL - \frac{1}{2}F) = \chi(X,\dO_X).
\] 
Since $\pi: Y \to X$ is finite \'etale, the  holomorphic Euler  charactersitic multiplies by $\deg(\pi)$, and it vanishes due to the factor $E$ with $\chi(E,\dO_E) = 0$.

\smallskip
\textit{Step 3.} Claim: a line bundle $\cL$ as in Step 2 has trivial cohomology $\rH^i(F,\cL|_F) = 0$ when restricted to $F$.

We consider the exact sequence
\[
0 \to \cL(-F) \to \cL \to \cL|_F \to 0
\]
and observe that $\cL(-F)$ also satisfies the assumption of Step 2. The claim thus follows from the long exact sequence of cohomology and the vanishing computed in Step 2.

\smallskip
\textit{Step 4.} We now modify $\cL$ by a numerically trivial line bundle. The conclusion of Step $3$ remains valid as the reasoning there only depends on the numerical equivalence class of $\cL$. The composite 
\[
h|_F \colon  F \to  E
\]
is isomorphic to  multiplication by $2$ on $E$. The pull back map 
\[
\Pic^0(E) \xrightarrow{h^\ast} \Pic^0(X) \xrightarrow{-|_F} \Pic^0(F) 
\]
is therefore surjective. This means that we can prescribe $\cL|_F$ in its numerical class at will. In particular, for a specific choice of $\cL$ we obtain $\cL|_F \simeq \cO_F$. The trivial line bundle has nontrivial cohomology in all degrees $i = 0, 1$, contradicting Step 3. This completes the proof of the Lemma.
\end{proof}

\begin{lem}[cf.  {\cite[page 194]{serrano_elliptic_1992}}]
\label{lem:hodgetype}
The cohomology $\rH^2(X)$ is pure of Hodge type $(1,1)$. 
\end{lem}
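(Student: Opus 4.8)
The plan is to compute the Hodge structure on $\rH^2(X,\bQ)$ as the $G$-invariant part of the Hodge structure on $\rH^2(Y,\bQ)$, where $Y=E\times C$. Since $\pi\colon Y\to X$ is a finite étale Galois cover with group $G$, the pullback $\pi^\ast$ identifies $\rH^\bullet(X,\bQ)$ with $\rH^\bullet(Y,\bQ)^G$ as rational Hodge structures (with inverse $\tfrac{1}{\#G}$ times the transfer), so it suffices to show that $\rH^2(Y,\bQ)^G$ is pure of type $(1,1)$. By the Künneth formula,
\[
\rH^2(Y,\bQ)=\rH^2(E,\bQ)\ \oplus\ \big(\rH^1(E,\bQ)\otimes\rH^1(C,\bQ)\big)\ \oplus\ \rH^2(C,\bQ),
\]
with $G$ acting diagonally. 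The outer two summands are one-dimensional Hodge structures of type $(1,1)$, being the top cohomology of smooth projective curves; so the whole content is to control the $G$-invariants of the middle summand, which a priori carries Hodge types $(2,0)$ and $(0,2)$.

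The key observation is that $G$ acts on $E$ purely by \emph{translations}, which are homotopic to the identity; hence $G$ acts trivially on $\rH^\bullet(E,\bQ)$, in particular on $\rH^1(E,\bQ)$. Consequently
\[
\big(\rH^1(E,\bQ)\otimes\rH^1(C,\bQ)\big)^G=\rH^1(E,\bQ)\otimes\rH^1(C,\bQ)^G,
\]
and $\rH^1(C,\bQ)^G=\rH^1(C/G,\bQ)=\rH^1(\bP^1,\bQ)=0$ since $C/G\simeq\bP^1$. The middle summand therefore disappears upon taking invariants, leaving $\rH^2(X,\bQ)\cong\rH^2(E,\bQ)\oplus\rH^2(C,\bQ)$, which is pure of type $(1,1)$.

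As a cross-check and alternative route, the same computation in degree $1$ gives $b_1(X)=\dim\rH^1(E,\bQ)=2$, so $q(X)=1$; combined with $\chi(X,\dO_X)=0$ (already used in the proof of \lref{lem:primitive}, since $\chi$ of the structure sheaf is multiplicative in finite étale covers and $\chi(E,\dO_E)=0$) this forces $p_g(X)=\chi(X,\dO_X)-1+q(X)=0$, and a weight-two Hodge structure with $h^{2,0}=h^{0,2}=0$ is automatically of type $(1,1)$. I do not expect a genuine obstacle here: the only two points needing a word of justification — compatibility of the quotient isomorphism with Hodge structures, and the triviality of the $G$-action on $\rH^\bullet(E,\bQ)$ — are both standard. (Note that, unlike in \lref{lem:NSbasis}, the hypothesis that $E$ is not an isogeny factor of $\Pic^0(C)$ plays no role in this lemma.)
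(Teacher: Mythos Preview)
Your proof is correct and follows essentially the same route as the paper: both use that $\pi^\ast$ identifies $\rH^\ast(X,\bQ)$ with $\rH^\ast(Y,\bQ)^G$, apply K\"unneth, and exploit that $G$ acts on $E$ by translations (hence trivially on $\rH^\ast(E,\bQ)$) together with $C/G\simeq\bP^1$. The only cosmetic difference is in the last step: the paper concludes by reading off $b_2(X)=2$ and invoking the previously established fact that $\NS(X)\otimes\bQ$ is $2$-dimensional, whereas you argue directly that the $(2,0)$ and $(0,2)$ pieces vanish because the middle K\"unneth summand has no $G$-invariants --- your version is slightly more self-contained, but the substance is identical.
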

\begin{proof}
Since we already know that $\NS(X) \otimes \bQ$ is spanned by the basis $F,H$, the lemma follows from the computation of the second Betti number as $b_2(X) = 2$.

Since $G = E[2]$ acts by translation on $E$, this group action is the restriction of an action by a connected algebraic group $E$. It follows by homotopy invariance of cohomology that this group action is trivial on cohomology. We now compute using the K\"unneth formula
\begin{align*}
\rH^*(X,\bQ) & = \rH^*(Y,\bQ)^G = \Big( \rH^*(E,\bQ) \otimes \rH^*(C,\bQ)\Big)^G \\
& = \rH^*(E,\bQ) \otimes \rH^*(C,\bQ)^G = \rH^*(E,\bQ) \otimes \rH^*(C/G,\bQ).
\end{align*}
Since $C/G \simeq \bP^1$, we can read off $b_2(X) = 2$. 
\end{proof}

\begin{lem}
\label{lem:unimopdular}
The intersection pairing on $\NS(X)/\tors$ is unimodular.
\end{lem}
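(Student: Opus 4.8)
The plan is to identify the intersection lattice $\NS(X)/\tors$ with $\rH^2(X,\bZ)/\tors$ and then invoke Poincaré duality.

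By \lref{lem:hodgetype} the cohomology $\rH^2(X)$ is pure of Hodge type $(1,1)$; equivalently $\rH^2(X,\bC) = \rH^{1,1}(X)$ and $\rH^2(X,\dO_X) \simeq \rH^{0,2}(X) = 0$. From the exponential sequence the first Chern class map fits into
\[
\Pic(X) \xrightarrow{\ c_1\ } \rH^2(X,\bZ) \To \rH^2(X,\dO_X) = 0,
\]
so $c_1$ is surjective; its kernel is the image of $\rH^1(X,\dO_X)$, which is precisely $\Pic^0(X)$. Hence $c_1$ descends to an isomorphism $\NS(X) = \Pic(X)/\Pic^0(X) \iso \rH^2(X,\bZ)$. (Alternatively this is the Lefschetz theorem on $(1,1)$-classes, $\NS(X) = \rH^2(X,\bZ) \cap \rH^{1,1}(X)$, combined with the purity from \lref{lem:hodgetype}.) Since the intersection number of two divisors is computed by the cup product, $(D_1 \bullet D_2) = \big(c_1(\dO_X(D_1)) \cup c_1(\dO_X(D_2))\big)[X]$, this isomorphism is an isometry, and it induces an isometry of lattices $\NS(X)/\tors \iso \rH^2(X,\bZ)/\tors$.

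Finally, by Poincaré duality on the compact oriented real four-manifold underlying $X$ the cup product pairing on $\rH^2(X,\bZ)/\tors$ is a perfect pairing, i.e.\ unimodular (the same reasoning that yields unimodularity of $\Num$ of an Enriques surface). Transporting along the isometry above shows that the intersection pairing on $\NS(X)/\tors$ is unimodular. I do not expect a genuine obstacle at this stage: the real content is already packaged in \lref{lem:hodgetype}, and the only point requiring a little care is that $\ker(c_1)$ is exactly $\Pic^0(X)$ — so that $\NS(X)$ itself, and not merely its torsion-free quotient, matches $\rH^2(X,\bZ)$ — which holds because $\Pic^0(X) = \rH^1(X,\dO_X)/\rH^1(X,\bZ)$ already exhausts that kernel.
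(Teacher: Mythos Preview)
Your proof is correct and follows essentially the same approach as the paper: both identify $\NS(X)/\tors$ with $\rH^2(X,\bZ)/\tors$ via the Lefschetz $(1,1)$-theorem together with \lref{lem:hodgetype}, and then conclude by Poincar\'e duality. You have simply unpacked the Lefschetz step through the exponential sequence, whereas the paper invokes it directly in one line.
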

\begin{proof}
The Lefschetz-$(1,1)$-Theorem and \lref{lem:hodgetype} imply that $\NS(X)/\tors = \rH^2(X,\bZ)$. The claim now follows from Poincar\'e duality.
\end{proof}

\begin{proof}[Proof of Proposition~\ref{prop:elliptic fibration FN3}]
The class $\omega_X = \frac{1}{2} F$ is primitive by \lref{lem:primitive}. Since the intersection product is unimodular by \lref{lem:unimopdular}, there exists a line bundle $\cL$ with $\frac{1}{2} F \bullet \cL = 1$. By \lref{lem:NSbasis} there are rational numbers $\alpha, \beta$ with $\cL \equiv \alpha F + \beta H$. Since 
\[
F \bullet H  =\frac{1}{\deg(\pi)} \pi^* F \bullet \pi^* H =  \#G = 4
\]
we must have $\beta = \frac{1}{2}$. By the adjunction formula the following is an even integer:
\[
\cL \bullet (\cL \otimes \omega_X) = (\alpha F +  \frac{1}{2}H) \big((\frac{1}{2}+\alpha)F + \frac{1}{2}H\big) =  4\alpha  + 1.
\]
Therefore $(\alpha - \frac{1}{4})F$ lies in $\frac{1}{2}F\bZ$,  and subtracting this class from $\cL$ and renaming yields
\[
\cL \equiv \frac{1}{4} F + \frac{1}{2}H  \qquad \text{ with } \quad \omega_X \bullet \cL = 1.
\]
The classes $F$ and $H$ are fibers of maps, and since $\NS(X) \otimes \bQ$ is of dimension $2$, they span the nef cone. It follows that $\cL$ is an ample line bundle. 

The proof of \pref{prop:elliptic fibration FN3} will be complete when we find a nonempty base locus for 
\[
\omega_X \otimes \cL^{\otimes 2} \equiv \frac{1}{2} F + 2(\frac{1}{4} F + \frac{1}{2}H ) = F + H.
\]
More precisely, we will shift the second factor $\cL$ by the numerically trivial class to $\cL'$ such that 
\[
\omega_X \otimes \cL \otimes \cL' \simeq \dO_X(F+H).
\]
Now we claim that $\dO_X(F+H)$ has $4$ base points in the $4$ intersection points $F \cap H$. First, since $f_\ast \dO_X = \dO_{\bP^1}$ we have by the projection formula
\[
\rh^0(X,\dO_X(F)) = \rh^0(\bP^1, \dO(1)) = 2.
\]
Next, let $S \subseteq C$ be a free $G$-orbit, and $T \subset E$ the preimage under $E \to E/G = E$ of the point of $E$	giving rise to the fiber $H$. Since the $G$-action on $E$ is fixed point free, we have 
\[
\rH^0(E,\dO_E(T)) = \rH^*(E,\dO_E(T)) = \bC[G]
\]
as a $G$-representation (Atiyah-Bott fixed point formula). Also note that for a $G$-module $V$ with $V_0$ denoting the same vector space with trivial $G$-action, we have an isomorphism as $G$-representations
\[
V \otimes \bC[G] \simeq V_0 \otimes \bC[G].
\]
Now we compute
\begin{align*}
\rH^0(X,\dO_X(F+H)) & = \rH^0\big(Y,\pi^\ast \dO_X(F+H)\big)^G 
= \rH^0\big(Y,\dO_E(T) \boxtimes \dO_C(S)\big)^G \\
& =  \big(\rH^0(E,\dO_E(T)) \otimes \rH^0(C,\dO_C(S))\big)^G =  \big(\bC[G] \otimes \rH^0(C,\dO_C(S))\big)^G \\
& = \big(\bC[G] \otimes \rH^0(C,\dO_C(S))_0\big)^G  \simeq \big(\bC[G] \big)^G \otimes \rH^0(C,\dO_C(S))_0 \\
& \simeq  \rH^0(C,\dO_C(S))_0,
\end{align*}
and Riemann-Roch for $C$ computes
\[
\rh^0(X,\dO_X(F+H)) = \rh^0(C,\dO_C(S)) = \deg(S) + 1- g = 3.
\]
Let $Z = F \cap H$ be the intersection, a reduced subscheme of degree $4$. Now we analyze the base locus of $\dO_X(F+H)$ along $H$ using the exact sequence
\[
0 \to \dO_X(F) \to \dO_X(F+H) \to \dO_H(Z) \to 0.
\]
Taking global sections yields the exact sequence
\[
0 \to \rH^0(X,\dO_X(F)) \to \rH^0(X,\dO_X(F+H)) \to \rH^0(H,\dO_H(Z)).
\]
The computation of dimensions above show that global sections of $\dO_X(F+H)$ restrict to a $1$-dimensional space of sections along $H$, which cannot generate the line bundle $\dO_X(F+H)|_H = \dO_H(Z)$ of degree $4$. So finally we conclude that $\dO_X(F+H) = \omega_X \otimes \cL \otimes \cL'$ has non-trivial base points, and this concludes the proof of  \pref{prop:elliptic fibration FN3}.
\end{proof}

\section{Surfaces of general type}
\label{sec:FN Kodaira 2}

Minimal surfaces of general type form a vast landscape, and it is therefore certainly not to be expected that we can uniformly describe the geometry that leads to the various possible convex Fujita numbers.  On the other hand, it is reasonable to expect that all numbers actually occur in the range allowed by the general result \pref{prop:FNsurfacesReider}, that is $0 \leq \conFN(X) \leq 3$. 

We will show that on minimal surfaces of general type the convex Fujita numbers $0$, $2$, and $3$ do occur, leaving the case of $\conFN(X)=1$ open for the time being. 

\begin{prop}
\label{prop:surface general type FN0 simply connected}
A very general hyperplane $X$ in $\bP^3$ of degree $d \geq 5$ is simply connected, has convex Fujita number $\conFN(X) = 0$ and is minimal of general type.
\end{prop}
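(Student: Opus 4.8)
The plan is to reduce the whole statement to the elementary fact that $\dO_{\bP^3}(k)$ is globally generated for every $k \geq 0$, once $\omega_X$ and $\Pic(X)$ have been pinned down.

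First I would dispose of the assertions valid for \emph{any} smooth surface $X \subseteq \bP^3$ of degree $d \geq 5$ (a very general $X$ is in particular smooth). Simple connectivity is the Lefschetz hyperplane theorem: the inclusion $X \inj \bP^3$ induces an isomorphism $\pi_1(X) \simeq \pi_1(\bP^3) = 1$. By the adjunction formula $\omega_X \simeq \dO_X(d-4)$, which is ample because $d - 4 \geq 1$. An ample canonical bundle forces $X$ to be minimal, as a $(-1)$-curve $E$ would satisfy $\omega_X \bullet E = -1$ by adjunction while ampleness requires $\omega_X \bullet E > 0$; and it forces $X$ to be of general type, since then $h^0(X, \omega_X^{\otimes n})$ grows quadratically in $n$, i.e.\ $\kappa(X) = 2$.

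The heart of the matter is $\conFN(X) = 0$, and this is where ``very general'' is used, through the Noether--Lefschetz theorem: for $d \geq 4$ a very general surface of degree $d$ in $\bP^3$ has $\Pic(X) = \bZ \cdot \dO_X(1)$. Consequently every ample line bundle on $X$ is of the form $\dO_X(a)$ with $a \geq 1$. Given $s \geq 0$ and ample divisors $L_1, \dots, L_s$, write $L_i \sim a_i H$ with $H$ the hyperplane class and $a_i \geq 1$; then
\[
K_X + L_1 + \dots + L_s \sim \Big(d - 4 + \sum_{i=1}^s a_i\Big) H ,
\]
whose coefficient is $\geq d - 4 \geq 1$, covering also the case $s = 0$ in which the adjoint divisor is just $K_X$. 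Since the restriction of a globally generated sheaf to a closed subscheme is again globally generated, and $\dO_{\bP^3}(k)$ is globally generated for $k \geq 1$, the line bundle $\dO_X(k)$ is globally generated for all $k \geq 1$. Hence $K_X + L_1 + \dots + L_s$ is globally generated for all $s \geq 0$ and all choices of ample $L_i$, which is exactly $\conFN(X) = 0$.

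I do not expect a genuine obstacle. The two points that require care are that Noether--Lefschetz must be invoked --- so the hypothesis ``very general'' is essential and cannot be weakened to ``general'' --- and that the definition of $\conFN$ includes the case $s = 0$, which is the reason one needs $d \geq 5$ so that $K_X = \dO_X(d-4)$ is globally generated on its own.
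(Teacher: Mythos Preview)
Your proof is correct and follows essentially the same route as the paper's: Lefschetz for $\pi_1$, Noether--Lefschetz for $\Pic(X)=\bZ\cdot\dO_X(1)$, adjunction for $\omega_X=\dO_X(d-4)$, and the observation that every $\dO_X(k)$ with $k\geq 0$ is globally generated. Your write-up is in fact a bit more complete than the paper's, since you spell out why ample $\omega_X$ forces minimality and general type and you flag explicitly that the case $s=0$ is what uses $d\geq 5$.
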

\begin{proof}
This was proven in \cite[Proposition 3.2]{chen_convex_2023}.  We recall the proof here for convenience of the reader.

The hyperplane is simply connected due to the Lefschetz hyperplane theorem for the fundamental group. By Noether-Lefschetz \cite{lefschetz_certain_1921} and degree $d \geq 4$ (see also \cite[Exp XIX, Th\'eor\`eme 1.2]{deligne_groupes_1973}), the Picard group $\Pic(X)$ is generated by $\dO(1)|_X$. This is the reason to restrict to very general hyperplanes. 

The line bundle $\cL = \dO(a)|_X$ is ample if and only if $a \geq 1$, and it is globally generated if and only if $a \geq 0$. So all ample line bundles are globally generated. By adjunction and our choice of $d$, the canonical bundle $\omega_X$ equals $\dO(d-4)|_X$ and in particular is ample. It follows at once that $X$ has convex Fujita number $0$. 
\end{proof}

\begin{prop}
\label{prop:products of curves}
Let $X= C_1 \times C_2$ be a product of smooth projective curves of genus at least $2$. Then $X$ is minimal of general type with convex Fujita number $\conFN(X) = 2$.
\end{prop}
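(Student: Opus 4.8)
The plan is to treat the three assertions---general type, minimality, and $\conFN(X)=2$---separately, since the first two are soft consequences of the product structure while the third is already in hand from \cref{cor:FNproduct of curves}.

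First I would compute the canonical bundle of $X = C_1 \times C_2$ via the K\"unneth formula: $\omega_X = \omega_{C_1} \boxtimes \omega_{C_2} = \pr_1^\ast \omega_{C_1} \otimes \pr_2^\ast \omega_{C_2}$. Since $C_i$ has genus $g_i \geq 2$, each $\omega_{C_i}$ is ample on $C_i$, and an external tensor product of line bundles that are ample on the respective factors of a product is ample on the product; hence $\omega_X$ is ample. Concretely this is Nakai--Moishezon: one has $(\omega_X^2) = 2(2g_1-2)(2g_2-2) > 0$, and any irreducible curve $D \subseteq X$ satisfies $\pr_1^\ast \omega_{C_1} \bullet D \geq 0$ and $\pr_2^\ast \omega_{C_2} \bullet D \geq 0$ with at least one of these strict, since $D$ cannot be contracted by both projections. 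In particular $\omega_X$ is big, so $X$ has Kodaira dimension $2$ and is of general type.

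Next, for minimality I would use that $\omega_X$ ample implies $\omega_X$ nef, and that a surface whose canonical bundle is nef contains no $(-1)$-curve: if $E$ were a smooth rational curve with $E^2 = -1$, then adjunction gives $\omega_X \bullet E = 2p_a(E) - 2 - E^2 = -1 < 0$, contradicting nefness of $\omega_X$. Hence $X$ contains no $(-1)$-curve and is a minimal surface.

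Finally, $\conFN(X) = 2$ is precisely \cref{cor:FNproduct of curves} applied to $C_1 \times C_2$. There is thus no genuine obstacle in this proposition; the only point that deserves an explicit sentence is the ampleness of $\omega_X$, everything else being a one-line consequence. (Alternatively, ampleness of $\omega_X$ can be seen by noting that the product of two very ample line bundles pulled back from the two factors of a product is very ample on a suitable Segre-type embedding, hence ample, and then applying this to sufficiently high multiples.)
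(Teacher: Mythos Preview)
Your proof is correct and essentially matches the paper's approach. The paper's proof omits the minimality and general-type verifications as standard, and for $\conFN(X)=2$ it reproves the content of \cref{cor:FNproduct of curves} inline (lower bound from \pref{prop:FNforProduct}, upper bound from \pref{prop:FNsurfacesReider}\eqref{propitem:reider4} via divisibility of $\omega_X$ by $2$) rather than citing the corollary; your version is slightly more explicit but follows the same route.
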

\begin{proof}
Curves have convex Fujita number $2$, see \cite[Example 1.6]{chen_convex_2023}. 
Thus we have $\conFN(C_1 \times C_2) \geq 2$ by \pref{prop:FNforProduct}. The converse inequality follows from \pref{prop:FNsurfacesReider} \eqref{propitem:reider4}, because $X$ is spin: the canonical class $\omega_X = \omega_{C_1} \boxtimes \omega_{C_2}$ is divisible by $2$.
\end{proof}

\begin{prop}
\label{prop:FermatQuinticGodeaux}
The classical Godeaux surface $X = Y/\mu_5$ that is the quotient of the Fermat quintic 
\[
Y = \{x_1^5 + x_2^5 + x_3^5 + x_4^5 = 0 \} \subseteq \bP^3
\]
by the action of $\mu_5$ given by 
\[
\zeta.[x_1:x_2:x_3:x_4] = [\zeta x_1 : \zeta^2 x_2: \zeta^3 x_3 : \zeta^4 x_4]
\] 
is minimal of general type and has convex Fujita number $3$.
\end{prop}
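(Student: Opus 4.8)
The plan is to realise $X$ as a free quotient of the Fermat quintic, show that $K_X$ is ample with $K_X^2=1$, produce two distinct ample effective divisors meeting in a single reduced point, and then invoke Proposition~\ref{prop:NCDampleFujitaExtremeSurface}.

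First I would check that the given $\mu_5$-action on $Y$ is free. A fixed point of an element $\zeta\in\mu_5$ of order $5$ under the diagonal action $x_k\mapsto \zeta^k x_k$ forces all but one homogeneous coordinate to vanish, and none of the four coordinate points of $\bP^3$ lies on the Fermat quintic $x_1^5+x_2^5+x_3^5+x_4^5=0$. Hence $\pi\colon Y\to X=Y/\mu_5$ is étale of degree $5$ and $X$ is a smooth projective surface. Since $K_Y\simeq\dO_Y(1)$ is ample and $\pi^\ast K_X\simeq K_Y$, ampleness descends along the finite surjective morphism $\pi$, so $K_X$ is ample; in particular $X$ is a minimal surface of general type, and $5K_X^2=(\pi^\ast K_X)^2=K_Y^2=\deg(Y)=5$, so $K_X^2=1$. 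Multiplicativity of the holomorphic Euler characteristic under étale covers (or Noether's formula on the smooth quintic surface $Y$, where $K_Y^2=5$ and $e(Y)=55$) gives $\chi(X,\dO_X)=\tfrac15\chi(Y,\dO_Y)=\tfrac15\cdot 5=1$.

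Next I would locate torsion in $\Pic(X)$. As a smooth hypersurface in $\bP^3$, the quintic $Y$ is simply connected, so $\pi$ is the universal cover, $\pi_1(X)\simeq\bZ/5\bZ$, and the kernel of $\pi^\ast\colon\Pic(X)\to\Pic(Y)$ is the character group $\Hom(\mu_5,\bC^\times)\simeq\bZ/5\bZ$ (a line bundle becomes trivial after an étale Galois pullback if and only if it arises from a character of the Galois group, using that $Y$ has only constant global functions). Thus $\Pic(X)$ contains four nontrivial torsion classes $\tau_1,\dots,\tau_4$. For each such $\tau$, Riemann--Roch gives
\[
\chi(X,K_X+\tau)=\chi(X,\dO_X)+\tfrac12(K_X+\tau)\bullet\big((K_X+\tau)-K_X\big)=1+\tfrac12(K_X+\tau)\bullet\tau=1,
\]
since $\tau$ is numerically trivial, while Serre duality gives $\rh^2(X,K_X+\tau)=\rh^0(X,-\tau)=0$ because a nontrivial torsion class is not effective (an effective numerically trivial divisor is zero). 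Hence $\rh^0(X,K_X+\tau)\geq 1$ for every nontrivial torsion class $\tau$.

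Finally I would fix two distinct nontrivial torsion classes $\tau_1\neq\tau_2$ and pick effective divisors $C_i\in|K_X+\tau_i|$. Each $C_i$ is numerically equivalent to the ample class $K_X$, hence is itself ample and effective; the $C_i$ are distinct because $\dO_X(C_1)=K_X+\tau_1\neq K_X+\tau_2=\dO_X(C_2)$; and $(C_1\bullet C_2)=(K_X+\tau_1)\bullet(K_X+\tau_2)=K_X^2=1$. Proposition~\ref{prop:NCDampleFujitaExtremeSurface} then yields $\conFN(X)=3$. The only ingredient that goes beyond formal bookkeeping is the identification of the torsion subgroup of $\Pic(X)$ — that is, that $\pi^\ast$ kills a copy of $\bZ/5\bZ$ — which is where simple connectivity of the Fermat quintic and the group-cohomological description of étale torsors enter; everything else is the standard geometry of free quotients together with the one-line Riemann--Roch computation feeding into Proposition~\ref{prop:NCDampleFujitaExtremeSurface}.
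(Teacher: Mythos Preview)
Your proof is correct and reaches the conclusion via the same criterion, \pref{prop:NCDampleFujitaExtremeSurface}, that the paper uses. The difference lies only in how the two ample divisors with intersection number $1$ are produced. The paper takes the explicit coordinate hyperplane sections $D_i=\{x_i=0\}\cap Y$, observes that they are $\mu_5$-invariant, and pushes them down to $C_i=D_i/\mu_5$ on $X$; then $C_1\bullet C_2=\tfrac15(D_1\bullet D_2)=1$ is read off directly. You instead argue abstractly: identify $\Pic(X)_\tors\simeq\bZ/5\bZ$, use Riemann--Roch and Serre duality to show $|K_X+\tau|$ is nonempty for each nontrivial torsion class $\tau$, and take $C_i\in|K_X+\tau_i|$.

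These two constructions are in fact the same divisors viewed from different angles: since $\pi^\ast\dO_X(C_i)\simeq\dO_Y(1)=\pi^\ast\omega_X$, the explicit $C_i$ of the paper lie in $|K_X+\tau_i|$ for suitable nontrivial $\tau_i$. Your abstract route is precisely the argument the paper deploys in \rref{rmk:QuinticGodeauxFN3} to treat the full $8$-dimensional Miyaoka--Reid family of numerical Godeaux surfaces with $\pi_1=\mu_5$, where explicit equations are less convenient; the paper's proof of the proposition itself opts for the concrete description because it is available for the Fermat quintic and avoids the cohomological bookkeeping.
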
 
\begin{proof}
Let $D_i = \{x_i = 0\} \cap Y$ be the intersection of $Y$ with the respective coordinate hyperplane. The $D_i$ are smooth, connected, and $\mu_5$-equivariant. We denote by $C_i = D_i/\mu_5$ their image in $X$. The divisor $D = D_1 + D_2$ is a divisor with normal crossing in 
\[
D_1 \cap D_2 = \{[0:0:\zeta:1] \ ; \ \zeta \in \mu_5\}.
\]
Since the quotient map $\pi: Y \to X$ is finite \'etale, It follows that $C = C_1 + C_2$ is also a normal crossing divisor and $C_1 \bullet C_2 = \frac{1}{5} (D_1 \bullet D_2) = 1$. We now apply the criterion of 
\pref{prop:NCDampleFujitaExtremeSurface}.
\end{proof}

\begin{rmk}
\label{rmk:QuinticGodeauxFN3}
The Fermat quintic used in \pref{prop:FermatQuinticGodeaux} is actually not particularly special. Miyaoka \cite[Theorem 5]{miyaoka_tricanonical_1976} and Reid \cite{reid_surfaces_1978} have constructions an $8$-dimensional moduli of numerical Godeaux surfaces with $\pi_1 = \mu_5$. For our purposes we restrict to those with ample canonical bundle. Then the universal cover is shown to be a smooth $\mu_5$-invariant quintic in $\bP^3$, where $\mu_5$ acts as in \pref{prop:FermatQuinticGodeaux}. So the quintic is a polynomial of the form
\[
F(x) = \sum_{i=1}^4 a_i x_i^5 + \sum_{i=1}^4 b_i x_i^3 x_{3i} x_{9i} + \sum_{i=1}^4 c_i (x_i x_{2i})^2 x_{4i}
\]
with indices considered modulo $5$. By rescaling the $x_i$ we may assume that $c_i = 1$ for all $i =1, \ldots, 4$. The $8$ remaining parameters $a_i$, and $b_i$ form an $\bA^{8}$, and a dense open yields a smooth quintic hyperplane that avoids the $\mu_5$-fixed points. Miyaoka shows in \cite[Theorem 5]{miyaoka_tricanonical_1976} that this space of parameters is quasi-finite over the moduli space of such surfaces obtained as quotients $X$ by $\mu_5$ acting on the hyperplane $\{F(x) = 0\}$. 

For any nontrivial torsion line bundle $0 \not=\alpha \in \pi^\ab_1(X) \simeq \Pic(X)_\tors$,  let $C_\alpha$ be the unique effective divisor in the class $K_X + \alpha$ by 
\cite[Proposition 0.4]{reid_surfaces_1978}.  Then for nontrivial $\alpha, \beta \in \pi^\ab_1(X)$ we have
\[
C_\alpha \bullet C_\beta = (K_X + \alpha) \bullet (K_X + \beta) = K_X^2 = \frac{1}{5} (\dO(1) \bullet \dO(1) \bullet \dO(5)) = 1.
\]
Again we deduce from \pref{prop:NCDampleFujitaExtremeSurface} that $\conFN(X) = 3$.
\end{rmk}

\begin{prop}
\label{prop:surface general type FN1}
Let $A$ be an abelian surface with Picard rank $1$ and symmetric principal polarization $\Theta$. Let $B$ be a general smooth divisor in the linear system $|2\Theta|$.
Let $X \to A$ be the double cover branched along $B$.

If $X$ has Picard number $1$, then $X$ is a smooth projective minimal surface of general type with ample canonical bundle $\omega_X \simeq f^\ast \dO_A(\Theta)$ and convex Fujita number $\conFN(X) = 1$.
\end{prop}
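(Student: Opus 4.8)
The plan is to read off the structure of $X$ from the double cover $f\colon X\to A$ via the standard cyclic‑cover formulas, then to use the hypothesis that $X$ has Picard number $1$ together with a parity argument to determine $\NS(X)$ and $\Pic^0(X)$ completely, and finally to recognise every adjoint line bundle on $X$ as the pullback of a line bundle on $A$ to which the global generation criterion for abelian surfaces applies. First I would record the basic geometry: since $B$ is smooth, $f\colon X\to A$ is a smooth projective connected surface, the cyclic‑cover formulas give $f_\ast\dO_X=\dO_A\oplus\dO_A(-\Theta)$ and, using $\omega_A=\dO_A$, $\omega_X=f^\ast(\omega_A\otimes\dO_A(\Theta))=f^\ast\dO_A(\Theta)$. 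Because $f$ is finite and $\dO_A(\Theta)$ is ample, $\omega_X$ is ample, so $X$ is its own canonical model, hence a minimal surface of general type. Pushing forward and applying Riemann--Roch on $A$ I would record $(K_X^2)=(\deg f)(\Theta^2)=4$, $\chi(\dO_X)=\chi(\dO_A)+\chi(\dO_A(-\Theta))=1$, $q(X)=\rh^1(X,\dO_X)=2$ and $p_g(X)=2$.

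For the inequality $\conFN(X)\geq 1$ it suffices to show that the ample bundle $\omega_X$ is not globally generated. Since $p_g(X)=2$, the system $|\omega_X|$ is a pencil; if it were base‑point free it would define a morphism $X\to\bP^1$ with $\omega_X$ pulled back from $\bP^1$, forcing $(\omega_X^2)=0$ and contradicting $(\omega_X^2)=4$. (More concretely, $|\omega_X|$ is the pencil spanned by $f^{-1}(\Theta)$ and the ramification divisor $R$, both of which lie in $|f^\ast\Theta|$, and its base locus consists of the four points of $f^{-1}(\Theta)\cap R$.)

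For the inequality $\conFN(X)\leq 1$ I would argue as follows. Since $X$ has Picard number $1$, write $\NS(X)=\bZ H$ with $H$ ample and $\omega_X\equiv kH$, $k\geq 1$; from $k^2(H^2)=(\omega_X^2)=4$ the only options are $(k,(H^2))=(1,4)$ and $(2,1)$, and the second is impossible because it would give $(H^2)+K_X\bullet H=1+2=3$, odd, whereas $(D^2)+K_X\bullet D$ is even for every divisor class $D$ on a smooth projective surface. Hence $\NS(X)=\bZ\omega_X$ with $(\omega_X^2)=4$. Next I would show that $f^\ast\colon\Pic^0(A)\to\Pic^0(X)$ is an isomorphism: it is a homomorphism of abelian surfaces because $q(X)=2=\dim A$, and it is injective since $f^\ast\gamma\simeq\dO_X$ for $\gamma\in\Pic^0(A)$ would force $1=\rh^0(X,f^\ast\gamma)=\rh^0(A,\gamma)+\rh^0(A,\gamma\otimes\dO_A(-\Theta))=0$ unless $\gamma=0$, and an injective homomorphism of abelian varieties of the same dimension is an isomorphism. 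Combining the two facts, every line bundle on $X$ has the form $\omega_X^{\otimes n}\otimes f^\ast\gamma=f^\ast(\dO_A(n\Theta)\otimes\gamma)$ with $n\in\bZ$ and $\gamma\in\Pic^0(A)$, and it is ample exactly when $n\geq 1$. So for $s\geq 1$ and ample $L_1,\dots,L_s$ on $X$ the adjoint bundle $K_X+L_1+\dots+L_s$ equals $f^\ast(\dO_A(m\Theta)\otimes\gamma)$ for some $m\geq s+1\geq 2$; the bundle $\dO_A(m\Theta)\otimes\gamma$ is ample on $A$ with self‑intersection $2m^2\geq 8>4$, and $A$ is not a product of elliptic curves since its Picard number is $1$, so it is globally generated by \pref{prop:globalgeneration abelian surface}, and pullback along the finite map $f$ preserves global generation. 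Hence $K_X+L_1+\dots+L_s$ is globally generated, which gives $\conFN(X)\leq 1$ and, with the previous step, $\conFN(X)=1$.

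The step I expect to be the main obstacle is the global generation of the smallest adjoint bundles $\omega_X^{\otimes 2}\otimes\beta$ with $\beta$ numerically trivial: since $(\omega_X^2)=4<5$, Reider's criterion is just out of reach and a direct Bogomolov‑instability argument does not apply. Routing everything through $A$ is precisely what circumvents this, the isomorphism $f^\ast\colon\Pic^0(A)\xrightarrow{\sim}\Pic^0(X)$ identifying these bundles with pullbacks of translates of $\dO_A(2\Theta)$, which is unconditionally globally generated on an abelian surface. A secondary delicate point is to be sure that the Picard‑number hypothesis together with the parity of $(D^2)+K_X\bullet D$ really forces $\omega_X$ to be primitive in $\NS(X)$; if it were divisible by $2$ then $X$ would have $(H^2)=1$ and could a priori sit on the Fujita‑extreme boundary.
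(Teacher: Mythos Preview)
Your approach is essentially the paper's: identify $\Pic(X)$ with $f^\ast\Pic(A)$ and then pull global generation back from $A$. Your direct computation of $q(X)=2$ together with the $h^0$-argument for injectivity of $f^\ast$ on $\Pic^0$ is in fact a pleasant, more elementary substitute for part of what the paper does. The parity argument ruling out $(k,(H^2))=(2,1)$ is also fine and slightly cleaner than the paper's version.

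There is, however, one genuine gap. From ``Picard number $1$'' you write $\NS(X)=\bZ H$, but the Picard number only controls the rank of $\NS(X)$, not its torsion. To conclude that \emph{every} line bundle on $X$ is of the form $\omega_X^{\otimes n}\otimes f^\ast\gamma$ you need $\NS(X)=\Pic(X)/\Pic^0(X)$ to be torsion-free; otherwise there could exist a numerically trivial $\beta\notin f^\ast\Pic^0(A)$, and then the ample bundle $\cL=\omega_X\otimes\beta$ has adjoint $\omega_X^{\otimes 2}\otimes\beta$ which is not visibly a pullback from $A$, while $(\cL^2)=4<5$ puts it exactly outside Reider's reach (the very obstacle you flag at the end). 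Your $\Pic^0$ argument, nice as it is, says nothing about $\NS(X)_{\tors}$.

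The paper closes this gap by invoking Nori's theorem that for a double cover of an abelian variety branched along a smooth ample divisor the map $f_\ast\colon\pi_1(X)\to\pi_1(A)$ is an isomorphism. Since $\pi_1(A)\simeq\bZ^4$ is torsion-free, this forces $H_1(X,\bZ)$ to be torsion-free, hence $\NS(X)_{\tors}\simeq\Hom\bigl((\pi_1^{\ab})_{\tors},\bQ/\bZ\bigr)=0$. With that in hand your argument goes through; without it, the crucial borderline case $\cL\equiv\omega_X$ is not covered.
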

\begin{proof}
As $B$ is smooth, the double cover is a smooth projective surface $X$. Let $R \subseteq X$ be the reduced preimage of $B$. Then 
\[
\omega_X = f^\ast (\omega_A(B)) \otimes \dO_X(-R) = f^\ast \dO_A(\Theta).
\]
It follows that $\omega_X$ is ample and so $X$ is a  minimal surface of general type. The sections of $\omega_X$ are 
\[
\rH^0(X,\omega_X) = \rH^0(A, f_\ast \omega_X) = \rH^0(A, \dO_A(\Theta) \otimes f_\ast \dO_X) = \rH^0(A, \dO_A(\Theta)) \oplus \rH^0(A, \dO_A).
\]
By Riemann-Roch on $A$  and $(\Theta^2) = 2$ we find $\rh^0(X,\omega_X) = 2$. In particular, $\omega_X$ is not globally generated, as otherwise there would be a map $X \to \bP^1$ that pulls back $\dO(1)$ to an ample line bundle on $X$, contradiction. 

It remains to show that for all ample line bundles $\cL$ on $X$ the adjoint bundle $\omega_X \otimes \cL$ is globally generated. We claim that the pullback map 
\[
f^\ast \colon \Pic(A) \to \Pic(X)
\]
is an isomorphism. By \cite[Cor.~2.7]{nori_zariskis_1983}, see also \cite[Prop.~1]{kharlamov_numerically_2014} the induced map 
\[
f_\ast \colon \pi_1(X) \xrightarrow{\sim} \pi_1(A)
\]
is an isomorphism. It follows that $f^\ast : \Pic^0(A) \to \Pic^0(X)$ is an isomorphism on torsion elements, hence an isomorphism, and secondly, that the induced map $f^\ast : \NS(A) \to \NS(X)$ is an isomorphism on the torsion subgroup $\NS_\tors \simeq \Hom((\pi_1^\ab)_\tors,\bQ/\bZ)$. Therefore $\NS(X)$ is torsion free. By assumption,
 $X$ has Picard number $1$, and so it remains to show that $f^\ast \Theta$ is primitive in $\NS(X)$, since $\NS(A)$ is generated by $\Theta$. 
 
First, for any divisor $D$ on $X$ we have that $D \bullet \omega_X = f_\ast D \bullet \Theta \in (\Theta^2) \bZ$ is even, because $f_\ast D$ is numerically a multiple of $\Theta$. It follows from Riemann-Roch that the intersection form on $\NS(X)$ is even. Now, if $f^\ast \Theta \equiv nD$, then $n^2 (D^2) = 2 (\Theta^2) = 4$. As $(D^2)$ is even, we must have $n=\pm 1$. This completes the proof of the claim that $f^\ast : \Pic(A) \to \Pic(X)$ is an isomorphism. 

Any ample $\cL$ on $X$ now is of the form $\cL = f^\ast \cM$ for a necessarily ample  line bundle $\cM$ on $A$. The adjoint line bundle $\omega_X \otimes \cL$ is identified with the pullback of the line bundle $\cM \otimes \dO_A(\Theta)$ which is globally generated since $\conFN(A) \leq 2$ by the classical Lefschetz theorem. Hence also $\omega_X \otimes \cL$ is globally generated and the proof of $\conFN(X) = 1$ is complete. 
\end{proof}

\begin{rmk}
At the moment we do not know whether the assumption in \pref{prop:surface general type FN1} that $X$ has Picard number $1$ can be achieved in an example.
\end{rmk}

\section{Fujita extreme surfaces}
\label{sec:FN3}

We recall that a smooth projective surface $X$ is called Fujita extreme if $\conFN(X) = \dim\,X+1 =3$. We already observed in \pref{prop:FNsurfacesReider} that by Reider's method $\conFN(X) \leq 3$ holds for all surfaces $X$. Certainly,  $\conFN(X) = 3$ apparently requires a surface with special geometry, at the same time we will see that such surfaces are birationally cofinal over any given one. 

\subsection{Symmetric products of curves}
Recall that the symmetric square of a smooth projective curve $C$ is the smooth projective surface
\[
\Sym^2(C) = (C \times C)/S_2
\]
where $S_2$ acts by permuting the factors.

\begin{prop}
For any smooth projective curve $C$ we have $\conFN(\Sym^2(C))  = 3$.
\end{prop}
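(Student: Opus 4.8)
The plan is to produce on $X = \Sym^2(C)$ two distinct ample effective divisors meeting in a single reduced point and then quote \pref{prop:NCDampleFujitaExtremeSurface}; combined with the upper bound $\conFN(X) \leq 3$ from \pref{prop:FNsurfacesReider}\eqref{propitem:reider1}, this forces $\conFN(X) = 3$. For $p \in C$ let $X_p \subseteq \Sym^2(C)$ be the image of the morphism $q \mapsto [p+q]$; it is an effective divisor isomorphic to $C$, and $X_p \neq X_{p'}$ whenever $p \neq p'$ because $[2p] \in X_p$ while $[2p] \notin X_{p'}$. The candidate divisors will be $C_1 = X_p$ and $C_2 = X_{p'}$ for any two distinct points $p, p'$.

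All the intersection theory I need comes from the degree two quotient $\sigma \colon C \times C \to \Sym^2(C)$, which is branched exactly along the diagonal $\delta$, with ramification divisor the diagonal $\Delta \subseteq C \times C$. A local computation in the symmetric coordinates $s = x+y$, $t = xy$ (the local equation of $X_p$ being $t = ps - p^2$, whose pullback along $(x,y) \mapsto (x+y, xy)$ is $(x-p)(y-p)$) shows $\sigma^{\ast} X_p = (\{p\}\times C) + (C\times\{p\})$ with multiplicity one along each component; in particular $\sigma^{\ast}X_p$ has class $f_1 + f_2$ for the two ruling classes $f_1, f_2$ of $C \times C$. Since $\sigma$ has degree $2$ and $f_1^2 = f_2^2 = 0$, $f_1 \bullet f_2 = 1$, it follows that $X_p^2 = 1$ and $X_p \bullet X_{p'} = 1$ for $p \neq p'$.

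It remains to check that $X_p$ is ample, for which I would invoke the Nakai--Moishezon criterion. We have $X_p^2 = 1 > 0$, so it suffices to see $X_p \bullet D > 0$ for every irreducible curve $D \subseteq \Sym^2(C)$. If $D = \delta$ this is direct, since $\sigma^{\ast}\delta = 2\Delta$ yields $X_p \bullet \delta = 2$. Otherwise $\sigma^{\ast}D$ is a nonzero effective divisor with $2(X_p \bullet D) = (f_1 + f_2)\bullet\sigma^{\ast}D$; as $f_1, f_2$ are nef this is $\geq 0$, and it is strictly positive because any component $\Gamma$ of $\sigma^{\ast}D$ with $\Gamma \bullet f_1 = 0$ is contracted by the first projection, hence equals a fibre $\{p_0\}\times C$, for which $\Gamma \bullet f_2 = 1$. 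Thus $X_p$ is ample, \pref{prop:NCDampleFujitaExtremeSurface} applies to $C_1 = X_p$ and $C_2 = X_{p'}$ with $p \neq p'$, and $\conFN(\Sym^2(C)) = 3$.

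I do not anticipate a genuine obstacle. The only point deserving care is the multiplicity one claim for $\sigma^{\ast}X_p$ along the diagonal, which is the local factorization recorded above; everything else is routine bookkeeping with intersection numbers. As a sanity check, for $C = \bP^1$ one has $\Sym^2(\bP^1) \simeq \bP^2$ with each $X_p$ a line, recovering $\conFN(\bP^2) = 3$.
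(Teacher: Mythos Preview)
Your proof is correct and follows essentially the same approach as the paper: the divisors $X_p$ are exactly the paper's $D_P$, and both arguments conclude via \pref{prop:NCDampleFujitaExtremeSurface} after computing $X_p \bullet X_{p'} = 1$ through the pullback $\sigma^\ast X_p = (\{p\}\times C) + (C\times\{p\})$. The only difference is that the paper obtains ampleness of $X_p$ in one line from the fact that $\sigma$ is finite and $\sigma^\ast X_p$ is ample on $C \times C$, whereas you verify Nakai--Moishezon directly.
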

\begin{proof}
We write $\pi: C \times C \to \Sym^2(C)$ for the quotient map, and let $\pr_i : C^2 \to C$ be the projection to the $i$-th factor. For any point $P \in C$ we define a divisor on $C \times C$ 
\[
H_P  :=  \pr_1^{-1}(P)  + \pr_2^{-1}(P) 
\] 
which is normal crossing. Moreover, $H_P$ is ample and fixed by the involution switching the factors. Let $D_P$ be the quotient of $H_P$ by $S_2$, i.e.\ the image of $H_P$ in $\Sym^2(C)$. The divisor  $D_P$ is an ample effective divisor on $X$ because $\pi$ is finite and $H_P = \pi^\ast(D_P)$. For distinct points $P \not= Q$ in $C$, the divisor $D = D_P + D_Q$ is normal crossing. We compute
\[
(D_P \bullet  D_Q) = \frac{1}{\deg(\pi)} (H_P \bullet  H_Q)  = 1.
\]
The result now follows at once from  \pref{prop:NCDampleFujitaExtremeSurface}.
\end{proof}

\begin{rmk}
\begin{enumerate}[align=left,labelindent=0pt,leftmargin=*]
\item 
Note that $\Sym^2(C)$ for $C = \bP^1$ is just $\bP^2$. So we recover the convex Fujita number of this classical example. 
\item
For $C = E$ an elliptic curve, the summation map $\Sym^2(E) \to \Pic^2_E \simeq E$ is a $\bP^1$-bundle associated to the vector bundle $\cE = f_\ast \cL$ where $f: E \times \Pic^2_E \to \Pic_E^2$ is the projection and $\cL$ is the universal line bundle of degree $2$. By \pref{prop:FNruled surface} the vector bundle $\cE$ must be stable of odd degree.
\item
If $C$ is of genus at least $2$ and not hyperelliptic, then the summation map identifies $\Sym^2(C)$ with a closed subscheme of the Jacobian $\Pic^2_C \simeq \Pic^0_C$. The sequence of maps
\[
C \times C \to \Sym^2(C) \to \Pic^0_C
\]
shows that the convex Fujita number can go up and down along finite (ramified) maps: we have $\conFN(C \times C) = 2$, while $\conFN(\Sym^2(C)) = 3$, but again $\conFN(\Pic^0_C) = 2$. The latter holds because $\Pic^0_C$ is principally polarized, and an ample line bundle inducing the principal polarization is not globally generated. 
See \rref{rmk:FN in etale cover} for a related remark.
\end{enumerate}
\end{rmk}

\subsection{Some rational non-minimal Fujita extreme surfaces}

Here we present a construction that yields  rational Fujita extreme surfaces. This serves as a model for the construction in \secref{sec:Fujitaextremecofinal}.

A pencil of plane curves of degree $d$ arises by choosing a line in $\bP\big(\rH^0(\bP^2,\dO(d))\big)$. We would like our pencil to have only irreducible (and reduced) members. This translates to the following: the chosen  line should avoid the products obtained by multiplication of equations of degree $a,b \geq 1$ with $a+b= d$, i.e., the images of the finite map that multiplies equations
\[
\bP\big(\rH^0(\bP^2,\dO(a))\big) \times \bP\big(\rH^0(\bP^2,\dO(b))\big) \to \bP\big(\rH^0(\bP^2,\dO(d))\big).
\]
This is achieved by any general line by the following dimension count. First recall
\[
\dim \bP\big(\rH^0(\bP^2,\dO(d))\big) = \binom{d+2}{2} - 1 = \frac{d(d+3)}{2}\ .
\]
Then, using that linear terms cancel, we find
\begin{align*}
& \dim \bP\big(\rH^0(\bP^2,\dO(d))\big)  \geq 2 + \dim \bP\big(\rH^0(\bP^2,\dO(a))\big) + \dim \bP\big(\rH^0(\bP^2,\dO(b))\big)  \\
\iff & \frac{d(d+3)}{2} \geq 2 + \frac{a(a+3)}{2} + \frac{b(b+3)}{2} \iff  d^2 \geq 4 + a^2 + b^2 \iff ab \geq 2.
\end{align*}
So we succeed if $d \geq 3$ which we assume from now on.

With two general planar degree $d$ curves $D_0$ and $D_\infty$ in $\bP^2$, we denote by $Z = D_0 \cap D_\infty \setminus \{P\}$ the set of $d^2$ intersection points with a distinguished point $P$ removed. We consider the blow up $X = \Bl_Z(\bP^2)$ and the further blow up $\sigma : Y = \Bl_P(X) \to X$. After blowing up all of $D_0 \cap D_\infty$, a transversal intersection by assumption, we arrive at the pencil $f: Y \to \bP^1$ whose fibers are all irreducible and reduced plane curves of degree $d$. 

\begin{prop}
\label{prop:general pencil degree d}
The surface $X$ obtained by blowing up $\bP^2$ in $d^2-1$ points of intersection of general plane curves $D_0$ and $D_\infty$ of degree $d$ has convex Fujita number $\conFN(X) = 3$.
\end{prop}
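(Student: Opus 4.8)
The plan is to produce two distinct ample effective divisors on $X$ that meet in a single point and to conclude by Proposition~\ref{prop:NCDampleFujitaExtremeSurface}. The obvious candidates are the proper transforms $C_1 = \widetilde D_0$ and $C_2 = \widetilde D_\infty$ on $X = \Bl_Z(\bP^2)$ of the two general degree $d$ curves. They are distinct and effective, and since $D_0$ and $D_\infty$ are general they meet transversally in $d^2$ reduced points, exactly $|Z| = d^2-1$ of which are blown up; the remaining intersection point is $P$. Hence
\[
(C_1 \bullet C_2)_X = (D_0 \bullet D_\infty)_{\bP^2} - \sum_{z \in Z} \mult_z(D_0)\,\mult_z(D_\infty) = d^2 - (d^2-1) = 1 ,
\]
so everything reduces to showing $C_1$ and $C_2$ are ample, and by symmetry it suffices to treat $C_1$.

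To prove ampleness I would pass to the full blow-up $Y = \Bl_{D_0 \cap D_\infty}(\bP^2)$ occurring in the construction, together with the blow-down $\sigma\colon Y = \Bl_P(X) \to X$ of the single $(-1)$-curve $\mathcal{E}$ over $P$, and the pencil $f\colon Y \to \bP^1$, whose fibres are all irreducible and reduced by the dimension count preceding Proposition~\ref{prop:general pencil degree d}. Since $D_0$ passes through $P$ with multiplicity one, the strict transform of $D_0$ in $Y$ is precisely a fibre $F_0 = f^{-1}(0)$, so that $\sigma^\ast C_1 = F_0 + \mathcal{E}$ in $\Pic(Y)$.

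Now I would check the Nakai--Moishezon criterion for $C_1$ on $X$ by computing on $Y$. First, $(C_1^2)_X = (\sigma^\ast C_1)^2 = (F_0 + \mathcal{E})^2 = 0 + 2 + (-1) = 1 > 0$. Next, for an arbitrary irreducible curve $D \subseteq X$ with strict transform $\widetilde D \subseteq Y$ we have $(C_1 \bullet D)_X = (F_0 + \mathcal{E}) \bullet \widetilde D$, because $\sigma^\ast C_1 \bullet \mathcal{E} = 0$. If $\widetilde D$ dominates $\bP^1$ then $F_0 \bullet \widetilde D > 0$ and $\mathcal{E} \bullet \widetilde D \geq 0$. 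If $\widetilde D$ lies in a fibre, then by irreducibility of the fibres $\widetilde D = F_t$ for some $t$, so $F_0 \bullet F_t = 0$ while $\mathcal{E} \bullet F_t = \mult_P(\Gamma_t) \geq 1$, where $\Gamma_t$ is the pencil member with strict transform $F_t$ (it passes through the base point $P$). In either case $(C_1 \bullet D)_X > 0$, so $C_1$ is ample; replacing $D_0$ by $D_\infty$ gives ampleness of $C_2$. Applying Proposition~\ref{prop:NCDampleFujitaExtremeSurface} to $C_1, C_2$ then yields $\conFN(X) = 3$.

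The only real content is the identification $\sigma^\ast C_1 = F_0 + \mathcal{E}$ together with the irreducibility of all fibres of $f$, which is exactly where the genericity of $D_0, D_\infty$ (and the hypothesis $d \geq 3$ feeding the dimension count) enters; everything else is bookkeeping with blow-ups. The point that deserves a little care is that the pencil has reduced base locus, so that the strict transforms of the $D_i$ are genuine reduced fibres meeting $\mathcal{E}$ positively — this, too, follows from generality.
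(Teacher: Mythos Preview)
Your proof is correct and follows essentially the same route as the paper: both take the strict transforms of $D_0$ and $D_\infty$, compute their intersection to be $1$, verify ampleness via Nakai--Moishezon by passing to the further blow-up $Y$ and using the irreducible pencil $f:Y\to\bP^1$, and conclude with Proposition~\ref{prop:NCDampleFujitaExtremeSurface}. The only cosmetic difference is that the paper organizes the intersection computation as $\cL\bullet C = F\bullet C' + m$ with $m=\mult_P(C)$ and splits according to whether $P\in C$, whereas you split according to whether $\widetilde D$ is horizontal or vertical for $f$; these are the same case distinction.
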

\begin{proof}
We use the notation of the construction above. In particular we assume that all curves in the pencil defined by $D_0$ and $D_\infty$ are irreducible. 

Let $C_0$ and $C_\infty$ be the strict transform of $D_0$ and $D_\infty$ in $X$. Then the $C_i$ are linearly equivalent, they intersect transversally in exactly one point $P$ and we are going to show that they are ample. Since $C_i^2 = D_i^2 - (d^2-1) = 1$, we complete the proof in view of  the criterion of 
\pref{prop:NCDampleFujitaExtremeSurface}.

Let $\cL$ be the line bundle associated on $X$ to the $C_i$, and recall the blow up $\sigma : Y \to X$ in $P$ which realizes our pencil as a regular fibration $f: Y \to \bP^1$. Let $E = \sigma^{-1}(P)$ be the exceptional fiber, which induces a section of $f$. We already computed $\cL^2 = 1$, so to apply the Nakai-Moishezon criterion for ampleness it remains to compute the intersection number with an arbitrary irreducible curve $C$ in $X$.  Let $C'$ be the strict transform of $C$ in $Y$, and let $m$ be the multiplicity of $C$ in $P$. We have $\sigma^\ast C = C' + mE$ and $C' \bullet E = m$. Using $\sigma^\ast \cL (-E) \simeq f^\ast \dO(1)$ we have
\[
\cL \bullet C = \sigma^\ast \cL \bullet (C' + mE) = \sigma^\ast \cL \bullet C' = (f^\ast \dO(1) + E) \bullet C' = F \bullet C' + m
\]
where $F$ is a fiber of $f$. Since $F$ and $C'$ are irreducible and $F$ isotropic, we have $F \bullet C' \geq 0$, and $m \geq 0$ anyway. But indeed more is true: if $P \in C$, then $m > 0$; while if $P \notin C$, then $C'$ is not contained in a fiber (here we need that the fibers of $f$ are irreducible) and therefore $F \bullet C' > 0$. So the proof is complete.
\end{proof}

\begin{cor}
\label{cor:FNdelPezzo1}
A general del Pezzo surface $X$ of degree $1$ has convex Fujita number $3$.
\end{cor}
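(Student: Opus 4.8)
The plan is to realise the statement as a direct application of \pref{prop:NCDampleFujitaExtremeSurface}. Let $X$ be a del Pezzo surface of degree $1$, so $-K_X$ is ample and $(-K_X)^2 = K_X^2 = 1$. First I would record that $|-K_X|$ is a pencil: Riemann--Roch on the surface gives $\chi(X,-K_X) = \chi(X,\dO_X) + \tfrac12\big((-K_X)^2 - K_X\cdot(-K_X)\big) = 1 + \tfrac12(1+1) = 2$, while $\rh^2(X,-K_X) = \rh^0(X,\dO_X(2K_X)) = 0$ (as $2K_X$ is anti-ample) and $\rh^1(X,-K_X)=0$ by Kodaira vanishing applied to $-K_X = \omega_X\otimes\dO_X(-2K_X)$ with $-2K_X$ ample; hence $\rh^0(X,-K_X) = 2$.

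Next I would pick two distinct members $C_1, C_2 \in |-K_X|$. Each $C_i$ is an effective divisor linearly equivalent to the ample class $-K_X$, hence an \emph{ample} effective divisor, and $C_1\bullet C_2 = (-K_X)^2 = 1$. Thus $X$, $C_1$, $C_2$ satisfy precisely the hypotheses of \pref{prop:NCDampleFujitaExtremeSurface}, which outputs that $P = C_1\cap C_2$ is a base point of $\omega_X\otimes\dO_X(C_1+C_2) \simeq \dO_X(-K_X)$ and that $\conFN(X) = 3$. In particular this recovers the classical fact that the anticanonical system of a degree-$1$ del Pezzo surface has a base point. Note that no genericity of $X$ is actually needed for this argument, and that \pref{prop:NCDampleFujitaExtremeSurface} itself forces $C_1$ and $C_2$ to be irreducible and reduced, so no separate irreducibility check is required.

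Alternatively --- and this is how the corollary sits next to \pref{prop:general pencil degree d} --- one may observe that a general degree-$1$ del Pezzo surface is the blow-up $X = \Bl_{\{p_1,\dots,p_8\}}(\bP^2)$ at $8$ of the $9$ intersection points of two general plane cubics $D_0, D_\infty$: the anticanonical pencil on $X$ pushes down to the pencil of cubics through $p_1,\dots,p_8$, its ninth base point being the point $P$ that is not blown up, and for $8$ points in general position every cubic through them is irreducible and reduced (a reducible cubic, being a union line $+$ conic or worse, passes through at most $2+5 = 7$ general points). Since $d^2 - 1 = 8$ for $d = 3$, $X$ is then one of the surfaces of \pref{prop:general pencil degree d}, giving $\conFN(X) = 3$. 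The only mildly delicate point on this second route is the dictionary between ``general del Pezzo surface of degree $1$'' and ``blow-up at eight of the nine base points of a general pencil of cubics'', i.e.\ that these eight-tuples sweep out a dense subset of $(\bP^2)^8$ and that the ninth base point stays in general position with respect to the other eight; this is a routine dimension count (or an appeal to Cayley--Bacharach), and in any case the first route above sidesteps it. I expect essentially no real obstacle: the content is simply the standard numerics of del Pezzo surfaces of degree $1$ fed into \pref{prop:NCDampleFujitaExtremeSurface}.
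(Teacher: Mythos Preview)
Your proposal is correct. Your \emph{second} route---realising a general degree-$1$ del Pezzo as $\Bl_Z(\bP^2)$ for $Z$ eight of the nine base points of a general pencil of cubics and then invoking \pref{prop:general pencil degree d} with $d=3$---is exactly the paper's one-line proof.

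Your \emph{first} route is genuinely different and in fact cleaner: you bypass \pref{prop:general pencil degree d} entirely and feed two distinct anticanonical members $C_1,C_2\in|-K_X|$ directly into \pref{prop:NCDampleFujitaExtremeSurface}, using only that $-K_X$ is ample with $(-K_X)^2=1$ and $\rh^0(X,-K_X)=2$. This has the advantage of applying to \emph{every} del Pezzo surface of degree~$1$, not just the general one, so it actually strengthens the corollary by removing the genericity hypothesis. The paper's route, by contrast, needs generality to ensure (via the dimension count preceding \pref{prop:general pencil degree d}) that all members of the cubic pencil are irreducible, which is what the Nakai--Moishezon argument inside that proposition requires; your direct route sidesteps this because \pref{prop:NCDampleFujitaExtremeSurface} deduces irreducibility of $C_1,C_2$ a posteriori from ampleness and $(C_1\bullet C_2)=1$.
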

\begin{proof}
A general del Pezzo surface of degree $1$ arises by the construction above for $d=3$.
\end{proof}

\subsection{Fujita extreme surfaces are birationally cofinal} 
\label{sec:Fujitaextremecofinal}

\begin{theorem}
\label{prop:cofinalFN3}
Let $X$ be a smooth projective surface. Then there is a proper birational modification $X' \to X$ such that $X'$ has convex Fujita number $\conFN(X') = 3$.
\end{theorem}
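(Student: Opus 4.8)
The plan is to mimic the construction of Proposition~\ref{prop:general pencil degree d}, using a general pencil of sections of a sufficiently positive line bundle on $X$ in place of the general pencil of plane curves.

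First I would fix an ample line bundle $\cA_0$ on $X$ and set $\cA = \cA_0^{\otimes m}$ for $m \gg 0$. The first step, and the main point, is to arrange a pencil $\langle D_0,D_1\rangle \subseteq |\cA|$ all of whose members are irreducible and reduced. Any reducible or non-reduced member of $|\cA|$ lies in the image of a multiplication map $|\cN_1| \times |\cN_2| \to |\cA|$ with $\cN_1,\cN_2$ nonzero effective divisor classes and $\cN_1 + \cN_2 \equiv \cA$; the $\cA_0$-degrees of such $\cN_i$ are bounded, so only finitely many decomposition types occur, and an asymptotic Riemann--Roch estimate shows $\dim|\cN_1| + \dim|\cN_2| \leq \dim|\cA| - 2$ once $m$ is large. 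Hence the locus $\Sigma \subseteq |\cA|$ of non-irreducible or non-reduced divisors has codimension at least $2$, so a general pencil in $|\cA|$ avoids $\Sigma$ entirely. Choosing $D_0, D_1$ general I may moreover assume, by Bertini, that $D_0$ and $D_1$ are smooth irreducible curves meeting transversally in a reduced set $W = D_0 \cap D_1$ of $(\cA^2)$ points.

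Next I would fix $P_0 \in W$, put $Z = W \setminus \{P_0\}$ so that $\# Z = (\cA^2) - 1$, and let $\rho\colon X' = \Bl_Z(X) \to X$; this is the required proper birational modification. Let $C_i \subseteq X'$ be the strict transform of $D_i$. Since the $D_i$ are smooth at every point of $W$ and meet there transversally, one computes $C_i^2 = (\cA^2) - \# Z = 1$ and $C_0 \bullet C_1 = (\cA^2) - \# Z = 1$, and the $C_i$ are distinct effective divisors. It remains to show that each $C_i$ is ample, which I would do by the Nakai--Moishezon argument from Proposition~\ref{prop:general pencil degree d}: blowing up $P_0$, say $\sigma\colon Y = \Bl_{P_0}(X') \to X'$, resolves the pencil to a morphism $f\colon Y \to \bP^1$ whose fibres are exactly the strict transforms of the members $D_t$, hence all irreducible and reduced, and for which the exceptional curves of $\pi = \rho \circ \sigma$ are sections; in particular $\sigma^\ast C_i - E \equiv f^\ast\dO(1)$, where $E = \sigma^{-1}(P_0)$. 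For an irreducible curve $\Gamma \subseteq X'$ that is $\rho$-exceptional one has $C_i \bullet \Gamma = 1$; otherwise, letting $\Gamma' \subseteq Y$ be the strict transform of $\Gamma$ and $F$ a general fibre of $f$, one gets $C_i \bullet \Gamma = F \bullet \Gamma' + \mult_{P_0}(\Gamma) > 0$, since either $P_0 \in \Gamma$, so $\mult_{P_0}(\Gamma) > 0$, or $P_0 \notin \Gamma$, in which case $\Gamma'$ is disjoint from $E$ whereas every fibre of $f$ meets $E$, so $\Gamma'$ is not contained in a fibre and $F \bullet \Gamma' > 0$. Together with $C_i^2 = 1 > 0$ this gives that $C_i$ is ample.

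Finally, $C_0$ and $C_1$ are distinct ample effective divisors on $X'$ with $C_0 \bullet C_1 = 1$, so Proposition~\ref{prop:NCDampleFujitaExtremeSurface} yields $\conFN(X') = 3$. The hard part is the codimension estimate for $\Sigma$: producing a pencil all of whose members are irreducible forces $\cA$ to be taken sufficiently positive, and one must control the (finitely many, but a priori numerous) ways an element of $|\cA|$ can split off a component. Everything after that is a faithful transcription of the rational surface case.
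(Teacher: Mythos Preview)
Your overall architecture is exactly the paper's: produce a pencil in a sufficiently positive linear system whose every member is irreducible, blow up the base locus minus one point, verify ampleness of the two residual strict transforms via Nakai--Moishezon on the further one-point blow-up, and invoke Proposition~\ref{prop:NCDampleFujitaExtremeSurface}. The Nakai--Moishezon step you wrote out is correct and matches the paper's argument (the paper does not separate the $\rho$-exceptional case explicitly, but your treatment of it is fine).

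The one substantive divergence is how you obtain a pencil with all members irreducible, and here your sketch is incomplete. Two issues: first, if $q(X)>0$ then decompositions $\cN_1\otimes\cN_2\simeq\cA$ do \emph{not} come in finitely many linear systems --- the relevant parameter space is the Hilbert scheme of effective divisors in a numerical class, which has dimension up to $q$ larger than any single $\dim|\cN_i|$, so your inequality $\dim|\cN_1|+\dim|\cN_2|\le\dim|\cA|-2$ is not what needs to be checked. Second, even working numerically, the number of decomposition types grows with $m$, so ``once $m$ is large'' requires a bound uniform in the decomposition; your asymptotic Riemann--Roch line does not supply this. Both points can be repaired with more bookkeeping (bounding $h^0$ of effective classes via Hodge index and controlling the Hilbert-scheme dimension), but the sketch as written does not do it.

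The paper avoids this entirely by a different device: take $L=3A$ very ample, so that by Van de Ven every hyperplane section is $2$-connected. A reducible section $D_1+D_2$ then has $D_1\cdot D_2\ge 2$, hence at least two singular points; but by Zak's tangency theorem the general tangent hyperplane is tangent at a single point, so the reducible locus sits in a proper closed subset of the dual variety $X^\ast$ and thus has codimension $\ge 2$ in $|L|$. This trades your growing combinatorial estimate for a fixed geometric input ($2$-connectedness plus one citation), and in particular needs no asymptotics in $m$ and is insensitive to $q(X)$.
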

\begin{proof}
As a first step we argue that there is a very ample divisor $L$ on $X$, giving rise to an embedding $X \inj \bP^N$, such that  any hyperplane section $D  = H \cap X$ in $\bP^N$ is $2$-connected. Indeed, we may choose an ample divisor $A$ on $X$ such that $L = 3A$ is very ample. By \cite[Theorem 1]{van_de_ven_2-connectedness_1979}  the hyperplane sections $D$ are $2$-connected, because the two exceptional cases of \textit{loc.\ cit.} lead to a divisor $C$ with intersection number $D \cdot C \leq 2$ while all intersection numbers with $L$ are divisible by $3$ (the exceptional cases are (1) two lines in $\bP^2$ --- here $C$ is a line, and (2) one section plus a sum of fibers --- here $C$ is another fiber).

Given the very ample $L$ as above, we study  the locus $B \subseteq \bP(\rH^0(X,L))$ of divisors $D$ in the complete linear series of $L$ that are reducible as divisors. In order to describe $B,$ we use the dual $\bP^N$, denoted by $\check{\bP}^N$, of hyperplanes $H \subseteq \bP^N$. The total space of tangent hyperplanes 
\[
TH := \{(H,P) \ ; \ \rT_P(X) = \rT_P(X \cap H)\} \subseteq \check{\bP}^N \times X
\]
is a closed subspace in $\check{\bP}^N \times X$. The image under the first projection $\pr: TH \to \check{\bP}^N$ is the dual variety $X^\ast$. The second projection $TH \to X$ is a Zariski locally trivial $\bP^{N-3}$-bundle showing that $TH$ is in fact connected, smooth and of dimension $N-1$. 

By the case $n=2$ and $m=N-1$ of \cite[Corollary 2.4]{zak_tangents_1993} the generic tangent hyperplane section to $X$ in $\bP^N$ is tangent to $X$ in exactly one point. This means that $\pr: TH \to X^\ast$ is generically finite with just one point in the fiber. In particular $TH \to X^\ast$ is in fact a birational map. Let $U \subseteq X^\ast$ be a dense open such that $\pr$ is an isomorphism restricted to $U$. Then for all hyperplanes $H \in U$ the fiber 
\[
\pr^{-1}(H) = \{ P \in X \ ; \ \rT_P(X) \subseteq H\}
\]
consists of a single reduced point. Therefore $D = X \cap H$ has only one singular point $P$, and if $D = D_1 + D_2$ is a sum of components, then $D_1$ and $D_2$ meet transversally in $P$. Therefore $D_1 \cdot D_2 = 1$ contradicting the fact that $L$ was chosen so that $D$ is $2$-connected. It follows that $B$, the locus of reducible $D \in |L|$  is contained in $X^\ast \setminus U$. So we can estimate the dimension of $B$ as
\[
\dim(B) < \dim X^\ast = \dim TH = N-1.
\]
Therefore a general line in $\bP(\rH^0(X,L))$ avoids $B$. Hence a general pencil of divisors in $|L|$ consists entirely of irreducible divisors. 

Let $D_0$ and $D_\infty$ be general divisors in $|L|$ such that the pencil spanned by these two divisors consists only of irreducible curves. Let $Z$ be the complement of a point $P$ of the intersection $D_0 \cap D_{\infty}$, and let $\sigma: X' \to X$ be the blow-up in $Z$. Then the strict transforms $D'_i$ of $D_i$ for $i \in \{0,\infty\}$ in $X'$ are ample with $D'_0 \cdot D'_\infty = 1$, essentially due to the Nakai-Moishezon criterion as in the proof of \pref{prop:general pencil degree d}. 
The criterion of \pref{prop:NCDampleFujitaExtremeSurface} completes the proof of $\conFN(X') = 3$.
\end{proof}

\begin{thm}
\label{thm:allpi1FN3}
For every projective group $\pi$ there is a smooth projective surface $X$ with $\pi_1(X) \simeq \pi$ and convex Fujita number $\conFN(X) =3$.
\end{thm}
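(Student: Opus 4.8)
The plan is to realize $\pi$ on \emph{some} smooth projective surface by a Lefschetz-type argument, and then to invoke \tref{prop:cofinalFN3} in order to raise the convex Fujita number to $3$ by a birational modification that leaves the fundamental group untouched.

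First I would produce a surface carrying the prescribed fundamental group. By definition of a projective group there is a smooth projective variety $V$ with $\pi_1(V) \simeq \pi$. Replacing $V$ by $V \times \bP^2$ we may assume $\dim V \geq 2$, since $\pi_1(V \times \bP^2) \simeq \pi_1(V) \times \pi_1(\bP^2) \simeq \pi$. Fix a projective embedding of $V$; by Bertini a general member of a sufficiently ample complete linear system is a smooth connected divisor $V' \subseteq V$, and by the Lefschetz hyperplane theorem for fundamental groups the inclusion induces an isomorphism $\pi_1(V') \xrightarrow{\sim} \pi_1(V)$ provided $\dim V \geq 3$. Iterating this, cutting down one dimension at a time and stopping as soon as the dimension reaches $2$, yields a smooth projective surface $S$ with $\pi_1(S) \simeq \pi_1(V) \simeq \pi$.

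Next, apply \tref{prop:cofinalFN3} to $S$: there is a proper birational modification $\sigma : X \to S$ with $\conFN(X) = 3$. Inspecting the proof of that theorem, $\sigma$ is a composition of blow-ups of $S$ at finitely many distinct closed points (all but one of the base points of a general pencil inside a very ample complete linear system). Blowing up a smooth projective surface at a point does not change its fundamental group: one has $\Bl_P(S) \setminus E \simeq S \setminus \{P\}$, removing a point from a real $4$-manifold does not affect $\pi_1$, and van Kampen applied along the simply connected exceptional curve $E \simeq \bP^1$ gives $\pi_1(\Bl_P(S)) \simeq \pi_1(S \setminus \{P\}) \simeq \pi_1(S)$. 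Composing over the blow-ups constituting $\sigma$ we conclude $\pi_1(X) \simeq \pi_1(S) \simeq \pi$, so $X$ is a smooth projective surface with $\pi_1(X) \simeq \pi$ and $\conFN(X) = 3$, as desired.

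There is no genuine obstacle here: the argument is an assembly of the Lefschetz hyperplane theorem, the birational invariance of $\pi_1$ under point blow-ups of surfaces, and \tref{prop:cofinalFN3}. The only points requiring a moment of care are the verification that the modification produced by \tref{prop:cofinalFN3} is built purely out of blow-ups at points — so that it is neutral on $\pi_1$ — and the harmless bookkeeping of halting the Lefschetz reduction precisely at surface dimension.
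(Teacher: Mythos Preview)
Your proof is correct and follows essentially the same route as the paper: realize $\pi$ on a smooth projective surface and then apply \tref{prop:cofinalFN3}, using that $\pi_1$ is unchanged under the resulting birational modification. The paper simply cites the realization on a surface as well known and invokes birational invariance of $\pi_1$ for smooth projective varieties directly, whereas you spell out the Lefschetz reduction and the van Kampen argument for point blow-ups; your extra inspection of the modification is not strictly needed, since any proper birational map between smooth projective varieties already induces an isomorphism on $\pi_1$.
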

\begin{proof}
It is well known that any projective group $\pi$ can be realized as the fundamental group of a smooth projective surface. We then dominate this surface birationally by one with convex Fujita number $3$ by \pref{prop:cofinalFN3}, noting that the fundamental group is a birational invariant and thus does not change.
\end{proof}

\begin{rmk}
\begin{enumerate}
\item
The surfaces constructed in the proof of \tref{thm:allpi1FN3} are not minimal in general. To determine whether $\conFN(X) = 3$ is possible for a minimal smooth projective surface with given birational invariants is a completely different matter.
\item
\tref{thm:allpi1FN3} describes Fujita extreme surfaces  ($\conFN(X) = 3$) with given fundamental group. In \cite[Proposition 4.1]{chen_convex_2023} we showed that also Fujita simple surfaces ($\conFN(X) = 0$) and surfaces with convex Fujita number $\conFN(X) = 1$ with given fundamental group exist. 
\end{enumerate}
\end{rmk}

\bibliographystyle{alpha}

\begin{bibdiv}
\begin{biblist}

\bib{barth_compact_2004}{book}{
      author={Barth, Wolf~P.},
      author={Hulek, Klaus},
      author={Peters, Chris A.{\thinspace}M.},
      author={Van~de Ven, Antonius},
      title={Compact complex surfaces},
      series={Ergebnisse der {Mathematik} und ihrer {Grenzgebiete}}, 
      publisher={Springer-Verlag, Berlin},
      date={2004},
      volume={4},
      ISBN={978-3-540-00832-3},
}

\bib{balaji_holonomy_2008}{article}{
      author={Balaji, Vikraman},
      author={Koll\'ar, J\'anos},
       title={Holonomy groups of stable vector bundles},
      date={2008},
      journal={Kyoto University. Publications of RIMS},
      volume={44},
      number={2},
       pages={183\ndash 211},
}

\bib{brandhorst_automorphism_2022}{article}{
      author={Brandhorst, Simon},
      author={Shimada, Ichiro},
      title={Automorphism groups of certain {Enriques} surfaces},
      date={2022},
      ISSN={1615-3375},
      journal={Foundations of Computational Mathematics. The Journal of the Society for the Foundations of Computational Mathematics},
      volume={22},
      number={5},
       pages={1463\ndash 1512},
}

\bib{bauer_tensor_1996}{article}{
      author={Bauer, Thomas},
      author={Szemberg, Tomasz},
      title={On tensor products of ample line bundles on abelian varieties},
      date={1996},
     journal={Mathematische Zeitschrift},
      volume={223},
      number={1},
       pages={79\ndash 85},
}

\bib{butler_normal_1994}{article}{
      author={Butler, David~C.},
      title={Normal generation of vector bundles over a curve},
      date={1994},
     journal={J.~of Differential Geometry},
      volume={39},
      number={1},
       pages={1\ndash 34},
}

\bib{cossec_enriques_1989}{book}{
      author={Cossec, Fran{\c{c}}ois~R.},
      author={Dolgachev, Igor~V.},
       title={Enriques surfaces. {I}},
      series={Progress in {Mathematics}},
      publisher={Birkh\"auser Boston, Inc., Boston, MA},
      date={1989},
      volume={76},
        ISBN={978-0-8176-3417-9},
}

\bib{chen_convex_2023}{misc}{
      author={Chen, Jiaming},
      author={K{\"u}ronya, Alex},
      author={Mustopa, Yusuf},
      author={Stix, Jakob},
      title={Convex {Fujita} numbers and the fundamental group},
      date={2023},
      url={http://arxiv.org/abs/2301.06367},
      note={arXiv:2301.06367},
}

\bib{cossec_projective_1983}{article}{
      author={Cossec, Fran{\c{c}}ois~R.},
       title={Projective models of {Enriques} surfaces},
        date={1983},
     journal={Math.\ Annalen},
      volume={265},
      number={3},
       pages={283\ndash 334},
}

\bib{deligne_groupes_1973}{book}{
      author={Deligne, Pierre},
      author={Katz, Nicholas~M.},
      title={Groupes de monodromie en g\'eom\'etrie alg\'ebrique. {II}},
      series={Lecture {Notes} in {Mathematics}},
      volume={340},
      publisher={Springer-Verlag, Berlin-New York},
      date={1973},
      label={SGA 7$_{\text{II}}$},
}

\bib{griffiths_residues_1978}{article}{
      author={Griffiths, Phillip},
      author={Harris, Joseph},
       title={Residues and zero-cycles on algebraic varieties},
        date={1978},
        ISSN={0003-486X},
     journal={Annals of Mathematics},
      volume={108},
      number={3},
       pages={461\ndash 505},
}

\bib{hartshorne_ample_1970}{book}{
      author={Hartshorne, Robin},
       title={Ample subvarieties of algebraic varieties},
      series={Lecture {Notes} in {Mathematics}},
      volume={156},
      publisher={Springer-Verlag, Berlin-New York},
        date={1970},
         url={https://mathscinet.ams.org/mathscinet-getitem?mr=0282977},
      review={\MR{0282977}},
}

\bib{huybrechts_lectures_2016}{book}{
      author={Huybrechts, Daniel},
       title={Lectures on {K3} surfaces},
      series={Cambridge {Studies} in {Advanced} {Mathematics}},
      publisher={Cambridge University Press, Cambridge},
        date={2016},
      volume={158},
        ISBN={978-1-107-15304-2},
}

\bib{illusie_complexe_1979}{article}{
      author={Illusie, Luc},
       title={Complexe de {de Rham}-{Witt} et cohomologie cristalline},
        date={1979},
        ISSN={0012-9593},
     journal={Ann. Sci. {\'E}NS (4)},
      volume={12},
       pages={501\ndash 661},
}

\bib{kharlamov_numerically_2014}{article}{
      author={Kharlamov, Viatcheslav},
      author={Kulikov, Viktor~St.},
       title={On numerically pluricanonical cyclic coverings},
        date={2014},
     journal={Izvestiya Rossiiskoi Akademii Nauk. Seriya Matematicheskaya},
      volume={78},
      number={5},
       pages={143\ndash 166},
}

\bib{knutsen_k-th-order_2001}{article}{
      author={Knutsen, Andreas~Leopold},
       title={On k-th-order embeddings of {K3} surfaces and {Enriques} surfaces},
        date={2001},
     journal={Manuscripta Mathematica},
      volume={104},
      number={2},
       pages={211\ndash 237},
}

\bib{lang_enriques_1983}{article}{
      author={Lang, William~E.},
       title={On {Enriques} surfaces in characteristic p. {I}},
        date={1983},
     journal={Math.\ Annalen},
      volume={265},
      number={1},
       pages={45\ndash 65},
}

\bib{lazarsfeld_lectures_1997}{incollection}{
      author={Lazarsfeld, Robert},
       title={Lectures on linear series},
        date={1997},
      booktitle={Complex algebraic geometry ({Park} {City}, {UT}, 1993)},
      series={{IAS}/{Park} {City} {Math}. {Ser}.},
      volume={3},
      publisher={Amer. Math. Soc., Providence, RI},
       pages={161\ndash 219},
}

\bib{lefschetz_certain_1921}{article}{
      author={Lefschetz, Solomon},
       title={On certain numerical invariants of algebraic varieties with application to abelian varieties},
        date={1921},
        ISSN={0002-9947},
     journal={Transactions of the AMS},
      volume={22},
      number={3},
       pages={327\ndash 406},
}

\bib{mayer_families_1972}{article}{
      author={Mayer, Alan~L.},
      title={Families of K3 surfaces},
      date={1972},
     journal={Nagoya Math.~Journal},
      volume={48},
       pages={1\ndash 17},
}

\bib{miyaoka_tricanonical_1976}{article}{
      author={Miyaoka, Yoichi},
       title={Tricanonical maps of numerical {Godeaux} surfaces},
        date={1976},
     journal={Inventiones Math.},
      volume={34},
      number={2},
       pages={99\ndash 111},
}

\bib{miyaoka_chern_1987}{article}{
      author={Miyaoka, Yoichi},
     title={The {Chern} {Classes} and {Kodaira} {Dimension} of a {Minimal} {Variety}},
     date={1987},
     journal={Algebraic Geometry, Sendai, 1985},
     volume={10},
     pages={449\ndash 477},
}

\bib{miranda_configurations_1989}{article}{
      author={Miranda, Rick},
      author={Persson, Ulf},
       title={Configurations of $\mathrm{I}_n$ fibers on elliptic K3 surfaces},
        date={1989},
     journal={Mathematische Zeitschrift},
      volume={201},
      number={3},
       pages={339\ndash 361},
}

\bib{mustata_vanishing_2002}{article}{
      author={Musta\c{t}\u{a}, Mircea},
       title={Vanishing theorems on toric varieties},
        date={2002},
     journal={Tohoku Math.~J.},
      volume={54},
      number={3},
       pages={451\ndash 470},
}

\bib{nori_zariskis_1983}{article}{
      author={Nori, Madhav~V.},
       title={Zariski's conjecture and related problems},
        date={1983},
     journal={Annales scientifiques de l'{\'E}NS}, 
      volume={16},
      number={2},
       pages={305\ndash 344},
}

\bib{narasimhan_moduli_1969}{article}{
      author={Narasimhan, Mudumbai~S.},
      author={Ramanan, Sundararaman},
       title={Moduli of vector bundles on a compact {Riemann} surface},
        date={1969},
     journal={Annals of Mathematics},
      volume={89},
       pages={14\ndash 51},
}

\bib{reid_surfaces_1978}{article}{
      author={Reid, Miles},
       title={Surfaces with $p_g=0$, $K^2=1$},
        date={1978},
     journal={Journal of the Faculty of Science. University of Tokyo. Section IA. Mathematics},
      volume={25},
      number={1},
       pages={75\ndash 92},
}

\bib{reider_vector_1988}{article}{
      author={Reider, Igor},
       title={Vector {Bundles} of {Rank} 2 and {Linear} {Systems} on  {Algebraic} {Surfaces}},
        date={1988-03},
      journal={The Annals of Mathematics},
      volume={127},
      number={2007055},
}

\bib{reid_chapters_1997}{incollection}{
      author={Reid, Miles},
       title={Chapters on algebraic surfaces},
        date={1997},
      booktitle={Complex algebraic geometry ({Park} {City}, {UT}, 1993)},
       series={{IAS}/{Park} {City} {Math}. {Ser}.},
       volume={3},
      publisher={Amer. Math. Soc., Providence, RI},
       pages={3\ndash 159},
}

\bib{saint-donat_projective_1974}{article}{
      author={Saint-Donat, Bernard},
       title={Projective models of {K3} surfaces},
        date={1974},
     journal={American J.~of Math.},
      volume={96},
       pages={602\ndash 639},
}

\bib{serrano_elliptic_1992}{article}{
      author={Serrano, Fernando},
       title={Elliptic surfaces with an ample divisor of genus two},
        date={1992},
      journal={Pacific Journal of Mathematics},
      volume={152},
      number={1},
       pages={187\ndash 199},
}

\bib{van_de_ven_2-connectedness_1979}{article}{
      author={Van~de Ven, Antonius},
       title={On the $2$-connectedness of very ample divisors on a surface},
        date={1979},
     journal={Duke Math.\ Journal},
      volume={46},
      number={2},
       pages={403\ndash 407},
}

\bib{zak_tangents_1993}{book}{
      author={Zak, Fyodor~L.},
       title={Tangents and secants of algebraic varieties},
      series={Translations of {Mathematical} {Monographs}},
       publisher={American Mathematical Society, Providence, RI},
        date={1993},
      volume={127},
}

\end{biblist}
\end{bibdiv}

\end{document}